\documentclass[12pt,a4paper]{amsart}
\PassOptionsToPackage{showonlyrefs}{mathtools}
\usepackage{preamble}
\usepackage{ulem}
\usepackage{amsmath}

\usepackage{amssymb}
\allowdisplaybreaks

\renewcommand{\leq}{\leqslant}
\renewcommand{\geq}{\geqslant}

\newcommand{\tf}{{\tilde f}}
\newcommand{\tth}{{\tilde h}}
\newcommand{\tg}{{\tilde g}}
\newcommand{\tW}{{\widetilde W}}
\newcommand{\thF}{{\widetilde {\mathcal {F}}}}
\newcommand{\tP}{{\mathcal P}}
\newcommand{\tR}{{\mathcal R}}
\newcommand{\tL}{{\mathcal L}}
\newcommand{\tQ}{{\mathcal Q}}
\newcommand{\tA}{{\mathcal A}}
\newcommand{\tB}{{\mathcal B}}
\newcommand{\tpi}{{\tilde \pi}}

\newcommand{\tgamma}{{\tilde \gamma}}
\newcommand{\tm}{{\tilde m}}
\newcommand{\Diff}{\mathrm{Diff^1}}
\newcommand{\uh}{\uline h}
\newcommand{\oh}{\overline h}
\newcommand{\udelta}{\uline \delta}
\newcommand{\odelta}{\overline{\delta}}
\newcommand{\e}{\mathrm{e}}
\newcommand{\td}{\mathrm{d}}
\newtheorem*{maintheorem}{Main Theorem}
\newcommand{\tF}{{\mathcal F}}

\newcommand{\tC}{{\mathcal C}}

\newcommand{\ud}{\uline d}
\newcommand{\od}{\overline d}

\title
[$C^1$ entropy formula part 2]
{
    Ledrappier-Young entropy formula for $C^1$ diffeomorphisms with dominated splitting \\
    Part 2: entropy formulas and measure dimension
}
\begin{document}

\begin{abstract}

In this paper, we partially extend the Ledrappier-Young entropy formula to invariant measures of $C^1$ diffeomorphisms with dominated splittings. For such measures, we show that whenever the $i$-th Lyapunov exponent has multiplicity one, the $i$-th transverse entropy equals the product of the $i$-th Lyapunov exponent and the corresponding transverse measure dimension. Furthermore, if all intermediate non-negative Lyapunov exponents have multiplicity one, then the Ledrappier-Young entropy formula holds. 

As applications, we derive $C^{1}$ versions of numerous results in measure dimension theory, including the famous works by Ledrappier-Young \cite[Section 12]{LEDRAPPIER_YOUNG_B}, Barreira-Pesin-Schmeling \cite{Barreira_Pesin_Schmeling}, and Ledrappier-Xie \cite{Ledrappier_Xie}.

\end{abstract}
\maketitle

\tableofcontents

\section{Introduction and main results}
\subsection{Introduction}
Entropy, Lyapunov exponents, and measure dimension are three fundamental concepts in ergodic theory, and they play central roles in the study of chaotic dynamical systems. Metric entropy was introduced by Kolmogorov in the 1950s as a quantitative measure of dynamical complexity, while Lyapunov exponents arise from Oseledets' multiplicative ergodic theorem \cite{Oseledets} from the 1960s. Precise definitions can be found, for example, in \cite{Rokhlin_1967}.

The Margulis-Ruelle inequality, see \cite{1978Ruelle-inequality}, states that the metric entropy of an invariant measure for a diffeomorphism is bounded above by the sum of the positive Lyapunov exponents, counted with multiplicity. For $C^{1+\alpha}$ volume-preserving diffeomorphisms, Pesin's entropy formula \cite{Pesin-entropyformula_1977} shows that the entropy of the volume measure is equal to this sum. In the $C^1$ setting, Sun and Tian \cite{SunTianC1pesinformula} proved Pesin's entropy formula for volume-preserving diffeomorphisms with dominated splittings; see also Tahzibi \cite{Tahzibi_2002}. Moreover, Wang, Wang, and Zhu \cite{WangwangZhu2018} proved $C^1$ versions, under dominated splitting assumptions, of the Margulis-Ruelle inequality and Pesin's entropy formula for partial entropy.

In \cite{Young_1982}, Young established a relation between entropy, Lyapunov exponents, and measure dimension for $C^{1+\alpha}$ surface diffeomorphisms. Subsequently, in the seminal works of Ledrappier and Young \cite{LEDRAPPIER_YOUNG_A,LEDRAPPIER_YOUNG_B}, they established the Ledrappier-Young entropy formula. This formula states that, for $C^2$ diffeomorphisms, the metric entropy is equal to the sum of the products of Lyapunov exponents and the corresponding measure dimensions. They also introduced the notion of partial entropy and proved analogous entropy formulas for partial entropies.

 Following these developments, many Ledrappier-Young type entropy formulas have been established in various settings under $C^2$ regularity assumptions. A random version of the Ledrappier-Young entropy formula was proved by Qian, Qian, and Xie in \cite{QIAN_QIAN_XIE_2003}. Entropy formulas for endomorphisms have also been extensively studied; see, for example, \cite{QianZhu02,Liu03,QianXie08,Liu08,Shu09}. In these works, the $C^2$ regularity assumption is used to guarantee the Lipschitz regularity of (strong) unstable holonomy maps, which is a crucial ingredient in the proof. This Lipschitz property also holds for $C^{1+\alpha}$ diffeomorphisms, as recently shown by Brown \cite{AaronBrown2022} and Saghin \cite{SaghinRadu2025}. Consequently, the arguments of Ledrappier and Young extend to the $C^{1+\alpha}$ setting. A natural question is then whether the Ledrappier-Young entropy formula continues to hold for invariant measures of $C^1$ diffeomorphisms with dominated splittings. In this paper, we provide a partial positive answer.

To state the result precisely, let $M$ be a compact $C^{\infty}$ Riemannian manifold without boundary, and let $f\in \Diff(M)$ be a $C^1$ diffeomorphism. Let $m$ be an $f$-invariant ergodic Borel probability measure on $M$. Our main theorem asserts that if $m$ admits a dominated splitting over its support, see \Cref{section 7.2} for precise definitions, then whenever the $i$-th Lyapunov exponent $\lambda_i$ has multiplicity one, the $i$-th transverse entropy $h_i-h_{i-1}$ equals the product of $\lambda_i$ and the corresponding transverse measure dimension $\gamma_i$. More precisely, we prove the following.

\begin{maintheorem}[\Cref{transverse entropy formula}]
     Let $f\in \Diff(M)$ preserve an ergodic Borel probability $m$, $2\leq i\leq u$. If the splitting $E^{(i-1)}\oplus E^{i}\oplus E^{\widehat{(i)}}$ is dominated on the support of $m$ and $\dim E^{i}=1$, then 
    \begin{equation*}
    \udelta_i-\udelta_{i-1}=\odelta_i-\odelta_{i-1}  :=  \gamma_i\leq 1
    \end{equation*}
    and
    \begin{equation*}
    h_i-h_{i-1}=\lambda_i\cdot\gamma_i.
    \end{equation*}
\end{maintheorem}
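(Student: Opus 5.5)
The approach follows the Ledrappier--Young strategy of relating the $(i-1)$-th and $i$-th unstable partitions. I use the objects from Part 1 and the earlier sections of this paper: the increasing measurable partitions $\xi_{i-1}\succ\xi_i$ subordinate to the (fake or genuine) foliations $W^{(i-1)}$ and $W^{(i)}$ tangent to the dominated bundles, the conditional entropies $h_i=H_m(\xi_i\mid f\xi_i)$, and the lower/upper pointwise dimensions $\udelta_i,\odelta_i$ of the conditional measures $m^{\xi_i}_x$. Since $\dim E^i=1$, the quotient of a $W^{(i)}$-plaque by the $W^{(i-1)}$-plaques is one-dimensional. Transversally, then, $f$ acts as a conformal map on a line with expansion rate $\lambda_i$, and no H\"older or Lipschitz regularity of the holonomies is needed in that direction.

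\textbf{Step 1 (lower bound $h_i-h_{i-1}\ge \lambda_i\,\udelta$-transverse).} Consider the quotient measures $\pi_*m^{\xi_i}_x$ on the transversal $W^{(i)}_x/\xi_{i-1}$. I would define transverse balls $B^{T}(x,\rho)$ as the union of those $\xi_{i-1}$-atoms in $\xi_i(x)$ that meet $W^{(i)}$-neighbourhoods of $W^{(i-1)}_{\mathrm{loc}}(x)$ of transverse width $\rho$. Then I would show
\[
-\tfrac1n\log m^{\xi_i}_x\bigl(\xi_{i-1}^{\,n}\text{-atom}\bigr)\;\to\; h_i-h_{i-1},
\]
where $\xi_{i-1}^{\,n}=\bigvee_{k=0}^{n} f^{-k}\xi_i\vee\xi_{i-1}$, using the Shannon--McMillan--Breiman type argument for the relative entropy $H(\xi_i\mid f\xi_i\vee\xi_{i-1})$.

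\textbf{Step 2 (comparison of atoms with transverse balls).} This is the geometric core. By domination and $\dim E^i=1$, the transverse distance between $W^{(i-1)}$-plaques grows under $f^n$ like $e^{n(\lambda_i\pm\varepsilon)}$ on Pesin-type blocks. I would derive this from the uniform angle between $E^{(i-1)}$ and $E^i$ together with continuity of $Df|_{E^i}$, which gives tempered distortion along orbits via Birkhoff averages of $\log\|Df|_{E^i}\|$. This yields
\[
B^T\bigl(x,e^{-n(\lambda_i+\varepsilon)}\bigr)\subset \xi_{i-1}^{\,n}(x)\subset B^T\bigl(x,e^{-n(\lambda_i-\varepsilon)}\bigr)
\]
on good sets, up to a Borel--Cantelli correction using $\log d(x,\partial\xi_i(x))\in L^1$. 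Combining with Step 1 gives that the transverse pointwise dimension exists and equals $(h_i-h_{i-1})/\lambda_i$ for $m$-a.e.\ $x$. Being a dimension on a line, it is at most $1$.

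\textbf{Step 3 (identify with $\udelta_i-\udelta_{i-1}$ and $\odelta_i-\odelta_{i-1}$).} This is the main obstacle. Ledrappier--Young use Lipschitz holonomy to get the product-like estimate
\[
m^{\xi_i}_x(B^{(i)}(x,\rho))\approx m^{\xi_{i-1}}_x(B^{(i-1)}(x,\rho))\cdot \pi_*m^{\xi_i}_x(B^T(x,\rho)).
\]
In the $C^1$ setting I would replace metric balls by dynamical balls: $B^{(i)}(x,\rho)$ is compared with the sets $\xi_i$-atoms of $f^{-n}\xi_i$ restricted by $\xi_{i-1}^{\,n}$, whose measures factor exactly through conditional measures. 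I would use the Part 1 result that $\udelta_j,\odelta_j$ can be computed through such dynamically defined neighbourhoods, which requires no holonomy regularity. The remaining difficulty is a Fubini/density-point argument: a set of $\xi_{i-1}$-atoms in $B^T$ carries most of the transverse mass, and those atoms each have $m^{\xi_{i-1}}$-mass of the right order. I would handle this with a Lusin-set and maximal-inequality argument as in Ledrappier--Young Section 11.

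Once these steps are in place, additivity shows that $\udelta_i-\udelta_{i-1}$ and $\odelta_i-\odelta_{i-1}$ both equal the transverse dimension $\gamma_i$, so they coincide. Step 2 then gives $\gamma_i\le 1$ and $h_i-h_{i-1}=\lambda_i\gamma_i$.
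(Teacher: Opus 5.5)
There is a genuine gap. It sits exactly where you locate the main obstacle, and it rests on your claim that no holonomy regularity is needed.

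One-dimensionality of $W^{(i)}/W^{(i-1)}$ gives the quotient no canonical metric or $C^1$ structure on which $f$ is ``conformal with rate $\lambda_i$''. That rate is visible only along an invariant transversal through the base point, namely the fake $i$-leaf. Uniform angles and continuity of $Df|_{E^i}$ do not give it, since inside $E^{(i)}$ the cone about $E^i$ is only backward invariant. Transversals through different points of an atom are identified by the $(i-1)$-holonomy, which for $C^1$ maps is merely H\"older. A transverse dimension must be computed in one fixed coordinate per atom. Comparing it with the balls $B^i$, $B^{i-1}$ centred at other points is exactly where regularity enters, and the Ledrappier--Young Section~11 argument you invoke relies on Lipschitz straightening.

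Your substitute, computing $\udelta_j,\odelta_j$ through dynamically defined sets, cannot work as stated. Sets such as $f^{-n}\xi_i(x)$ are contracted at the distinct rates $\e^{-n\lambda_1},\dots,\e^{-n\lambda_i}$, so they are not comparable to balls $B^i(x,\rho)$. The paper reads a dimension off Bowen balls only for $W^1$, which carries the single exponent $\lambda_1$.

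The missing idea is that the fake $(i-1)$-holonomy inside fake $(i)$-leaves is uniformly $\alpha$-bi-H\"older with $\alpha=1-\frac{7\epsilon}{\lambda_{i-1}-\lambda_i}\to1$. Consequently the straightening maps $\hat\pi,\tpi$ are $\alpha$-bi-H\"older. The abstract lemmas of \Cref{sec 5.1} then apply in straightened coordinates at the cost of factors $\alpha^{\pm1}$ and $\alpha^3$. This yields
\[
\alpha^{-1}\odelta_i-\alpha\,\odelta_{i-1}\;\geq\;\frac{\alpha^3(1-\epsilon)}{\lambda_i+3\epsilon}\,(h_i-h_{i-1}-3\epsilon)
\]
together with its lower-dimension analogue, and one then lets $\epsilon\to0$.

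Step 1 is also ill-posed as written. Since $\xi_{i-1}\succ\xi_i\succ f\xi_i$, one has $f\xi_i\vee\xi_{i-1}=\xi_{i-1}$, hence $H(\xi_i\mid f\xi_i\vee\xi_{i-1})=0$. Moreover $\bigvee_{k=0}^{n}f^{-k}\xi_i=f^{-n}\xi_i$, so $\xi_{i-1}^{\,n}(x)=f^{-n}\xi_i(x)\cap\xi_{i-1}(x)$ lies in the single plaque $\xi_{i-1}(x)$. It is therefore $m^{\xi_i}_x$-null whenever the transverse measures are non-atomic, and it cannot contain a transverse ball as Step 2 claims.

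The object you want is the $\xi_{i-1}$-saturation of $f^{-n}\xi_i(x)$. Its transverse mass is not controlled by an SMB theorem. Disintegrating over plaques and restricting to good sets gives only the one-sided bound of \Cref{lem 33}, which already contains the factor $\alpha^{-1}$.

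The opposite inequality $h_i-h_{i-1}\geq\lambda_i\max\{\udelta_i-\udelta_{i-1},\odelta_i-\odelta_{i-1}\}$ needs no transverse structure at all; it is a direct counting argument in $W^i$ (\Cref{section 10}). Exactness then comes from sandwiching the two bounds. Because $\delta_{i-1}$ is not known to be exact, your ``additivity'' would in any case have to be proved separately for lower and upper dimensions.
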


Here $E^{(i-1)}$, $E^i$, and $E^{\widehat{(i)}}$ denote the Oseledets bundles corresponding to the top $i-1$ Lyapunov exponents, the $i$-th Lyapunov exponent, and the remaining exponents, respectively. The quantities $\udelta_i$ and $\odelta_i$ denote the $i$-th lower and upper partial measure dimensions, respectively, and $h_i$ denotes the $i$-th partial entropy.

Moreover, if we assume a stronger assumption that all the intermediate Lyapunov exponents have multiplicity one, we can obtain exactly the Ledrappier-Young entropy formula for $C^1$ diffeomorphisms. See \Cref{section 7.2} for precise statements and details.

It is worth noting that the Lipschitz regularity of the holonomy maps induced by strong unstable leaves is an important ingredient in several earlier works devoted to weakening the smoothness assumptions. However, for $C^{1}$ diffeomorphisms with dominated splittings, the (strong) unstable holonomy maps are only Hölder continuous in general. This paper makes progress toward establishing Ledrappier-Young's theory under $C^1$ regularity by working with Hölder continuous fake (strong) unstable holonomy maps.

Ledrappier-Young's theory has numerous applications, particularly in measure dimension theory. Ledrappier and Young used this in \cite[Section~12]{LEDRAPPIER_YOUNG_B} to obtain an upper bound for the upper measure dimension and to derive several consequences for full-dimensional measures. Barreira, Pesin, and Schmeling proved the Eckmann-Ruelle conjecture for $C^{1+\alpha}$ diffeomorphisms in \cite{Barreira_Pesin_Schmeling}, showing that the dimension of a hyperbolic ergodic measure is exact. Ledrappier and Xie proved in \cite{Ledrappier_Xie} that zero transverse dimension is equivalent to the conditional measures being transversely Dirac. All these results require the $C^{1+\alpha}$ regularity assumption on $f$. Using our main results, we obtain $C^1$ versions of these results. See \Cref{section 7.3} for details. For further developments related to the Ledrappier-Young's theory and measure dimension, see also the works of Ben Ovadia on the Ledrappier-Young property \cite{Snir-LY-ETDS} and on tubular dimension \cite{Sinr-tubular}, and the works of Ben Ovadia and Rodriguez Hertz \cite{Sinr-Rodriguez-HertzIMRN}, Dong and Qiao \cite{DongqiaoCMP} on neutralized local entropy.

\subsection{Entropy formulas}\label{section 7.2}
In this section, we provide the technical definitions and state the main results of this paper.
As before, $M$ is a $C^{\infty}$ compact Riemannian manifold without boundary. Let $f$ be a $C^1$ diffeomorphism of $M$ onto itself, denoted by $f \in \Diff(M)$. Let $m$ be an $f$-invariant ergodic Borel probability measure on $M$.

A point $x\in M$ is said to be a regular point if there exist numbers $\lambda_1(x)>\lambda_2(x)>\cdots >\lambda_{r(x)}(x)$ and a decomposition of the tangent space $$T_xM=E^1(x)\oplus E^2(x)\oplus \cdots \oplus E^{r(x)}(x)$$   such that   for each $v\in E^i(x)\setminus\{0\}$, we have $$\lim_{n\rightarrow \pm\infty} \frac{1}{n}\log | Df^n_x (v)|=\lambda_i(x),$$ and $$\lim_{n\rightarrow \pm \infty}\frac{1}{n}\log |\mathrm{Jac}(Df^n_x)|=\sum^{r(x)}_{i=1}\lambda_i(x)\dim E^i(x).$$
By the Oseledets theorem \cite{Oseledets}, the set of regular points with respect to $m$, denoted by $\Gamma_{\mathrm{reg}}$, has full $m$-measure. The quantities $\lambda_i(x)$, $r(x)$, and $\dim E^i(x)$ are $f$-invariant. Since $m$ is ergodic, for $m$-a.e. $x\in M$,
$\lambda_i(x)=\lambda_i$ are called the Lyapunov exponents, and $\dim E^i(x)=\dim E^i$ is called the multiplicity of $\lambda_i$.

We recall some basic properties of dominated splittings (see \cite{ABC}). A $Df$-invariant splitting $T_\Lambda M=F\oplus E$ of the tangent bundle over an $f$-invariant set $\Lambda$ is dominated if there exists $N>0$   such that   given any $x\in \Lambda$ and any unit vectors $u\in F(x),v\in E(x)$, we have $$\frac{1}{2}\cdot| Df_x^N(u)|\geq| Df_x^N(v)|.$$
More generally, a $Df$-invariant splitting $T_\Lambda M=E_1\oplus\cdots\oplus E_t$ is dominated if, for every $\ell\in\{ 1,\cdots,t-1\}$, the splitting $(E_1\oplus\cdots\oplus E_\ell)\oplus(E_{\ell+1}\oplus\cdots\oplus E_t)$ is dominated. Recall that if an $f$-invariant set $\Lambda$ admits a dominated splitting $T_\Lambda M=E_1\oplus\cdots\oplus E_t$, then the splitting depends continuously on $x\in \Lambda$ and extends uniquely to a dominated splitting over the closure of $\Lambda$. In particular, the angles between different bundles are uniformly bounded away from zero.

In general, we use $(i)$ to denote all the indices less than or equal to $i$, use $\widehat i$ to denote all the indices except $i$, and use $\widehat{(i)}$ to denote all the indices larger than $i$. Now assume that $\lambda_1>\lambda_2>\cdots>\lambda_u>0$ are all the positive Lyapunov exponents of $m$.
For $1\leq i\leq u$, we assume that the sub-splitting of the Oseledets splitting $E^{(i-1)}\oplus E^{i}\oplus E^{\widehat{(i)}}$ is a dominated splitting over $\mathrm{supp}m\cap \Gamma_{\mathrm{reg}}$, where $E^{(i-1)}$ is the sub-bundle of exponents $\{\lambda_1,\cdots, \lambda_{i-1}\}$, $E^{i}$ is the sub-bundle of exponent $\{\lambda_i\}$, and $E^{\widehat{(i)}}$ is the sub-bundle of exponents $\{\lambda_{i+1},\cdots,\lambda_r\}$. We also use the notations $E^u,E^c,E^s,E^{cs}$ with the same meaning as in \cite{Part1} to represent the bundles of the positive, zero, negative, and non-positive Lyapunov exponents, respectively. 

In what follows, we assume throughout that the splitting  
$E^{(i-1)}\oplus E^{i}\oplus E^{\widehat{(i)}}$  
is dominated on $\mathrm{supp}\,m\cap \Gamma_{\mathrm{reg}}$. 
Since $\Gamma_{\mathrm{reg}}$ has full measure, the dominated splitting can be extended to a dominated splitting over $\mathrm{supp}m$.

For any $x\in \Gamma_{\mathrm{reg}}\cap\mathrm{supp}m$, we define $W^{i-1}$ and $W^i$ by $$W^{i-1}(x)=\{ y\in M: \limsup_{n\rightarrow \infty}\frac{1}{n}\log (d(f^{-n}x,f^{-n}y))\leq-\lambda_{i-1}+\epsilon,\ \forall \epsilon>0\},$$
and
$$W^{i}(x)=\{ y\in M: \limsup_{n\rightarrow \infty}\frac{1}{n}\log (d(f^{-n}x,f^{-n}y))\leq-\lambda_{i}+\epsilon,\ \forall \epsilon>0\},$$
where $d$ is the distance induced by the Riemannian metric on $M$. When  $E^{(i-1)}\oplus E^{i}\oplus E^{\widehat{(i)}}$ is dominated on the support of $m$, Abdenur, Bonatti, and Crovisier proved in \cite[Section 8]{ABC} that both $W^{i-1}(x)$ and $W^i(x)$ are injectively immersed $C^1$ manifolds with dimensions $\dim E^{(i-1)}(x)$ and $\dim E^{(i)}(x)$, respectively. Moreover, we have $W^{i-1}(x)\subseteq W^i(x)$.

We define the partial entropy of $f$ following the approaches in \cite{LEDRAPPIER_YOUNG_B,wuweisheng-u-entropy,YANG_expanding_entropy,Part1}. The concepts of measurable partitions, disintegration, and entropy are used in this paper; the reader can find details in \cite{Rokhlin_1967}.

To begin, we introduce the measurable partitions subordinate to $W^i$. This notion was first introduced by Ledrappier and Strelcyn in \cite{Ledrappier_Strelcyn_1982}. 
\begin{definition}[Subordinate partitions]\label{def subordiante partition}
Let $\xi$ be a measurable partition of $M$ and $1\leq i\leq u$. We say $\xi$ is subordinate to $W^i$ if for $m-$a.e. $x\in M$, we have
\begin{itemize}
\item[(a)] $\xi(x)\subseteq  W^i(x)$ and $\xi(x)$ contains an open neighborhood of $x$ inside $W^i(x)$.
\item[(b)] $\xi$ is an increasing partition, meaning that $f\xi \prec \xi$.
\item[(c)] $\bigvee_{n=0}^\infty f^{-n}\xi$ is the partition into points.
\end{itemize}
\end{definition}

Similarly, we can define the $i$-th partial entropy $h_i  :=  H(\xi|f\xi)$ for any $\xi$ subordinate to $W^i$. The value $h_i$ can be shown to be independent of the choice of $\xi$. See \cite[Lemma 3.1.2]{LEDRAPPIER_YOUNG_A} for details.

As in \cite{LEDRAPPIER_YOUNG_B}, we can also define partial entropy through Bowen balls on unstable manifolds. We now introduce the definition of Bowen balls on $W^i$. Let $\epsilon>0$ and $n\in\mathbb N^+$. We define 
$$V^i(x,n,\epsilon):=\{y\in W^i(x):d^i(f^k(x),f^k(y))<\epsilon,\ \forall 0\leq k<n\},$$
where $d^i$ is the distance induced by the Riemannian submanifold metric on $W^i$. Let $\xi$ be a measurable partition subordinate to $W^i$, and let $\{m^i_x\}$ be the conditional measures of $m$ associated with the measurable partition $\xi$. We define
\begin{equation}\label{bowen ball def 1}
    \uline h_i(x,\epsilon,\xi):=\liminf_{n\rightarrow\infty}-\frac{1}{n}\log m^i_xV^i(x,n,\epsilon).
\end{equation}
\begin{equation}\label{bowen ball def 2}
    \overline h_i(x,\epsilon,\xi):=\limsup_{n\rightarrow\infty}-\frac{1}{n}\log m^i_xV^i(x,n,\epsilon).
\end{equation}
Then we prove the following proposition in \Cref{section partial entropy}.
\begin{proposition}\label{proposition 1}
    For $m$-a.e. $x\in M$ and $1\leq i\leq u$, one has
    $$\lim_{\epsilon\rightarrow 0}\uline h_i(x,\epsilon,\xi)=\lim_{\epsilon\rightarrow0}\overline h_i(x,\epsilon,\xi)=h_i$$ and $h_u=h_m^u(f)$, where $h^u_m(f)$ is the $u$-entropy of $(f,m)$ defined in  \cite{Part1}.
\end{proposition}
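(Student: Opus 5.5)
I will follow the scheme of Ledrappier--Young \cite[Section 9]{LEDRAPPIER_YOUNG_A}, adapted to the $C^1$ dominated setting as in \cite{Part1}. The plan is to prove separately that $\overline h_i(x,\epsilon,\xi)\leq h_i$ for every small $\epsilon$ and that $\lim_{\epsilon\to0}\uline h_i(x,\epsilon,\xi)\geq h_i$, for $m$-a.e.\ $x$. Combined with the trivial inequality $\uline h_i\leq\overline h_i$ and monotonicity in $\epsilon$, these give both limits equal to $h_i$. Throughout I will use the ingredients supplied by domination:
\begin{itemize}
\item the leaves $W^i(x)$ are uniformly $C^1$ embedded discs of uniform size on $\mathrm{supp}\,m$;
\item $f^{-1}$ contracts $W^i$-leaves uniformly after finitely many iterates, at rate close to $\mathrm{e}^{-\lambda_i}$ along Pesin blocks;
\item the leafwise distance $d^i$ is locally comparable to $d$.
\end{itemize}

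For the upper bound, I will use the identity $m^i_x(f^{-n}\xi)(x)=\exp\bigl(-\sum_{k=0}^{n-1}I(\xi|f\xi)(f^kx)\bigr)$, where $I$ is the conditional information. This follows from invariance and the fact that $f^{-n}\xi=\bigvee_{k=0}^{n-1}f^{-k}\xi$ refined relative to $\xi$ (since $\xi$ is increasing). Birkhoff's theorem then gives $-\frac1n\log m^i_x((f^{-n}\xi)(x))\to H(\xi|f\xi)=h_i$. To compare with Bowen balls, I will use $(f^{-n}\xi)(x)\subseteq V^i(x,n,\epsilon)$ whenever $f^k x$ has $\xi(f^kx)$ of $d^i$-diameter less than $\epsilon$. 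This can be arranged by constructing $\xi$ with small elements, a standard Ledrappier--Strelcyn construction, as in \cite{Part1}. To handle arbitrary $\xi$, I will use the independence of $h_i$ from $\xi$ together with a comparison lemma: for two subordinate partitions, $m^i_x$ restricted to $\xi(x)\cap\xi'(x)$ are proportional. Hence $\overline h_i(x,\epsilon,\xi)$ does not depend on the choice of $\xi$.

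For the lower bound, I will mimic the Mañé/Ledrappier--Young argument. Choose a finite partition $\mathcal P$ with small diameter and $m(\partial_\rho\mathcal P)\leq C\rho$, so that $\eta=\xi\vee\bigvee_{k\geq0}f^{k}\mathcal P^{+}$-type refinements satisfy the following: $V^i(x,n,\epsilon)\cap\xi(x)$ is contained in the $\xi\vee\bigvee_{k=0}^{n-1}f^{-k}\mathcal P$ element of $x$ for most times, by a Borel--Cantelli argument using uniform contraction of $f^{-1}$ on $W^i$-leaves. Then $\uline h_i(x,\epsilon,\xi)\geq h_m(f,\mathcal P\,|\,\xi\text{-leaves})$. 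The right-hand side tends to $H(\xi|f\xi)$ as $\mathrm{diam}\,\mathcal P\to0$, because $\bigvee_{n\geq 0} f^{-n}\xi$ is the point partition. Where I need a Lebesgue-type conditional Shannon--McMillan--Breiman theorem along leaves, I will use the version already established in \cite{Part1} for the $u$-case, which transfers verbatim to $W^i$.

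Finally, for $h_u=h^u_m(f)$, I will observe that $W^u=W^i$ with $i=u$ coincides with the Pesin unstable manifold. Then a partition subordinate to $W^u$ in the above sense is subordinate in the sense of \cite{Part1}, so the two conditional entropies $H(\xi|f\xi)$ agree by definition and the independence of choice.

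I expect the main obstacle to be the lower-bound step. In $C^1$ there is no bounded distortion along $W^i$, so controlling how Bowen balls intersect $\partial\mathcal P$ must rely only on the exponential contraction of $f^{-1}$ on $W^i$ from domination and on tempered Pesin-block estimates. I will first try to prove that $\sum_n m\bigl(d(f^nx,\partial\mathcal P)<\epsilon\mathrm{e}^{-n(\lambda_i-\delta)}\bigr)<\infty$ and apply it to backward iterates. This is the only place where non-uniformity may cause trouble.
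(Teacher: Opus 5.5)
\textbf{Upper bound.} The inclusion $(f^{-n}\xi)(x)\subseteq V^i(x,n,\epsilon)$ for all large $n$ is not justified. Since $f^{k}\bigl((f^{-n}\xi)(x)\bigr)=f^{k-n}\xi(f^nx)\subseteq\xi(f^kx)$, you need these sets to have $d^i$-diameter $<\epsilon$ for \emph{every} $0\le k<n$.

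The Ledrappier--Strelcyn partition does not provide this, however small you take the discs $D_\alpha$. Only atoms through points of $S$ lie inside a disc $D_\alpha$. If $fz\notin S$, then $\xi(fz)=f(\xi(z))\setminus S$, so between visits to $S$ the atoms are pushed forward and grow. An atom whose last backward visit to $S$ was $J$ steps ago is a piece of $f^J(D_\alpha)$. There is no uniform bound on $\mathrm{diam}_{d^i}\xi$, and in general a typical orbit meets atoms of diameter $\ge\epsilon$ with positive frequency. Your standing assumptions that $W^i$-leaves have uniform size and that $f^{-1}$ contracts them uniformly are also unavailable: both are controlled only through the tempered function $A$.

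The missing idea is to compare with Bowen balls only at returns to the block $S'=S\cap\Lambda_{A_0}$. Your argument can be repaired as follows.
\begin{itemize}
\item If $f^nx\in S'$, then $\xi(f^nx)$ lies in a local disc at a point with $A\le A_0$.
\item The local argument (\Cref{local arg here}) then gives $f^{k-n}\xi(f^nx)\subseteq B^i(f^kx,\epsilon)$ for all $0\le k<n$, provided $S$ is built from discs $W^i_{\cdot,\delta}$ with $\delta\ll\epsilon/C_f^N$. This is legitimate, since the $\limsup$ does not depend on $\xi$.
\item For general $n$, use $V^i(x,n,\epsilon)\supseteq(f^{-t}\xi)(x)$, where $t\ge n$ is the next return time, together with $t/n\to1$ (Birkhoff).
\end{itemize}
The paper argues differently. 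Mañé's lemma gives a finite-entropy $\tP$ with $\tP(y)\subseteq B(y,\psi(y))$, where on $S'$ the radius $\psi$ is a small constant times $C_f^{-2\max\{n_+,n_-\}}$. Crude $C_f$-growth between consecutive returns then keeps orbits $\epsilon'$-close, so $\tP^n_0(x)\cap\xi_i(x)\subseteq V^i(x,n,\epsilon')$ for $n>n_+(x)$ (\Cref{Lemma 9.3.3}). The leafwise Shannon--McMillan--Breiman lemma (\Cref{SMB type thm}) then gives $\oh_i\le h_i$.

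\textbf{Lower bound.} Your sketch here is harder than necessary and not sound as stated. For $k$ within bounded distance of $n$, the set $f^kV^i(x,n,\epsilon)$ has size comparable to $\epsilon$ and can straddle $\partial\mathcal P$. A Borel--Cantelli bound with radii $\epsilon\e^{-j(\lambda_i-\delta)}$ along backward orbits does not control those times, and the base point $f^nx$ moves with $n$. The claimed convergence of ``$h_m(f,\mathcal P\,|\,\xi)$'' to $H(\xi|f\xi)$ as $\mathrm{diam}\,\mathcal P\to0$ is also unjustified.

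The information identity you reserved for the upper bound proves the lower bound directly, with the inclusion reversed. Let $A_\delta=\{y:B^i(y,\delta)\subseteq\xi(y)\}$; its measure tends to $1$ as $\delta\to0$. If $f^jx\in A_\delta$ with $j<n$, then $f^jV^i(x,n,\delta)\subseteq\xi(f^jx)$, that is, $V^i(x,n,\delta)\subseteq(f^{-j}\xi)(x)$. The $m^i_x$-measure of this atom is $\exp\bigl(-\sum_{k<j}g(f^kx)\bigr)$, where $g=-\log m^i_{\cdot}\bigl((f^{-1}\xi)(\cdot)\bigr)\ge0$. Birkhoff then gives $\uline h_i(x,\delta,\xi)\ge\int_{A_\delta}g\,\td m\to h_i$, which is the paper's \Cref{lemma 15}.

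So the real difficulty is in the upper bound, not the lower bound as you expected. Your identification $h_u=h^u_m(f)$ is fine.
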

Hence, the two definitions of partial entropy, by subordinate partitions and by Bowen balls, coincide as $h_i$.

We now turn to the definition of measure dimensions. Again fix $1\leq i\leq u$. Let $B^i(x,\epsilon)$ denote the $d^i$-ball in $W^i(x)$ centered at $x$ with radius $\epsilon$, and let $\xi$ be a measurable partition subordinate to $W^i$ with conditional measures $\{m^i_x\}$ of the measure $m$. For $x\in \Gamma_{\mathrm{reg}}\cap \mathrm{supp}m$, we define
\begin{equation*}
    \udelta_i(x,\xi)=\liminf_{\epsilon\rightarrow 0}\frac{\log m^i_x(B^i(x,\epsilon))}{\log\epsilon}
\end{equation*}
and
\begin{equation*}
    \odelta_i(x,\xi)=\limsup_{\epsilon\rightarrow 0}\frac{\log m^i_x(B^i(x,\epsilon))}{\log\epsilon}.
\end{equation*}
It is easy to verify, as in \cite{LEDRAPPIER_YOUNG_B}, that $\udelta_i(x,\xi)$ and $\odelta_i(x,\xi)$ are constant along the  orbits and are defined independently of $\xi$. Since $m$ is ergodic, we have for $m$-a.e. $x\in M$, $\udelta_i(x,\xi)=\uline\delta_i$, and $\odelta_i(x,\xi)=\odelta_i$. We call $\udelta_i$ the lower dimension of $m$ along $W^i$ and $\odelta_i$ the upper dimension of $m$ along $W^i$.

Now, let us present our first main theorem: the transverse entropy formula.
\begin{theorem}[Transverse entropy formula]\label{transverse entropy formula}
    Let $f\in \Diff(M)$ preserve an ergodic Borel probability $m$, and let
    $2\leq i\leq u$. Assume that the splitting
    $E^{(i-1)}\oplus E^{i}\oplus E^{\widehat{(i)}}$ is dominated on the support
    of $m$ and that $\dim E^{i}=1$. Then the $i$-th transverse dimension of $m$
    is exact, in the sense that
    \begin{equation*}
        \udelta_i-\udelta_{i-1}
        =
        \odelta_i-\odelta_{i-1}.
    \end{equation*}
    Denote this common value by
    \begin{equation*}
        \gamma_i
        :=
        \udelta_i-\udelta_{i-1}
        =
        \odelta_i-\odelta_{i-1}
        \leq 1,
    \end{equation*}
    then we have
    \begin{equation*}
        h_i-h_{i-1}=\lambda_i\gamma_i.
    \end{equation*}

\begin{remark}
    We do not prove here that the dimensions along $W^i$ and $W^{i-1}$, which are $\delta_i$ and $\delta_{i-1}$,
    are separately exact; it remains unknown at this level of generality. Nevertheless, we do prove that in the
    present setting, the $i$-th transverse dimension is
    exact, which is the quantity denoted by $\gamma_i$.
\end{remark}
\end{theorem}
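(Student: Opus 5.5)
The approach follows Ledrappier--Young's treatment of transverse dimension \cite[Sections 10--11]{LEDRAPPIER_YOUNG_B}. The Lipschitz holonomies along $W^{i-1}$ inside $W^i$ are unavailable in the $C^1$ setting, so I replace them by the Hölder continuous fake $W^{i-1}$-holonomies given by the domination $E^{(i-1)}\oplus E^i\oplus E^{\widehat{(i)}}$. The point to exploit is that $\dim E^i=1$: the quotient of $W^i(x)$ by $W^{i-1}$-leaves is locally one-dimensional, so transverse distances are recorded by a single real coordinate, which grows like $e^{n\lambda_i}$ up to subexponential errors.

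\textbf{Setup.} Fix partitions $\xi_i$ subordinate to $W^i$ and $\xi_{i-1}$ subordinate to $W^{i-1}$ with $\xi_i\prec\xi_{i-1}$, as in \cite{LEDRAPPIER_YOUNG_B}. Let $\{m^i_x\}$ and $\{m^{i-1}_x\}$ be the conditional measures. On $\xi_i(x)$ define the transverse measure $\nu_x$ as the projection of $m^i_x$ to the quotient $\xi_i(x)/\xi_{i-1}$. Using fake foliations, identify this quotient locally with an interval through $x$ in the $E^i$ direction.

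\textbf{Step 1: transverse entropy via Bowen balls.} Using \Cref{proposition 1} for both $i$ and $i-1$, I would show that for $m$-a.e.\ $x$,
\[
-\tfrac1n\log \frac{m^i_x V^i(x,n,\epsilon)}{m^{i-1}_x V^{i-1}(x,n,\epsilon)}\to h_i-h_{i-1}
\]
as $n\to\infty$ and then $\epsilon\to0$. The ratio is, up to controlled errors, the $\nu_x$-measure of a transverse Bowen set. The domination and $\dim E^i=1$ let me compare this transverse Bowen set with the transverse ball of radius $\epsilon e^{-n(\lambda_i\pm\eta)}$ on a Pesin-type set of large measure, since one-dimensionality makes the growth rate of $E^i$ uniform up to tempered factors. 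This gives
\[
\lim_{r\to0}\frac{\log\nu_x(B(x,r))}{\log r}=\frac{h_i-h_{i-1}}{\lambda_i},
\]
so the transverse dimension is exact. Since the transverse space is one-dimensional, its value is at most $1$.

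\textbf{Step 2: relate transverse dimension to $\delta_i-\delta_{i-1}$.} Following Ledrappier--Young's product-structure argument, I would show
\[
m^i_x(B^i(x,r))\approx \nu_x(B(x,r))\cdot m^{i-1}_y(B^{i-1}(y,r))
\]
for typical $y$ near $x$. This yields $\udelta_i=\gamma_i+\udelta_{i-1}$ and $\odelta_i=\gamma_i+\odelta_{i-1}$, even though $\delta_i$ and $\delta_{i-1}$ need not be exact individually. The lower bound on $m^i_x(B^i(x,r))$ comes from Fubini: integrate $m^{i-1}$ of leaf balls over the transverse ball. The upper bound requires that most $W^{i-1}$-leaves through $B^i(x,r)$ have leaf-ball measure comparable to the one at $x$. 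As in \cite{LEDRAPPIER_YOUNG_B}, I would obtain this by a density/Borel--Cantelli argument over dyadic scales, using the conditional measures of $m^i_x$ on $\xi_{i-1}$.

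\textbf{Main obstacle.} The hard step is controlling distortion of the fake holonomies: they are only Hölder, so the $r$-ball in a nearby leaf could map to a ball of radius $r^\theta$, which would destroy dimension comparisons. I would avoid holonomies at scale $r$ entirely. Instead, I would iterate backwards by $n\approx\log(1/r)/\lambda_i$ steps to unit scale, compare leaves there, and push forward. The key point is that $f^n$ restricted to $W^{i-1}$ and in the transverse direction has rates separated by domination, so the errors are only $e^{n\eta}=r^{-\eta/\lambda_i}$. These factors vanish in the $\log/\log r$ limits as $\eta\to0$.
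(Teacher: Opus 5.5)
\textbf{There is a genuine gap: both of your product-structure claims fail for the same reason.}

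In Step~2, the ``Fubini'' lower bound $m^i_x(B^i(x,r))\gtrsim\nu_x(B(x,r))\,r^{\delta_{i-1}+\eta}$ needs lower bounds on $m^{i-1}_y\bigl(B^i(x,r)\cap W^{i-1}(y)\bigr)$ for most nearby leaves. These slices are balls centred where the leaves cross a transversal through $x$. That position is dictated by $x$, not by $m^{i-1}_y$, and no density-point or Borel--Cantelli argument controls leaf measures at such non-typical centres.

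Measure-theoretically, the inequality ``total $\le$ conditional $+$ transverse'' is simply false. Push Lebesgue measure forward under $s\mapsto(s,\phi(s))$ with $\phi(0.a_1a_2a_3\ldots)=0.a_2a_4a_6\ldots$ (binary digits). The result has Dirac conditionals on vertical lines and a one-dimensional projection, yet local dimension $3/2$ everywhere. Soft arguments give only the reverse inequality, which is exactly what \Cref{upper transverse dimension lemma} and \Cref{lower transverse dimension lemma} provide. So the upper bound on $m^i_x(B^i(x,r))$ is the soft side (as you say, via Borel--Cantelli), and the lower bound is the one that needs dynamics.

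Step~1 has the same flaw. We have $m^i_xV^i(x,n,\epsilon)=\int m^{i-1}_y\bigl(V^i(x,n,\epsilon)\cap W^{i-1}(y)\bigr)\,d\nu_x(y)$. Identifying the Bowen-ball ratio with the $\nu_x$-measure of a transverse Bowen set requires these slice measures to be comparable to $m^{i-1}_xV^{i-1}(x,n,\epsilon)$ on $\nu_x$-most leaves. But \Cref{proposition 1} controls Bowen balls only at typical centres, and non-uniformly in $n$. So neither the exact transverse dimension of $\nu_x$ nor the two-sided product formula is actually obtained.

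\textbf{The missing idea is that exactness comes from sandwiching,} namely $\lambda_i\min\{\udelta_i-\udelta_{i-1},\odelta_i-\odelta_{i-1}\}\ge h_i-h_{i-1}\ge\lambda_i\max\{\udelta_i-\udelta_{i-1},\odelta_i-\odelta_{i-1}\}$. The two halves are proved by different one-sided mechanisms.

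The right-hand inequality (the one your Fubini step was supposed to supply) is the counting argument of \Cref{section 10}, which uses no holonomy.
Atoms of $\mathcal P^n_0$ meeting the good part of $L=B^{i-1}(x,e^{-n(\lambda_i-3\epsilon)})$ have $m^{i-1}$-mass at most $e^{-n(h_{i-1}-\epsilon)}$, so there are at least $\tfrac12 m^{i-1}_x(L)e^{n(h_{i-1}-\epsilon)}$ of them. The corresponding atoms of $\mathcal P^n_0\vee\xi_i$ have $m^i$-mass at least $e^{-n(h_i+\epsilon)}$, and by the local argument they lie in $B^i(x,2e^{-n(\lambda_i-3\epsilon)})$. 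The dynamical atoms carry the transverse mass, so only the leaf of the typical point $x$ is ever used.

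The left-hand inequality uses the increasing partitions $\eta_i=\xi_i\vee\mathcal P^+\prec\eta_{i-1}$, with the quotient compatibility $f^{-1}(\eta_{i-1}(y))=\eta_{i-1}(f^{-1}y)\cap f^{-1}(\eta_i(x))$. This gives only a lower bound on the liminf transverse dimension of $\eta_i/\eta_{i-1}$ (\Cref{lem 33}), which is then transferred to the $\delta$'s by the one-sided lemmas above.

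\textbf{On your main obstacle:} the paper does not avoid holonomies. The straightening maps are $\alpha$-bi-H\"older with $\alpha=1-7\epsilon/(\lambda_{i-1}-\lambda_i)$, and the resulting factors $\alpha^{\pm1}$, $\alpha^3$ disappear as $\epsilon\to0$. Your ``rates separated by domination'' heuristic is essentially why $\alpha\to1$. But the rescaling picture fails as stated. You must iterate forward, not backward, and then an $r$-ball in $W^{i-1}$ is stretched to size at least $r^{1-\lambda_{i-1}/\lambda_i}\gg1$. So no comparison of $m^{i-1}$-measures of $r$-balls takes place at unit scale.
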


If we further assume that all the intermediate Lyapunov exponents have multiplicity one, then we obtain the Ledrappier-Young entropy formula for partial entropy as follows. It is worth noting that we do not assume that the top Lyapunov exponent is simple.
\begin{theorem}[Partial entropy formula]\label{partial entropy formula}
    Let $f\in \Diff(M)$ preserve an ergodic Borel probability $m$. Fix $2\leq i\leq u$. If the splitting $E^{1}\oplus E^{2}\oplus\cdots\oplus E^{i}\oplus E^{\widehat {(i)}}$ is dominated on the support of $m$ and $\dim E^{2}=\cdots=\dim E^{i}=1$, then for any $1\leq k\leq i$,
\begin{equation*}
    \udelta_k=\odelta_k  :=  \delta_k\leq\sum_{j=1}^{k}\dim E^{j}=\dim E^{1}+k-1.
\end{equation*}
Denote $\gamma_1:=\delta_1$ and $\gamma_k:=\delta_k-\delta_{k-1}$ for $2\leq k\leq i$. Then
\begin{equation*}
    h_i=\sum_{k=1}^{i}\lambda_k\gamma_k.
\end{equation*}
\end{theorem}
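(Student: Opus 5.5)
Theorem~\ref{partial entropy formula} will follow by induction on $k$ from Theorem~\ref{transverse entropy formula}, once we have a base case at $k=1$. For $2\le k\le i$ the hypotheses of Theorem~\ref{transverse entropy formula} hold at level $k$. Indeed, the splitting $E^{(k-1)}\oplus E^{k}\oplus E^{\widehat{(k)}}$ is obtained by grouping bundles of the dominated splitting $E^1\oplus\cdots\oplus E^i\oplus E^{\widehat{(i)}}$. A coarsening of a dominated splitting obtained by grouping consecutive bundles is still dominated, directly from the definition. Moreover $\dim E^k=1$ by assumption. Theorem~\ref{transverse entropy formula} therefore gives, for each such $k$,
\[
\udelta_k-\udelta_{k-1}=\odelta_k-\odelta_{k-1}=\gamma_k\le 1,
\qquad h_k-h_{k-1}=\lambda_k\gamma_k .
\]

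The main step is the base case $k=1$. We must show $\udelta_1=\odelta_1=:\delta_1\le \dim E^1$ and $h_1=\lambda_1\delta_1$, even though $E^1$ may have dimension larger than one. Here the key observation is that along $W^1$ the dynamics is asymptotically conformal. All vectors in $E^1$ share the single exponent $\lambda_1$, and since the splitting $E^1\oplus E^{\widehat{(1)}}$ is dominated, $W^1$ is a $C^1$ leaf tangent to $E^1$.

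I would argue with Bowen balls, using Proposition~\ref{proposition 1}. Fix $\epsilon>0$ and use Lusin/Pesin-type sets of large measure on which the following holds for all $n$ beyond a uniform time, by Oseledets and the tempered behaviour along the orbit:
\[
e^{n(\lambda_1-\epsilon)}\le \|Df^n|_{E^1}\|\ \text{and}\ m\bigl(Df^n|_{E^1}\bigr)\le e^{n(\lambda_1+\epsilon)},
\]
where $m(\cdot)$ denotes the conorm. Using uniform continuity of $Df$ on the compact manifold, together with continuity of $E^1$ and domination to control tangent directions of $W^1$ near the orbit, I would prove the inclusions
\[
B^1\bigl(x,\rho e^{-n(\lambda_1+2\epsilon)}\bigr)\subseteq V^1(x,n,\rho)\subseteq B^1\bigl(x,\rho e^{-n(\lambda_1-2\epsilon)}\bigr)
\]
for small $\rho$ and for $x$ returning to the good set. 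This is the standard $C^1$ argument used in \cite{Part1} for the unstable case. I expect this step to be the main obstacle, because without $C^{1+\alpha}$ there is no Pesin chart control and the non-uniformity has to be handled through returns and a Borel--Cantelli argument.

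Given the inclusions, Proposition~\ref{proposition 1} yields
\[
\frac{h_1}{\lambda_1+2\epsilon}\le \udelta_1\le\odelta_1\le \frac{h_1}{\lambda_1-2\epsilon}
\]
after interpolating $\epsilon$-radii between consecutive $n$. Letting $\epsilon\to0$ gives $\udelta_1=\odelta_1=h_1/\lambda_1$. The bound $\delta_1\le\dim E^1$ holds because the conditional measures live on $\dim E^1$-dimensional $C^1$ leaves, so ball volumes bound lower local dimension. Finally I would run the induction: $\udelta_k=\udelta_1+\sum_{j\le k}\gamma_j=\odelta_k$. This gives exactness, the bound $\delta_k\le \dim E^1+k-1$ (using $\gamma_j\le 1$), and after summing the telescoping entropy identities, $h_i=\sum_{k=1}^i\lambda_k\gamma_k$.
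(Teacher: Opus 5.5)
Your proposal has the same architecture as the paper's proof. You settle $k=1$ by comparing $W^1$-balls with Bowen-type sets via the local argument (\Cref{local arg here}), and then telescope using \Cref{transverse entropy formula}. Your coarsening remark is exactly what justifies applying that theorem at each level $2\le k\le i$. Your lower bound $h_1\le\lambda_1\udelta_1$ is also the paper's: the inclusion $B^1(x,c\,\e^{-n(\lambda_1+3\epsilon)}\delta A(x)^{-1})\subseteq V^1(x,n,\delta)$ together with $\uh_1\ge h_1$.

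The difference is in the upper bound on $\odelta_1$. You want a reverse inclusion $V^1(x,n,\rho)\subseteq B^1(x,\rho\,\e^{-n(\lambda_1-2\epsilon)})$, followed by $\oh_1\le h_1$ from \Cref{proposition 1}. The paper never compares Bowen balls with balls in this direction. Instead it reuses the finite-entropy partition $\tR$ of \Cref{claim 19}, whose atoms at returns to $S'$ have size $\psi$. This gives $\xi_1(x)\cap\tR^n_0(x)\subseteq B^1(x,\e^{-n(\lambda_1-3\epsilon)})$ for $n>n_3(x)$, and the rate $h_1$ is then read off from the SMB-type lemma as in \Cref{eq 33}. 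So the paper builds the return-time control into the partition and needs no new geometric lemma. Your symmetric sandwich is cleaner to state, but it leaves unproved exactly the step you flagged.

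That step can be done, but not by the naive argument. Pulling back from time $n-1$ fails: the controlled local $W^1$-disk at $f^{n-1}x$ has size $\delta A(f^{n-1}x)^{-1}$, which can be far smaller than the fixed $\rho$. Then $d^1(f^{n-1}x,f^{n-1}y)<\rho$ gives no backward contraction. Here is a fix that needs no Borel--Cantelli argument:
\begin{itemize}
\item Let $k(n)<n$ be the last time with $f^{k(n)}x\in\Lambda_{A_0}$.
\item For $\rho\ll\delta_0A_0^{-1}$, the intrinsic ball $B^1(f^{k(n)}x,\rho)$ lies in the local disk at $f^{k(n)}x$.
\item The local argument then gives $d^1(x,y)\le CA_0\rho\,\e^{-k(n)(\lambda_1-3\epsilon)}$.
\item By Birkhoff, $k(n)/n\to1$ for $m$-a.e. $x$, so you lose only $o(n)$ in the exponent.
\end{itemize}

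Two smaller corrections. Your displayed derivative bounds point the wrong way. The two inclusions need $\|Df^n|_{E^1}\|\le \e^{n(\lambda_1+\epsilon)}$ and a conorm lower bound $\ge\e^{n(\lambda_1-\epsilon)}$, with tempered constants; this is precisely where the single exponent on $E^1$ enters. Also, the telescoping sum $\udelta_k=\udelta_1+\sum\gamma_j$ should start at $j=2$.
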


If we make the stronger assumption that all non-negative Lyapunov exponents, except the largest, have multiplicity at most one, then we obtain an entropy formula for the metric entropy $h_m(f)$. In the following theorem, $E^{u,i}$ denotes the bundle of the positive Lyapunov exponent $\{\lambda_i\}$, $E^c$ denotes the bundle of the zero Lyapunov exponent, and $E^s$ denotes the bundle of the negative Lyapunov exponents.

\begin{theorem}[Metric entropy formula]\label{metric entropy formula}
    Let $f\in \Diff(M)$ preserve an ergodic Borel probability $m$. If the splitting $E^{u,1}\oplus E^{u,2}\oplus\cdots\oplus E^{u,u}\oplus E^{c}\oplus E^s$ is dominated on the support of $m$, 
    $\dim E^{u,2}=\cdots=\dim E^{u,u}=1$,
    and $\dim E^c\leq 1$, then 
    \begin{equation*}
    h_m(f)=\sum_{k=1}^{u}\lambda_k\cdot\gamma_k.
    \end{equation*}
\end{theorem}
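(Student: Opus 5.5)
The statement to prove is the metric entropy formula (\Cref{metric entropy formula}). I would reduce it to \Cref{partial entropy formula} together with the identity $h_m(f)=h_u$. Since $E^{u,1}\oplus\cdots\oplus E^{u,u}\oplus(E^c\oplus E^s)$ is dominated and $\dim E^{u,2}=\cdots=\dim E^{u,u}=1$, \Cref{partial entropy formula} with $i=u$ and $E^{\widehat{(u)}}=E^c\oplus E^s$ gives $h_u=\sum_{k=1}^u\lambda_k\gamma_k$. By \Cref{proposition 1}, $h_u=h^u_m(f)$, the $u$-entropy of Part 1. So everything reduces to showing $h_m(f)=h^u_m(f)$ under the additional hypotheses that $E^u\oplus E^c\oplus E^s$ is dominated and $\dim E^c\le1$.

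\textbf{Case $\dim E^c=0$.} Here $m$ is hyperbolic and $E^u\oplus E^s$ is dominated. The equality $h_m(f)=h^u_m(f)$ should be exactly the $C^1$ result established in Part 1 (\cite{Part1}), where the $u$-entropy was compared with metric entropy in the dominated setting. I would quote it directly. The underlying idea: the entropy of $f$ equals $h_m(f,\xi)$ for a partition $\xi$ subordinate to the fast unstable foliation. This follows because along $E^{cs}$ there is no exponential separation in forward time, so the entropy carried by the $E^{cs}$-direction vanishes. That vanishing is a $C^1$ Ruelle-type inequality applied to $f$ restricted to the $cs$-plaques of a fake foliation, and it gives the inequality $h_m(f)\le h^u_m(f)$; the reverse inequality is trivial.

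\textbf{Case $\dim E^c=1$.} The same argument must control the entropy generated transversally to $W^u$. I would apply a Ruelle-type bound to the quotient dynamics on $E^c\oplus E^s$: the conditional entropy $h_m(f)-h^u_m(f)$ is bounded by the sum of the positive exponents on $E^c\oplus E^s$, which is $0$. Domination of $E^u\oplus E^c\oplus E^s$ provides the locally invariant fake $cs$-foliations, and \cite{WangwangZhu2018} supplies the needed $C^1$ Margulis--Ruelle inequality for partial entropy. Concretely, $h_m(f)\le h^u_m(f)+\sum_{\lambda_j(E^{cs})>0}\lambda_j=h^u_m(f)$. If Part 1 already states $h_m(f)=h^u_m(f)$ whenever $E^u\oplus E^{cs}$ is dominated, with no condition on $\dim E^c$, this step is immediate. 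The hypothesis $\dim E^c\le 1$ is presumably there to make the center behaviour tame, for example to handle the center direction through one-dimensional subexponential estimates.

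\textbf{Main obstacle.} I expect the hard step to be the inequality $h_m(f)\le h^u_m(f)$ in the $C^1$ setting when a zero exponent is present. Without $C^{1+\alpha}$ regularity there are no Pesin stable or center manifolds. The plan is to work with fake foliations tangent to cone fields near $E^c\oplus E^s$ and to count Bowen balls: the number of $(n,\epsilon)$-Bowen balls needed to cover a $cs$-plaque grows subexponentially. In the one-dimensional center direction this follows since $\frac1n\log\|Df^n|_{E^c}\|\to0$ and lengths of curves tangent to the center cone grow subexponentially. Combining this covering count with the unstable conditional measures, via a Brin--Katok type argument as in \Cref{proposition 1}, yields the claim.
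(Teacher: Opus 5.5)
Your reduction is the paper's entire proof. Apply \Cref{partial entropy formula} with $i=u$ and $E^{\widehat{(u)}}=E^c\oplus E^s$ (coarsening a dominated splitting keeps it dominated) to get $h_u=\sum_{k=1}^u\lambda_k\gamma_k$, and note $h_u=h^u_m(f)$. Then \cite[Theorem 1.1]{Part1} gives $h^u_m(f)=h_m(f)$ whenever $E^u\oplus E^c\oplus E^s$ is dominated on $\mathrm{supp}\,m$ and $\dim E^c\le 1$. That theorem covers $\dim E^c=0$ and $\dim E^c=1$ at once, so no case split is needed. The hypothesis $\dim E^c\le 1$ appears in the statement precisely because it is the hypothesis of that theorem; it is not a result valid for arbitrary $\dim E^c$, and the partial entropy formula does not use it.

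The problem is the self-contained argument you give for $\dim E^c=1$. The $C^1$ Margulis--Ruelle inequality of \cite{WangwangZhu2018} bounds a partial entropy along the unstable hierarchy from above by the exponents of the corresponding unstable bundle. It gives no upper bound for $h_m(f)$ in terms of $h^u_m(f)$. There is also no quotient system on $E^c\oplus E^s$ whose entropy is $h_m(f)-h^u_m(f)$ and to which a Ruelle inequality could be applied. So your displayed inequality $h_m(f)\le h^u_m(f)+\sum_{\lambda_j(E^{cs})>0}\lambda_j$ is unsupported. A quick sanity check: that argument never uses $\dim E^c\le 1$, so if it were valid it would give the metric entropy formula for centers of any dimension.

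Your covering sketch in the last paragraph is also wrong as stated. Because $E^u\oplus E^c\oplus E^s$ is dominated, a cone around $E^c$ is not $Df$-invariant in either time direction; forward iterates are pushed toward $E^u$, so curves tangent to it can grow exponentially. Controlling the center requires the fake foliations and the local argument. In any case this is the content of Part 1 and need not be redone: drop the case analysis and cite \cite[Theorem 1.1]{Part1}.
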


\subsection{Measure dimension}\label{section 7.3}
The Ledrappier-Young entropy formula is a fundamental tool in the dimension theory of invariant measures. In the previous section, we showed that, in our setting, the standard $C^{1+\alpha}$ regularity assumption can be weakened to $C^1$. This allows one to extend a number of dimension-theoretic consequences of the Ledrappier-Young formula to the $C^1$ category in our setting. In this section, we present several such applications.

First, consider ergodic measures with zero transverse dimension. 
Based on Ledrappier-Young's theory, Ledrappier and Xie proved in \cite{Ledrappier_Xie} that zero transverse dimension is equivalent to the conditional measures along higher-dimensional unstable foliations being supported on a lower-dimensional strong unstable leaf.
Using the transverse entropy formula \Cref{transverse entropy formula}, we obtain a result analogous to theirs under $C^1$ regularity. 
Since the remainder of the proof is analogous and only requires $C^1$ smoothness, we omit the details.
\begin{theorem}
    Let $f\in\Diff(M)$ preserve an ergodic Borel probability $m$. For $0\leq i<j\leq u$, if the splitting $E^{(i)}\oplus E^{i+1}\oplus\cdots\oplus E^{j}\oplus E^{\widehat{(j)}}$ is dominated on the support of $m$ and $\dim E^{i+1}=\cdots =\dim E^{j}=1$, where we define $E^{(0)}  = \{0\}$, $\odelta_0=\udelta_0=0$, and $W^0(x)=\{x\}$, then the following are equivalent:
    \begin{itemize}
    \item[(1)] $\odelta_i=\odelta_j$. 
    \item[(2)] $\udelta_i=\udelta_j$.
    \item[(3)] For any measurable partition $\tA$ subordinate to $W^j$ and $m$-a.e. $x\in M$, the conditional measures of $m$ at $x$ with respect to $\tA$ are supported on a single leaf $W^i(x)$.
    \end{itemize}
\end{theorem}

We now provide some estimates of the measure dimension for ergodic measures of $C^1$ diffeomorphisms with dominated Oseledets splittings that satisfy several one-dimensional assumptions; we call such measures simply dominated.
\begin{definition}[Simply dominated]
    Let $f\in\Diff(M)$ preserve an ergodic Borel probability $m$. Then $m$ is called simply dominated if the Oseledets splitting is dominated on the support of $m$, and the Lyapunov exponents, except for the largest positive and the smallest negative, have multiplicity one. 
\end{definition}

In particular, for a simply dominated measure, one has either $\dim E^c=0$ or $\dim E^c=1$.  If $m$ is simply dominated, then the metric entropy formula \Cref{metric entropy formula} holds for both $f$ and $f^{-1}$. We can define the unstable dimension of $f$ as $d^u  :=  \sum_{i=1}^{u}\gamma_i$. Similarly, applying the metric entropy formula \Cref{metric entropy formula} to $f^{-1}$, we can define the transverse dimensions for $f^{-1}$ as $\bar\gamma_1,\cdots,\bar\gamma_s$, and the stable dimension of $f$ as $d^s  :=  \sum_{i=1}^{s}\bar\gamma_i$. 

Let $\xi^u$ be a measurable partition subordinate to $W^u$ for $f$, and let $\xi^s$ be a measurable partition subordinate to $W^s$ for $f^{-1}$. 
Let $\{m^u_x\}$ and $\{m^s_x\}$ be the conditional measures of $m$ associated with $\xi^u$ and $\xi^s$, respectively. Then, by the definition of the partial measure dimensions and \Cref{metric entropy formula}, it follows that
\begin{equation*}
    \lim_{\rho\rightarrow0}\frac{\log m^u_x(B^u(x,\rho))}{\log\rho}= d^u,
\end{equation*}
\begin{equation*}
       \lim_{\rho\rightarrow0}\frac{\log m^s_x(B^s(x,\rho))}{\log\rho}= d^s.
\end{equation*}

For any Borel measure $m$, the upper and lower pointwise measure dimension functions are defined as follows:
\[
\overline d(x):=\limsup_{\epsilon\rightarrow0}\frac{\log m(B(x,\epsilon))}{\log\epsilon}
\]
\[
\ud(x):=\liminf_{\epsilon\rightarrow0}\frac{\log m(B(x,\epsilon))}{\log\epsilon}.
\]
When $m$ is ergodic, they are both $f$-invariant functions and hence are constants almost everywhere. These two constants are called the lower and upper measure dimensions of the ergodic measure $m$.

Ledrappier and Young proved in \cite[Section 12]{LEDRAPPIER_YOUNG_B} that, in the $C^2$ setting, the upper dimension of an invariant measure $m$ is bounded above by $d^u+d^s+\dim E^c$. 
Using the metric entropy formula \Cref{metric entropy formula} in place of the Ledrappier-Young entropy formula, we obtain a result analogous to their theorem under $C^1$ regularity.

\begin{theorem}\label{C1 dimension estimate 1}
    If $m$ is simply dominated, then for $m$-a.e. $x\in M$, 
    \begin{equation*}
        \overline d(x)\leq d^u+d^s+\dim E^c.
    \end{equation*}
\end{theorem}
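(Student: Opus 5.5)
We follow the strategy of Ledrappier--Young \cite[Section 12]{LEDRAPPIER_YOUNG_B}, replacing the absolutely continuous/Lipschitz structure by a direct covering argument with local product structure in a measure sense. Fix a small $\epsilon>0$ and a Pesin-type set $\Lambda$ of measure close to $1$. On $\Lambda$ the quantities are uniform: the unstable and stable conditional measures satisfy $m^u_y(B^u(y,\rho))\geq \rho^{d^u+\epsilon}$ and $m^s_y(B^s(y,\rho))\geq\rho^{d^s+\epsilon}$ for all $\rho<\rho_0$ and $y\in\Lambda$, by the two displayed limits following \Cref{metric entropy formula}. Because the splitting $E^u\oplus E^c\oplus E^s$ is dominated on $\mathrm{supp}\,m$, the local leaves $W^u_{loc}$, $W^s_{loc}$ (and locally invariant fake center manifolds of dimension $\dim E^c\leq 1$) have uniform size and uniformly transverse tangent spaces. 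Thus a small neighborhood of a point of $\Lambda$ carries a topological local product chart $(u,c,s)$, with H\"older (not Lipschitz) holonomies.

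The goal is the lower bound $m(B(x,C\rho))\gtrsim \rho^{d^u+d^s+\dim E^c+O(\epsilon)}$ for $m$-a.e. $x$ and small $\rho$. Following Ledrappier--Young, we avoid holonomies altogether and work with a partition-counting estimate, which gives the bound on average and then for a.e. point. Choose a finite partition $\mathcal P$ with small boundary and consider the sets $\Lambda\cap B(x,\rho)$. We decompose $m$ on a neighbourhood $U$ via $\xi^u$, and cover the set of unstable plaques meeting $B(x,\rho)$. The key inequality to establish is the following: the $m$-measure of $B(x,C\rho)$ is at least the $m^s$-mass of a stable ball of radius $\rho$, times the $m^u$-mass of unstable balls of radius $\rho$, times a factor $\rho^{\dim E^c}$ accounting for the center direction. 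This center factor can be handled crudely via a Lebesgue/volume-type counting bound: one uses that at most $\rho^{-\dim E^c}$ center translates are needed to cover, so one loses at most that many pieces of size $\rho$.

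Concretely, we argue by contradiction in the style of a density/Borel--Cantelli argument. Let $A_\rho=\{x\in\Lambda: m(B(x,\rho))<\rho^{d^u+d^s+\dim E^c+5\epsilon}\}$. Cover $A_\rho$ by balls of radius $\rho$ with bounded multiplicity (Besicovitch). Then we bound the number of such balls meeting a fixed stable-unstable product box of size $r_0$. The $W^u$-direction contributes at most $\rho^{-d^u-\epsilon}$ balls, using the lower bound for $m^u$ together with disjointness. The $W^s$-direction similarly contributes at most $\rho^{-d^s-\epsilon}$, using $m^s$ and the fact that the stable partition for $f^{-1}$ gives a second disintegration of the same $m$. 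The center contributes at most $C\rho^{-\dim E^c}$, since the center is one-dimensional or absent. Summing gives $m(A_\rho)\leq \rho^{\epsilon}$, so along $\rho=2^{-n}$ Borel--Cantelli yields $\overline d(x)\leq d^u+d^s+\dim E^c+5\epsilon$ a.e., and we then let $\epsilon\to0$.

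The main obstacle is the stable-direction count. Combining the $\xi^u$ and $\xi^s$ disintegrations requires some transversal regularity linking $m^s$ to the quotient measure of $m$ on unstable plaques, and in the $C^2$ case this is exactly where Lipschitz holonomy enters. Here we would instead use the entropy formulas directly. By \Cref{metric entropy formula} for $f$ and $f^{-1}$, $h_m(f)$ is captured both by $d^u$-weighted exponents and by $d^s$-weighted exponents. Hence Bowen balls $B(x,n,r)$ have measure approximately $e^{-nh}$, using Brin--Katok, which holds for $C^1$. Meanwhile their unstable slices have $m^u$-measure approximately $e^{-nh}$ by \Cref{proposition 1}. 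This forces the quotient of $m$ on unstable plaques, restricted to a stable-size box, to behave at least as well as $m^s$, up to $e^{n\epsilon}$. Converting Bowen balls to metric balls of radius $\rho\approx e^{-n\lambda}$ in each direction needs the one-dimensionality of intermediate bundles; this is the delicate step, and it is where I expect most of the work.
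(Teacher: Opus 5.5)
Your proposal has a genuine gap, at the step you yourself flag. You announce a Ledrappier--Young partition count, but what you actually run in your third paragraph is a ball-covering count, and only its unstable factor is justified. The lower bound on $m^u_y$ bounds the number of $\rho$-separated good points \emph{inside one unstable plaque}. The stable factor $\rho^{-d^s-\epsilon}$ would need two further facts. First, the unstable plaques carrying the mass of $A_\rho$, resolved at transverse scale $\rho$, would have to be controlled by $m^s$ on a single stable leaf. Second, the good part of one plaque would have to be carried onto good parts of neighbouring plaques. Neither is available. The plaques cross $W^s_{\mathrm{loc}}(x)$ at points that need not lie in $\Lambda$ or in the support of $m^s_x$, and the conditionals $m^u$ on different plaques are not transported to one another by holonomy. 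That $\xi^s$ gives a second disintegration of $m$ says nothing about the quotient measure of $m$ on unstable plaques.

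The inequality $m(B(x,C\rho))\gtrsim m^s\cdot m^u\cdot\rho^{\dim E^c}$ you aim for is a local-product-structure statement. The paper obtains something of this kind only for hyperbolic $m$, in its last section, through the Barreira--Pesin--Schmeling machinery, and it is not needed for an upper bound on $\overline d$. Your fallback does not close the gap either. Forward Bowen balls have size about $r$ in the stable direction, so Brin--Katok together with Proposition~\ref{proposition 1} says nothing about the transverse measure at small stable scales.

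The missing idea, which is how the paper proceeds, is to count atoms of the two-sided partition $\tP^{na}_{-nb}=\tP^0_{-nb}\vee\tP^{na}_0$, with $a=(\lambda^u-3\epsilon)^{-1}$ and $b=(-\lambda^s-3\epsilon)^{-1}$. The entropy $h$ serves as the common currency, so the product structure is purely combinatorial and no holonomy ever appears. What is needed from the entropy theory is not the weighted-exponent formula but $h_u=h_s=h$ (Part 1). The argument then runs as follows.
\begin{enumerate}
\item Since $m^s_x(\tP^0_{-nb})\le \e^{-nb(h-2\epsilon)}$, at least $\frac12\e^{-n(d^s+\epsilon)}\e^{nb(h-2\epsilon)}$ cylinders $p$ of $\tP^0_{-nb}$ meet $B^s(x,\e^{-n})$.
\item Each such $p$ contains an atom $\eta(y)$ of $\eta=\xi^u\vee\tP^0_{-\infty}$. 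Its conditional measure satisfies $m_y(\tP^{na}_0)\le\e^{-na(h-2\epsilon)}$, and by Lemma~\ref{upper dimension lemma} it still has upper dimension at most $d^u$. This gives at least $\frac12\e^{-n(d^u+\epsilon)}\e^{na(h-2\epsilon)}$ forward cylinders inside $p$.
\item Distinct pairs (stable cylinder, forward cylinder) give distinct atoms of $\tP^{na}_{-nb}$, so these counts multiply.
\item On the other side, Shannon--McMillan--Breiman gives $m(\tP^{na}_{-nb})\ge\e^{-n(a+b)(h+2\epsilon)}$.
\item The one genuinely $C^1$ geometric input is Lemma~\ref{lemma 35}. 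Cutting each atom into $O(\e^{n})$ slices along the one-dimensional center coordinate produces sets of diameter at most $2\e^{-n}$, at a mass cost of at most $\e^{-n(1+\epsilon)}$ on a large set. Its proof uses fake foliations and the local argument, and needs only the domination $E^u\oplus E^c\oplus E^s$ and the exponents $\lambda^u,\lambda^s$.
\item Compare the upper and lower bounds on the number of slices meeting $B(x,2\e^{-n})$, along those $n$ with $m(B(x,4\e^{-n}))\le \e^{-n(\overline d-\epsilon)}$. The $(a+b)h$ terms cancel and leave $\overline d\le d^u+d^s+1$.
\end{enumerate}
In particular, one-dimensionality is not used to convert Bowen balls into metric balls direction by direction, as you anticipate. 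It enters through $h_u=h_s=h$ and the $O(\e^{n})$ center count, and for the intermediate unstable and stable bundles only to make $d^u$ and $d^s$ exact.
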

It is worth noting that the proof of \cite[Section 12.2]{LEDRAPPIER_YOUNG_B} relies on Lyapunov charts in the $C^2$ setting, which are not available in our $C^1$ setting. In this paper, we give a proof of \Cref{C1 dimension estimate 1} in \Cref{Appendix} following their ideas, using fake foliation charts instead.

We now recall the notion of exact dimensionality. An ergodic measure is said to
be exact dimensional if its upper and lower pointwise dimensions coincide
almost everywhere. Exact dimensional measures enjoy several useful consequences:
for instance, the Hausdorff dimension, the lower and upper measure-theoretic
box dimensions, and the lower and upper information dimensions all coincide
with the common pointwise dimension, denoted by $\dim m$. See
\cite{Young_1982} for details.

An ergodic measure is called \textit{hyperbolic} if it has no zero Lyapunov exponent. An important problem in dimension theory is whether the measure dimensions of hyperbolic measures are exact. This is often referred to as the Eckmann-Ruelle conjecture \cite{EckmannRuelle_85}.

Based on Ledrappier-Young's theory, Barreira, Pesin, and Schmeling proved in \cite{Barreira_Pesin_Schmeling} that, under $C^{1+\alpha}$ regularity, if $m$ is hyperbolic, then $m$ is exact dimensional and $\dim m=d^u+d^s$.

Using the metric entropy formula in \Cref{metric entropy formula}, we obtain \Cref{C^1 dimension estimate 2}, a $C^1$ analogue of their main theorem. Although our overall strategy is inspired by the framework of \cite{Barreira_Pesin_Schmeling}, the proof here relies essentially on fake foliations and the geometry of dominated splittings. This approach highlights the effectiveness of geometric tools arising from fake foliation charts and provides additional techniques for studying systems with dominated splittings. We refer the reader to \Cref{section erconjecture} for a more detailed and general discussion. To the best of the author's knowledge, this result is new even for simply dominated measures of $C^1$ Anosov diffeomorphisms.

\begin{theorem}\label{C^1 dimension estimate 2}
    If $m$ is simply dominated and hyperbolic, then for $m$-a.e. $x\in M$, 
    \begin{equation*}
        \overline d(x)=\underline d(x)=d^u+d^s.
    \end{equation*}
\end{theorem}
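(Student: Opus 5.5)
The plan follows the Barreira--Pesin--Schmeling strategy, with Lyapunov charts replaced by fake foliation charts adapted to the dominated splitting $E^u\oplus E^s$; hyperbolicity means $E^c=\{0\}$. The upper bound $\overline d(x)\leq d^u+d^s$ is exactly \Cref{C1 dimension estimate 1} with $\dim E^c=0$. It therefore remains to prove the lower bound $\underline d(x)\geq d^u+d^s-C\epsilon$ for $m$-a.e.\ $x$ and every small $\epsilon>0$.

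\textbf{Step 1: uniform estimates on a Pesin block.} Apply \Cref{metric entropy formula} to $f$ and to $f^{-1}$, and use \Cref{proposition 1} (Bowen balls versus conditional measures) together with the exactness of $d^u$ and $d^s$ stated above. By Egorov's theorem, fix a set $\Lambda$ with $m(\Lambda)>1-\epsilon$ on which the following hold uniformly for $n\geq N$ and $\rho\leq\rho_0$:
\[
m^u_y(B^u(y,\rho))\leq \rho^{d^u-\epsilon},\qquad m^s_y(B^s(y,\rho))\leq\rho^{d^s-\epsilon},
\]
\[
m^u_yV^u(y,n,\delta)\leq e^{-n(h-\epsilon)},\qquad m^s_yV^s(y,n,\delta)\leq e^{-n(h-\epsilon)}.
\]
Here $h=h_m(f)$, and $V^s$ denotes the Bowen ball for $f^{-1}$. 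On $\Lambda$, the dominated splitting also provides fake unstable and fake stable foliations in a uniform neighbourhood. The leaves of these foliations are uniformly transverse, and their holonomies are H\"older with exponent close to $1$ at small scales, once the fake foliations are built with sufficiently small local Lipschitz constants.

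\textbf{Step 2: the core BPS argument.} Let $\xi$ be the partition whose atoms are the sets $\xi^u(y)\cap\xi^s(y)$, and let $\mathcal P$ be a finite partition with small boundary. Define
\[
Q_n(x)=\mathcal P_{-n}^{n}(x),
\]
the two-sided dynamical cylinder of length $n$. Choose $n\approx \log(1/r)/\lambda$ scales adapted to the ball $B(x,r)$; because the exponents within $E^u$ differ, one chooses separate times $n_u$ and $n_s$ for the two directions. The key estimate compares three quantities:
\[
m(Q_n(x)),\qquad m^u_x\bigl(\xi^u(x)\cap Q_n(x)\bigr),\qquad m^s_x\bigl(\xi^s(x)\cap Q_n(x)\bigr).
\]
The target is the almost-product inequality
\[
m(Q_n(x))\lesssim m^u(\cdot)\,m^s(\cdot)\,e^{n\epsilon}.
\]
This follows from the entropy identity $h=h^u=h^s$, supplied by \Cref{metric entropy formula} for $f$ and $f^{-1}$, together with a Borel--Cantelli counting argument over nested cylinders.

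Next, cover $B(x,r)\cap\Lambda$ by the sets
\[
\bigcup_{y}\ \text{fake-}s\text{-neighbourhood of } B^u(y,r'),
\]
and project them along the fake stable holonomy onto the unstable leaf through $x$. The ball $B(x,r)$ then meets at most polynomially-in-$e^{n\epsilon}$ many cylinders. Each of these carries unstable-conditional mass at most $r^{d^u-\epsilon}$ and stable-conditional mass at most $r^{d^s-\epsilon}$, measured in the product-like coordinates.

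\textbf{Main obstacle.} In the $C^{1+\alpha}$ setting, the product structure of $B(x,r)$ is controlled by Lipschitz holonomies. Here only H\"older fake holonomies are available, so the projection of $B^u(y,r)$ onto the leaf through $x$ may land in a ball of radius $r^{\theta}$ rather than $r$. This would cost a factor $\theta$ in the dimension. I plan to overcome it by working at the scale $e^{-n\lambda}$ with fake foliations built at the appropriate iterate, as in Part~1. At that scale, the fake foliation through iterates is close to $C^1$ in the sense of a $(1+\epsilon)$-bi-Lipschitz change of coordinates between $n$-step charts. This should yield
\[
\theta\geq 1-C\epsilon,
\]
which suffices after letting $\epsilon\to0$. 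Controlling this distortion uniformly over the $n$ iterates, using only domination and $C^1$ continuity of $Df$ on a neighbourhood of $\mathrm{supp}\,m$, is the step I expect to be hardest.
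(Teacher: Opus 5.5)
The gap is in Step~2, which carries the entire lower bound.

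\textbf{What is missing.} Your ``almost-product inequality'' for the cylinder $\tP^{n}_{-n}(x)$ is essentially automatic: it is just the Shannon--McMillan--Breiman bounds combined with $h^u=h^s=h$ (compare \Cref{lemma 6second}, part A). By itself it says nothing about $m(B(x,r))$. What has to be proved is that the number of rectangles of $\tP^{an}_{an}$ meeting the good part of $B(x,\e^{-n})$ is at most $\e^{Can\epsilon}$ times the product of the numbers of rectangles seen along $\xi^s(x)$ and along $\xi^u(x)$.

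Your substitute, that $B(x,r)$ meets only $\e^{O(n\epsilon)}$ cylinders, is unjustified and false in general. For the cylinders to fit inside $B(x,r)$, the time must be governed by the slowest unstable rate $\lambda^u$. The cylinders are then much thinner than $r$ in the faster directions. Already along $\xi^u(x)$ their number is about $r^{-(h/\lambda^u-d^u)}$, which is exponentially large as soon as $h>\lambda^u d^u$. Choosing separate times for the stable and unstable sides does nothing about the spread of exponents inside $E^u$.

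Even with a count in hand, you cannot bound $m$ by multiplying an unstable conditional mass by a stable conditional mass. The quantity to control is the quotient measure of the unstable plaques crossing $B(x,r)$. Comparing it with $m^s_x$ is precisely the Eckmann--Ruelle difficulty, since $m$ is not known to have local product structure. (Also, the partition with atoms $\xi^u(y)\cap\xi^s(y)$ is the point partition and plays no role.)

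\textbf{The obstacle is misidentified.} Barreira--Pesin--Schmeling do not use Lipschitz stable holonomies even for $C^{1+\alpha}$ maps, where these holonomies are only H\"older in general. The paper uses no holonomy regularity in this part either. The near-Lipschitz estimate of \Cref{sec 2.2} concerns the strong unstable holonomy inside $W^i$, whose transversal is one-dimensional with a single exponent. For stable holonomies between unstable transversals, the standard estimates give a H\"older exponent governed by ratios of the exponents, not by $\epsilon$. In any case, even a bi-Lipschitz holonomy would not relate the transverse measure to $m^s$, so your proposed fix would not close the gap.

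\textbf{What is needed.} The missing ingredient is the BPS combinatorics, carried out in \Cref{section erconjecture} through \Cref{main thmsecond}. Its components are:
\begin{itemize}
\item the rectangle families $\tR(n)$ and $\tF(n)$;
\item the sets $\tQ_n(x)$, squeezed between $B(x,\e^{-n})\cap\hat\Gamma$ and $B(x,C_2^2\e^{-n})$ (\Cref{lemma 10second}), whose graph conditions together with the tube lemma \Cref{tube lem} give a uniform local product structure;
\item the injection of rectangles of $\tQ_n(y)$ into pairs of stable and unstable rectangles (\Cref{lem3second}), giving $N\lesssim\hat N^s\hat N^u$;
\item the key Frostman--Besicovitch \Cref{lem5second}, which uses exactness of $d^s,d^u$ and the global count \Cref{lem4second} to show $\hat N^\flat\leq N^\flat\e^{7an\epsilon}$.
\end{itemize}
Only then does \Cref{lemma upperboundsecond} give $m(B(y,\e^{-n-\Delta}))\lesssim m^s_y(B^s(y,C_2^2\e^{-n}))\,m^u_y(B^u(y,C_2^2\e^{-n}))\,\e^{Can\epsilon}$, i.e.\ $\underline d\geq d^u+d^s$.

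\textbf{A minor point on the upper bound.} Citing \Cref{C1 dimension estimate 1} for $\overline d\leq d^u+d^s$ is acceptable in substance. However, the paper proves its hyperbolic case through this very section. You should instead note that the argument of \Cref{Appendix} runs with $\Xi_n=\tP^{na}_{-nb}$ and no center cutting.
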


Finally, we introduce the concept of full-dimensional measures and Pesin entropy formulas.
We say that the $u$-Pesin entropy formula holds for $m$ if  $$h_m(f)=\sum_{\lambda_i>0}\lambda_i\cdot\dim E^i,$$ where $E^i$ is the sub-bundle of the exponent $\{\lambda_i\}$. Similarly, we say that the $s$-Pesin entropy formula holds for $m$ if $$h_m(f)=\sum_{\lambda_i<0}-\lambda_i\cdot\dim E^i.$$ We say that $m$ has full dimension if, for $m$-a.e.  $x\in M$, $$\overline d(x)=\dim M.$$

Using \Cref{C1 dimension estimate 1}, we directly obtain a relationship between the full dimension and Pesin entropy formulas, as in \cite[Corollary G]{LEDRAPPIER_YOUNG_B}.
\begin{proposition}\label{pesin full dimension 1}
    If $m$ is simply dominated and has full dimension, then both the $u$-Pesin entropy formula and the $s$-Pesin entropy formula hold for $m$. In this case, we have $$\int \log|\mathrm{Det} Df|\,\td m=0.$$
\end{proposition}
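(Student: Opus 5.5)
The plan is to follow Ledrappier--Young's Corollary G argument, with \Cref{C1 dimension estimate 1} in place of their $C^2$ dimension bound. Since $m$ has full dimension, $\overline d(x)=\dim M$ for $m$-a.e. $x$. \Cref{C1 dimension estimate 1} then gives
\begin{equation*}
    \dim M\leq d^u+d^s+\dim E^c .
\end{equation*}
We also have the trivial upper bounds on each transverse dimension. By \Cref{partial entropy formula} (or \Cref{transverse entropy formula} together with its statement $\gamma_k\leq 1$), and since $\delta_1\leq\dim E^1$, we get $\gamma_1\leq \dim E^{u,1}$ and $\gamma_k\leq 1=\dim E^{u,k}$ for $2\leq k\leq u$. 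Hence $d^u\leq \dim E^u$. Applying the same reasoning to $f^{-1}$, which is legitimate because $m$ is simply dominated, gives $d^s\leq \dim E^s$. Since $\dim E^u+\dim E^c+\dim E^s=\dim M$, the inequality above forces equality everywhere: $d^u=\dim E^u$ and $d^s=\dim E^s$. In particular, $\gamma_k=\dim E^{u,k}$ for every $k$, and $\bar\gamma_k=\dim E^{s,k}$ for every $k$.

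Next I would plug these values into \Cref{metric entropy formula}. This yields $h_m(f)=\sum_{k=1}^u\lambda_k\gamma_k=\sum_{\lambda_i>0}\lambda_i\dim E^i$, which is the $u$-Pesin formula. Applying \Cref{metric entropy formula} to $f^{-1}$, whose positive exponents are $-\lambda_i$ for $\lambda_i<0$ and which has the same entropy $h_m(f^{-1})=h_m(f)$, yields the $s$-Pesin formula. One point needs checking here: the hypotheses of \Cref{metric entropy formula} hold for $f^{-1}$. The Oseledets splitting is dominated, the intermediate multiplicities are one, and $\dim E^c\leq 1$. I expect this check to be routine.

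Finally, subtracting the two formulas gives $\sum_i\lambda_i\dim E^i=0$. It remains to compute this sum. The zero exponent contributes nothing, and by the Oseledets theorem together with Birkhoff's ergodic theorem,
\begin{equation*}
    \sum_i\lambda_i\dim E^i=\lim_{n\to\infty}\frac1n\log|\mathrm{Jac}(Df^n_x)|=\int\log|\mathrm{Det}\,Df|\,\td m .
\end{equation*}
This gives $\int\log|\mathrm{Det}\,Df|\,\td m=0$.

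The only nontrivial step is the sharp inequality $\dim M\leq d^u+d^s+\dim E^c$, i.e.\ \Cref{C1 dimension estimate 1}, which we take as given. Apart from that, the main care needed is in justifying the bounds $\gamma_k\leq\dim E^{u,k}$, especially $\delta_1\leq \dim E^1$. The latter holds because $W^1$ is a $C^1$ manifold of dimension $\dim E^1$, so the local dimension of any measure on it is at most $\dim E^1$.
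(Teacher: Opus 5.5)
Your proposal is correct and follows the route the paper takes. The paper derives the proposition directly from \Cref{C1 dimension estimate 1}, as in Ledrappier--Young's Corollary G. It combines $\overline d\le d^u+d^s+\dim E^c$ with $\gamma_1\le \dim E^{u,1}$, $\gamma_k\le 1$ and their analogues for $f^{-1}$ to force $\gamma_k=\dim E^{u,k}$ and $\bar\gamma_k=\dim E^{s,k}$, then inserts these into \Cref{metric entropy formula} for $f$ and $f^{-1}$ and subtracts, exactly as you do.

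Your justification of $\delta_1\le\dim E^1$ is the content of the lemma from Part 1 that the paper cites, and the hypothesis checks for $f^{-1}$ that you flag are indeed routine.
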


If we further assume that $m$ is hyperbolic, then by \Cref{C^1 dimension estimate 2}, we also obtain the converse part of the previous proposition.
\begin{proposition}\label{Pesin full dimension 2}
     If $m$ is simply dominated and hyperbolic, then $m$ has full dimension if and only if both the $u$-Pesin entropy formula and the $s$-Pesin entropy formula hold for $m$.
\end{proposition}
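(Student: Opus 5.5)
The forward implication needs no new argument. If $m$ has full dimension, then \Cref{pesin full dimension 1} already shows that both the $u$-Pesin and the $s$-Pesin entropy formulas hold, and this uses only that $m$ is simply dominated. So the work is in the converse: assuming both Pesin formulas, we show $\overline d(x)=\dim M$ for $m$-a.e.\ $x$. The plan is to deduce $d^u=\dim E^u$ and $d^s=\dim E^s$ from a termwise comparison inside the metric entropy formula, and then to invoke \Cref{C^1 dimension estimate 2}.

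\emph{Unstable side.} Since $m$ is simply dominated, \Cref{metric entropy formula} applies to $f$ and gives $h_m(f)=\sum_{k=1}^u\lambda_k\gamma_k$. Combined with the $u$-Pesin formula this yields
\begin{equation*}
    \sum_{k=1}^{u}\lambda_k\bigl(\dim E^{k}-\gamma_k\bigr)=0 .
\end{equation*}
Each bracket is non-negative. For $k=1$, \Cref{partial entropy formula} gives $\gamma_1=\delta_1\le\dim E^1$. For $2\le k\le u$, \Cref{transverse entropy formula} gives $\gamma_k\le 1=\dim E^k$, using that these exponents have multiplicity one. Since every $\lambda_k>0$, each bracket must vanish, so $\gamma_k=\dim E^k$ for all $k$ and $d^u=\sum_k\gamma_k=\dim E^u$.

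\emph{Stable side.} The same argument applied to $f^{-1}$ uses the $s$-Pesin formula and \Cref{metric entropy formula} for $f^{-1}$, which holds because $m$ is simply dominated. It gives $\bar\gamma_k=\dim E^{s,k}$ for each $k$, hence $d^s=\dim E^s$.

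\emph{Conclusion.} Because $m$ is hyperbolic, $\dim E^c=0$, so $\dim E^u+\dim E^s=\dim M$. By \Cref{C^1 dimension estimate 2}, for $m$-a.e.\ $x$,
\begin{equation*}
    \overline d(x)=\underline d(x)=d^u+d^s=\dim M ,
\end{equation*}
so $m$ has full dimension.

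The only step needing care is the termwise bound $\gamma_k\le\dim E^k$, in particular for $k=1$, where the multiplicity may exceed one; this is supplied by the estimate $\delta_1\le\dim E^1$ in \Cref{partial entropy formula}. Everything else is bookkeeping once \Cref{C^1 dimension estimate 2} is available.
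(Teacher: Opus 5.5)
Your proof is correct and follows the route the paper indicates. The paper states this proposition with only a one-line justification: the forward direction is \Cref{pesin full dimension 1}, and the converse comes from \Cref{C^1 dimension estimate 2}. Your termwise comparison (using $\gamma_1=\delta_1\le\dim E^1$, $\gamma_k\le 1$ for $k\ge 2$, and $\lambda_k>0$ to force $d^u=\dim E^u$, then the analogous statement for $f^{-1}$) is exactly the implicit step that gives $d^u+d^s=\dim M$ before applying \Cref{C^1 dimension estimate 2}.
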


\subsection{Non-ergodic case} As in \cite[Section 1.5]{Part1}, all the theorems in \Cref{section 7.2} and \Cref{section 7.3} have non-ergodic versions. Obtaining the non-ergodic version from the ergodic results uses a standard reduction argument based on the Ergodic Decomposition Theorem. While other authors have presented similar arguments in the $C^2$ setting, extending them to our context presents no essential difficulties. However, the formal statements and proofs of these theorems can be quite lengthy. Therefore, we refer the reader to the following references for a detailed treatment of the non-ergodic versions of the aforementioned results:

\begin{itemize}
    \item For the entropy formulas (\Cref{transverse entropy formula,partial entropy formula,metric entropy formula}), see \cite[Section 7.5]{LEDRAPPIER_YOUNG_B}.
    \item For the dimension estimates (\Cref{C1 dimension estimate 1,C^1 dimension estimate 2}), see \cite[Section 12]{LEDRAPPIER_YOUNG_B} and \cite[Section 7]{Barreira_Pesin_Schmeling}.
    \item For the full dimensional measures results (\Cref{pesin full dimension 1,Pesin full dimension 2}), see \cite[Section 7.6]{LEDRAPPIER_YOUNG_B}.
\end{itemize}
\bigskip

\textbf{Strategy of proof:} 
We prove \Cref{transverse entropy formula} following essentially the ideas in \cite{LEDRAPPIER_YOUNG_B}, using fake foliation charts and the local argument (see \Cref{local arg here}) to replace the classical Lyapunov charts, as the author did in \cite{Part1}; see \Cref{section 8}.

Although the strong unstable holonomy induced by the strong unstable fake foliation is no longer Lipschitz for $C^1$ diffeomorphisms, the fake $(i-1)$-th unstable holonomy is $\alpha$-bi-Hölder inside the $i$-th unstable leaves, and $\alpha$ can be chosen arbitrarily close to $1$; see \Cref{uniform Hölder}. Thus, although the $\alpha$-Hölder straightening maps and holonomies distort dimension and roughly produce an error factor of $\alpha^m$, this error term disappears when we finally let $\alpha\rightarrow 1$; see \Cref{upper estimate}.

Compared with \cite{LEDRAPPIER_YOUNG_B}, in our paper, we have to carry out the arguments for both the lower partial dimension $\udelta_i$ and the upper partial dimension $\odelta_i$. The reason is that, knowing only that one intermediate Lyapunov exponent $\lambda_i$ is simple, we cannot obtain the exactness of $\delta_{i-1}$, since the induction in \cite{LEDRAPPIER_YOUNG_B} is not available in this case. However, with some additional effort, we can show that similar properties hold for both upper and lower partial dimensions; see \Cref{section 10} and \Cref{section 11}.

\bigskip

\textbf{Organization:} \Cref{section 8} contains some geometric preparations, including the introduction of fake foliation charts, straightening maps, and their Hölder properties. \Cref{section partial entropy} introduces partial entropy for ergodic measures of $C^1$ diffeomorphisms with dominated splittings, in parallel with \cite{LEDRAPPIER_YOUNG_B}. In \Cref{section 10} and \Cref{section 11}, we complete the proof of \Cref{transverse entropy formula}, \Cref{partial entropy formula}, and \Cref{metric entropy formula}, which are the main results of this paper. In \Cref{Appendix}, we prove an application concerning the upper bound for the upper measure dimension. In \Cref{section erconjecture}, we state and prove a $C^1$ plus domination version of the Eckmann-Ruelle conjecture.

\section{Geometry preparation}\label{section 8}
In this section, which is parallel to \cite[Section 8]{LEDRAPPIER_YOUNG_B}, we present some geometric preparations for the proof of the entropy formulas. The main difference is that we use the fake foliation charts induced by the dominated splitting instead of the classical Pesin charts, since the Pesin charts fail in our setting. 

In \Cref{section fake foliation} we construct the fake foliation parallel to \cite[Section 2.1]{Part1}. In \Cref{sec 2.2} we establish the uniform Hölder property of the fake strong unstable holonomy and consider the geometric properties of unstable manifolds, which are analogous to \cite[Section 2.3]{Part1}. In \Cref{straightening map section} we construct the straightening maps parallel to \cite[Section 8.4]{LEDRAPPIER_YOUNG_B}.

\subsection{Fake foliations}\label{section fake foliation}
In this section, we consider the fake foliations induced by the dominated splitting as in \cite[Section 2.1]{Part1}. We use the construction of \cite[Section 2.1]{Part1}, with the roles of
\(u,c,s\) replaced by \((i-1),i,\widehat{(i)}\), respectively.

Recall the assumption that $f\in \Diff(M)$ preserves an ergodic Borel probability $m$ and the splitting $E^{(i-1)}\oplus E^{i}\oplus E^{\widehat{(i)}}$ is dominated on the support of $m$. By the continuity of dominated splitting, there exists a constant $\theta>0$   such that   for any $x\in \mathrm{supp}m$ we have $|\mathrm{sin}(E^*(x),E^{**}(x))|>\theta,*\neq**\in\{(i-1),i,\widehat{(i)}\}$. This means that the angles between different bundles are uniformly bounded away from zero. Note that $E^{(i)}=E^{(i-1)}\oplus E^{i}$, $E^{{\widehat{(i-1)}}}=E^{i}\oplus E^{\widehat{(i)}}$, and $E^{{\widehat{i}}}=E^{(i-1)}\oplus E^{\widehat{(i)}}$.

We have the following lemma, which is analogous to \cite[Lemma 2.2]{Part1}:
\begin{lemma}\label{ABC ergodic lemma}
Given any $\epsilon>0$, there exist a full measure set $\Gamma'\subseteq \mathrm{supp}m\cap \Gamma_{\mathrm{reg}}$, a measurable function $A:\Gamma'\rightarrow[1,\infty)$, and a constant $N\in\mathbb N$  such that   $A(f^{\pm}x)\leq \mathrm{e}^{\epsilon}A(x)$ and for any $k\in \mathbb N $ we have:
\begin{equation*}
    \prod_{\ell=0}^{k-1}\parallel Df^N|_{E^{\widehat{(i)}}(f^{\ell N}(x))}\parallel \leq A(x)\cdot \mathrm{e}^{kN(\lambda_{i+1}+\epsilon)} 
\end{equation*}
\begin{equation*}
    \prod_{\ell=0}^{k-1}\parallel Df^N|_{E^{i}(f^{\ell N}(x))}\parallel \leq A(x)\cdot \mathrm{e}^{kN(\lambda_i+\epsilon)} 
\end{equation*}
\begin{equation*}
    \prod_{\ell=0}^{k-1}\parallel Df^{-N}|_{E^{i}(f^{-\ell N}(x))}\parallel \leq A(x)\cdot \mathrm{e}^{kN(-\lambda_i+\epsilon)} 
\end{equation*}
\begin{equation*}
    \prod_{\ell=0}^{k-1}\parallel Df^{-N}|_{E^{(i-1)}(f^{-\ell N}(x))}\parallel \leq A(x)\cdot \mathrm{e}^{kN(-\lambda_{i-1}+\epsilon)} 
\end{equation*}
\end{lemma}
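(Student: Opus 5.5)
The plan is to derive the lemma from the Oseledets theorem applied to the cocycle $Df^N$ restricted to each of the continuous invariant bundles $E^{\widehat{(i)}}$, $E^{i}$, $E^{(i-1)}$, combined with a tempering-kernel construction for $A$, as in \cite[Lemma 2.2]{Part1}. First fix $N$. Since the splitting $E^{(i-1)}\oplus E^{i}\oplus E^{\widehat{(i)}}$ is dominated on $\mathrm{supp}\,m$, the restrictions of $Df$ to each bundle are continuous linear cocycles over $(f,m)$. Their top Lyapunov exponents (for $E^{\widehat{(i)}}$ under $f$, and for $E^{i}$ under $f$) are $\lambda_{i+1}$ and $\lambda_i$, and for $E^{i}$ and $E^{(i-1)}$ under $f^{-1}$ they are $-\lambda_i$ and $-\lambda_{i-1}$, since these are the extreme exponents of the respective Oseledets blocks. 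By the subadditive ergodic theorem (Kingman), for $m$-a.e. $x$,
\[
\lim_{n\to\infty}\frac1n\log\|Df^n|_{E^{\widehat{(i)}}(x)}\|=\lambda_{i+1},
\]
and analogously for the other three. Ergodicity of $m$ under $f$ gives constant limits. (If $i=u$ and $\lambda_{i+1}\leq 0$, the same statement holds with whatever $\lambda_{i+1}$ denotes.) Then choose $N$ large, depending on $\epsilon$, so that
\[
\frac1N\int\log\|Df^N|_{E^{\widehat{(i)}}}\|\,\td m\leq \lambda_{i+1}+\epsilon/2,
\]
and similarly for the other three bundles, using that $\frac1n\int\log\|Df^n|_{E}\|\,\td m$ converges to the top exponent.

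Next, set $\phi(x)=\log\|Df^N|_{E^{\widehat{(i)}}(x)}\|-N(\lambda_{i+1}+\epsilon)$. Then $\int\phi\,\td m\leq -N\epsilon/2<0$. We want $\sum_{\ell=0}^{k-1}\phi(f^{\ell N}x)\leq \log A(x)$ for all $k$. Here $m$ need not be $f^N$-ergodic, but it decomposes into at most $N$ ergodic components for $f^N$, permuted cyclically by $f$. On each component the average of $\phi$ is the same, by $f$-invariance of the exponents; indeed the asymptotic growth along $f^N$ of the product of norms is at most $N\lambda_{i+1}$ on each component, because the product dominates nothing worse than $\|Df^{kN}\|$ only from above. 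To repair this, I would choose $N$ so that the per-step norm average is close on every $f^N$-component. Concretely, apply Kingman to the product cocycle $\prod\|Df^N|\|$ itself: $\lim_k\frac1k\sum_{\ell<k}\phi(f^{\ell N}x)$ equals the conditional expectation of $\phi$ on the $f^N$-invariant $\sigma$-algebra. Its $f$-average over the $N'$ components ($N'\mid N$) equals $\int\phi\,\td m$. The main obstacle is guaranteeing this is negative on each component, not just on average. To handle it, replace the one-step choice by the observation that the components are images of each other under $f$, and that $\phi\circ f-\phi$ is bounded by $2\max\log\|Df^{\pm1}\|$. For $N$ large the component averages differ by $O(1)$, which is at most $N\epsilon/4$, so each is negative.

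With a negative Birkhoff limit almost everywhere, $S_k(x):=\sum_{\ell<k}\phi(f^{\ell N}x)\to-\infty$, so $B(x):=\sup_{k\geq0}S_k(x)$ is finite a.e. Define the tempered version
\[
A(x)=\sup_{j\in\mathbb Z}\exp\bigl(B(f^jx)-|j|\epsilon\bigr)\cdot C,
\]
where $B$ is taken as the maximum of the four functions built this way, and $C\geq 1$. Finiteness of this supremum requires $\frac1{|j|}B(f^jx)\to0$. This follows from $B\geq 0$ and $B\circ f^N\geq B-\phi$ (with $|\phi|$ bounded) together with the standard fact that a measurable $B$ with $B\circ f^N-B$ bounded below by an integrable function satisfies $B(f^jx)/|j|\to0$ a.e. (Tempering Kernel Lemma). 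By construction $A(f^{\pm1}x)\leq \e^{\epsilon}A(x)$ and $A(x)\geq e^{B(x)}$. This yields all four inequalities on the full-measure invariant set $\Gamma'$ where everything is finite, intersected with $\mathrm{supp}\,m\cap\Gamma_{\mathrm{reg}}$.
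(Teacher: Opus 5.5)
\textbf{Gap in the key step.} The paper gives no proof here beyond deferring to \cite[Lemma 2.2]{Part1}. Your outline is the standard one: fix $N$, show the grid sums $S_k(x)=\sum_{\ell<k}\phi(f^{\ell N}x)$ drift to $-\infty$, then temper $e^{B}$ along the orbit. But it breaks at the point you yourself flag as the main obstacle. You need the $f^N$-Birkhoff limit of $\phi$ to be negative on \emph{every} $f^N$-ergodic component $\mu_0,\dots,\mu_{p-1}$ of $m$; otherwise $B=+\infty$ on a set of positive measure.

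Nothing in your argument gives this. First, the inequality $\|Df^{kN}|_{E(x)}\|\le\prod_{\ell<k}\|Df^N|_{E(f^{\ell N}x)}\|$ runs the other way from what you state. It only shows that each component average $a_j=\int\phi\,\td\mu_j$ is $\geq -N\epsilon$, i.e.\ it bounds $a_j$ from below, not from above. Second, $|\phi\circ f-\phi|\le 2\log\max\|Df^{\pm1}\|$ controls only consecutive components: since $\mu_{j+1}=f_*\mu_j$, you get $|a_{j+1}-a_j|=O(1)$. Across the cycle the spread can be of order $p$, and $p$ is not bounded independently of $N$; if $(f,m)$ has the dyadic odometer as a factor, then $p=N$ for $N=2^n$. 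A spread of order $N\log\max\|Df^{\pm1}\|$ is not absorbed by $N\epsilon/4$. Knowing only that the mean of the $a_j$ is $\le -N\epsilon/2$ does not exclude a component with $a_j>0$. For one-dimensional bundles $\phi$ is an additive Birkhoff sum and the problem disappears, but $E^{(i-1)}$ and $E^{\widehat{(i)}}$ are in general higher-dimensional.

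\textbf{How to repair it.} The missing idea is to bypass the $f^N$-components entirely by comparing with a full $f$-Birkhoff sum. Let $F_n=\log\|Df^n|_{E}\|$ and $L=\log\max\{\|Df\|,\|Df^{-1}\|\}$, so $|F_n|\le nL$.
\begin{enumerate}
\item Choose $N_0$ with $\frac{1}{N_0}\int F_{N_0}\,\td m\le\lambda+\epsilon/4$ simultaneously for all four cocycles, and put $N=KN_0$.
\item For each offset $0\le s<N_0$, apply subadditivity to the decomposition $N=s+(K-1)N_0+(N_0-s)$. Averaging over $s$ gives, for every $y$,
\[
F_N(y)\le 2N_0L+\frac{1}{N_0}\sum_{t=0}^{N-1}F_{N_0}(f^t y).
\]
\item Sum over $y=f^{\ell N}x$, $0\le\ell<k$. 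The right-hand side becomes a Birkhoff sum of $F_{N_0}/N_0$ along $f$ itself, so Birkhoff's theorem for the $f$-ergodic measure $m$ yields, for $m$-a.e.\ $x$ and $K$ large,
\[
\limsup_{k\to\infty}\frac{1}{kN}\sum_{\ell=0}^{k-1}F_N(f^{\ell N}x)\le\lambda+\frac{\epsilon}{4}+\frac{2N_0L}{N}\le\lambda+\frac{\epsilon}{2}.
\]
\end{enumerate}
Hence $S_k\to-\infty$ and $B<\infty$ almost everywhere. Your tempering construction then goes through unchanged. To get $B(f^jx)/|j|\to0$ for all $j$, not just multiples of $N$, apply your tempering fact to each $B\circ f^r$, $0\le r<N$, separately.
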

Fix a constant $C_f\geq 100\max\{|Df^\pm|,\mathrm{e}^{|\lambda_1|+100\epsilon},\mathrm{e}^{|\lambda_r|+100\epsilon}\}$. There exists a constant $\Delta>0$ small enough such that for any $ x\in M$, the exponential map $\exp_x:\{|v|\leq \Delta\}\rightarrow M$ is a diffeomorphism to the image with $|D\exp^{\pm}|\leq 2$. We define $$\tf_x=\exp^{-1}_{f(x)}\circ f\circ \exp_x$$ whenever it makes sense.

Choose a $C^\infty$ bump function $\rho:\mathbb R\rightarrow\mathbb  R$   such that  
\begin{equation*}
	\rho(x)=\left\{
	\begin{aligned}
		1 & , &|x|&<1\\
	    \in[0,1] & , & |x|&\in [1,2]\\
            0 & , & |x|&>2
	\end{aligned}
	\right.
\end{equation*}
For any $r_0<\frac{0.1\Delta}{C_f}$, we can define $\tg_x:T_xM\rightarrow T_{f(x)}M$ for $x\in M$ as
\begin{equation*}
    \tg_x(v)=\left\{
	\begin{aligned}
		&\rho(|v|\cdot r_0^{-1})\tf_x(v)+(1-\rho(|v|\cdot r_0^{-1}))D\tf_x(0)v  , &|v|\leq 2r_0,\\
	   &D\tf_x(0)v   , & |v|>2r_0.\\
	\end{aligned}
	\right.
\end{equation*}
Then, as in \cite[Section 2.1]{Part1}, it can be proved that for any $\epsilon_0>0$, there exists $r_0>0$  such that   $D(\tg_x-D\tf_x(0))\leq \epsilon_0$. Denoting $L(g)$ as the Lipschitz constant of $g$, it implies that $L(\tg_x-D\tf_x(0))\leq \epsilon_0$ and $\tg_x=\tf_x$ when $|v|\leq r_0$. Hence, we can choose an $\epsilon_0'>0$ sufficiently small such that for any $\epsilon_0\leq\epsilon_0'$ and the $\tg_x$, $N$ defined above, 
\begin{equation}\label{local argument 1}
    \mathrm{e}^{-\epsilon N}\leq \frac{|D\tf^{\pm N}_x(0)v|}{|D\tg^{\pm N}_x(y) v|}\leq \mathrm{e}^{\epsilon N}, \qquad \forall x\in M,\ v ,y\in T_xM .   
\end{equation}

Unlike \cite{LEDRAPPIER_YOUNG_B}, in this article, we will not use the Lyapunov norms. Instead, we use the box norm $|\cdot|'$ on each $T_xM,x\in\Gamma'$ with respect to the splitting $T_xM= E^{(i-1)} _x\oplus E_x^i\oplus  E^{\widehat{(i)}}_x$, that is, $|v|'=\max\{|v_{(i-1)}|,|v_i|,|v_{\widehat{(i)}}|\}$, where $v=v_{(i-1)}+v_i+v_{\widehat{(i)}}$ is the decomposition as above and $|\cdot|$ is the norm induced by the Riemannian metric on $M$.

Denote $R^*_x(\rho)$ as the ball centered at $0$ with radius $\rho$ under the box norm in $E^{*}_x$, where $*\in\{(i-1),i,\widehat{(i)},(i),{\widehat{i}},{\widehat{(i-1)}}\}$. Similarly, $R_x(\rho)$ (without any superscript) is the ball centered at $0$ with radius $\rho$ under the box norm in $T_xM$. Since the angles between different bundles are uniformly bounded away from zero, there exists a constant $K>1$   such that   $K^{-1}|\cdot|\leq |\cdot|'\leq K|\cdot|$.

Henceforth, we slightly abuse notation by treating $*\in\{(i-1),i,\widehat{(i)},(i),{\widehat{i}},{\widehat{(i-1)}}\}$ as the dimension of their corresponding bundles (i.e., $* = \dim E^*$). 
By continuity, there exists a constant $C_0>K$ sufficiently large   such that   for any $x$ in the full measure set $\Gamma'$, and any $*$-dimensional linear subspace $V^*(x)$ as the graph of a linear function $G:E^*_x\rightarrow E^{\widehat *}_x$ with slope $\leq \frac{1}{C_0}$, where $*\in\{(i-1),i,\widehat{(i)},(i),{\widehat{i}},{\widehat{(i-1)}}\}$, we have
\begin{equation}\label{local argument 2}
	\mathrm{e}^{-\epsilon N}\leq\frac{\|D\tf_x^{\pm N}(0)|_{V^*(x)}\|}{\|D\tf_x^{\pm N}(0)|_{E^*(x)}\|}\leq \mathrm{e}^{\epsilon N},
\end{equation}
where $N$ is given by Lemma \ref{ABC ergodic lemma}.
We note that $C_0$ is a constant that depends only on $(f,m)$, $N$, and $\epsilon$, but not on $\epsilon_0$, $r_0$, or $\tg_\cdot$.

Now consider the dominated splitting $E^{(i)}\oplus E^{\widehat{(i)}}$.
By the property of dominated splitting, there exist a large constant $C_1>0$ and a constant $\epsilon_0''\leq\epsilon'_0$ such that for any $\epsilon_0\leq\epsilon_0''$ and $r_0$, $\tg_\cdot$ given by $\epsilon_0$ as above, we have for any $x\in \Gamma'$, if 
\begin{center}
    $U$ is the graph of a function $\varphi:E^{(i)}_x\rightarrow E^{\widehat{(i)}}_x$ with slope $\leq\frac{1}{C_1}$, 
\end{center}
then for any $ n\geq 0$,
\begin{center}
    $\tg^n_x(U)\subseteq T_{f^nx}M$ is the graph of a function $\varphi_n:E^{(i)}_{f^nx}\rightarrow E^{\widehat{(i)}}_{f^nx}$ with slope $\leq\frac{1}{C_0}$. 
\end{center}
Similarly, if
\begin{center}
$U$ is the graph of a function $\varphi:E^{\widehat{(i)}}_x\rightarrow E^{(i)}_x$ with slope $\leq\frac{1}{C_1}$, 
\end{center}
then for any $n\leq 0$,
\begin{center}
$\tg^n_x(U)\subseteq T_{f^nx}M$ is the graph of a function $\varphi_n:E^{\widehat{(i)}}_{f^nx}\rightarrow E^{(i)}_{f^nx}$ with slope $\leq\frac{1}{C_0}$.
\end{center} 
See, for instance, \cite{Katok_book} and \cite{HPS}.

The same arguments also hold for the dominated splitting $E^{(i-1)}\oplus E^{{\widehat{(i-1)}}}$.  Keep in mind that the slopes here are all considered under the box norm.

Fix a constant $\gamma_0<\frac{\pi}{10^6\cdot C_0^{100}}$. Following the standard discussions on the graph transformation (see, for instance, \cite{Katok_book,HPS,BW2010,LVY13}), we can choose $\epsilon_0>0$ and consequently $r_0>0$ small enough,   such that   the following lemma holds.

{
	\begin{lemma}[Fake foliations]\label{fake $u$-foliation}
		For each $x\in\Gamma'$ and $*\in\{{\widehat{(i-1)}},(i),\widehat{(i)},(i-1)\}$, there exist unique global fake foliations $\thF^{*}_x$ on $T_xM$ with $C^1$ leaves,   such that   for any $y \in T_xM$:
		\begin{enumerate}
			\item[a)] The unique leaf containing $y$, denoted by $\tW^{*}_x(y)$, is the graph of a $C^1$ function $\varphi:E^{*}_x\rightarrow E^{\hat{*}}_x$ with $|D\varphi|\leq\frac{\gamma_0}{C_1}$. Here when $* = \widehat{(i-1)},(i),\widehat{(i)},(i-1)$, $\hat{*} = (i-1),\widehat{(i)},(i),\widehat{(i-1)}$ respectively. 
			\item[b)] The foliations are invariant under $\tg_{\cdot}$ in the sense that 
			$$
			\tg_x\left(\thF^{*}_x(y)\right) = \thF^{*}_{f(x)}(\tg_x(y)).
			$$
			\item[c)] $\thF^{(i-1)}_x$ subfoliates $\thF^{(i)}_x$, and $\thF^{\widehat{(i)}}_x$ subfoliates $\thF^{\widehat{(i-1)}}_x$.
		\end{enumerate}
	\end{lemma}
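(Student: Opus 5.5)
The statement to prove is \Cref{fake $u$-foliation}. I would construct each fake foliation by the standard graph transform for the globally defined, uniformly hyperbolic-like sequence of maps $\{\tg_{f^n x}\}_{n\in\mathbb Z}$ on the tangent spaces along the orbit of $x$. The key point is that each $\tg_x$ is a $C^1$ map on all of $T_xM$ with $L(\tg_x-D\tf_x(0))\leq\epsilon_0$, while the linear parts $D\tf_x(0)$ preserve the dominated splittings $E^{(i)}\oplus E^{\widehat{(i)}}$ and $E^{(i-1)}\oplus E^{\widehat{(i-1)}}$ with uniform domination after $N$ iterates. Uniformity comes from \eqref{local argument 2} and the cone estimates with constants $C_0, C_1$. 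Since the splitting is continuous and the angles are bounded below, the box norms are uniformly comparable to the Riemannian one. Hence all estimates are uniform in $x$, and no Lyapunov-chart tempering is needed for the existence part.

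For $*=(i)$, I would work with the space of Lipschitz sections whose leaves are graphs $\varphi:E^{(i)}_{f^{-n}x}\to E^{\widehat{(i)}}_{f^{-n}x}$ with $\mathrm{Lip}\leq \gamma_0/C_1$. The construction has four steps.
\begin{enumerate}
\item Invariant cones: because $\epsilon_0$ is small relative to the domination gap of $Df^N$, the cone of slope $\gamma_0/C_1$ is mapped into itself by $\tg^N$. This is the cone statement already recorded before the lemma, applied with the smaller constant $\gamma_0/C_1$.
\item Graph transform contraction: push a graph forward by $\tg^n_{f^{-n}x}$ and show that the sequence of graphs converges in $C^0$. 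The distance in the $E^{\widehat{(i)}}$ direction contracts relative to the $E^{(i)}$ direction by a factor of about $\tfrac12$ every $N$ steps. The limit leaf through $y$ is the set of points $z$ such that $|\tg^{-n}(z)-\tg^{-n}(y)|$ grows at most like the weaker rate in backward time. This characterization gives uniqueness, the foliation property (disjointness and covering of $T_xM$), and invariance b) directly.
\item Regularity: obtain the $C^1$ smoothness of the leaves by the $C^1$ section theorem (fiber contraction on the tangent-plane bundle) in \cite{HPS}. The bound $|D\varphi|\leq\gamma_0/C_1$ follows because the tangent planes lie in the invariant cones.
\item The other three foliations: $*=\widehat{(i)}$ is obtained by the same argument for $\tg^{-1}$. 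The pair $(i-1),\widehat{(i-1)}$ is obtained by using the second dominated splitting.
\end{enumerate}

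The main obstacle is c), the subfoliation property. For $\thF^{(i-1)}\subset\thF^{(i)}$ I would use the dynamical characterization of leaves. A point $z\in\tW^{(i-1)}_x(y)$ satisfies $|\tg^{-n}z-\tg^{-n}y|\lesssim \mathrm{e}^{-n(\text{rate of }E^{(i-1)})}$ in backward time. This rate is stronger than the backward contraction rate characterizing $\tW^{(i)}_x(y)$, namely $|\tg^{-n}z-\tg^{-n}y|$ not exponentially larger than the $E^{(i)}$ cone growth compared to $E^{\widehat{(i)}}$. Hence $z\in\tW^{(i)}_x(y)$.

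To make this rigorous I would phrase both characterizations via cones. A pair $(y,z)$ lies in the same $\thF^{(i)}$ leaf iff $\tg^{-n}z-\tg^{-n}y$ stays in the $E^{(i)}$-cone for all $n\geq0$. The difference vector is then tracked using the cone invariance for $\tg^{-1}$. The same argument with $\tg$ forward gives $\thF^{\widehat{(i)}}\subset\thF^{\widehat{(i-1)}}$. Smallness of $\gamma_0$ and $\epsilon_0$ is used exactly so that the cones for the two splittings are nested and jointly invariant.
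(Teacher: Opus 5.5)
Your proposal is correct and takes the same route as the paper, which gives no argument beyond invoking the standard graph-transform construction of fake foliations from \cite{HPS}, \cite{Katok_book}, \cite{BW2010} and \cite{LVY13}. Your cone characterization of leaves (backward differences staying in the $E^{(i)}$-cone for $\thF^{(i)}$, forward differences staying in the $E^{\widehat{(i)}}$-cone for $\thF^{\widehat{(i)}}$), together with the nesting of the box-norm cones, is the standard way to get uniqueness and the coherence in c). The one point to state more carefully: with domination alone, the graph transform contracts in the metric weighted by distance to the base point, not in the plain $C^0$ sup-distance.
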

}
{Note that Lemma \ref{fake $u$-foliation} does not immediately give the fake $i$-foliation}. Indeed, the fake $i$-foliation is defined by $\tW^i_x(y)=\tW^{{\widehat{(i-1)}}}_x(y)\cap\tW^{(i)}_x(y)$. By the previous lemma, it is invariant under $\tg_\cdot$ and is the graph of a function $\varphi:E^{i}_x\rightarrow E^{{\widehat{i}}}_x$ with slope $\leq \frac{\gamma_0}{C_1}$.

For simplicity, when $y\in \exp_x(|v|\leq r_0)$, we also denote $\tW_x^*(y)  :=  \tW^*_x(\exp_x^{-1}y)$ for $*\in\{(i-1),(i),i,\widehat{(i)},{\widehat{(i-1)}}\}$.

\subsection{Uniformly Hölder holonomy and unstable manifolds}\label{sec 2.2}
In this section, we introduce the uniformly Hölder property of the fake strong unstable holonomy induced by the strong unstable fake foliations within fake unstable manifolds and the geometric properties of unstable manifolds as in \cite[Section 2]{Part1}. 
The fact that the Hölder exponent of the fake strong unstable holonomy within fake unstable manifolds can be arbitrarily close to $1$ is very important for the proof of the entropy formulas in \Cref{section 11}.

First, we give the definition of $i$-transversal.
\begin{definition}[$i$-transversal]\label{c-trans-def}
	For each $x\in\Gamma'$, $U\subseteq T_xM$ is called an $i$-transversal inside $\tW^{(i)}_x(x)\subseteq T_xM$ with slope $\leq \frac{1}{C}$, if it is a one-dimensional submanifold of $\hat{U}=\mathrm{graph}(\varphi)\subseteq \tW^{(i)}_x(x)$ where the function $\varphi: E^i _x\rightarrow E^{\widehat i}_x$ satisfies $|D\varphi|\leq\frac{1}{C}$.  
\end{definition}
It follows that the $(i-1)$-holonomy is uniformly Hölder inside fake $(i)$-leaves.
\begin{lemma}[Uniformly Hölder]\label{uniform Hölder}
For each $x\in\Gamma'$, given two $i$-transversal $U$ and $V$ with slope $\leq\frac{1}{C_1}$ inside $\tW^{(i)}_x(x)\subseteq T_xM$, if the fake $(i-1)$-foliation $\thF ^{(i-1)}_x$ induces an injective holonomy $\tth^{(i-1)}_x:U\rightarrow V$, then the fake $(i-1)$-holonomy is uniformly Hölder. That is, for any $r>0$, $p,q\in U$ satisfying $|p-\tth^{(i-1)}_x(p)|\leq r$ and $|q-\tth^{(i-1)}_x(q)|\leq r$, there exists $C=C(A(x),r)$ such that for $\alpha=1-\frac{7\epsilon}{\lambda_{i-1}-\lambda_{i}}$,
\begin{equation*}
    C^{-1}|p-q|^{\alpha^{-1}}\leq |\tth^{(i-1)}_x(p)-\tth^{(i-1)}_x(q)|\leq C|p-q|^\alpha, \ \forall p,q\in U.
\end{equation*}
\begin{proof}
    The same as \cite[Lemma 2.4]{Part1}.
\end{proof}
\end{lemma}

Characterization of the geometric structure of the local unstable manifolds is also needed. For $$0<\delta\leq \delta_0  :=  \frac{r_0}{100C^N_fC_0^{10}},$$ 
the $i$-th and $(i-1)$-th local unstable manifolds are defined as
$$W^i_{x,2\delta}(y)  := \exp^{-1}_x(\text{component of }W^i(y)\cap \exp_x(R_x(2\delta A(x)^{-1}))\text{ that contains }y)$$
and
$$W^{i-1}_{x,2\delta}(y)  := \exp^{-1}_x(\text{component of }W^{i-1}(y)\cap \exp_x(R_x(2\delta A(x)^{-1}))\text{ that contains }y)$$
where $x,y\in\Gamma'$.
Then, by the same method as in \cite[Lemma 2.5]{Part1}, we can establish the following lemma.
\begin{lemma}
    For any $x\in\Gamma',\ W^i_{x,\delta}(x)=\tW^{(i)}_x(x)\cap R_x(\delta A(x)^{-1})$.\\
    For any $y\in\Gamma'\cap\exp_x(W^i_{x,\delta}(x)),\  W^{i-1}_{x,2\delta}(y)=\tW^{(i-1)}_x(y)\cap R_x(2\delta A(x)^{-1})$.
\end{lemma}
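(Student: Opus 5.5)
The plan is to identify both sides of each equality with the intersection of a fake leaf and a box, using the fake foliation charts of Lemma \ref{fake $u$-foliation}, following \cite[Lemma 2.5]{Part1}. Take the first claim. The local unstable manifold $W^i_{x,\delta}(x)$ is the component of $W^i(x)\cap \exp_x(R_x(\delta A(x)^{-1}))$ containing $x$, pulled back by $\exp_x^{-1}$. We would prove the two inclusions separately:
\begin{itemize}
\item[(1)] $\tW^{(i)}_x(x)\cap R_x(\delta A(x)^{-1})\subseteq W^i_{x,\delta}(x)$;
\item[(2)] $W^i_{x,\delta}(x)\subseteq \tW^{(i)}_x(x)$.
\end{itemize}

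\textbf{Step 1: inclusion (1).} Let $v\in\tW^{(i)}_x(x)\cap R_x(\delta A(x)^{-1})$. Iterate backwards along the orbit, putting $v_{-kN}=\tg^{-kN}_{x}(v)\in \tW^{(i)}_{f^{-kN}x}(0)$.
\begin{itemize}
\item By invariance (part b of Lemma \ref{fake $u$-foliation}), these points stay on the fake $(i)$-leaves through $0$.
\item Those leaves are graphs over $E^{(i)}$ with slope $\le \gamma_0/C_1$. So \eqref{local argument 2} applies, and together with \eqref{local argument 1} and Lemma \ref{ABC ergodic lemma} it gives
\[
|v_{-kN}|'\le K\,A(x)\,\mathrm{e}^{kN(-\lambda_i+3\epsilon)}\,\delta A(x)^{-1}.
\]
\item Here we use $E^{(i)}=E^{(i-1)}\oplus E^i$, and that $Df^{-N}$ contracts $E^{(i-1)}$ at least as fast as $E^i$ up to $\epsilon$, which comes from the domination.
\item Since $\delta\le\delta_0$ is much smaller than $r_0$, the backward orbit stays in the region $|v|\le r_0$ where $\tg=\tf$.
\end{itemize}
It follows that $\exp_x v$ is a genuine backward orbit point of $f$, with $d(f^{-n}x,f^{-n}\exp_x v)$ decaying at rate $-\lambda_i+3\epsilon$, so $\exp_x v\in W^i(x)$. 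Connectedness of the graph inside the box then puts $v$ in the component containing $0$.

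\textbf{Step 2: inclusion (2).} Let $w\in W^i_{x,\delta}(x)$ lie off the fake leaf, and decompose $w$ relative to the fake foliation $\thF^{\widehat{(i)}}$. Since $\thF^{\widehat{(i)}}$ and $\tW^{(i)}_x(0)$ are transverse graphs, they meet in a unique point $w'$. Under backward iteration the $\widehat{(i)}$-component of $w-w'$ expands at rate at least $-\lambda_{i+1}-\epsilon$, which is strictly larger than $-\lambda_i+\epsilon$ by domination.
\begin{itemize}
\item We need the backward iterates of $w$ to stay in the $r_0$-box. Granting this, $|\tg^{-kN}(w)-\tg^{-kN}(w')|$ grows, or at least does not decay at rate $\lambda_i-\epsilon$.
\item On the other hand $w'\in W^i(x)$ by Step 1, and $w\in W^i(x)$ by assumption. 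So both backward orbits converge to that of $x$ exponentially at rate $\lambda_i-\epsilon$, which is a contradiction.
\end{itemize}
The delicate part is ensuring the backward orbit of a point of $W^i(x)$ stays in the linearization box. Here we use that $W^i_{x,\delta}(x)$ is defined as the connected component, together with an inductive cone argument. The cone-field invariance under $\tg^{-N}$ in the statement after \eqref{local argument 2} keeps $W^i(x)$ tangent to the $(i)$-cone, so $W^i_{x,\delta}(x)$ is a graph over $E^{(i)}$. Its backward images contract uniformly at rate $\mathrm{e}^{-\lambda_i+\epsilon}$, tempered by $A(f^{-kN}x)\le \mathrm{e}^{kN\epsilon}A(x)$, and this is the role of the factor $A(x)^{-1}$ in the box radius. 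I expect this step, controlling the ABC-manifold $W^i(x)$ in the fake chart, to be the main obstacle.

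\textbf{Step 3: the second claim.} For $y\in\Gamma'\cap\exp_x(W^i_{x,\delta}(x))$, repeat Steps 1--2 with $(i-1)$ in place of $(i)$, now along the leaf $\tW^{(i-1)}_x(y)$ based at $y$ rather than at $0$.
\begin{itemize}
\item The rates are $-\lambda_{i-1}+\epsilon$ versus $-\lambda_i-\epsilon$, separated by the domination $E^{(i-1)}\oplus E^{\widehat{(i-1)}}$.
\item We use the subfoliation property (part c of Lemma \ref{fake $u$-foliation}), so that $\tW^{(i-1)}_x(y)\subseteq\tW^{(i)}_x(x)$ when $y$ lies on the fake $(i)$-leaf.
\item The larger box $R_x(2\delta A(x)^{-1})$ absorbs the offset of $y$ from $0$, since $|\exp_x^{-1}y|'\le\delta A(x)^{-1}$.
\end{itemize}
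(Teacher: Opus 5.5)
The paper gives no argument here beyond citing \cite[Lemma 2.5]{Part1}, so I judged your proposal on its own. Your Step 1 is the local argument and is fine.

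\textbf{The gap is in Step 2.} It sits exactly where you flag it, and the workaround you propose does not work. Your separation argument needs the whole backward orbit of $\exp_x w$, for an \emph{arbitrary} $w\in W^i_{x,\delta}(x)$, to stay where $\tg=\tf$. But $\exp_x w\in W^i(x)$ only controls $d(f^{-n}x,f^{-n}\exp_x w)$ for $n\geq n_0(w)$. Lying in the component of $W^i(x)\cap\exp_x(R_x(\delta A(x)^{-1}))$ says nothing about the intermediate times. The cone argument does not supply this control either:
\begin{itemize}
\item The cone statement after \eqref{local argument 2} is forward invariance of graphs over $E^{(i)}$ and backward invariance of graphs over $E^{\widehat{(i)}}$. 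So $\tg^{-N}$ does not keep anything tangent to the $(i)$-cone. Pushing the cone forward from the far past instead needs the same missing orbit control.
\item A $\dim E^{(i)}$-dimensional graph tangent to the $(i)$-cone is tilted towards $E^{\widehat{(i)}}$ by $\tg^{-1}$. It is not contracted at rate $\lambda_i$ unless it lies in the invariant fake leaf. So ``its backward images contract uniformly'' presupposes the conclusion.
\end{itemize}
Shifting time does not rescue it. Running your argument at $f^{-n_0}x$ only shows that $f^{-n_0}(\exp_x w)$ lies on the fake $(i)$-leaf of $f^{-n_0}x$. Returning to time $0$ through $\tg^{n_0}$ again requires the uncontrolled orbit segment to stay in the charts.

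\textbf{What is missing is a topological argument.} It removes any need to control the dynamics of general points of $W^i_{x,\delta}(x)$.
\begin{itemize}
\item Let $G:=\tW^{(i)}_x(x)\cap R_x(\delta A(x)^{-1})$. The leaf is a graph through $0$ with slope at most $\gamma_0/C_1$, so $G$ is the full graph over $R^{(i)}_x(\delta A(x)^{-1})$. Hence $G$ is connected and relatively closed in the box.
\item By Step 1, $\exp_x(G)\subseteq W^i(x)$ and its backward iterates contract uniformly. So for large $n$, the disc $f^{-n}\exp_x(G)$ sits in a tiny neighbourhood of $f^{-n}x$.
\item At that scale your separation argument is legitimate. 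Apply it to the points of the local unstable disc of $f^{-n}x$ coming from \cite{ABC}, whose backward orbits are controlled. It shows that this disc and the fake leaf coincide near $f^{-n}x$.
\item Consequently $\exp_x(G)$ lies in $f^{n}$ of a single local unstable disc. So the inclusion $\exp_x(G)\hookrightarrow W^i(x)$ is continuous for the leaf topology. By invariance of domain, $\exp_x(G)$ is open in $W^i(x)$.
\item Thus $\exp_x(G)$ is open and closed in $W^i(x)\cap\exp_x(R_x(\delta A(x)^{-1}))$, taken in the leaf topology, which is how ``component'' must be read. It is connected and contains $x$, so it is the component.
\end{itemize}
This gives inclusion (2). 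It also justifies the last sentence of your Step 1, which tacitly uses the same continuity.

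Step 3 inherits the same gap and takes the same fix. The first claim guarantees that the $\tg$-orbit of $\exp_x^{-1}y$ is the true orbit of $y$. So the local argument along $\tW^{(i-1)}_x(y)$ gives one inclusion. The other inclusion again comes from the clopen argument, not from separation.
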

\begin{remark}
The preceding lemma should be understood as a scale statement. The fake leaves are only used inside boxes of size
$\delta A(x)^{-1}$, with $0<\delta\leq\delta_0$. At this size, the relevant
fake unstable leaves are genuine local Pesin unstable manifolds after
restriction. Thus, they have the same local unstable geometry; for example, the coherence of local leaves.
\end{remark}

 Subsequently, for any $0<\delta\leq \delta_0$, we can establish the following lemma, which is parallel to \cite[Lemma 2.6]{Part1}. The proof is almost the same, so we omit it.
\begin{lemma}\label{lemma 10 the manifold property}
    A. For any $x\in\Gamma',\ y\in \Gamma'\cap \exp_x(W^{i}_{x,\delta}(x))$,
    $W^{i-1}_{x,2\delta}(y)$ is the graph of a function $\varphi:R_x^{(i-1)}(2\delta A(x)^{-1})\rightarrow R_x^{{\widehat{(i-1)}}}(2\delta A(x)^{-1})$ with slope $\leq\frac{\gamma_0}{C_1}$.\\
    B. For $m$-a.e$.\ x\in\Gamma'$, if $y\in\Gamma'\cap \exp_x(W^{i}_{x,\delta}(x))$ and $f(y)\in \exp_{f(x)}(W^{i}_{f(x),\delta}(f(x)))$:
    \begin{itemize}
    \item[(1)] $\tf_xW^{i-1}_{x,2\delta}(y)\cap R_{f(x)}(2\delta A(f(x))^{-1})\subseteq W^{i-1}_{f(x),2\delta}(f(y))$.
    \item[(2)] $W^{i}_{x,2\delta}(x)\cap\exp_x^{-1}(W^{i-1}(y))=W^{i-1}_{x,2\delta}(y)$.    
    \end{itemize}
\end{lemma}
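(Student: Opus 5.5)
We need to prove Lemma \ref{lemma 10 the manifold property}: A (graph property of local $(i-1)$-leaves) and B (invariance and the intersection property). The plan follows \cite[Lemma 2.6]{Part1}, with $(u,c,s)$ replaced by $((i-1),i,\widehat{(i)})$, and relies on the preceding lemma identifying local unstable manifolds with fake leaves inside boxes of size $\delta A(x)^{-1}$.

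For part A, take $y\in\Gamma'\cap\exp_x(W^i_{x,\delta}(x))$. By the preceding lemma, $W^{i-1}_{x,2\delta}(y)=\tW^{(i-1)}_x(y)\cap R_x(2\delta A(x)^{-1})$. By Lemma \ref{fake $u$-foliation}(a), the leaf $\tW^{(i-1)}_x(y)$ is a global graph over $E^{(i-1)}_x$ with slope at most $\frac{\gamma_0}{C_1}$, and all slopes are measured in the box norm. Since $y\in W^i_{x,\delta}(x)$, the point $\exp_x^{-1}y$ lies in $R_x(\delta A(x)^{-1})$. A graph through a point of the half-size box, with slope $\gamma_0/C_1\ll 1$, cannot exit the $2\delta A(x)^{-1}$-box through its ${\widehat{(i-1)}}$-faces before exiting through its $(i-1)$-faces. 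Hence its intersection with the box is exactly the graph of the restriction $\varphi:R_x^{(i-1)}(2\delta A(x)^{-1})\to R_x^{\widehat{(i-1)}}(2\delta A(x)^{-1})$. The box-norm structure makes this an elementary estimate: the vertical displacement is at most $(\gamma_0/C_1)\cdot 4\delta A(x)^{-1}<\delta A(x)^{-1}$.

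For B(1), use that $\tg_x=\tf_x$ on $|v|\leq r_0$, which contains the box because $\delta\leq\delta_0\ll r_0$. By Lemma \ref{fake $u$-foliation}(b), $\tf_x\tW^{(i-1)}_x(y)$ is contained in $\tW^{(i-1)}_{f(x)}(\tf_x y)$. Intersecting with $R_{f(x)}(2\delta A(f(x))^{-1})$ and applying the preceding lemma at $f(x)$ gives $W^{i-1}_{f(x),2\delta}(f(y))$; this uses the hypothesis $f(y)\in\exp_{f(x)}W^i_{f(x),\delta}(f(x))$. There is one subtlety: the preceding lemma computes the connected component, while $\tf_x$ of the component might a priori be disconnected after intersection. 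Part A at $f(x)$ settles this, since the intersection of a graph over a convex box is connected.

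For B(2), the inclusion $\supseteq$ holds because $W^{i-1}_{x,2\delta}(y)\subseteq W^{i-1}(y)\subseteq W^i(y)$. Moreover $W^i(y)=W^i(x)$, and by part A this local piece lies in $\tW^{(i)}_x(x)\cap R_x(2\delta A(x)^{-1})=W^i_{x,2\delta}(x)$, using subfoliation from Lemma \ref{fake $u$-foliation}(c).

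The inclusion $\subseteq$ is the main obstacle. A point $z\in W^i_{x,2\delta}(x)\cap\exp_x^{-1}W^{i-1}(y)$ could lie on a different local plaque of the immersed global leaf $W^{i-1}(y)$, which may return to the box. The argument goes through backward iteration. Since $z$ and $y$ lie on the same global $(i-1)$-leaf, $d(f^{-n}z,f^{-n}y)$ contracts at rate $\mathrm e^{-n(\lambda_{i-1}-\epsilon)}$. On the other hand, $z$ lies in the fake $(i)$-leaf, so $z$ can be decomposed via the fake $(i-1)$-holonomy onto the $i$-transversal through $y$. If $z$ were on a different plaque, its $i$-coordinate relative to $\tW^{(i-1)}_x(y)$ would be nonzero. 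Under $\tg^{-n}$, the domination estimates from Lemma \ref{ABC ergodic lemma}, together with \eqref{local argument 1} and \eqref{local argument 2}, show that this coordinate contracts only like $\mathrm e^{-n(\lambda_i+2\epsilon)}$, which is strictly slower than the $(i-1)$ rate. That contradicts the characterization of $W^{i-1}$.

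To run this argument, we need the backward orbits to stay in the fake charts. This holds for the local $i$-leaf of $x$ by the choice of $A(x)$, with tempered growth $A(f^{-1}x)\leq\mathrm e^\epsilon A(x)$. The distances along backward orbits are then controlled inside the region where $\tg=\tf$.

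Finally, the "$m$-a.e." qualifier enters only through the requirement that $x$ and the relevant $y$ lie in $\Gamma'$. I expect this transversal-growth comparison to be the delicate step, exactly as in \cite[Lemma 2.6]{Part1}.
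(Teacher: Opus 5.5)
Your proposal is correct and follows essentially the route the paper intends. The paper omits the proof as being the same as \cite[Lemma 2.6]{Part1} with $(u,c,s)$ replaced by $((i-1),i,\widehat{(i)})$: A and B(1) follow from the preceding identification of local leaves with fake leaves inside the box, together with the graph, invariance and subfoliation properties of the fake foliations, and B(2) follows from the backward-rate comparison (transverse $i$-separation inside $\tW^{(i)}_x(x)$ contracts no faster than about $\mathrm{e}^{-n\lambda_i}$, while points of $W^{i-1}(y)$ approach $y$ at rate about $\mathrm{e}^{-n\lambda_{i-1}}$), which rules out other plaques. Two small points to make explicit: you use the identification $W^i_{x,2\delta}(x)=\tW^{(i)}_x(x)\cap R_x(2\delta A(x)^{-1})$ at scale $2\delta$ rather than $\delta$, and B(1) needs $f(y)\in\Gamma'$, i.e.\ an $f$-invariant choice of $\Gamma'$.
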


For convenience, we introduce the definitions of local charts and fake foliation charts as follows.

\begin{definition}[Local charts and fake foliation charts]\label{fake foliation charts def}
For any $x\in \Gamma'$, 

The {local chart at $x$} is defined as the set $\{v \in T_x M : |v| \leq r_0\}$, where $r_0>0$ is the uniform radius introduced in \Cref{section fake foliation}.

The {$\delta$-fake foliation chart at $x$} is the pair $\{R_x(\delta A(x)^{-1}), \exp_x\}$, where $R_x(\delta A(x)^{-1})$ is a neighborhood in $T_xM$ and $\exp_x: R_x(\delta A(x)^{-1}) \rightarrow M$ is the exponential map.

\end{definition}

\subsection{Straightening maps}\label{straightening map section}
In this section, we define straightening maps that straighten the local unstable manifolds as in \cite[Section 8.3]{LEDRAPPIER_YOUNG_B} and \cite[Section 8.4]{LEDRAPPIER_YOUNG_B}.
We first give a general straightening lemma parallel to \cite[Lemma 8.3.1]{LEDRAPPIER_YOUNG_B}. The proof is straightforward; hence we omit it.
\begin{lemma}[Straightening lemma]\label{straightening lemma} 
    Let $F^{(i-1)}$ be a continuous foliation with $C^1$ leaves on a subset of $R^{(i)}=R^{(i-1)}\oplus R^i$ that contains $R^{(i)}(2\rho)$, where $R^{(i)}$, $R^{(i-1)}$, and $R^i$ denote a $\dim E^{(i)}$-dimensional, $\dim E^{(i-1)}$-dimensional, and $1$-dimensional Euclidean space respectively, and $R^*(\rho)$ denotes the ball centered at the origin with radius $\rho$ under the box norm.
    
       Assume that each leaf of $F^{(i-1)}$ is the graph of a function $\varphi:R^{(i-1)}(2\rho)\rightarrow R^i$ with $|D\varphi|\leq\frac{1}{100}$. Then we define $O:R^{(i)}(\rho)\rightarrow R^{(i)}$ as $O(x_{(i-1)},x_i)=(x_{(i-1)},O_i(x))$, where $x=(x_{(i-1)},x_i)$ is the decomposition of coordinates and $(0,O_i(x))$ is the unique point in $F^{(i-1)}(x)\cap \{0\}\times R^i$. The following properties hold
    \begin{itemize}
    \item[(1)] $O$ is a homeomorphism between $R^{(i)}(\rho)$ and its image.
    \item[(2)] For any $x,y\in R^{(i)}(\rho)$, $O_i(x)=O_i(y)$ if and only if $y\in F^{(i-1)}(x)$.
    \item[(3)] If we further assume the holonomy induced by $F^{(i-1)}$ is uniformly $\alpha$-bi-Hölder as in \Cref{uniform Hölder}, then $O$ is the identity on its (i-1)-component and $\alpha$-bi-Hölder on its i-component. Hence, $O$ is $\alpha$-bi-Hölder.
    \end{itemize}
\end{lemma}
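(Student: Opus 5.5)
The three properties are checked in order, and each uses only the graph structure of the leaves of $F^{(i-1)}$.

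\textbf{Well-definedness.} For $x\in R^{(i)}(\rho)$, the leaf $F^{(i-1)}(x)$ is the graph of some $\varphi_x:R^{(i-1)}(2\rho)\rightarrow R^i$. It therefore meets $\{0\}\times R^i$ in exactly one point, namely $(0,\varphi_x(0))$. So $O_i(x)=\varphi_x(0)$. The slope bound gives
$$|O_i(x)-x_i|=|\varphi_x(0)-\varphi_x(x_{(i-1)})|\leq \tfrac{1}{100}|x_{(i-1)}|,$$
so $O$ maps $R^{(i)}(\rho)$ into a slightly larger box.

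\textbf{Property (2).} If $y\in F^{(i-1)}(x)$, the two leaves coincide, so $\varphi_x=\varphi_y$ and $O_i(x)=O_i(y)$. Conversely, if $O_i(x)=O_i(y)$, then the leaves through $x$ and $y$ share the point $(0,O_i(x))$. Leaves of a foliation are either disjoint or equal, so $y\in F^{(i-1)}(x)$.

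\textbf{Property (1).} Injectivity comes in two cases.
\begin{itemize}
\item If $O(x)=O(y)$, then $x_{(i-1)}=y_{(i-1)}$, and by (2) the point $y$ lies on the leaf through $x$.
\item That leaf is a graph over $R^{(i-1)}$, so it contains a unique point with first coordinate $x_{(i-1)}$. Hence $x=y$.
\end{itemize}
For continuity, the map $x\mapsto \varphi_x(0)$ is continuous because $F^{(i-1)}$ is a continuous foliation: its local product charts make the intersection of a leaf with the transversal $\{0\}\times R^i$ depend continuously on the point. A continuous injection from the compact box $R^{(i)}(\rho)$ into a Hausdorff space is a homeomorphism onto its image. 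Alternatively, the inverse can be written down directly: $(a,t)\mapsto (a,\psi_t(a))$, where $\psi_t$ is the graph function of the leaf through $(0,t)$. This inverse is continuous by the same foliation continuity.

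\textbf{Property (3).} The $(i-1)$-component of $O$ is the identity by definition. For the $i$-component, compare $O_i(x)$ and $O_i(y)$.
\begin{itemize}
\item Let $U_a=\{a\}\times R^i$, which is a one-dimensional transversal with slope $0$.
\item Slide $y$ along its leaf to the point $y'\in U_{x_{(i-1)}}$ by the holonomy $U_{y_{(i-1)}}\to U_{x_{(i-1)}}$. Then $O_i(y')=O_i(y)$, and $|y-y'|\leq 2|x-y|$ because the leaf has slope at most $\frac{1}{100}$.
\item Now $O_i(x)$ and $O_i(y')$ are the images of $x$ and $y'$ under the single holonomy $U_{x_{(i-1)}}\rightarrow U_0$. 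All displacements along leaves are at most about $\rho$.
\item By the assumed uniform $\alpha$-bi-Hölder property,
$$C^{-1}|x-y'|^{1/\alpha}\leq |O_i(x)-O_i(y)|\leq C|x-y'|^\alpha\leq C'|x-y|^\alpha.$$
\end{itemize}
Combining these estimates with the identity $(i-1)$-component and the box norm shows that $O$ is $\alpha$-Hölder. The same argument applied to $O^{-1}$, which uses the holonomies in the reverse direction, shows that $O^{-1}$ is $\alpha$-Hölder as well.

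The main obstacle is the lower bound in (3). It needs care because $|x-y|$ can be dominated by the $(i-1)$-component while $|x-y'|$ is small. This is handled by the max-norm: $|O(x)-O(y)|'\geq |x_{(i-1)}-y_{(i-1)}|$. It is also handled by the estimate $|x-y|\lesssim |x_{(i-1)}-y_{(i-1)}|+|x-y'|$, which gives
$$|O(x)-O(y)|'\gtrsim |x-y|^{1/\alpha}.$$
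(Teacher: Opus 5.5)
Your proposal is correct. The paper omits this proof as straightforward (it is the analogue of Ledrappier--Young's Lemma 8.3.1), and your argument is exactly the routine verification intended: graph uniqueness gives (2) and injectivity; continuity of $x\mapsto\varphi_x(0)$ comes from the continuity of the foliation, either by chaining local charts along the leaf or by an equicontinuity argument using the uniform Lipschitz bound on the graphs; and (3) reduces to a single holonomy $\{x_{(i-1)}\}\times R^i\to\{0\}\times R^i$ via the slide $y\mapsto y'$, using $|y-y'|'\le|x_{(i-1)}-y_{(i-1)}|$ in the box norm. The only cosmetic point is that if $R^{(i)}(\rho)$ denotes the open box, you should run the compactness argument on its closure (which still lies inside $R^{(i)}(2\rho)$) or rely on your explicit inverse.
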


Next, we introduce a method to construct straightening maps on a positive measure set with \Cref{straightening lemma}. First, fix an $A_0>0$   such that   $$\Lambda=\Lambda_{A_0}:=\{x\in\Gamma':A(x)\leq A_0\}$$ with $m(\Lambda)=m(\Lambda_{A_0})>0$, and choose a density point $a_0 \in\Lambda_{A_0}$ for the measure $m|_{\Lambda_{A_0}}$.

Given $\delta_0>0$, we can choose $\tau_0<\delta_0$ small enough depending on $\delta_0$   such that the following holds:  for any $$x,\omega_0\in G_{\tau_0}:=\Lambda\cap\exp_{a_0 }(R_{a_0}(\frac{\tau_0}{100}A_0^{-1})), $$
we have that $$\exp_{\omega_0 }^{-1}\circ\exp_x(W^{i}_{x,\delta_0}(x))\cap R_{\omega_0}(2\tau_0A_0^{-1})$$ 
is the graph of a function $\varphi:R^{(i)}_{\omega_0}(2\tau_0A_0^{-1})\rightarrow R_{\omega_0}^{{\widehat{(i)}}}(2\tau_0A^{-1}_0)$ with slope $\leq\frac{2\gamma_0}{C_1}$ 
, and for any $$y\in \exp_x(W^i_{x,\delta_0}(x))\cap \exp_{\omega_0 }(R_{\omega_0}(\tau_0A_0^{-1})),$$ we have that $$\exp_{\omega_0 }^{-1}\circ\exp_x(W^{i-1}_{x,\delta_0}(y))\cap R_{\omega_0}(2\tau_0A_0^{-1})$$ 
is the graph of a function $\varphi:R_{\omega_0}^{(i-1)}(2\tau_0A_0^{-1})\rightarrow R_{\omega_0}^{{\widehat{(i-1)}}}(2\tau_0A^{-1}_0)$ with slope $\leq\frac{2\gamma_0}{C_1}$.

Let $p_{\omega_0 }:T_{\omega_0 }M\rightarrow E^{(i)}_{\omega_0 }$ denote the projection map along the $E^{\widehat{(i)}}_{\omega_0 }$ direction. 
By our previous assumption that the parameter $\tau_0$ is sufficiently small, the containment relation holds as follows: $$R_{\omega_0}^{(i)}(\tau_0A_0^{-1})\subseteq p_{\omega_0 }\circ\exp_{\omega_0 }^{-1}\circ\exp_x(W^{i}_{x,\delta_0}(x)).$$

\begin{figure}[htbp]
    \centering
    \includegraphics[width=1\textwidth]{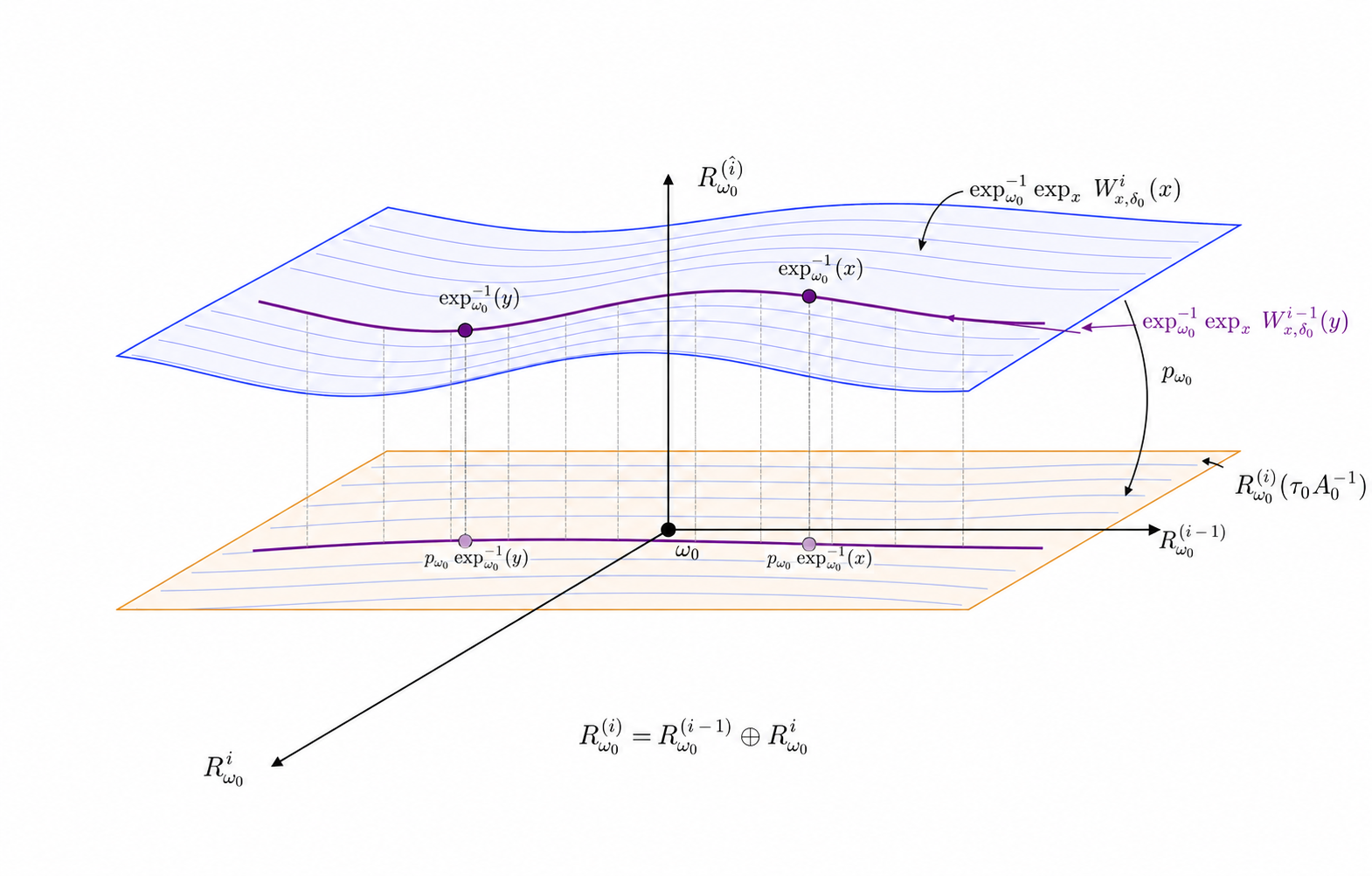}
    \caption{illustration for straightening.}
\end{figure}

Fix an $\omega_0\in G_{\tau_0}$ which is a density point of $m|_{G_{\tau_0}}$. Analogous to \cite[Section 8.4]{LEDRAPPIER_YOUNG_B},
for any $y\in W^i_{x,\delta_0}(x)$, 
$$p_{\omega_0 }\circ\exp_{\omega_0 }^{-1}\circ\exp_x (W^{i-1}_{x,\delta_0}(y))$$ forms a foliation inside $R^{(i)}_{\omega_0}(\tau_0A_0^{-1})$, which is contained in $$p_{\omega_0 }\circ\exp_{\omega_0 }^{-1}\circ\exp_x(W^{i}_{x,\delta_0}(x)).$$ 
Moreover, for any $x\in G_{\tau_0}$, the straightening lemma \Cref{straightening lemma} induces a map $$O_{\omega_0 ,x}:R_{\omega_0}^{(i)}(\tau_0A_0^{-1})\rightarrow R^{(i)}=R^{(i)}_{\omega_0}$$ that straightens the  leaves contained in $$p_{\omega_0 }\circ\exp_{\omega_0 }^{-1}\circ\exp_x (W^{i-1}_{x,\delta_0}(y)).$$

Now we define $$S  :=  \bigcup_{x\in G_{\tau_0}}\left(\exp_x(W^i_{x,\delta_0}(x))\cap\exp_{a_0 }(R_{a_0}(\tau_0A_0^{-1}))\right)$$ as a positive measure lamination. Define the straightening map $\pi:S\rightarrow R^{(i)}$ as $$\pi(y)=O_{\omega_0 ,x}\circ p_{\omega_0 }\circ \exp_{\omega_0 }^{-1}(y),\quad \forall y\in\exp_x(W^i_{x,\delta_0}(x)).$$ Then if $\tau_0$ is small enough, we have 
\begin{itemize}
    
\item[(1)] $S$ is the disjoint union of local unstable manifolds $D_\alpha$ where $D_\alpha\cap G_{\tau_0}\neq \emptyset$, and if $x\in D_\alpha\cap \Lambda_{A_0}$, then $D_\alpha=S\cap \exp_x(W^i_{x,\delta_0}(x))$.
\item[(2)] $\pi$ is continuous. Moreover, for any $\pi|_{D_\alpha}$, we note that since all the $p_{\omega_0 },\exp,\exp^{-1}$ in the definition of $\pi|_{D_\alpha}$ are uniformly bi-Lipschitz, and $O_{\omega_0 ,x}$ is uniformly $\alpha$-bi-Hölder with the constant depending on $A_0$ by \Cref{uniform Hölder} and \Cref{straightening lemma}. Thus, $\pi|_{D_\alpha}$ is $\alpha$-bi-Hölder with the constant depending on $A_0$.
\end{itemize}

\section{Partial entropy}\label{section partial entropy}
The purpose of this section is to define partial entropy for $C^1$ diffeomorphisms with an appropriate dominated splitting. Following the approach of \cite[Section 9]{LEDRAPPIER_YOUNG_B}, we will provide two equivalent characterizations: the first in terms of subordinate partitions and the second using Bowen balls. Since the discussion is standard, several details are omitted.

We first construct the measurable partitions subordinate to $W^i$ and $W^{i-1}$.
Leaving $S=\bigcup D_{\alpha}$ as before, we further define that
\begin{equation}\label{equation 19}
\hat{\xi}_i(x)=\left\{
    \begin{aligned}
        &D_\alpha, & x\in D_\alpha,\\
        &M-S,      & x\notin S,
    \end{aligned}
    \right .
\end{equation}

\begin{equation*}
\hat{\xi}_{i-1}(x)=\left\{
    \begin{aligned}
        &D_\alpha\cap W^{i-1}(x), & x\in D_\alpha,\\
        &M-S,      & x\notin S,
    \end{aligned}
    \right .
\end{equation*}
\begin{equation*}
     \xi_i(x)=\hat\xi_i^+(x)
     :=\bigvee_{j=0}^\infty f^j\hat\xi_i(x),
\end{equation*}
\begin{equation*}
    \xi_{i-1}(x)=\hat\xi_{i-1}^+(x):=\bigvee_{j=0}^\infty f^j\hat\xi_{i-1}(x).
\end{equation*}

As in \cite[Section 3]{Part1}, following \cite{LEDRAPPIER_YOUNG_B} and \cite{Ledrappier_Strelcyn_1982} we can obtain that:
\begin{lemma}
    There exists $\tau_0$ small enough and $\xi_i,\ \xi_{i-1}$ as above, such that $\xi_{i-1}$ is subordinate to $W^{i-1}$, $\xi_i$ is subordinate to $W^i$ and $\xi_{i}\prec\xi_{i-1}$.
\end{lemma}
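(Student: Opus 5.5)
The plan follows the Ledrappier–Strelcyn construction, adapted as in \cite[Section 3]{Part1}: use fake foliation charts and the local unstable manifolds $W^i_{x,\delta_0}$ in place of Pesin charts. I would verify the three conditions of \Cref{def subordiante partition} for $\xi_i=\hat\xi_i^+$, do the same for $\xi_{i-1}$, and then check the refinement relation.

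\emph{Refinement and basic properties.} Since each $D_\alpha$ is a disjoint union of sets $D_\alpha\cap W^{i-1}(x)$, we have $\hat\xi_i\prec\hat\xi_{i-1}$ (here $\xi\prec\eta$ means $\eta$ is finer). Because $f^j$ preserves this relation, taking joins gives $\xi_i\prec\xi_{i-1}$ directly. Measurability of $\hat\xi_i$ comes from the continuity of the lamination $S$ (property (1) of the straightening construction). For $\hat\xi_{i-1}$ it comes from the straightening map $\pi$: by \Cref{straightening lemma}(2), the atoms $D_\alpha\cap W^{i-1}(x)$ are level sets of the $i$-coordinate of $\pi$ restricted to each $D_\alpha$, and this coordinate is continuous. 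Both joins are increasing, $f\xi\prec\xi$, by construction. Condition (c) follows because $\hat\xi_i$ separates points of $S$ from $M\setminus S$, and almost every orbit enters $S$ infinitely often in the past; hence $\bigvee_{n\ge0}f^{-n}\xi$ separates points. If needed, I would refine by a finite partition with small diameter, as Ledrappier–Young do.

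\emph{Main step: atoms contain neighbourhoods.} The key point is to show that for $m$-a.e.\ $x$, the atom $\xi_i(x)$ contains an open neighbourhood of $x$ in $W^i(x)$, and similarly for $\xi_{i-1}$ and $W^{i-1}$. The atom $\xi_i(x)$ is $\bigcap_{j\ge0}f^j\hat\xi_i(f^{-j}x)$. The condition fails at time $j$ only when $f^{-j}x$ lies within distance $\approx \mathrm{e}^{-j(\lambda_i-2\epsilon)}A(x)\,r$ of the boundary $\partial S$ relative to the leaf. This bound uses the backward contraction along $W^i$, i.e.\ the third estimate of \Cref{ABC ergodic lemma} together with \eqref{local argument 1} and \eqref{local argument 2}.

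Here the free parameter is $\tau_0$, or more precisely the radius $r\in[\tau_0/2,\tau_0]$ of the box $\exp_{a_0}(R_{a_0}(rA_0^{-1}))$ used to define $S$. Choosing $r$ appropriately gives $m(\{y: d(y,\partial S_r)<t\})\le Ct$ for all small $t$, via the standard averaging argument over $r$ in \cite{Ledrappier_Strelcyn_1982}. Summing over $j$, $\sum_j m(d(f^{-j}x,\partial S)<\mathrm{e}^{-cj})<\infty$. Borel–Cantelli then shows that only finitely many $j$ are bad. Each of those finitely many bad times is handled by shrinking the neighbourhood, which is legitimate because $x$ lies in the interior of its local leaf.

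For $\xi_{i-1}$ the same estimate applies, with the observation from \Cref{lemma 10 the manifold property}B(2) that $D_\alpha\cap W^{i-1}(y)$ is an open piece $W^{i-1}_{x,2\delta}(y)$ of the global leaf. This gives $\xi_{i-1}(x)\subseteq W^{i-1}(x)$, with a relative neighbourhood, and $\xi_i(x)\subseteq W^i(x)$ via \Cref{lemma 10 the manifold property}B(1).

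I expect the main obstacle to be the leafwise-to-ambient comparison in this step. Using only $C^1$ regularity and fake foliations, I have to ensure that distance to $\partial S$ measured inside the leaf is comparable to ambient distance, uniformly on $\Lambda_{A_0}$. I would get this from the uniform slope bounds $\frac{2\gamma_0}{C_1}$ on graphs in the charts at $\omega_0$.
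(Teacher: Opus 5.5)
\textbf{Property (c) is the genuine gap.} Your plan is the standard Ledrappier--Strelcyn construction, which is what the paper relies on; it gives no argument of its own and refers to Part 1, Section 3, and to Ledrappier--Strelcyn. Your treatment of property (a) is sound in outline, but your argument for (c) does not work.

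\begin{itemize}
\item \emph{Why your argument fails.} Knowing that $\hat\xi_i$ separates $S$ from $M\setminus S$, and that a.e.\ orbit visits $S$ infinitely often in the past, does not separate points. Indeed $\bigvee_{n\ge0}f^{-n}\xi_i=\bigvee_{k\in\mathbb Z}f^k\hat\xi_i$, and two distinct points of one plaque $D_\alpha$ are never distinguished by membership in $S$. Past visits only give $y\in W^i(x)$. That is what yields the inclusion $\xi_i(x)\subseteq W^i(x)$ in (a), not (c).
\item \emph{What is needed.} You need recurrence in the \emph{future} together with leafwise contraction. For a.e.\ $x$, $f^nx\in S$ for infinitely many $n\to+\infty$. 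If $y$ lies in the atom of $x$, then $f^ny\in\hat\xi_i(f^nx)=D_\beta$, the plaque through $f^nx$.
\item \emph{The contraction estimate.} Every plaque meets $G_{\tau_0}\subseteq\Lambda_{A_0}$. So $D_\beta\subseteq\exp_w(W^i_{w,\delta_0}(w))$ for some $w$ with $A(w)\le A_0$. The local argument then gives $\mathrm{diam}\,f^{-n}D_\beta\le C\,\e^{-n(\lambda_i-3\epsilon)}$, with $C$ depending only on $A_0$, $\delta_0$ and $C_f^N$.
\item \emph{Conclusion.} Hence $d(x,y)=0$. The statement for $\xi_{i-1}$ follows because $\xi_{i-1}$ refines $\xi_i$.
\item \emph{Your fallback.} Refining by a finite partition of small diameter does not bypass this. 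Without expansivity, $f^kx$ and $f^ky$ lying in a common small atom for all $k$ forces $x=y$ only through the same contraction estimate.
\end{itemize}

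\textbf{Precision points in (a).}
\begin{itemize}
\item \emph{Which boundary.} The Borel--Cantelli estimate must be taken for the boundary of the box $\exp_{a_0}(R_{a_0}(rA_0^{-1}))$, which is what the averaging over $r$ controls. It cannot be taken for the topological boundary of $S$, which can carry positive measure: if $S$ has empty interior, then $\partial S\supseteq S$ and $m(S)>0$.
\item \emph{Reducing to the box boundary.} You need one observation: a connected piece of a $W^i$-leaf lying inside the box and meeting some $D_\alpha$ is contained in $D_\alpha$, because $D_\alpha$ is a full plaque crossing the box. This also handles the times $j$ with $f^{-j}x\notin S$, where the atom is $M\setminus S$.
\item \emph{The finitely many bad times.} What is actually used there is $f^{-j}x\notin\partial(\text{box})$. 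This holds a.e., since the same estimate makes the box boundary $m$-null. It does not come from $x$ lying in the interior of its leaf.
\item \emph{The obstacle you anticipate.} It is not one: leaf distance dominates ambient distance, which is the direction required.
\item \emph{Backward contraction.} Contraction along $W^i$, which is tangent to $E^{(i-1)}\oplus E^i$, uses the third and fourth estimates of the lemma on $A(x)$ together, not the third alone.
\end{itemize}
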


By the same arguments as in \cite[Section 3]{Part1}, the conditional entropy $H(\xi_{i}|f\xi_{i})$ and $H(\xi_{i-1}|f\xi_{i-1})$ do not depend on the choice of $\xi_i$ and $\xi_{i-1}$. Therefore, the partial entropies are defined as $h_i  :=  H(\xi_{i}|f\xi_{i})$ and $h_{i-1}  :=  H(\xi_{i-1}|f\xi_{i-1})$. By definition, we have $h_u=h^u_m(f)$, where $h^u_m(f)$ is the $u$-entropy defined in  \cite{Part1}.

In the remainder of this section, following the approach in \cite[Section 9]{LEDRAPPIER_YOUNG_B}, we present a definition of partial entropy based on Bowen balls and prove the equivalence between this definition and the previously established one.

Recall \Cref{bowen ball def 1} and \Cref{bowen ball def 2} in \Cref{section 7.2}. Denote
\begin{equation*}
     \uh_i  :=  \lim_{\epsilon\rightarrow 0}\uline h_i(x,\epsilon,\xi_i)
\end{equation*}
and
\begin{equation*}
     \oh_i  :=  \lim_{\epsilon\rightarrow 0}\oh_i(x,\epsilon,\xi_i).
\end{equation*}
The limit exists because $\uline h_i(x,\epsilon,\xi_i)$ and $\oh_i(x,\epsilon,\xi_i)$ increase as $\epsilon\rightarrow 0$.
Moreover, it is easy to verify that the two limits $\uh_i(x,\xi_i)$ and $\oh_i(x,\xi_i)$ are constant along the orbits and independent of the choice of $\xi_i$. Since $m$ is ergodic, for $m$-a.e. $x\in M$, we have $\uh_i(x,\xi_i)=\uline h_i$ and $\oh_i(x,\xi_i)=\oh_i$. In the rest of this section, we prove that $$\uh_i=\oh_i=h_i.$$
Similarly, we can define $\uh_{i-1}$ and $\oh_{i-1}$ and establish the relationship that $$\uh_{i-1}=\oh_{i-1}=h_{i-1}.$$
Recall that
$$V^i(x,n,\epsilon):=\{y\in W^i(x):d^i(f^k(x),f^k(y))<\epsilon,\ \forall 0\leq k<n\},$$
where $d^i$ is the distance induced by the Riemannian submanifold metric on $W^i$.

In this section, $\{m^i_x\}$ denotes the conditional measures of $m$ associated with $\xi_i$. The following \Cref{lemma 15} establishes that the partial entropy defined via Bowen balls is bounded below by the conditional entropy. The proof follows the same steps as in \cite[Section 9.2]{LEDRAPPIER_YOUNG_B} and is included here for completeness.
\begin{lemma}\label{lemma 15}
    $\uh_i\geq h_i$.
\begin{proof}
    For any $\epsilon>0$ and $\delta>0$, we define
    \[
    A_\delta:=\{x:B^i(x,\delta)\subseteq\xi_i(x)\}
    \]
    and
    \[
    U^i(x,n,\delta):=\bigcap_{0\leq j< n, f^jx\in A_\delta} (f^{-j}\xi_i)(x),
    \]
    where $B^i(x,\delta):=\{y\in W^i(x):d^i(x,y)<\delta\}$.
    By the subordinate property of $\xi_i$, one has $m(A_\delta)\to1$ as $\delta\to 0$. Let $g(x):=-\log m^i_x((f^{-1}\xi_i)(x))$ and choose $\delta'$ small enough such that
    \[
    \int_{A_{\delta'}}g(x)\,\td m(x)\geq\int g(x)\,\td m(x)-\epsilon= h_i-\epsilon.
    \]
    Thus, whenever $\delta<\delta'$, by the definition we have $V^i(x,n,\delta)\subseteq U^i(x,n,\delta)$ and $$-\log m^i_x( U^i(x,n,\delta))\geq \sum^{n-1}_{j=0}(\chi_{A_\delta}\cdot g)(f^j(x)).$$ Therefore
    \begin{align*}
        \uh_i(x,\delta,\xi_i)&=\liminf_{n\to\infty}-\frac{1}{n}\log m^i_x(V^i(x,n,\delta))\\
        &\geq \int_{A_\delta} g(x)\,\td m(x)\\
        &\geq h_i-\epsilon
    \end{align*}
    and the proof is complete.
\end{proof}
\end{lemma}
Then it only remains to prove $\oh_i\leq h_i$. First, we recall a Shannon-McMillan-Breiman type theorem for partial entropy from \cite{LEDRAPPIER_YOUNG_B}.
\begin{lemma}\cite[Lemma 9.3.1]{LEDRAPPIER_YOUNG_B}\label{SMB type thm}
    Let $\tP$ be a measurable partition with finite entropy, for $m$-a.e.\ $x$, 
    \begin{equation*}
        \lim_{n\rightarrow\infty}-\frac{1}{n}\log m^i_x(\tP\vee\xi_i)_0^n(x)=h_i
    \end{equation*}
    where $\tP_0^n(x)  :=  (\bigvee_{j=0}^{n-1}f^{-j}\tP) (x)$.
\end{lemma}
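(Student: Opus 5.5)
This is Ledrappier–Young Lemma 9.3.1 transplanted to partial entropy. I would follow their argument, which uses only measure-theoretic properties of $\xi_i$: it is increasing ($f\xi_i\prec\xi_i$) and satisfies $H(\xi_i\mid f\xi_i)=h_i<\infty$. No smoothness enters.

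\textbf{Step 1: telescoping identity.} Write
\[
I_n(x):=-\log m^i_x\bigl((\tP\vee\xi_i)_0^n(x)\bigr),
\]
the information function of $(\tP\vee\xi_i)_0^n$ conditioned on $\xi_i$. Since $\xi_i\prec(\tP\vee\xi_i)_0^n$ and $f^{-j}\xi_i\succ\xi_i$, we have
\[
(\tP\vee\xi_i)_0^n=\tP_0^n\vee f^{-(n-1)}\xi_i .
\]
Applying the chain rule for conditional information step by step, $I_n$ splits as a sum
\[
I_n(x)=\sum_{j=0}^{n-1} g_{n-j}\bigl(f^j x\bigr),
\]
where $g_k$ denotes the conditional information of $\tP\vee f^{-1}\xi_i$ given $f^{-1}\tP_0^{k-1}\vee\xi_i$, computed with respect to the corresponding partitions. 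The point is that $f^{-1}\xi_i\vee\xi_i=f^{-1}\xi_i$, so each term is well defined.

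\textbf{Step 2: identify the limit function.} Use the increasing martingale convergence theorem. As $k\to\infty$, $g_k\to g$ almost everywhere and in $L^1$, where $g$ is the information of $\tP\vee f^{-1}\xi_i$ given $\tP_1^\infty\vee\xi_i$.

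Then compute $\int g\,\td m$. By the standard identity
\[
H(\tP\vee f^{-1}\xi_i\mid f^{-1}(\tP^{+}\vee\xi_i))=H(\tP^{+}\vee\xi_i\mid f^{-1}(\tP^{+}\vee\xi_i))\circ
\]
(equivalently $H(\zeta\mid f\zeta)$ for $\zeta=\tP^{+}\vee\xi_i$), this integral equals $H(\zeta\mid f\zeta)$ for the increasing partition $\zeta=\bigvee_{j\ge0}f^{-j}\tP\vee\xi_i$, possibly after replacing $f$ by $f^{-1}$ in the conventions.

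It remains to show $H(\zeta\mid f\zeta)=H(\xi_i\mid f\xi_i)=h_i$. This is the independence-of-partition argument of \cite[Lemma 3.1.2]{LEDRAPPIER_YOUNG_B} and \cite[Section 3]{Part1}. Both $\zeta$ and $\xi_i$ are increasing partitions, and $\zeta$ refines $\xi_i$ only by adding $\tP$-past information, which has finite entropy. Hence
\[
H(\zeta\mid f\zeta)-H(\xi_i\mid f\xi_i)=\lim_n\tfrac1n\bigl[H(\zeta\mid f^n\zeta)-H(\xi_i\mid f^n\xi_i)\bigr]=0,
\]
using $H(\tP\mid\cdot)<\infty$ and the fact that $\bigvee f^{-n}\xi_i$ is the point partition.

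\textbf{Step 3: Maker's ergodic theorem.} To pass from the triangular sum to the limit, I need $\sup_k g_k\in L^1$. This follows from the standard estimate
\[
\int\sup_k g_k\le H(\tP)+H(\xi_i\mid f\xi_i)+C<\infty,
\]
obtained as in the classical Shannon–McMillan–Breiman proof, using the finite entropy of $\tP$ and the finiteness of $h_i$. With this bound and ergodicity, Maker's theorem gives
\[
\tfrac1n I_n\to\int g\,\td m=h_i \quad m\text{-a.e.}
\]

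\textbf{Main obstacle.} The delicate part is Step 2: showing $\int g\,\td m=h_i$ and not something larger. This requires the refinement argument, which compares $\zeta$ and $\xi_i$ via finite entropy of $\tP$. Everything else is the classical SMB machinery.
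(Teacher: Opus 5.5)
There is a genuine gap. It sits in Steps 1--2, not in the ``main obstacle'' you flag: your expansion is not what the chain rule gives, and the limit you identify is wrong. Your $k$-th term conditions only on $\xi_i$ and on the future of $\tP$. The correct conditioning must also contain the accumulated refinements $f^{-j}\xi_i$.

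A counterexample satisfying every hypothesis you say is used:
\begin{itemize}
\item the two-sided full $2$-shift with the $(\tfrac12,\tfrac12)$-Bernoulli measure, $f$ the left shift;
\item $\xi$ (in place of $\xi_i$) the partition by the coordinates $x_{\le 0}$, which is increasing, has $\bigvee_n f^{-n}\xi$ equal to the point partition, and has $H(\xi\mid f\xi)=\log 2$;
\item $\tP$ the partition by $x_0$.
\end{itemize}
Here $-\log m^\xi_x\bigl((\tP\vee\xi)_0^n(x)\bigr)=(n-1)\log 2$, so the lemma holds. But your $g_k$ vanishes for every $k\ge 2$, because its conditioning already fixes $x_{\le 1}$; hence your sum equals $\log 2$ for every $n$. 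Moreover, $\tP_1^\infty\vee\xi$ and your $\zeta=\bigvee_{j\ge0}f^{-j}\tP\vee\xi$ are both the point partition, so $\int g\,\td m=H(\zeta\mid f\zeta)=0\neq\log 2$.

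So the identity $H(\zeta\mid f\zeta)=h_i$ is false, not merely unproved. Three reasons it cannot be rescued as stated:
\begin{itemize}
\item $\zeta$ joins $\xi_i$ with the \emph{future} of $\tP$, so it is in general not increasing ($f\zeta$ involves $f\tP$), and together with $\xi_i$ that future can determine points.
\item Your displayed justification is empty: for an increasing partition $H(\zeta\mid f^n\zeta)=nH(\zeta\mid f\zeta)$, so the limit merely restates the claim.
\item \cite[Lemma 3.1.2]{LEDRAPPIER_YOUNG_A} only compares partitions subordinate to $W^i$, which $\xi_i\vee\tP^{\pm}$ need not be for an arbitrary finite-entropy $\tP$.
\end{itemize}

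The repair is to expand along the future while keeping all of $\beta:=\tP\vee\xi_i$ in the conditioning. Since $\beta_0^n=\beta\vee f^{-1}\beta_0^{n-1}$ and $\xi_i\prec f^{-1}\xi_i\prec f^{-1}\beta_0^{n-1}$, the chain rule gives, for $n\ge 2$,
\begin{equation*}
I_n=I_{n-1}\circ f+I\bigl(f^{-1}\xi_i\mid \xi_i\bigr)+I\bigl(\tP\mid f^{-1}\beta_0^{\,n-1}\bigr),\qquad I_1=I(\tP\mid\xi_i).
\end{equation*}
The three pieces are handled as follows.
\begin{itemize}
\item The middle term is the fixed function $x\mapsto -\log m^i_x\bigl((f^{-1}\xi_i)(x)\bigr)$, with integral $H(\xi_i\mid f\xi_i)=h_i$. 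Birkhoff's theorem handles it.
\item In the last term the conditioning partitions increase with $n$ and refine $f^{-(n-1)}\xi_i$. Because $\bigvee_k f^{-k}\xi_i$ is the point partition, the martingale theorem gives $I(\tP\mid f^{-1}\beta_0^{k})\to 0$ a.e. Chung's maximal inequality gives $\int\sup_k I(\tP\mid f^{-1}\beta_0^{k})\,\td m\le H(\tP)+1$. By Maker's theorem these terms contribute nothing to $\frac1n I_n$.
\item Finally $\frac1n I_1\circ f^{n-1}\to 0$.
\end{itemize}
Hence $\frac1n I_n\to h_i$ a.e. This is the standard Shannon--McMillan--Breiman argument behind \cite[Lemma 9.3.1]{LEDRAPPIER_YOUNG_B}, which the paper cites without proof. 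On this route there is no comparison of two increasing partitions. Nor is a maximal inequality needed for the uncountable $\xi_i$-part, since that term does not depend on $n$. The point-partition property of $\bigvee_k f^{-k}\xi_i$ is used exactly to kill the $\tP$-terms, which is where your outline should have used it.
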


Subsequently, we need to construct a partition $\tP$   such that   $\tP_0^n(x)$ can be compared to $V^i(x,n,\epsilon)$. Denote $S'  :=  S\cap \Lambda_{A_0}$, which is a set with a positive measure by definition. The return-time functions $n_+,n_-,n_0: M\rightarrow\mathbb N$ are defined as
\begin{equation*}
    n_+(x)=\inf\{n>0:f^n(x)\in S'\},
\end{equation*}
\begin{equation*}
    n_-(x)=\inf\{n>0:f^{-n}(x)\in S'\},
\end{equation*}
and
\begin{equation*}
    n_0(x)=\inf\{n\geq 0:f^n(x)\in S'\}.
\end{equation*}
Define the function $\psi:M\rightarrow R^{+}$ by:
    \begin{equation*}
	\psi(x)=\left\{
	\begin{aligned}
		&1  , &x&\notin S'\\
            &\frac{\epsilon' }{100C_0^{10}C^N_f} \delta_0A_0^{-1}C^{-2\max\{n_+(x),n_-(x)\}}_f  , & x&\in S'
	\end{aligned}
	\right.
\end{equation*} 
where $\epsilon'$ is a preassigned small number, $C_f$ and $C_0$ are the constants defined in \Cref{section fake foliation}, and $N$ is given by \Cref{ABC ergodic lemma}.
By Kac's lemma, or equivalently by the integrability of the return-time functions to \(S'\), we have \[\int -\log\psi\,\td m<\infty.\]

Since $\int-\log\psi\,\td m<\infty$, by \cite[Lemma 2]{Mañé_1981}, there exists a finite-entropy partition $\tP$  such that   $\tP(x)\subseteq B(x,\psi(x))$, where $B(x,r)$ denotes the ball with center $x$ and radius $r$. Then the following lemma can be established:
\begin{lemma}\label{Lemma 9.3.3}
For any $\epsilon'>0$, there exists a finite-entropy partition $\tP$   such that   for $m$-a.e. $x\in M$,
\begin{equation*}
    \tP^n_0(x)\cap \xi_i(x)\subseteq V^i(x,n,\epsilon'),\ \forall n>n_+(x).
\end{equation*}
\begin{proof}
Consider an arbitrary point $x$ in a set with full $m$ measure such that $f^n(x)\in S'$ holds infinitely many times as $n\rightarrow\pm\infty$. For any $n>n_+(x)$ and $y\in \tP^n_0(x)\cap \xi_i(x)$, we only need to prove $d^i(f^j(x),f^j(y))<\epsilon'$ for $0\leq j\leq n-1$.
We first prove that $d^i(f^j(x),f^j(y))<\epsilon'$ for $0\leq j\leq n_+(x)$ and $$f^{n_+(x)}(y)\in\exp_{f^{n_+(x)}(x)}\left(W^i_{f^{n_+(x)}(x),\delta_0}(f^{n_+(x)}(x))\right).$$

  Let $\ell_0$ be the largest non-positive integer such that   $f^{\ell_0}x\in S'$. Since $\xi_i$ is increasing, $$f^{\ell_0}(y)\in \xi_i(f^{\ell_0}(x))\subseteq \exp_{f^{\ell_0}(x)}\left(W^i_{f^{\ell_0}(x),\delta_0}(f^{\ell_0}(x))\right).$$ Moreover, since $|\ell_0|+n_+(x)=n_-(f^{n_+(x)}x)$ and $n_-(f^{n_+(x)}(x))= n_+(f^{\ell_0}(x))$, we have
\begin{equation}\label{equation 30}
\begin{aligned}
    d^i(f^{\ell_0}(x),f^{\ell_0}(y))&\leq C_f^{|\ell_0|+n_+(x)}\cdot \psi(f^{n_+(x)}(x)) \\
    &\leq \frac{\epsilon' }{100C_0^{10}C^N_f}\delta_0A_0^{-1}C^{-\max\{n_+(f^{n_+(x)}(x)),n_-(f^{n_+(x)}(x))\}}_f\\
    &\leq  \frac{\epsilon' }{100C_0^{10}C^N_f}\delta_0A_0^{-1}C^{-n_+(f^{\ell_0}(x))}_f.\\
\end{aligned}
\end{equation}
Together with $d^i(f^{\ell_0+k}(x),f^{\ell_0+k}(y))\leq d^i(f^{\ell_0}(x),f^{\ell_0}(y))\cdot C_f^k$, we obtain 
\begin{equation}\label{eq 36}
    d^i(f^j(x),f^j(y))<\epsilon',\ \forall\ 0\leq j\leq n_+(x)
\end{equation}
and 
\begin{equation*}
f^{n_+(x)}(y)\in\exp_{f^{n_+(x)}(x)}\left(W^i_{f^{n_+(x)}(x),\delta_0}(f^{n_+(x)}(x))\right).
\end{equation*}

If $n-1=n_+(x)$, then the proof is complete. When $n-1>n_+(x)$, we shall first prove the following claim: 
\begin{claim}
    For any $x\in S'$, if $y\in \tP_0^n(x)\cap \exp_x(W^i_{x,\delta_0}(x))$ for some $n\geq 0$, then $d^i(f^j(x),f^j(y))<\epsilon'$ for all $0\leq j\leq n-1$.
\begin{proof}
    By the same discussion as above with $\ell_0=0$, it can be proved that if $x\in S'$, $y\in \tP(x)\cap \exp_x(W^i_{x,\delta_0}(x))$, then $d^i(f^j(x),f^j(y))<\epsilon'$ for $0\leq j\leq n_+(x)$ and $$f^{n_+(x)}(y)\in\exp_{f^{n_+(x)}(x)}\left(W^i_{f^{n_+(x)}(x),\delta_0}(f^{n_+(x)}(x))\right).$$ If $n-1\leq n_+(x)$, the proof is complete. If $n-1>n_+(x)$, together with $y\in\tP^n_0(x)$, it follows that $f^{n_+(x)}(y)\in\tP(f^{n_+(x)}(x))$ and $$f^{n_+(x)}(y)\in\exp_{f^{n_+(x)}(x)}\left(W^i_{f^{n_+(x)}(x),\delta_0}(f^{n_+(x)}(x))\right).$$ 
    Thus, previous discussion is also available for $f^{n_+(x)}(y)$. 
    By continuing this process, the claim can be proved inductively.
\end{proof} 
\end{claim}

Now we can apply the claim to $f^{n_+(x)}(x)$ and $f^{n_+(x)}(y)$, which implies that $d^i(f^j(x),f^j(y))<\epsilon'$ for $n_+(x)\leq j\leq n-1$. Together with \Cref{eq 36}, the proof is complete.
    
\end{proof}
\end{lemma}

Finally, combining \Cref{SMB type thm} and \Cref{Lemma 9.3.3}, for $m$-a.e. $x\in M$ and any small $\epsilon'>0$, one has
\begin{equation}\label{eq 33}
\begin{aligned}
    \oh_i(x,\epsilon',\xi_i)&=\limsup_{n\rightarrow \infty}-\frac{1}{n}\log m^i_xV^i(x,n,\epsilon')\\
    &\leq \limsup_{n\rightarrow\infty} -\frac{1}{n}\log m^i_x \tP^n_0(x)\\
    &\leq \lim_{n\rightarrow\infty} -\frac{1}{n}\log m^i_x(\xi_i\vee \tP)_0^n(x)\\
    &=h_i.
\end{aligned}
\end{equation}
Thus, $$\oh_i\leq h_i.$$ Together with \Cref{lemma 15}, we obtain $\uh_i=\oh_i=h_i$. Hence, the proof of \Cref{proposition 1} is complete.

\section{Lower bound for transverse entropy}\label{section 10}
In this section, which is analogous to \cite[Section 10]{LEDRAPPIER_YOUNG_B}, we prove the lower bound estimate for the transverse entropy:
$$h_i-h_{i-1}\geq \lambda_i\cdot\max\{\udelta_i-\udelta_{i-1},\odelta_i-\odelta_{i-1}\}.$$ 
The argument in \cite[Section 10]{LEDRAPPIER_YOUNG_B} establishes the estimate only for the upper dimensions. However, in this paper, we require this estimate for both the upper and lower dimensions, since we only assume that the bundle $E^i$ is one-dimensional and thus the induction arguments in \cite{LEDRAPPIER_YOUNG_B} are not available.

Let ${m^i_x}$ and ${m^{i-1}_x}$ be the conditional measures of $m$ associated with the measurable partitions $\xi_i$ and $\xi_{i-1}$, respectively.
We first prove that $$h_i-h_{i-1}\geq \lambda_i\cdot(\udelta_i-\udelta_{i-1}).$$ In preparation, we establish the following lemma:

\begin{lemma}
    There exists a finite-entropy partition $\tP$ and a measurable function $n_5:M\rightarrow \mathbb N^+$ 
     such that   the following properties are satisfied for any $n>n_5(x)$,
    \begin{align*}
    1): &\quad\frac{\log m^i_x(B^i(x,2\e^{-(\lambda_i-3\epsilon)n} )) }{-n(\lambda_i-3\epsilon)}\geq \udelta_i-\epsilon. \\
    2): &\quad-\frac{1}{n}\log m^{i-1}_x\tP^n_0(x)\geq h_{i-1}-\epsilon. \\
    3): &\quad\xi_i(x)\cap \tP^n_0(x)\subseteq B^i(x,\e^{-n(\lambda_i-3\epsilon)}).   \\
    4): &\quad-\frac{1}{n}\log m^i_x\tP^n_0(x)\leq h_i+\epsilon.        
    \end{align*}
\begin{proof}
    By the definition of $\udelta_i$, there exists $n_1:M\rightarrow\mathbb N^+$   such that   when $n>n_1(x)$, property 1) holds.

    By \Cref{proposition 1}, we can choose $\epsilon'>0$ and $n_2:M\rightarrow\mathbb N^+$   such that   for $m$-a.e. $x$ and $n>n_2(x)$, $$-\frac{1}{n}\log m^{i-1}_xV^{i-1}(x,n,\epsilon')\geq h_{i-1}-\epsilon.$$ Furthermore, by \Cref{Lemma 9.3.3} there exists a finite-entropy partition $\tL$ and a function $n_2':M\rightarrow\mathbb N^+$   such that   for $m$-a.e. $x\in M$, $$\tL^n_0(x)\cap \xi_{i-1}(x)\subseteq V^{i-1}(x,n,\epsilon').$$ 
    With the help of the local argument (see \Cref{local arg here}), we can prove the following claim.
    \begin{claim}\label{claim 19}
     There exists a finite-entropy partition $\tR$ and $n_3:M\rightarrow \mathbb N^+$   such that   for $m$-a.e. $x\in M$, $n>n_3(x)$ we have $$\xi_i(x)\cap \tR_0^n(x)\subseteq B^i(x,\e^{-n(\lambda_i-3\epsilon)}).$$
     \begin{proof}
         Let $\tR$ be the partition with finite entropy in \Cref{Lemma 9.3.3} with parameter $\epsilon'$. If $f^n(x)\in S'$, $n\geq 0$, then it follows from \Cref{Lemma 9.3.3} that $$f^n(\xi_i(x)\cap \tR_0^n(x))\subseteq \exp_{f^nx}(W^i_{f^nx,\delta_0}(f^nx)).$$ Fix a point $x$ such that $f^nx\in S'$ holds infinitely often as $n\rightarrow\pm\infty$ and let $n_3(x)=n_0(x)$. When $n>n_0(x)$, take any $y\in\xi_i(x)\cap\tR^n_0(x)$.

         Let $k$ be the largest integer less than  $n$   such that   $f^k(x)\in S'$. Since $f^k(y)\in \tR(f^k(x))$, it follows from the construction of partition $\tR$ that $$f^k(y)\in B(f^k(x),\psi(f^k(x))).$$ Note that $n_+(f^k(x))\geq n-k$. Moreover, since $f^k(y)\in \exp_{f^k(x)}\left(W^i_{f^k(x),\delta_0}(f^k(x))\right)$, by the local argument (see \Cref{local arg here}) we have
         \begin{equation}\label{local argument example}
         \begin{aligned}
             d^i(x,y)&\leq 4|\exp_x^{-1}y|\\
             &\leq 4\cdot |\exp_{f^k(x)}^{-1}f^k(y)|\cdot \e^{-k(\lambda_i-3\epsilon)}C^N_f A(f^k(x))\\
             &\leq 4\cdot 2\psi(f^k(x))\cdot \e^{-k(\lambda_i-3\epsilon)}C^N_f A(f^k(x))\\
             &\leq \e^{-n(\lambda_i-3\epsilon)},
         \end{aligned}
         \end{equation}
         where $C_f\geq 100\max\{|Df^{\pm}|,\mathrm{e}^{|\lambda_1|+100\epsilon},\mathrm{e}^{|\lambda_r|+100\epsilon}\}$ is defined in \Cref{section fake foliation}.
         So $$\xi_i(x)\cap \tR^n_0(x)\subseteq B^i(x,\e^{-n(\lambda_i-3\epsilon)})$$ whenever $n>n_0(x)=n_3(x)$. Thus, the proof of the claim is complete.
     \end{proof}
    \end{claim}
\begin{remark}[The local argument]\label{local arg here}
    This is the first instance where \textbf{the local argument} is applied in this paper. The method is designed to translate properties from the tangent space to a local neighborhood by \Cref{local argument 1} and \Cref{local argument 2}. By extending the hyperbolic-type control provided by \Cref{ABC ergodic lemma}, it allows us to estimate the length growth along fake leaves. More specifically, this argument establishes that:
    $$|\exp_x^{-1}y|\leq |\exp_{f^k(x)}^{-1}f^k(y)|\cdot \e^{-k(\lambda_i-3\epsilon)}C^N_f A(f^k(x)).$$
    For the reader's convenience, we include the details as follows: Recall \Cref{ABC ergodic lemma}, \Cref{local argument 1} and \Cref{local argument 2}. Then we have
    \begin{align*}
         |\exp_x^{-1}y|&=|\exp_x^{-1}y-\exp^{-1}_xx|\\
         &\leq 2\cdot d_{\tW^{i}}(\exp_x^{-1}y,\exp^{-1}_xx)\\
         &\leq 2\cdot\int_{0}^{\exp^{-1}_{f^{k}(x)}f^{k}(y)}  \|D\tg^{-k}(z)|_{T_z\tW^{i}}\|\,\td z\\
         &\leq \frac{1}{2} C^N_f\cdot \int_{0}^{\exp^{-1}_{f^{k}(x)}f^{k}(y)} \|D\tg^{-KN}(z)|_{T_z\tW^{i}}\|\,\td z\quad \text{(where $K:=[\frac{k}{N}]$)}\\
         &\leq \frac{1}{2}C^N_f\cdot \int_{0}^{\exp^{-1}_{f^{k}(x)}f^{k}(y)}\|D\tg^{-KN}(0)|_{E^{(i)}_{f^{k}(x)}}\|\cdot\e^{2KN\epsilon}\,\td z\quad \text{(by \ref{local argument 1} and \ref{local argument 2})}\\
         & \leq \frac{1}{2}C^N_f\cdot \e^{2
         k\epsilon}\cdot A(f^{k}(x))\cdot \e^{-k(\lambda_i-\epsilon)}\cdot d_{\tW^i}(\exp_{f^{k}(x)}^{-1}f^{k}(y),0)\quad\text{(by \Cref{ABC ergodic lemma})}\\
         & \leq |\exp_{f^k(x)}^{-1}f^k(y)|\cdot \e^{-k(\lambda_i-3\epsilon)}C^N_f A(f^k(x)).
    \end{align*}
    The same technique is also used in Part 1 (cf. \cite[Remark 3]{Part1}). We refer to this technique as \textbf{the local argument} and will use it frequently in the rest of this paper. For simplicity, we will omit the details and directly state the result whenever the same argument is used again. 
\end{remark}
    
Now, let $\tP=\tR\vee\tL$. It is a finite-entropy partition because both $\tR$ and $\tL$ are finite entropy. Since $\tP^n_0(x)\supseteq (\xi_i\vee\tP)_0^n(x)$, by \Cref{SMB type thm} there exists $n_4:M\rightarrow \mathbb N$   such that   for $m$-a.e. $x\in M$ and $n>n_4(x)$, property 4) holds. Finally, we choose $\tP$ and $n_5  := \max\{n_1,n_2,n_2',n_3,n_4\}$, and it is easy to verify that when $n>n_5(x)$, properties 1)-4) are satisfied.
\end{proof}
\end{lemma}

Choose $\tP$ and $n_5$ as above. Then we claim that there exists $N_0>0$ such that the set $\Gamma_{N_0}=\{x\in M:n_5(x)\leq N_0\}$ has a positive measure, and for $m$-a.e. $x\in\Gamma_{N_0}$, there exists $n(x)\geq N_0$  such that   the following properties 5)-8) are satisfied for $n=n(x)$.
\begin{align*}
5):&\quad L  :=  B^{i-1}(x,\e^{-n(\lambda_i-3\epsilon)})\subseteq \xi_{i-1}(x). \\    
6):&\quad \frac{m^{i-1}_x(L\cap \Gamma_{N_0})}{m^{i-1}_x(L)}\geq\frac{1}{2}.   \\
7): &\quad \frac{\log m^{i-1}_x(B^{i-1}(x,\e^{-n(\lambda_{i}-3\epsilon)}))}{-n(\lambda_i-3\epsilon)}\leq \udelta_{i-1}+\epsilon.  \\
8): &\quad\frac{\log 2}{n}\leq\epsilon.   
\end{align*}
Property 5) holds for any $n$ large enough (depending on $x$) because $\xi_{i-1}$ is subordinate to $W^{i-1}$. Thus, for $m$-a.e. $x\in M$, one has that $\xi_{i-1}(x)$ contains an open neighborhood of $x$ in $W^{i-1}(x)$. 
Property 6) holds for any $n$ large enough because almost every $x$ is a density point of $m^{i-1}_x$.
Property 7) holds for some $n$ large enough by the definition of $\udelta_{i-1}$ and Property 8) holds for any $n$ large enough.

Then, by the same arguments as in \cite[Section 10.2]{LEDRAPPIER_YOUNG_B}, we complete the proof of this section. For completeness, the details are presented here.

First, by 6) and 7),  $$m^{i-1}_x(L\cap\Gamma_{N_0})\geq\frac{1}{2}m^{i-1}_x(L)\geq\frac{1}{2}\e^{-n(\lambda_i-3\epsilon)(\udelta_{i-1}+\epsilon)}.$$ For $y\in L\cap\Gamma_{N_0}$, by 2) we have $$m^{i-1}_x(\tP^n_0(y))\leq\e^{-(h_{i-1}-\epsilon)n}.$$ Thus, the number of atoms of $\tP^n_0$ that have a non-empty intersection with $L\cap\Gamma_{N_0}$ is greater than or equal to $$ \frac{1}{2}\e^{-n(\lambda_i-3\epsilon)(\udelta_{i-1}+\epsilon)+(h_{i-1}-\epsilon)n}.$$
Note that the number of atoms of $\tP^n_0$ which have a non-empty intersection with $L\cap\Gamma_{N_0}$ is equal to the number of atoms of $\tP^n_0\vee \xi_{i}$ that have a non-empty intersection with $L\cap\Gamma_{N_0}$ since $L\subseteq\xi_{i-1}(x)\subseteq \xi_i(x)$. By 3), when $y\in\Gamma_{N_0}$, we have $$\xi_i(y)\cap\tP^n_0(y)\subseteq B^i(y,\e^{-n(\lambda_i-3\epsilon)}).$$ Hence, the atoms of $\tP^n_0\vee \xi_{i}$ that have a non-empty intersection with    $L\cap\Gamma_{N_0}$ are contained in $B^i(x,2\e^{-n(\lambda_i-3\epsilon)})$. Since \(L\subseteq \xi_i(x)\), for every \(y\in L\cap\Gamma_{N_0}\) we have
\(\xi_i(y)=\xi_i(x)\) and hence \(m_y^i=m_x^i\). Together with 4), it follows that
\begin{equation*}
\begin{aligned}
    m^i_x(B^i(x,2\e^{-n(\lambda_i-3\epsilon)}))&\geq \#\{\text{distinct atoms of $\tP^n_0\vee \xi_{i}$ intersecting $L\cap\Gamma_{N_0}$}\}\\
    &\ \ \times \{\text{minimal measure of such atoms}\}\\
    &\geq \frac{1}{2}\e^{-n(\lambda_i-3\epsilon)(\udelta_{i-1}+\epsilon)+(h_{i-1}-\epsilon)n}\cdot\e^{-n(h_i+\epsilon)}.
\end{aligned}   
\end{equation*}
Together with 1),
\begin{equation*}
    (\udelta_i-\epsilon)(\lambda_i-3\epsilon)\leq (\udelta_{i-1}+\epsilon)(\lambda_i-3\epsilon)+\frac{\log 2}{n}+h_i-h_{i-1}+2\epsilon.
\end{equation*}
Simplifying, one has
\begin{equation*}
    \udelta_i-\udelta_{i-1}-2\epsilon\leq \frac{h_i-h_{i-1}+3\epsilon}{\lambda_i-3\epsilon}.
\end{equation*}
Letting $\epsilon\rightarrow 0$, we obtain $h_i-h_{i-1}\geq\lambda_i(\udelta_i-\udelta_{i-1})$.

The proof for the corresponding inequality $h_i-h_{i-1}\geq\lambda_i(\odelta_i-\odelta_{i-1})$ is analogous and requires only minor adjustments. 
One simply needs to interchange the positions of properties 1) and 7) in the proof for $\odelta_i$ and $\odelta_{i-1}$. 
The argument remains valid as long as one of these properties holds for all sufficiently large~$n$, while the other holds for some sufficiently large~$n$.


\section{Entropy formulas}\label{section 11}
In this section, we finish the proof of the entropy formulas in \Cref{section 7.2}. In \Cref{sec 5.1}, some general results in dimension theory are introduced for preparations. In \Cref{upper estimate}, we prove the upper bound estimate for transverse entropy. Together with the lower bound obtained in \Cref{section 10}, we complete the proof of the entropy formulas in \Cref{sec 5.3}.

\subsection{Lemmas on dimension estimation}\label{sec 5.1}
Before proving the main theorems, we begin with preparations concerning some general results in dimension theory. We prepare four lemmas in measure dimension theory. Two more lemmas are required with respect to \cite[Section 11]{LEDRAPPIER_YOUNG_B}, since we need lemmas to deal with the upper dimension.

\begin{lemma}\label{upper transverse dimension lemma}
Let $m$ be a probability measure on $R^p\times R^q$, where $R^n$ denotes the $n$-dimensional Euclidean space. $\pi:R^p\times R^q\rightarrow R^p$ is the projection onto $R^p$ and $\{m_s\}_{s\in R^p}$ is the disintegration of $m$ associated with $\pi$. Define
$$\gamma(s)  :=  \liminf_{\rho\rightarrow0}\frac{\log m\circ \pi^{-1}(B^p(s,\rho))}{\log \rho},\ s\in R^p$$
and assume that there exists $\delta\geq 0$ such that for $m$-a.e. $(s,t)\in R^p\times R^q$, 
$$\limsup_{\rho\rightarrow0}\frac{\log m(B((s,t),\rho))}{\log\rho}\leq\delta.$$
Then for $m$-a.e. $(s,t)\in R^p\times R^q$,
$$\limsup_{\rho\rightarrow 0}\frac{\log m_s(B^q(t,\rho))}{\log\rho}\leq\delta-\gamma(s).$$
\begin{proof}
Let
\[
\nu:=m\circ \pi^{-1}.
\]
We fix the disintegration
$$
m=\int m_s\,\td\nu(s),
$$
and note that for $m$-a.e. $(s,t)$,
$$
0\leq \gamma(s)\leq \delta.
$$

Fix $h>0$ and $\sigma>0$. For each $j\geq0$, set
\[
a_j:=jh,
\qquad
E_j:=\{s\in\mathbb R^p: a_j\leq \gamma(s)<a_j+h\}.
\]
The sets $E_j$ form a countable partition of the set where $\gamma$ is finite. Since
$\gamma(s)\leq\delta$ for $m$-a.e. $(s,t)$, it is enough to prove the desired estimate
on each $E_j\times\mathbb R^q$.

For $n\geq1$, write $r_n=\e^{-n}$ and define
\[
B_n(\sigma):=
\left\{
(s,t):
m(B((s,t),r_n))\geq \e^{-n(\delta+\sigma)}
\right\},
\]
\[
C_{n,j}(\sigma):=
\left\{
(s,t):
\nu(B^p(s,r_n))\leq \e^{-n(a_j-\sigma)}
\right\},
\]
and
\[
A_{n,j}(\sigma):=
\left\{
(s,t):s\in E_j,\ 
m_s(B^q(t,2r_n))\leq \e^{-n(\delta-a_j+3\sigma)}
\right\}.
\]
We claim that
\[
m\left(\limsup_{n\to\infty} A_{n,j}(\sigma)\right)=0
\quad
\text{on }E_j\times\mathbb R^q .
\]

Let
\[
D_{n,j}:=A_{n,j}(\sigma)\cap B_n(\sigma)\cap C_{n,j}(\sigma).
\]
We first estimate $m(D_{n,j})$. Fix $z=(s,t)\in D_{n,j}$. By disintegration,
\[
m(A_{n,j}(\sigma)\cap B(z,r_n))
=
\int_{B^p(s,r_n)}
m_{s'}\bigl(
\{u\in\mathbb R^q:(s',u)\in A_{n,j}(\sigma)\cap B(z,r_n)\}
\bigr)\,\td\nu(s').
\]
If the integrand is nonzero for some $s'\in B^p(s,r_n)$, then there exists
$t_0\in\mathbb R^q$ such that
\[
(s',t_0)\in A_{n,j}(\sigma)\cap B(z,r_n).
\]
For every $u$ such that $(s',u)\in A_{n,j}(\sigma)\cap B(z,r_n)$, we have
\[
u\in B^q(t_0,2r_n).
\]
Since $(s',t_0)\in A_{n,j}(\sigma)$, it follows that
\[
m_{s'}(B^q(t_0,2r_n))
\leq
\e^{-n(\delta-a_j+3\sigma)}.
\]
Consequently,
\[
m_{s'}\bigl(
\{u:(s',u)\in A_{n,j}(\sigma)\cap B(z,r_n)\}
\bigr)
\leq
\e^{-n(\delta-a_j+3\sigma)}.
\]
Integrating over $B^p(s,r_n)$ gives
\[
m(A_{n,j}(\sigma)\cap B(z,r_n))
\leq
\e^{-n(\delta-a_j+3\sigma)}
\nu(B^p(s,r_n)).
\]
Since $z\in C_{n,j}(\sigma)$, we get
\[
m(A_{n,j}(\sigma)\cap B(z,r_n))
\leq
\e^{-n(\delta-a_j+3\sigma)}\e^{-n(a_j-\sigma)}
=
\e^{-n(\delta+2\sigma)}.
\]
On the other hand, $z\in B_n(\sigma)$ implies
\[
m(B(z,r_n))\geq \e^{-n(\delta+\sigma)}.
\]
Therefore,
\[
m(A_{n,j}(\sigma)\cap B(z,r_n))
\leq
\e^{-n\sigma}m(B(z,r_n)).
\]

The family $\{B(z,r_n):z\in D_{n,j}\}$ covers $D_{n,j}$. By the Besicovitch
covering lemma, there exists a countable subcover $\mathcal B_{n,j}$ with
multiplicity bounded by a constant $C=C(p+q)$. Thus,
\[
\begin{aligned}
m(D_{n,j})
&\leq
\sum_{B\in\mathcal B_{n,j}} m(D_{n,j}\cap B)  \\
&\leq
\sum_{B\in\mathcal B_{n,j}} m(A_{n,j}(\sigma)\cap B)  \\
&\leq
\e^{-n\sigma}\sum_{B\in\mathcal B_{n,j}}m(B) \\
&\leq
C \e^{-n\sigma}.
\end{aligned}
\]
Hence
\[
\sum_{n=1}^\infty m(D_{n,j})<\infty.
\]
By the Borel-Cantelli lemma,
\[
m\left(\limsup_{n\to\infty}D_{n,j}\right)=0.
\]

We now remove the auxiliary sets $B_n(\sigma)$ and $C_{n,j}(\sigma)$. By the
assumption of the lemma, for $m$-a.e. $(s,t)$,
\[
(s,t)\in B_n(\sigma)
\]
for all sufficiently large $n$. Moreover, if $s\in E_j$, then $\gamma(s)\geq a_j$;
hence, by the definition of $\gamma$, we have
\[
\nu(B^p(s,r_n))\leq \e^{-n(a_j-\sigma)}
\]
for all sufficiently large $n$, that is,
\[
(s,t)\in C_{n,j}(\sigma)
\]
eventually. Therefore, on $E_j\times\mathbb R^q$,
\[
m\left(\limsup_{n\to\infty} A_{n,j}(\sigma)\right)=0.
\]

Since $j$ ranges over a countable set, we conclude that for $m$-a.e. $(s,t)$ there
exists $j\geq0$ with $s\in E_j$ such that, for all sufficiently large $n$,
\[
(s,t)\notin A_{n,j}(\sigma).
\]
Equivalently,
\[
m_s(B^q(t,2\e^{-n}))
>
\e^{-n(\delta-a_j+3\sigma)}
\]
for all sufficiently large $n$.

We now pass to arbitrary $\rho>0$. If
\[
2\e^{-(n+1)}\leq \rho\leq 2\e^{-n},
\]
then
\[
B^q(t,\rho)\supseteq B^q(t,2\e^{-(n+1)}).
\]
Hence, for all sufficiently large $n$,
\[
m_s(B^q(t,\rho))
\geq
m_s(B^q(t,2\e^{-(n+1)}))
>
\e^{-(n+1)(\delta-a_j+3\sigma)}.
\]
Since $\log\rho\leq \log 2-n<0$, it follows that
\[
\frac{\log m_s(B^q(t,\rho))}{\log\rho}
\leq
\frac{(n+1)(\delta-a_j+3\sigma)}{n-\log 2}.
\]
Letting $\rho\to0$, equivalently $n\to\infty$, gives
\[
\limsup_{\rho\to0}
\frac{\log m_s(B^q(t,\rho))}{\log\rho}
\leq
\delta-a_j+3\sigma.
\]
Since $s\in E_j$, we have
\[
a_j\leq \gamma(s)<a_j+h,
\]
and therefore
\[
\delta-a_j+3\sigma
\leq
\delta-\gamma(s)+h+3\sigma.
\]
Thus, for $m$-a.e. $(s,t)$,
\[
\limsup_{\rho\to0}
\frac{\log m_s(B^q(t,\rho))}{\log\rho}
\leq
\delta-\gamma(s)+h+3\sigma.
\]
Finally, taking $h\to0$ and $\sigma\to0$ along countable sequences, we obtain
\[
\limsup_{\rho\to0}
\frac{\log m_s(B^q(t,\rho))}{\log\rho}
\leq
\delta-\gamma(s)
\]
for $m$-a.e. $(s,t)\in\mathbb R^p\times\mathbb R^q$. This proves the lemma.
\end{proof}

\end{lemma}

A proof similar to the above ones gives the following lemma. See also \cite[Lemma 12.1.2]{LEDRAPPIER_YOUNG_B} for the ideas of the proof.
\begin{lemma}\label{upper dimension lemma}
    Let $(\Omega,\mu)$ be an arbitrary probability space and $m$ a probability on $\Omega\times R^q$ with $m(\td \omega,\td s)=\int m_\omega(\td s)\mu(\td\omega)$ where $\omega\in\Omega,s\in R^q$. Suppose $\delta\geq0$ such that
    $$\limsup_{\rho\rightarrow0}\frac{\log m(\Omega\times B^q(s,\rho))}{\log\rho}\leq\delta,\ \ m-a.e.\  (\omega,s)\in \Omega\times R^q$$
    then
    $$\limsup_{\rho\rightarrow0}\frac{\log m_\omega(B^q(s,\rho))}{\log\rho}\leq\delta,\ \ m-a.e.\ (\omega,s)\in \Omega\times R^q$$
\end{lemma}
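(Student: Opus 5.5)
The plan is to follow the Borel--Cantelli/covering scheme used in the proof of \Cref{upper transverse dimension lemma}, with the simplification that no transverse dimension $\gamma$ has to be subtracted. The only geometry available is in the $R^q$ factor: the cylinders $\Omega\times B^q(s,\rho)$ play the role of balls, and the Besicovitch covering lemma is applied in $R^q$ alone.

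Fix $\sigma>0$ and write $r_n=\e^{-n}$. Define
\[
B_n(\sigma):=\{(\omega,s): m(\Omega\times B^q(s,r_n))\geq \e^{-n(\delta+\sigma)}\}
\]
and
\[
A_n(\sigma):=\{(\omega,s): m_\omega(B^q(s,2r_n))\leq \e^{-n(\delta+3\sigma)}\}.
\]
By hypothesis, $m$-a.e. point lies in $B_n(\sigma)$ for all large $n$. It therefore suffices to show $\sum_n m(A_n(\sigma)\cap B_n(\sigma))<\infty$.

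The key local estimate is the following. Fix $z=(\omega_0,s_0)\in D_n:=A_n(\sigma)\cap B_n(\sigma)$ and set $Q=\Omega\times B^q(s_0,r_n)$. By disintegration,
\[
m(A_n\cap Q)=\int m_\omega\bigl(\{u\in B^q(s_0,r_n):(\omega,u)\in A_n\}\bigr)\,\td\mu(\omega).
\]
For fixed $\omega$, the section $\{u\in B^q(s_0,r_n):(\omega,u)\in A_n\}$ is either empty or contains some $t_0$. In the latter case the whole section lies in $B^q(t_0,2r_n)$, so its $m_\omega$-measure is at most $\e^{-n(\delta+3\sigma)}$. Integrating over $\omega$ gives
\[
m(A_n\cap Q)\leq \e^{-n(\delta+3\sigma)}\leq \e^{-2n\sigma}\, m(Q),
\]
where the last step uses $z\in B_n$. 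This is where the lack of a measure on the base matters: we integrate against all of $\mu$, which has total mass $1$, and the factor $\e^{-n(\delta+\sigma)}$ supplied by $B_n$ absorbs this.

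Next, cover the projection of $D_n$ to $R^q$ by the balls $B^q(s,r_n)$ with $(\omega,s)\in D_n$. By Besicovitch there is a countable subcover of multiplicity at most $C(q)$, and the corresponding cylinders cover $D_n$. Summing the local estimate over the subcover gives
\[
m(D_n)\leq \e^{-2n\sigma}\sum m(Q)\leq C(q)\,\e^{-2n\sigma},
\]
which is summable. Borel--Cantelli then shows that a.e. $(\omega,s)$ lies outside $A_n(\sigma)$ for all large $n$, that is,
\[
m_\omega(B^q(s,2\e^{-n}))>\e^{-n(\delta+3\sigma)}.
\]
Interpolating between $2\e^{-(n+1)}\leq\rho\leq 2\e^{-n}$ exactly as at the end of the proof of \Cref{upper transverse dimension lemma} yields $\limsup_{\rho\to0}\log m_\omega(B^q(s,\rho))/\log\rho\leq \delta+3\sigma$. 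Letting $\sigma\to0$ along a countable sequence finishes the proof.

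The main obstacle I expect is the measurability of the sets $A_n$ and $B_n$, together with applying Besicovitch to a possibly uncountable family indexed by points of $\Omega\times R^q$. I would handle the latter by covering only the projection of $D_n$ to $R^q$, as above. For measurability, I would use that $s\mapsto m_\omega(B^q(s,\rho))$ is lower semicontinuous and that $(\omega,s)\mapsto m_\omega(\cdot)$ is measurable via the disintegration.
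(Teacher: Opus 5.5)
Your proposal is correct and is essentially the argument the paper intends. The paper gives no separate proof; it only says the Borel--Cantelli/Besicovitch scheme of \Cref{upper transverse dimension lemma} adapts, referring to \cite[Lemma 12.1.2]{LEDRAPPIER_YOUNG_B}. Your adaptation is exactly that: you apply Besicovitch only in the $R^q$ factor via the cylinders $\Omega\times B^q(s,r_n)$, and you use $\mu(\Omega)=1$ in place of the base estimate $\nu(B^p(s,r_n))\leq \e^{-n(a_j-\sigma)}$, which corresponds to $\gamma=0$. Your treatment of measurability is also adequate.
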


Both \Cref{upper transverse dimension lemma} and \Cref{upper dimension lemma} have a version for the lower dimensions. See \cite[Lemma 11.3.1]{LEDRAPPIER_YOUNG_B} and \cite[Lemma 11.3.2]{LEDRAPPIER_YOUNG_B}.

\begin{lemma}\cite[Lemma 11.3.1]{LEDRAPPIER_YOUNG_B}\label{lower transverse dimension lemma}
Let $m$ be a probability measure on $R^p\times R^q$, where $R^n$ denotes the $n$-dimensional Euclidean space. $\pi:R^p\times R^q\rightarrow R^p$ is the projection onto $R^p$ and $\{m_s\}_{s\in R^p}$ is the disintegration of $m$ associated with $\pi$. Define
$$\gamma(s)  :=  \liminf_{\rho\rightarrow0}\frac{\log m\circ \pi^{-1}(B^p(s,\rho))}{\log \rho},\ s\in R^p$$
and assume that there exists $\delta\geq 0$ such that for $m$-a.e. $(s,t)\in R^p\times R^q$,  
$$\liminf_{\rho\rightarrow 0}\frac{\log m_s(B^q(t,\rho))}{\log\rho}\geq\delta.$$
Then for $m$-a.e. $(s,t)\in R^p\times R^q$,
$$\liminf_{\rho\rightarrow0}\frac{\log m(B((s,t),\rho))}{\log\rho}\geq\delta+\gamma(s).$$
\end{lemma}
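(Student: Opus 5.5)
The plan is to prove the lower-dimension analogue of \Cref{upper transverse dimension lemma} by the same Borel--Cantelli and Besicovitch scheme, now run on the event where the ball in $R^p\times R^q$ is too heavy. Write $\nu=m\circ\pi^{-1}$ and disintegrate $m=\int m_s\,\td\nu(s)$. Fix $h>0$ and $\sigma>0$, and slice by level sets $E_j=\{s: a_j\leq\gamma(s)<a_j+h\}$ with $a_j=jh$. The case $\gamma(s)=\infty$ is handled separately with a truncation $a_j$ arbitrarily large; this is harmless since it only strengthens the conclusion. Let $r_n=\e^{-n}$. Define the bad event
\[
A_{n,j}:=\{(s,t): s\in E_j,\ m(B((s,t),r_n))\geq \e^{-n(\delta+a_j-3\sigma)}\}.
\]
The goal is $m(\limsup_n A_{n,j})=0$. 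Given this, the passage from $r_n$ to arbitrary $\rho$ is routine, and letting $h,\sigma\to0$ gives the claim.

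To control $A_{n,j}$, intersect it with two auxiliary events. The first is $C_{n,j}:=\{\nu(B^p(s,2r_n))\leq \e^{-n(a_j-\sigma)}\}$. Because $\gamma$ is a liminf, this holds for all large $n$ when $s\in E_j$, so its complement occurs only finitely often almost surely. The second is $G_n:=\{m_{s}(B^q(t,2r_n))\leq \e^{-n(\delta-\sigma)}\}$, which holds eventually by the hypothesis on the fibre dimensions. The difficulty is that $G_n$ concerns the centre's own fibre, whereas a ball $B(z,r_n)$ meets many fibres $s'$.

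So the key step is a density argument on each fibre. Let $D_{n}=A_{n,j}\cap C_{n,j}$, and let $F_N$ be the set of points satisfying $G_n$ for all $n\geq N$; then $m(F_N)\to1$. Fix $z=(s,t)\in D_n$. For every $s'\in B^p(s,r_n)$, the set $\{u:(s',u)\in F_N\cap B(z,r_n)\}$ lies in a ball $B^q(t_0,2r_n)$ centred at one of its own points $(s',t_0)\in F_N$. Hence its $m_{s'}$-measure is at most $\e^{-n(\delta-\sigma)}$. Integrating over $s'$ gives
\[
m(F_N\cap B(z,r_n))\leq \e^{-n(\delta-\sigma)}\,\nu(B^p(s,r_n))\leq \e^{-n(\delta+a_j-2\sigma)}\leq \e^{-n\sigma}\,m(B(z,r_n)),
\]
where the last inequality uses $z\in A_{n,j}$.

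Now apply the Besicovitch covering lemma to the balls centred at points of $D_n\cap F_N$. This yields $m(D_n\cap F_N)\leq C\e^{-n\sigma}$, which is summable, so Borel--Cantelli gives that $D_n\cap F_N$ occurs only finitely often. Letting $N\to\infty$ and using that $C_{n,j}$ holds eventually, we get $m(\limsup_n A_{n,j})=0$ on $E_j\times R^q$.

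Outside $\limsup_n A_{n,j}$ we have $m(B((s,t),r_n))<\e^{-n(\delta+a_j-3\sigma)}$ for all large $n$. Hence the lower pointwise dimension is at least $\delta+a_j-3\sigma\geq\delta+\gamma(s)-h-3\sigma$. The main obstacle I anticipate is the measurability of the sets involved, together with the fact that the fibre estimate is only available at points of $F_N$ and not at arbitrary points of the ball. The restriction to $F_N$ above is exactly what handles the latter.
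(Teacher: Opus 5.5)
Your proof is correct. The paper does not prove this lemma itself; it only cites \cite[Lemma 11.3.1]{LEDRAPPIER_YOUNG_B}. Your argument is the dual of the paper's proof of \Cref{upper transverse dimension lemma}, with the roles of hypothesis and bad event exchanged. The shared ingredients are the slicing by $E_j$, the slice-by-slice bound through a ball $B^q(t_0,2r_n)$ centred at a point of the slice, Besicovitch, and Borel--Cantelli.

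The one genuinely new ingredient is the restriction to the set $F_N$, where the fibre bound holds for all $n\geq N$. This is exactly what is needed, because the hypothesis now has to be applied at the integrated points $(s',t_0)$ rather than at the centre. With it, for $z\in D_n$ and $n\geq N$,
\[
m(F_N\cap B(z,r_n))\leq \e^{-n(\delta-\sigma)}\nu(B^p(s,r_n))\leq \e^{-n\sigma}m(B(z,r_n)),
\]
and $\limsup_n(D_n\cap F_N)=F_N\cap\limsup_n D_n$ is null, as you say. The case $\gamma(s)=\infty$ is actually $\nu$-null, since $\gamma\leq p$ holds $\nu$-a.e., so your truncation is harmless but unnecessary.

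Once $F_N$ is available, you can drop the level sets $E_j$, the events $A_{n,j}$ and Borel--Cantelli altogether. Define $F_N$ by the bound $m_s(B^q(t,\rho))\leq\rho^{\delta-\sigma}$ for all $\rho<1/N$. Your slice computation then gives $m(B(z,\rho)\cap F_N)\leq (2\rho)^{\delta-\sigma}\nu(B^p(s,\rho))$ for every $z=(s,t)$ and every $\rho<1/(2N)$. The Lebesgue--Besicovitch density theorem gives $m(B(z,\rho))\leq 2m(B(z,\rho)\cap F_N)$ for $m$-a.e. $z\in F_N$ and all small $\rho$. Together these yield the conclusion at once, and they handle $\gamma(s)=\infty$ automatically.

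In the upper lemma the bad set changes with the scale, so a Borel--Cantelli argument over $r_n=\e^{-n}$ is the natural tool there. In the lower direction the bad behaviour is confined to the complement of a fixed set $F_N$, so a single density-point argument suffices. Your version in effect re-derives a quantitative density estimate by hand.
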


\begin{lemma}\cite[Lemma 11.3.2]{LEDRAPPIER_YOUNG_B}\label{lower dimension lemma}
    Let $(\Omega,\mu)$ be an arbitrary probability space and $m$ a probability on $\Omega\times R^q$ with  $m(\td \omega,\td s)=\int m_\omega(\td s)\mu(\td\omega)$ where $\omega\in\Omega,s\in R^q$. Suppose $\delta\geq0$ such that
   $$\liminf_{\rho\rightarrow0}\frac{\log m_\omega(B^q(s,\rho))}{\log\rho}\geq\delta,\ \ m-a.e.\ (\omega,s)\in \Omega\times R^q$$
    then
     $$\liminf_{\rho\rightarrow0}\frac{\log m(\Omega\times B^q(s,\rho))}{\log\rho}\geq\delta,\ \ m-a.e.\ (\omega,s)\in \Omega\times R^q$$
\end{lemma}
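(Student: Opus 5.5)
The statement to be proved is \Cref{lower dimension lemma}: in $\Omega\times R^q$, a lower bound on the fiber dimensions of $m_\omega$ transfers to the projected measure $\bar m:=m\circ p^{-1}$ on $R^q$. Here $p$ is the projection to $R^q$, and $\bar m(B^q(s,\rho))=m(\Omega\times B^q(s,\rho))$. I would argue by a Borel--Cantelli and covering scheme dual to the proof of \Cref{upper transverse dimension lemma}.

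\textbf{Setup.} Fix $\sigma>0$ and put $r_n=\e^{-n}$. Define
\[
G_n:=\{(\omega,s): m_\omega(B^q(s,2r_n))\leq \e^{-n(\delta-\sigma)}\}.
\]
By hypothesis, for $m$-a.e. $(\omega,s)$ we have $(\omega,s)\in G_n$ for all large $n$. Hence, given $\eta>0$, there is $N$ such that the set $G:=\bigcap_{n\geq N}G_n$ satisfies $m(G)\geq 1-\eta$.

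\textbf{Step 1 (bad set for the projection).} For $n\geq N$ let
\[
A_n:=\{(\omega,s)\in G: \bar m(B^q(s,r_n))\geq \e^{-n(\delta-3\sigma)}\}.
\]
I want to show $\sum_n m(A_n)<\infty$. Cover $p(A_n)$ by the balls $B^q(s,r_n)$ with $s\in p(A_n)$, and use Besicovitch to extract a subcover $\{B^q(s_k,r_n)\}$ of bounded multiplicity $C(q)$. For each $k$ and each $\omega$, the set $\{s:(\omega,s)\in G\}\cap B^q(s_k,r_n)$ is either empty or contained in $B^q(s',2r_n)$ for some $s'$ with $(\omega,s')\in G$. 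Therefore
\[
m\bigl(G\cap(\Omega\times B^q(s_k,r_n))\bigr)\leq \e^{-n(\delta-\sigma)}.
\]
Moreover $\bar m(B^q(s_k,r_n))\geq \e^{-n(\delta-3\sigma)}$ because $s_k\in p(A_n)$. The balls have bounded overlap and $\bar m$ is a probability, so the number of balls in the subcover is at most $C(q)\e^{n(\delta-3\sigma)}$. Summing over $k$,
\[
m(A_n)\leq C(q)\,\e^{n(\delta-3\sigma)}\e^{-n(\delta-\sigma)}=C(q)\,\e^{-2n\sigma},
\]
which is summable.

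\textbf{Step 2 (conclusion).} Borel--Cantelli shows that for $m$-a.e. point of $G$ we have $\bar m(B^q(s,r_n))<\e^{-n(\delta-3\sigma)}$ for all large $n$. Interpolating between the scales $r_{n+1}\leq\rho\leq r_n$ exactly as at the end of the proof of \Cref{upper transverse dimension lemma} gives
\[
\liminf_{\rho\to0}\frac{\log \bar m(B^q(s,\rho))}{\log\rho}\geq \delta-3\sigma
\]
on $G$, up to a null set. Letting $\eta\to0$ and $\sigma\to0$ along countable sequences yields the claim.

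\textbf{Main obstacle.} The delicate point is the counting in Step 1, namely the observation that the fiberwise estimate at the doubled radius $2r_n$ controls the mass of $G$ inside a whole cylinder $\Omega\times B^q(s_k,r_n)$. This is what allows the estimate to ignore the structure of $\Omega$, and it is why $G$ has to be fixed before the counting rather than working with each $G_n$ separately. Measurability of $A_n$, $G_n$ and of $s\mapsto\bar m(B^q(s,r_n))$ is routine, since these functions are semicontinuous in $s$.
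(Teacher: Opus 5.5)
Your proof is correct. The paper gives no proof of this lemma of its own and only cites \cite[Lemma 11.3.2]{LEDRAPPIER_YOUNG_B}. The usual argument for it is a density-point argument rather than a Borel--Cantelli one, so your route is genuinely different.

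In the density-point proof one fixes
\[
\Gamma_N:=\{(\omega,s): m_\omega(B^q(s,\rho))\leq\rho^{\delta-\sigma}\ \text{for all}\ \rho\leq\e^{-N}\}.
\]
The same doubling observation you isolate then gives $m\bigl(\Gamma_N\cap(\Omega\times B^q(s_0,\rho))\bigr)\leq(2\rho)^{\delta-\sigma}$ for every $s_0\in R^q$ and every $\rho\leq\frac12\e^{-N}$. One then differentiates $p_*(m|_{\Gamma_N})\leq\bar m$ with respect to $\bar m$ on $R^q$. Its density $g$ satisfies $g(s)>0$ for $m$-a.e.\ $(\omega,s)\in\Gamma_N$, and hence $\bar m(B^q(s,\rho))\leq 2g(s)^{-1}(2\rho)^{\delta-\sigma}$ for all small $\rho$.

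That version is shorter and needs no discretization of scales or interpolation, but it uses the Besicovitch differentiation theorem for the projected measure. Your version needs only a bounded-overlap cover by balls of a single radius (a cube grid would do) together with Borel--Cantelli. It also has the merit of being literally dual to the paper's proof of \Cref{upper transverse dimension lemma}.

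One correction to your closing remark: the intersection $G=\bigcap_{n\geq N}G_n$ is not needed. The bound $m\bigl(G\cap(\Omega\times B^q(s_k,r_n))\bigr)\leq\e^{-n(\delta-\sigma)}$ uses only membership in $G_n$ at the single scale $n$. Indeed, if $(\omega,s')\in G_n$ and $s'\in B^q(s_k,r_n)$, then $B^q(s_k,r_n)\subseteq B^q(s',2r_n)$. So you can define $A_n$ as a subset of $G_n$ and obtain the same estimate $m(A_n)\leq C(q)\e^{-2n\sigma}$. Since $m$-a.e.\ point lies in $G_n$ for all large $n$, Borel--Cantelli then gives the conclusion directly, and the $\eta\to0$ step disappears. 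Requiring the fiber bound at all scales simultaneously is what the density-point proof needs, not yours.
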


\subsection{Upper bound for transverse entropy}\label{upper estimate}
In this section, we prove the upper bound estimate for the transverse entropy: $$\lambda_i\min\{\udelta_i-\udelta_{i-1},\odelta_i-\odelta_{i-1}\}\geq h_i-h_{i-1}$$ using ideas similar to  \cite{Part1} and \cite[Section 11]{LEDRAPPIER_YOUNG_B}. We first prove that  $$\lambda_i(\odelta_i-\odelta_{i-1})\geq h_i-h_{i-1}.$$ 

\subsubsection{Discussions parallel to  \cite{Part1}}

We carry out discussions that are parallel to  \cite{Part1}.
The only difference in this subsection is to substitute $(i-1)$ for $u$, $i$ for $c$, and $\widehat{(i)}$ for $s$.

Consider $A(x),\ \xi_i,\ \xi_{i-1},\ S$ and $S'$ as before. Let $\tP$ be a finite-entropy partition adapted to $\{A(x),\delta_{\text{1}}\}$ as in \cite[Section 3.2]{Part1}. Choose $E\subseteq S'$ to be a positive measure set of diameter sufficiently small and depending only on $A_0$ as in \cite[Section 5.1]{Part1}. We require that $\tP$ refines $\{E,M-E\},\{S,M-S\}$. For $0<\delta<\frac{\delta_\text{1}}{100}$, we define $\eta_i:=\xi_i\vee \tP^+$ and $\eta_{i-1}  := \xi_{i-1}\vee \tP^+$. Then the following lemmas can be obtained as in \cite[Section 3]{Part1}.

\begin{lemma}
The following properties hold:
\begin{itemize}
    \item[(1)] $\eta_i$ and $\eta_{i-1}$ are increasing, which means that $f\eta_i\prec\eta_i$ and $f\eta_{i-1}\prec\eta_{i-1}$.
    \item[(2)] $\eta_i\prec\eta_{i-1}$.
    \item[(3)] $\eta_{i-1}(x)\subseteq\exp_x(W^{i-1}_{x,2\delta}(x))$, $\eta_i(x)\subseteq\exp_x(W^i_{x,2\delta}(x))$ for $m$-a.e$.\ x$.
    \item[(4)] $h_m(f,\eta_i)=h_i$ and $h_m(f,\eta_{i-1})=h_{i-1}$.
\end{itemize}
\begin{proof}
    The same as \cite[Lemma 3.3]{Part1}, parallel to \cite[Lemma 11.1.1]{LEDRAPPIER_YOUNG_B}.
\end{proof}
\end{lemma}

\begin{lemma}
    For $m$-a.e$.\ x$ and for any $y\in\Gamma'\cap\eta_i(x)$, $$\exp_x(W^{i-1}_{x,2\delta}(y))\cap \eta_i(x)=\eta_{i-1}(y).$$
\begin{proof}
    The same as \cite[Lemma 3.4]{Part1}, parallel to \cite[Lemma 11.1.2]{LEDRAPPIER_YOUNG_B}.
\end{proof}
\end{lemma}

\begin{lemma}
For $m$-a.e$.\ x$ and for any $ y\in\Gamma'\cap \eta_i(x)$, $$f^{-1}(\eta_{i-1}(y))=\eta_{i-1}(f^{-1}y)\cap f^{-1}(\eta_i(x)).$$
\begin{proof}
    The same as \cite[Lemma 3.5]{Part1}, parallel to \cite[Lemma 11.1.3]{LEDRAPPIER_YOUNG_B}.
\end{proof}
\end{lemma}

So, there is a nice quotient structure for $\eta_{i}(x)/\eta_{i-1}$ as in  \cite{Part1} or \cite[Section 11]{LEDRAPPIER_YOUNG_B}. We now define the transverse metric on $\eta_i(x)/\eta_{i-1}$ for $m$-a.e. $x$. The first step is to define the projection map $\tpi:\bigcup_{n\geq 0}f^nE\rightarrow R^{(i)}$, where $R^{(i)}$ is a $\dim E^{(i)}$-dimensional Euclidean space. When $x\in E$, $\tpi(x)  :=  \pi(x)$, where $\pi$ is the straightening map defined in \Cref{straightening map section}. When $x\notin E$, we define $\tpi(x):=\pi(f^{\ell_0}(x))$, where $\ell_0$ is the largest negative number   such that   $f^{\ell_0}x\in E$. From the discussion in \Cref{straightening map section} and the previous discussion, for $m$-a.e. $x\in M$, $\tpi|_{\eta_i(x)}$ maps $\eta_{i-1}(y)$ into distinct $R^{(i-1)}$ planes and $\tpi|_{\eta_i(x)}$ is $\alpha$-bi-Hölder.

For $x\in \bigcup _{n\geq0}f^nE$ and $y,y'\in\eta_i(x)$, the transverse metric is defined by $$\tilde d^i(y,y')  :=  |\tpi^iy-\tpi^iy'|,$$ where $\tpi^i$ is the $i$-th component of $\tpi$. Let $\{\tm^i_x\}$ be the conditional measures of $m$ associated with $\eta_i$. Let $\tilde B^i(x,\rho)  := \{y\in\eta_i(x):\tilde d^i(x,y)\leq\rho\} $. We define
\begin{equation}\label{eq 59}
    \tilde\gamma_i(x)  :=  \liminf_{\rho\rightarrow 0}\frac{\log\tm^i_x(\tilde B^i(x,\rho))}{\log\rho}
\end{equation}
as the transverse dimension function for $\eta_i/\eta_{i-1}$. Recall our assumption that $\dim E^i=1$. This implies that the quotient space ${R^{(i)}}/{R^{(i-1)}}$ is isomorphic to $\mathbb R$. Therefore, since the quotient measure $\tm^i_x\circ (\tpi^i)^{-1}$ is supported on this one-dimensional space, an application of  \cite[Lemma 4.4]{Part1} yields the bound $\tgamma_i(x)\leq 1$.

By the same arguments as in \cite[Section 5]{Part1}, we can prove that the following proposition analogous to \cite[Equation (13)]{Part1} holds.

\begin{lemma}\label{lem 33}
    For any $\epsilon>0$, there exist measurable partitions $\eta_i,\ \eta_{i-1}$ 
    as above   such that   for $m$-a.e. $x\in M$, 
    \begin{equation*}
    \begin{aligned}
        \frac{\lambda_i+3\epsilon}{\alpha}\cdot\tgamma_i(x)&\geq -(1-\epsilon)\cdot(h(f,\eta_{i-1})+2\epsilon)+(1-\epsilon)\cdot(h(f,\eta_i)-\epsilon)\\
        &=(1-\epsilon)\cdot(h_i-h_{i-1}-3\epsilon)
    \end{aligned}
    \end{equation*}
    where $\tgamma_i(x)$ is the transverse dimension function for $\eta_i/\eta_{i-1}$ defined by \Cref{eq 59}.
\end{lemma}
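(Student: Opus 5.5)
The plan is to run the argument of \cite[Section 5]{Part1}, which parallels \cite[Section 11.2]{LEDRAPPIER_YOUNG_B}, inside a single leaf $\eta_i(x)$. The overall shape is as follows. The atoms of $f^{-n}\eta_i$ cut $\eta_i(x)$ into pieces. On one hand, these pieces are transversally thin: their $\tilde d^i$-width is of order $\e^{-n(\lambda_i+3\epsilon)/\alpha}$ up to subexponential factors, which gives the factor $\frac{\lambda_i+3\epsilon}{\alpha}$. On the other hand, the $\tm^i_x$-mass of such a piece is about $\e^{-nh(f,\eta_i)}$, while its intersection with each $\eta_{i-1}$-plaque has conditional mass about $\e^{-nh(f,\eta_{i-1})}$. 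Comparing these two facts bounds the transverse ball mass by $\e^{-n(h_i-h_{i-1})}$ up to $\epsilon$-errors.

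\textbf{Step 1 (SMB estimates and a good set).} Since $\eta_i$ and $\eta_{i-1}$ are increasing, the Shannon--McMillan--Breiman theorem for increasing partitions applies, in the form used for \Cref{SMB type thm}. It gives
\[
-\tfrac1n\log\tm^i_x((f^{-n}\eta_i)(x))\to h(f,\eta_i)
\quad\text{and}\quad
-\tfrac1n\log\tm^{i-1}_y((f^{-n}\eta_{i-1})(y))\to h(f,\eta_{i-1})
\]
for $m$-a.e.\ $x,y$. By Egorov's theorem, I choose $N_0$ and a set $G$ with $m(G)>1-\epsilon$ on which both convergences are $\epsilon$-uniform for $n\geq N_0$. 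I also require that on $G$ the function $A$ is bounded, the return times to $E$ are controlled, and $G$-points are $\tm^i_x$-density points of $G\cap\eta_i(x)$.

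\textbf{Step 2 (atoms of $f^{-n}\eta_i$ versus transverse balls).} This is the key geometric step. The lemma stating $f^{-1}(\eta_{i-1}(y))=\eta_{i-1}(f^{-1}y)\cap f^{-1}(\eta_i(x))$ gives, after iteration, $(f^{-n}\eta_i)(y)\cap\eta_{i-1}(y)=(f^{-n}\eta_{i-1})(y)$. Hence each atom of $f^{-n}\eta_i$ inside $\eta_i(x)$ is a union of $f^{-n}\eta_{i-1}$-pieces of $\eta_{i-1}$-plaques. I will then show the following: the $\eta_{i-1}$-saturation of $\tilde B^i(x,\rho_n)$, with $\rho_n:=C(A_0)\e^{-n(\lambda_i+3\epsilon)/\alpha}$, meets at most a controlled collection of $f^{-n}\eta_i$-atoms, each through a $G$-point, with $\rho_n$ computed along a chosen sequence of $n$ at which $f^n x$ returns to $E$. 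The proof of this uses three ingredients.
\begin{enumerate}
\item[(a)] The local argument of \Cref{local arg here}, applied along the fake $i$-direction through $\tg^{\pm n}$, controls the transverse separation under $f^n$ by $\e^{n(\lambda_i+3\epsilon)}$.
\item[(b)] The construction of $\tpi$ through the last visit to $E$, with $\operatorname{diam}E$ small, ensures the straightening at time $0$ and at time $n$ is taken in the same chart at $\omega_0$.
\item[(c)] The bi-Hölder property of $\pi|_{D_\alpha}$ from \Cref{uniform Hölder} and \Cref{straightening lemma} converts Riemannian transverse distance into $\tilde d^i$. This conversion is where the exponent $1/\alpha$ appears.
\end{enumerate}

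\textbf{Step 3 (mass count).} Disintegrating $\tm^i_x$ along $\eta_{i-1}$, I write
\[
\tm^i_x\bigl((f^{-n}\eta_i)(y)\bigr)=\int \tm^{i-1}_z\bigl((f^{-n}\eta_{i-1})(z)\bigr)\,\td\tm^i_x(z)
\]
over the atom. On $G$ the integrand is at least $\e^{-n(h(f,\eta_{i-1})+2\epsilon)}$, while the left side is at most $\e^{-n(h(f,\eta_i)-\epsilon)}$. This bounds the $\tm^i_x$-mass of the $G$-part of each atom by $\e^{-n(h(f,\eta_i)-h(f,\eta_{i-1})-3\epsilon)}$. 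Summing over the boundedly many atoms found in Step 2 gives
\[
\tm^i_x(\tilde B^i(x,\rho_n)\cap G)\lesssim \e^{-n(h_i-h_{i-1}-3\epsilon)}.
\]
By the density point property on $G$ this is comparable to $\tm^i_x(\tilde B^i(x,\rho_n))$. Interpolating between consecutive radii $\rho_n$, as in the final part of the proof of \Cref{upper transverse dimension lemma}, and taking $\liminf$ in \Cref{eq 59} yields the inequality. The factor $(1-\epsilon)$ accounts for the fraction of times, by Birkhoff's theorem, at which $f^n x$ lies in $E\cap G$, which is where the estimates hold.

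I expect Step 2 to be the main obstacle. The straightening is only Hölder, and the charts move with $f^nx$, so comparability of $\tilde d^i$ at time $0$ and at time $n$ is only available at returns to $E$. My first attempt will be to copy \cite[Section 5]{Part1} verbatim, with $(i-1)$ in place of $u$, $i$ in place of $c$ and $\widehat{(i)}$ in place of $s$, and to check that the $\dim E^i=1$ assumption makes the transverse coordinate a single real line. On that line an ordering argument bounds the number of relevant atoms by two.
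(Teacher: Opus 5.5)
There is a genuine gap in Steps 2--3. The distance $\tilde d^i$ only sees $\tpi^i$, which is constant on $\eta_{i-1}$-plaques. So $\tilde B^i(x,\rho)$ is $\eta_{i-1}$-saturated and contains the whole plaque $\eta_{i-1}(x)$ for every $\rho$. By the identity $(f^{-n}\eta_i)(z)\cap\eta_{i-1}(z)=(f^{-n}\eta_{i-1})(z)$, which you derive yourself, $f^{-n}\eta_i$ cuts this plaque into atoms of $f^{-n}\eta_{i-1}$. By SMB these have $\tm^{i-1}_x$-mass about $\e^{-nh(f,\eta_{i-1})}$. Hence $\tilde B^i(x,\rho_n)$ meets at least of order $\e^{n(h_{i-1}-\epsilon)}$ atoms of $f^{-n}\eta_i$, however small $\rho_n$ is. An ordering argument on the one-dimensional quotient controls only the transverse coordinate and cannot see this cutting along plaques. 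So ``boundedly many (two) atoms'' fails as soon as $h_{i-1}>0$.

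A sanity check confirms that Step 2 cannot hold as stated. Your per-atom bound $\tm^i_x(Q\cap G)\le\e^{-n(h_i-h_{i-1}-3\epsilon)}$ is weaker than the trivial SMB bound $\tm^i_x(Q)\le\e^{-n(h_i-\epsilon)}$. (The displayed formula in Step 3 is also only an inequality: $\int_Q\tm^{i-1}_z(Q)\,\td\tm^i_x(z)=\int\tm^{i-1}_w(Q)^2\,\td\tm^i_x(w)\le\tm^i_x(Q)$.) So Step 2 together with the trivial bound would give $\tgamma_i\ge\alpha(h_i-\epsilon)/(\lambda_i+3\epsilon)$. Take Lebesgue measure for a linear Anosov automorphism of $\mathbb T^3$ with $\lambda_1>\lambda_2>0>\lambda_3$ and $i=2$. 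Then $h_2=\lambda_1+\lambda_2$, so this contradicts $\tgamma_2\le 1$.

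Weakening ``bounded'' to ``at most $\e^{n(h_{i-1}+\epsilon)}$'' does not rescue the counting either. Such an upper bound on the number of atoms meeting $\tilde B^i(x,\rho_n)\cap G$ would need every such atom to cross all plaques of the ball. That is a transverse product structure for $f^{-n}\eta_i$, which SMB does not provide.

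The missing idea is that $h(f,\eta_{i-1})$ has to be removed through conditional densities on plaques, not through a count. This is the mechanism of \cite[Section 11]{LEDRAPPIER_YOUNG_B}, which the argument of \cite[Section 5]{Part1} invoked by the paper adapts. Take $\eta_{i-1}$-saturated neighbourhoods $\tilde B_k\subseteq\eta_i(f^kx)$ of $f^kx$ with $f(\tilde B_k)\cap\eta_i(f^{k+1}x)\subseteq\tilde B_{k+1}$. Invariance of $m$ gives
\[
\tm^i_{f^kx}(\tilde B_k)\le\frac{\tm^i_{f^kx}(\tilde B_k)}{\tm^i_{f^kx}\bigl(\tilde B_k\cap(f^{-1}\eta_i)(f^kx)\bigr)}\cdot\tm^i_{f^kx}\bigl((f^{-1}\eta_i)(f^kx)\bigr)\cdot\tm^i_{f^{k+1}x}(\tilde B_{k+1}),
\]
and you telescope this in $k$.
\begin{itemize}
\item The middle factors produce $h(f,\eta_i)$ by Birkhoff.
\item As the $\tilde B_k$ shrink to plaques, the ratios tend to $1/m^{i-1}_y\bigl((f^{-1}\eta_{i-1})(y)\bigr)$, because $(f^{-1}\eta_i)(y)\cap\eta_{i-1}(y)=(f^{-1}\eta_{i-1})(y)$. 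Once they are dominated by an integrable maximal function, a Maker-type ergodic theorem turns them into $-h(f,\eta_{i-1})$.
\item The domination is where $\dim E^i=1$ really enters. Saturated sets project to intervals, and the multiplicity-two covering of intervals replaces Besicovitch.
\item Stopping the product at $k\approx(1-\epsilon)n$, so that every remaining set still has large depth, produces factors $(1-\epsilon)$ of the kind appearing in the statement.
\end{itemize}
Your ingredients (a)--(c) are the right tools for the last step: showing $\tilde B_0\supseteq\tilde B^i(x,\rho_n)$ with $\rho_n\approx\e^{-n(\lambda_i+3\epsilon)/\alpha}$. For this, the $\tilde B_k$ must compare later transverse positions directly with the current one, for example via transverse Bowen-type sets. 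Composing the one-step H\"older distortions at successive returns to $E$ would make the radius of $\tilde B_0$ decay superexponentially in $n$.
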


In particular, one has
\begin{equation*}
    \frac{\lambda_i+3\epsilon}{\alpha}\geq (1-\epsilon)\cdot(h_i-h_{i-1}-3\epsilon).
\end{equation*}
As $\epsilon\rightarrow 0$, it follows that $\alpha=1-\frac{7\epsilon}{\lambda_{i-1}-\lambda_{i}}\rightarrow 1$, which proves that 
\begin{equation}\label{eq 55}
    \lambda_i\geq h_i-h_{i-1}.
\end{equation}

\subsubsection{Substitute $\odelta_i$ and $\odelta_{i-1}$ for $\tgamma_i(x)$}
To finish the proof, we still need to substitute $\odelta_i$ and $\odelta_{i-1}$ for $\tgamma_i(x)$. For this purpose, another transverse dimension function $\gamma_i(x)$ is introduced as a transition. Consider $\epsilon>0$, $A_0$, $S$ and $\pi$ given as before, and $E$, $\eta_i$, $\eta_{i-1}$ as above. $\tgamma_i$ is the function of the transverse dimension of $\eta_i/\eta_{i-1}$ defined by \Cref{eq 59}. $\hat\xi_i$ is defined in \Cref{equation 19} and let $\{\hat m^i_x\}$ be the conditional measures of $m$ associated with $\hat\xi_i$.

Fix $x\in S$, let $\hat\pi  :=  \pi|_{\hat\xi_i(x)}:\hat\xi_i(x)\rightarrow R^{(i)}$ be the straightening map on $\hat\xi_i(x)$. Since $m$ is ergodic, for $m$-a.e. $x\in S$, $\hat m^i_x(\bigcup_{n\geq 0}f^nE)=1$. We define the transverse dimension function $\gamma_i:R^{(i)}\rightarrow \mathbb R$ as 
\begin{equation*}
    \gamma_i(y)=\liminf_{\rho\rightarrow0}\frac{\log \hat m^i_x\circ\hat\pi^{-1}\{z\in R^{(i)}:|z_i-y_i|\leq\rho\}}{\log\rho}.
\end{equation*}

We remark that the maps $\hat\pi$ and $\tpi$ are distinct. The map $\hat\pi$ is a straightening map defined for points in $S$, constructed leafwise from the straightening maps on each element $\hat\xi_i(x)$, while the definition of $\tpi$ depends on the location of $x$. For $x \in E$, it coincides with $\hat\pi$. For $x \notin E$, its definition requires first iterating $x$ backward via $f^{-1}$ finite many times until its first return to $E$, and then composing this iterate with the appropriate straightening map  $\hat\pi|_E$. 
Since $f^{-1}$ is $C^1$ and the straightening maps are $\alpha$-bi-Hölder, it follows that both $\hat\pi$ and $\tpi$ are $\alpha$-bi-Hölder continuous.

Then, based on the arguments in the first half of \cite[Section 11.4]{LEDRAPPIER_YOUNG_B}, with some adjustments, we can prove the following equation 
\begin{equation}\label{eq 57}
    \gamma_i(y)\geq\frac{\alpha^3(1-\epsilon)}{\lambda_i+3\epsilon}\cdot(h_i-h_{i-1}-3\epsilon),\ \hat m^i_x\circ\hat\pi^{-1}-a.e.\ y\in R^{(i)}.
\end{equation}
Let us show the proof. Recall that given any $\eta_i(p)\subseteq\hat\xi_i(x)$, the definition for $\tgamma_i(y)$ is
\begin{equation*}
    \tgamma_i(y)=\liminf_{\rho\rightarrow0}\frac{\log \tilde m^i_p\circ\tpi^{-1}\{z\in R^{(i)}:|z_i-y_i|\leq\rho\}}{\log\rho}.
\end{equation*}
By \Cref{lem 33}, for $\tilde m^i_p\circ\tpi^{-1}-a.e.\ y\in R^{(i)}$,
\begin{equation*}
    \liminf_{\rho\rightarrow0}\frac{\log \tilde m^i_p\circ\tpi^{-1}\{z\in R^{(i)}:|z_i-y_i|\leq\rho\}}{\log\rho}\geq\frac{\alpha(1-\epsilon)}{\lambda_i+3\epsilon}\cdot(h_i-h_{i-1}-3\epsilon).
\end{equation*}
Since both $\tpi$ and $\hat{\pi}$ are $\alpha$-bi-Hölder continuous when restricted to any fixed atom of partition $\eta_i$, it follows that
for $\tilde m^i_p\circ\hat\pi^{-1}-a.e.\ y\in R^{(i)}$,
\begin{equation*}
    \liminf_{\rho\rightarrow0}\frac{\log \tilde m^i_p\circ\hat\pi^{-1}\{z\in R^{(i)}:|z_i-y_i|\leq\rho\}}{\log\rho}\geq\frac{\alpha^3(1-\epsilon)}{\lambda_i+3\epsilon}\cdot(h_i-h_{i-1}-3\epsilon).
\end{equation*}
Indeed, on each atom of \(\eta_i\), the transition map
\(\hat\pi\circ\tpi^{-1}\) between the \(\tpi\)-coordinates and the
\(\hat\pi\)-coordinates is \(\alpha^2\)-Hölder, since both \(\tpi^{-1}\)
and \(\hat\pi\) are \(\alpha\)-Hölder. Therefore, a lower pointwise
dimension lower bound is multiplied by \(\alpha^2\). Combined with the
factor \(\alpha\) already present in \Cref{lem 33}, this gives the
factor \(\alpha^3\).
Then, applying \Cref{lower dimension lemma} with $\Omega=\hat\xi_i(x)/\eta_i$ and $q=1$ as in \cite{LEDRAPPIER_YOUNG_B}, we can finish the proof of \Cref{eq 57}.

Consider now the partition of $\hat\pi(\hat\xi_i(x))\subseteq R^{(i)}$ into planes of $\{z_i=\text{constant}\}$. Since $\hat\pi$ is $\alpha$-bi-Hölder, we may assume that for $\hat m^i_x\circ\hat\pi^{-1}-a.e.\ y\in R^{(i)}$, 
\begin{equation*}
    \limsup_{\rho\rightarrow0}\frac{\log \hat m^i_x\circ\hat\pi^{-1}\{z\in R^{(i)}:|z-y|<\rho\}}{\log\rho}\leq \alpha^{-1}\cdot \odelta_i
\end{equation*}
and
\begin{equation*}
    \limsup_{\rho\rightarrow0}\frac{\log \hat m^{i-1}_{\hat\pi^{-1}y}\circ\hat\pi^{-1}\{z\in R^{(i)}:z_i=y_i,|z-y|<\rho\}}{\log\rho}\geq \alpha\cdot \odelta_{i-1}.
\end{equation*}
Hence using \Cref{upper transverse dimension lemma}, we obtain that
\begin{equation*}
    \alpha^{-1}\odelta_i-\gamma_i(y)\geq \alpha \odelta_{i-1}.
\end{equation*}
Together with \Cref{eq 57}, it follows that
\begin{equation*}
    \alpha^{-1}\odelta_i-\alpha\odelta_{i-1}\geq\gamma_i(y)\geq \frac{\alpha^3(1-\epsilon)}{\lambda_i+3\epsilon}\cdot(h_i-h_{i-1}-3\epsilon).
\end{equation*}
Once again letting $\epsilon\rightarrow0$ and $\alpha\rightarrow1$, we have
\begin{equation*}
    h_i-h_{i-1}\leq \lambda_i\cdot(\odelta_i-\odelta_{i-1}).
\end{equation*}

The proof of $ h_i-h_{i-1}\leq \lambda_i\cdot(\udelta_i-\udelta_{i-1})$ is nearly the same, as it only requires substituting the lemmas for the lower dimension (\Cref{lower transverse dimension lemma} and \Cref{lower dimension lemma}) for those lemmas for the upper dimension (\Cref{upper transverse dimension lemma} and \Cref{upper dimension lemma}). See also \cite[Section 11]{LEDRAPPIER_YOUNG_B} for the ideas of the proof. We leave the proof to the reader. Thus, the proof of the upper bound of transverse entropy
\begin{equation*}
    \lambda_i\cdot\min\{\udelta_i-\udelta_{i-1},\odelta_i-\odelta_{i-1}\}\geq h_i-h_{i-1}
\end{equation*}
is complete.

\subsection{Entropy formulas}\label{sec 5.3}
In this section, we complete the proof of three entropy formulas mentioned in \Cref{section 7.2}.

Recall that in the last section, we have proved that
\begin{equation*}
     \lambda_i\min\{\udelta_i-\udelta_{i-1},\odelta_i-\odelta_{i-1}\}\geq h_i-h_{i-1}\geq \lambda_i\max\{\udelta_i-\udelta_{i-1},\odelta_i-\odelta_{i-1}\}.
\end{equation*}
It follows that $\udelta_i-\udelta_{i-1}=\odelta_i-\odelta_{i-1}  :=  \gamma_i$ and $h_i-h_{i-1}=\lambda_i\gamma_i$. Together with \Cref{eq 55} we have $\gamma_i\leq 1$. Hence, the proof of the transverse entropy formula \Cref{transverse entropy formula} is complete.

Subsequently, we further assume that the assumption for the partial entropy formula \Cref{partial entropy formula} holds, i.e. $f\in \Diff(M)$ preserves an ergodic Borel probability $m$ and the splitting $$E^{1}\oplus E^{2}\oplus\cdots\oplus E^{i}\oplus E^{\widehat{(i)}}$$ is dominated on the support of $m$ with $\dim E^{2}=\cdots=\dim E^{i}=1$.

To prove the partial entropy formula, the key is to prove that $\udelta_1=\odelta_1=\delta_1\leq\dim E^{u,1}$ and $h_1=\lambda_1\delta_1$ following the ideas of \cite[Section 10.1]{LEDRAPPIER_YOUNG_B}. We first prove that $h_1\leq \lambda_1\udelta_1$.
Fix an $\epsilon>0$ small enough. Choose a measurable partition $\xi_1$ subordinate to $W^1$ and let $\{m^1_x\}$ be the conditional measures of $m$ associated with $\xi_1$. By \Cref{lemma 15}, it follows that for $m$-a.e. $x\in M$, there exists $\delta>0$ small enough such that   
\begin{equation*}
    \uh_1(x,\delta,\xi_1)\geq h_1-\epsilon.
\end{equation*}
Consider $y\in B^1(x,\frac{1}{100C^N_f}\cdot\e^{-n(\lambda_1+3\epsilon)}\delta A(x)^{-1})$. Let $z  :=  \exp_x^{-1}y$, then $z\in W^1_x(x)$ and $$|z|\leq \frac{1}{50C^N_f}\cdot\e^{-n(\lambda_1+3\epsilon)}\delta A(x)^{-1}.$$  Together with the local argument (see \Cref{local arg here}), we can prove that  
\begin{equation*}
\begin{aligned}
    |\tf_x^k(z)|\leq |z|\cdot C^N_f\mathrm{e}^{(\lambda_1+3\epsilon)k}A(x)
    \leq \frac{\delta}{50},\quad \forall 0\leq k\leq n,
\end{aligned}
\end{equation*}
and $\tf^k_xz\in W^1_{f^k(x)}(f^k(x))$. Following this, one has $$B^1(x,\frac{1}{100C^N_f}\cdot\e^{-n(\lambda_1+3\epsilon)}\delta A(x)^{-1})\subseteq V^1(x,n,\delta).$$ This implies that for $m$-a.e. $x\in M$,
\begin{equation*}
\begin{aligned}
    \udelta_1(x)&=\liminf_{\rho\rightarrow0}\frac{\log m^1_x(B^1(x,\rho))}{\log\rho}\\
    &=\liminf_{n\rightarrow\infty}\frac{\log m^1_x(B^1(x,\frac{1}{100C^N_f}\cdot\e^{-n(\lambda_1+3\epsilon)}\delta A(x)^{-1}))}{-n(\lambda_1+3\epsilon)}\\
    &\geq\frac{1}{\lambda_1+3\epsilon}\cdot\liminf_{n\rightarrow\infty}\frac{1}{-n}\log m^1_x(V^1(x,n,\delta))\\
    &= \frac{1}{\lambda_1+3\epsilon}\cdot \uh_1(x,\delta,\xi_1)\\
    &\geq\frac{h_1-\epsilon}{\lambda_1+3\epsilon}.
\end{aligned}
\end{equation*}
Since $\epsilon$ can be arbitrarily small, as $\epsilon\rightarrow0$, it follows that $h_1\leq \lambda_1\udelta_1$.

We continue to prove $h_1\geq \lambda_1\odelta_1$. Let $\tR$ and $n_3$ be the measurable partition and the function in \Cref{claim 19} with $i=1$. Then we have
\begin{equation*}
\begin{aligned}
    \odelta_1(x)&=\limsup_{\rho\rightarrow0}\frac{\log m^1_x(B^1(x,\rho))}{\log\rho} \\
    &=\limsup_{n\rightarrow\infty}\frac{\log m^1_x(B^1(x,\e^{-n(\lambda_1-3\epsilon)}))}{-n(\lambda_1-3\epsilon)}\\
    &\leq\frac{1}{\lambda_1-3\epsilon}\limsup_{n\rightarrow\infty}-\frac{1}{n}\log m^1_x(\tR^n_0(x))\\
    &\leq\frac{h_1}{\lambda_1-3\epsilon},
\end{aligned}
\end{equation*}
where the last inequality follows from the definition of $\tR$ and \Cref{eq 33}. 
Letting $\epsilon\rightarrow0$, we obtain $h_1\geq \lambda_1\odelta_1$. Thus, the proof for $\udelta_1=\odelta_1  := \delta_1$ and $h_1=\lambda_1\delta_1$ is complete. $\delta_1\leq\dim E^{u,1}$ follows from \cite[Lemma 4.4]{Part1}.

To complete the proof, it only remains to establish the following claim by induction on $j$ for $2 \le j \le i$:
\begin{itemize}
    \item[(a)] The dimension $\delta_j$ is exact (i.e., $\odelta_j = \udelta_j$).
    \item[(b)] The bound $\delta_j \leq \dim E^1 + j-1$ holds.
    \item[(c)] The entropy formula $h_j-h_{j-1}=\lambda_j(\delta_j-\delta_{j-1})$ is satisfied.
\end{itemize}

\noindent\textbf{Inductive Step:} Assume the claim holds for an index $j-1$. By the transverse entropy formula \Cref{transverse entropy formula}, the $j$-th transverse dimension is exact and bounded by 1, yielding
\[ \udelta_j - \delta_{j-1} = \odelta_j - \delta_{j-1} \le 1, \]
and the entropy relation is given by $h_j-h_{j-1}=(\udelta_j-\delta_{j-1})\cdot\lambda_j$.
Since $\delta_{j-1}$ is exact by the inductive hypothesis, the exactness of the transverse dimension immediately implies that $\delta_j$ is also exact. We may therefore set $\odelta_j=\udelta_j:=\delta_j$. This proves (a). 
The bound (b) follows from the inductive hypothesis: $\delta_j \leq \delta_{j-1} + 1 \leq (\dim E^1 + j-2) + 1 = \dim E^1 + j-1$.
Finally, substituting $\delta_j$ and $\delta_{j-1}$ into the entropy relation proves (c). This completes the induction.

Hence, the proof of the partial entropy formula \Cref{partial entropy formula} is complete.

Finally, we assume that the assumption for the metric entropy formula \Cref{metric entropy formula} holds, i.e. the splitting $E^{u,1}\oplus E^{u,2}\oplus\cdots\oplus E^{u,u}\oplus E^{c}\oplus E^s$ is dominated on the support of $m$ and $\dim E^{u,2}=\cdots=\dim E^{u,u}=1$, $\dim E^c\leq 1$. Then to obtain the metric entropy formula \Cref{metric entropy formula}, we only need to combine the partial entropy formula \Cref{partial entropy formula} and the theorem \cite[Theorem 1.1]{Part1}.

\section{Proof of \Cref{C1 dimension estimate 1}}\label{Appendix}

In this section, we prove \Cref{C1 dimension estimate 1} following the ideas in \cite[Section 12]{LEDRAPPIER_YOUNG_B}. The main difference is the proof of \Cref{lemma 35}. We use fake foliations and the local argument (see \Cref{local arg here}) instead of Lyapunov charts.

In \Cref{sec 6.1}, we construct suitable partitions and introduce their general properties, following the approach in \cite[Section 12.1]{LEDRAPPIER_YOUNG_B}.
In \Cref{sec 6.2}, \Cref{lemma 35} is proved, which concerns the behavior of the center direction.
Finally, in \Cref{sec 6.3}, we complete the proof using an argument analogous to that in \cite[Section 12.3]{LEDRAPPIER_YOUNG_B}.

\subsection{General properties}\label{sec 6.1}
Throughout this section, let $m$ be an ergodic Borel probability measure preserved by $f\in\Diff(M)$, and assume that $m$ is simply dominated throughout this section.
In the case where $m$ is hyperbolic, we refer the reader to the proof of \Cref{C^1 dimension estimate 2} in \Cref{section erconjecture}, although the proof presented in \Cref{Appendix} can be adapted to cover it with several adjustments.
Therefore, in \Cref{Appendix} we will focus on the case where $\dim E^c=1$.

Let $\lambda^u$ be the smallest positive exponent, $\lambda^s$ the largest negative exponent. Fix any $\epsilon>0$ small enough. Let $A(x)$ be the function in \cite[Lemma 2.2]{Part1} (which is parallel to \Cref{ABC ergodic lemma} with $u,c,s$).
Let $\xi^u$ be a measurable partition subordinate to $W^u$ for $f$, and let $\xi^s$ be a measurable partition subordinate to $W^s$ for $f^{-1}$. 
Let $\{m^u_x\}$ and $\{m^s_x\}$ be the conditional measures of $m$ associated with $\xi^u$ and $\xi^s$ respectively. $d^u$ and $d^s$ are defined as in \Cref{section 7.3}. We denote $h:=h_m(f)$.

First, by \cite[Main Theorem]{Part1}, the unstable entropy is equal to the metric entropy when $m$ is simply dominated, that is,
\begin{equation}\label{a1}
    h_u:=H(\xi^u|f\xi^u)=h,
\end{equation}
and
\[
h_s:=H(\xi^s|f^{-1}\xi^s)=h.
\]

Recall that in \Cref{section 7.3}, for $m$-a.e. $x\in M$,

\begin{equation}\label{a2}
    \lim_{\rho\rightarrow0}\frac{\log m^u_x(B^u(x,\rho))}{\log\rho}= d^u,
\end{equation}
\begin{equation}\label{a3}
       \lim_{\rho\rightarrow0}\frac{\log m^s_x(B^s(x,\rho))}{\log\rho}= d^s.
\end{equation}
Moreover, for any finite-entropy partition $\tP$ and $a,b>0$, by the Shannon-McMillan-Breiman theorem we have for
$m$-a.e. $x\in M$,
\begin{equation}\label{a4}
    \lim_{n\rightarrow\infty}-\frac{1}{n}\log m(\tP^{na}_{-nb}(x))\leq (a+b)h
\end{equation}
where $\tP^q_p(x)  := (\bigvee_{i=p}^{q-1}f^{-i}\tP)(x)$.

Let $\eta=\xi^u\vee \tP^0_{-\infty}$ and $\{m_x\}$ be the conditional measures of $m$ associated with $\eta$. We introduce a general fact in measure theory, see \cite[Section 12.4]{LEDRAPPIER_YOUNG_B}.
\begin{lemma}\cite[Lemma 12.4.1]{LEDRAPPIER_YOUNG_B}
For $m$-a.e. $x\in M$,
    \begin{equation}\label{a5}
        \lim_{n\rightarrow\infty} -\frac{1}{n}\log m_x(\tP^n_0(x))=h_m(f,\tP)
    \end{equation}
\end{lemma}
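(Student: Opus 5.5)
The plan is the standard telescoping/martingale argument of \cite[Section 12.4]{LEDRAPPIER_YOUNG_B}; it uses only that $\tP$ has finite entropy and that $\xi^u$ is increasing. Note that $\tP^0_{-\infty}=\bigvee_{j\geq 1}f^{j}\tP$ is the past of $\tP$, and it does not contain $\tP$ itself.

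\emph{Step 1 (telescoping).} Write $\eta_k:=\eta\vee\tP^k_0$, so that $\eta_0=\eta$. Then
\[
m_x(\tP^n_0(x))=\prod_{k=0}^{n-1}\frac{m_x(\tP^{k+1}_0(x))}{m_x(\tP^{k}_0(x))},
\]
and the $k$-th factor is the conditional probability of the atom $f^{-k}\tP(x)$ given $\eta_k$, evaluated at $x$.

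\emph{Step 2 (pushing forward).} Since $f^k\tP^0_{-\infty}\vee f^k\tP^k_0=\tP^0_{-\infty}$ as partitions, we get $f^k\eta_k=f^k\xi^u\vee\tP^0_{-\infty}$. By invariance of $m$, the $k$-th factor equals $\exp(-g_k(f^kx))$, where
\[
g_k(y):=-\log m\bigl(\tP(y)\,\big|\,f^k\xi^u\vee\tP^0_{-\infty}\bigr)(y).
\]
Because $\xi^u$ is increasing, the partitions $f^k\xi^u\vee\tP^0_{-\infty}$ decrease in $k$. By the decreasing martingale theorem, $g_k\to g_\infty$ almost everywhere and in $L^1$, where $g_\infty$ is the information of $\tP$ given $\mathcal T\vee\tP^0_{-\infty}$ and $\mathcal T:=\bigwedge_{k}(f^k\xi^u\vee\tP^0_{-\infty})$. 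Chung's lemma gives $\int\sup_k g_k\,\td m\leq H(\tP)+1<\infty$. Maker's ergodic theorem, together with ergodicity of $m$, then yields
\[
-\frac1n\log m_x(\tP^n_0(x))=\frac1n\sum_{k=0}^{n-1}g_k(f^kx)\longrightarrow\int g_\infty\,\td m=H\bigl(\tP\,\big|\,\mathcal T\vee\tP^0_{-\infty}\bigr).
\]

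\emph{Step 3 (identifying the limit).} It remains to show that the last quantity equals $H(\tP|\tP^0_{-\infty})=h_m(f,\tP)$. This is the main obstacle. Since $\mathcal T$ is the lower limit of $f^k\xi^u\vee\tP^0_{-\infty}$, I will bound it by $\bigl(\bigwedge_k f^k\xi^u\bigr)\vee\tP^0_{-\infty}$ up to null sets; interchanging $\bigwedge$ with $\vee$ is itself delicate, and I would first try to handle it by approximating with finite partitions as Ledrappier--Young do.

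Next I use that the tail $\bigwedge_k f^k\xi^u$ of the increasing partition $\xi^u$ is contained in the Pinsker $\sigma$-algebra $\pi(f)$. To see this, let $Q$ be a finite partition measurable with respect to the tail. Then $f^{-j}Q\prec\xi^u$ for all $j\geq 0$, and $h(f,Q)=\lim_n H(Q\,|\,\bigvee_{j\geq n}f^{-j}Q)$ vanishes because these $\sigma$-algebras are all sub-$\sigma$-algebras of the increasing tail structure. Alternatively, cite \cite{Part1}/\cite{LEDRAPPIER_YOUNG_A}, where $\bigwedge_k f^k\xi^u=\pi(f)$.

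Finally, the Pinsker formula $H(\tP\,|\,\tP^0_{-\infty}\vee\pi(f))=H(\tP\,|\,\tP^0_{-\infty})$, valid for any finite-entropy $\tP$, gives the claim, since adding $\mathcal T$ can only lower the conditional entropy while adding $\pi(f)$ does not.
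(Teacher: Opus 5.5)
Your Steps 1 and 2 are correct. They reduce the lemma to showing $H(\tP\,|\,\mathcal T)=H(\tP\,|\,\tP^0_{-\infty})$. The paper gives no argument of its own and only quotes \cite[Lemma 12.4.1]{LEDRAPPIER_YOUNG_B}, so everything rests on your Step 3, and that step has a genuine gap.

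The inclusion $\mathcal T\subseteq(\bigwedge_k f^k\xi^u)\vee\tP^0_{-\infty}$ (mod $0$) is not merely delicate; it is false, even for a genuine unstable partition. Take the full two-shift with the $(\frac12,\frac12)$-Bernoulli measure (the symbolic model of a horseshoe). Let $\xi^u$ be the partition according to $(\omega_j)_{j\le 0}$, and let $\tP$ be the two-set partition according to the value of $\omega_0\omega_1$. Then $f^j\tP$ is the partition according to $\omega_{-j}\omega_{1-j}$. So $\omega_0=\omega_{-k}\prod_{j=1}^{k}\omega_{-j}\omega_{1-j}$ is $f^k\xi^u\vee\tP^0_{-\infty}$-measurable for every $k$, hence $\mathcal T$-measurable. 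On the other hand, $\bigwedge_k f^k\xi^u$ is trivial and $\omega_0$ is independent of $\tP^0_{-\infty}$. The lemma still holds in this example, since $H(\tP|\mathcal T)=\log 2=h_m(f,\tP)$. So the identity must be proved at the level of conditional entropies, not $\sigma$-algebras, and no finite-partition approximation can rescue the inclusion.

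Your second claim also fails. The tail of an increasing partition need not lie in $\pi(f)$: the point partition is increasing and is its own tail. Moreover, $\lim_n H(Q|\bigvee_{j\ge n}f^{-j}Q)$ is not $h(f,Q)$. In fact the lemma itself is false if one only knows that $\xi^u$ is increasing: for $\xi^u$ the point partition, $m_x=\delta_x$ and the left-hand side vanishes. So any proof must use $H(\xi^u|f\xi^u)=h_m(f)<\infty$, i.e.\ \eqref{a1}, which your argument never invokes.

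Here is one way to close the gap using that input.

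Set $a_k:=H(\tP\,|\,f^k\xi^u\vee\tP^0_{-\infty})$, which increases to $H(\tP|\mathcal T)$. Set $b_k:=H(\xi^u\,|\,f\xi^u\vee\tP^k_{-\infty})$, which decreases to $H(\xi^u\,|\,f\xi^u\vee\bigvee_{j\in\mathbb Z}f^j\tP)$. The chain rule and invariance give $\sum_{k<n}a_k=H(\tP^n_0|\eta)$ and $\sum_{k<n}b_k=H(\xi^u|f^n\xi^u\vee\tP^0_{-\infty})$. Their sum is $H(f^{-n}\eta|\eta)=nH(\eta|f\eta)$, so $\lim_k a_k=H(\eta|f\eta)-\lim_k b_k$.

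Next, expand $H(f^{-n}\xi^u\vee\tP^n_{n-M}\,|\,\xi^u)$ in two ways, for $M\ge n$. This gives $H(f^{-n}\xi^u\vee\tP^n_0\,|\,\xi^u\vee\tP^0_{n-M})=nh_m(f)+H(\tP^0_{-M}|\xi^u)-H(\tP^0_{n-M}|\xi^u)\ge nh_m(f)$. Letting $M\to\infty$ yields $H(\eta|f\eta)\ge h_m(f)$.

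On the other hand, take finite partitions $\xi_N\uparrow\xi^u$. Then $\bigvee_{j\ge 1}f^j\xi_N\prec f\xi^u$, so Pinsker's formula gives $H(\xi_N|f\xi^u\vee\bigvee_{j\in\mathbb Z}f^j\tP)\le h_m(f,\xi_N\vee\tP)-h_m(f,\tP)\le h_m(f)-h_m(f,\tP)$. Hence $\lim_k b_k\le h_m(f)-h_m(f,\tP)$.

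Therefore $\lim_k a_k\ge h_m(f,\tP)$. The reverse inequality is trivial because $\mathcal T$ contains $\tP^0_{-\infty}$.
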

Using \Cref{a2} and \Cref{upper dimension lemma}, we can prove that for $m$-a.e. $x\in M$,
\begin{equation}\label{a6}
    \limsup_{\rho\rightarrow 0}\frac{\log m_x(B^u(x,\rho))}{\log\rho}\leq d^u.
\end{equation}
By the previous discussion in \Cref{section partial entropy}, more specifically, by the first inequality of \Cref{eq 33} applying to $f^{-1}$ and $\liminf$, it follows that for any small $\epsilon'>0$, there exists a finite-entropy partition $\tP_1$ such that for $m$-a.e. $x\in M$,
\[
\liminf_{n\rightarrow\infty}-\frac{1}{n}\log m^s_x\left((\tP_1)^0_{-n}(x)\right)\geq \uh_s(x,\epsilon',\xi^s).
\]
One can choose $\epsilon'$ small enough such that 
\[
\uh_s(x,\epsilon',\xi^s)\geq h_s-\epsilon=h-\epsilon.
\]
Thus, for $m$-a.e. $x\in M$, one has 
\begin{equation}\label{a7}
    \liminf_{n\rightarrow\infty}-\frac{1}{n}\log m^s_x\left((\tP_1)^0_{-n}(x)\right)\geq h-\epsilon.
\end{equation}
We also assume that $h_m(f,\tP_1)\geq h-\epsilon$. Together with \Cref{a5}, one has
\begin{equation}\label{a8}
    \lim_{n\rightarrow\infty}-\frac{1}{n}\log m_x((\tP_1)^n_0(x))\geq h-\epsilon.
\end{equation}

\subsection{Cutting the center direction}\label{sec 6.2}
We establish the following lemma, which is analogous to \cite[Lemma 12.2.1]{LEDRAPPIER_YOUNG_B}. 
\begin{lemma}\label{lemma 35}
    Let $a=\frac{1}{\lambda^u-3\epsilon}$, $b=\frac{1}{-\lambda^{s}-3\epsilon}$. For any $\epsilon'>0$, there exists a set $\Lambda_1$ with $m(\Lambda_1)>1-\epsilon'$, an integer $N_0\in\mathbb N$, a constant $C>0$, and a finite-entropy partition $\tP$ that refines $\tP_1$ with the following properties: for any $n\geq N_0$, there exists a partition $\Xi_n\succ \tP^{na}_{-nb}$   such that for any $x\in\Lambda_1$, 
    \begin{itemize}
    \item[(1)] $\mathrm{diam}(\Xi_n(x))\leq 2\e^{-n}$ and
    \item[(2)] $m(\Xi_n(x))\geq C^{-1}\e^{-n(1+\epsilon)}m(\tP^{na}_{-nb}(x))$.
    \end{itemize}
\begin{proof}
    Take $A>0$ large enough so that for $$\Lambda_2:=\{x\in \Gamma':A(x)\leq A\},$$ one has   $m(\Lambda_2)>1-0.1\epsilon'$. Let $$\delta< \frac{r_0}{10^6C_0^{10}C_f^N} $$ be small enough that for any $d(x,x')\leq\delta$ and $*,**\in\{c,u,s,cu,cs,su\}$ satisfying $T_xM=E^*_x\oplus E^{**}_x$, 
    $$\exp_{x'}^{-1}\circ\exp_x\{v_x^*\in T_xM:v_x^*=\mathrm{constant}\}$$ is the graph of a function $\varphi:E^{**}\rightarrow E^{*}$ with slope $\leq\frac{1}{10^6}$.
    
    Moreover, the decomposition of $T_xM=E^u_x\oplus E^c_x\oplus E^s_x$ for any $v\in T_xM$ is denoted as 
    $v=v_u+v_c+v_s$ or $v=v^u+v^c+v^ s$.
    Recall that $|\cdot|$ denotes the Euclidean norm on the tangent space induced by the Riemannian metric and $|\cdot|'$ is the box norm defined in \Cref{section fake foliation}.  
    Let $\Xi_0$ be a finite partition of $\Lambda_2$ into sets of diameter smaller than $\delta$ and choose a point $z(q)\in q$ for any $q\in\Xi_0$. Denote $\bar x  :=  \exp^{-1}_{z(\Xi_0(x))}x$. For any $y\in\Xi_0(x)$ and $\bar y  :=  \exp^{-1}_{z(\Xi_0(x))}y$, since $\delta$ is small enough, it follows that $$\frac{1}{2}d(x,y)\leq|\bar x-\bar y|\leq 2d(x,y).$$

    \begin{claim}\label{claim 6.3}
        For any $x\in\Lambda_2$, define $$V(x,n)=\{y\in\Xi_0(x):d(f^i(x),f^i(y))\leq \frac{\delta}{2}A(f^i(x))^{-1},\forall -nb\leq i\leq na\}.$$
        For any $y_1,y_2\in V(x,n)$ such that $|\bar y^c_1-\bar y^c_2|\leq \frac{\e^{-n}}{100C_0}$, $|\bar y_1-\bar y_2|\leq\e^{-n}$.
    \begin{proof}
        If $$|\bar y^c_1-\bar y^c_2|\geq\frac{1}{100}\max \{|\bar y_1^u-\bar y^u_2|,|\bar y_1^s-\bar y^s_2|\},$$ then the claim is clearly true since $$C_0^{-1}|\cdot|\leq |\cdot|'\leq C_0|\cdot|,$$ where $|\cdot|'$ denotes the box norm and $|\cdot|$ denotes the Euclidean norm.    
        Now assume $$|\bar y^c_1-\bar y^c_2|\leq\frac{1}{100}\max \{|\bar y_1^u-\bar y^u_2|,|\bar y_1^s-\bar y^s_2|\}.$$
        Since $\delta>0$ is chosen small enough, it follows that 
        \begin{equation}\label{rewirte1}
        |(\exp_x^{-1} y_1)_c-(\exp_x^{-1} y_2)_c|\leq\max \{|(\exp_x^{-1} y_1)_u-(\exp_x^{-1} y_2)_u|,|(\exp_x^{-1} y_1)_s-(\exp_x^{-1} y_2)_s|\}.
        \end{equation}
        Without loss of generality, we assume that 
        \begin{equation}\label{rewirte2}
            |(\exp_x^{-1} y_1)_u-(\exp_x^{-1} y_2)_u|\geq |(\exp_x^{-1} y_1)_s-(\exp_x^{-1} y_2)_s|.
        \end{equation}

        
        Consider two points $y_1$ and $y_2$ such that $\exp_x^{-1}y_1$ and $\exp_x^{-1}y_2$ lie inside the local chart at $x$ (see \Cref{fake foliation charts def}). By definition, the first $[ na ]$ steps of their forward orbits remain within the local charts at the corresponding points on the forward orbit of $x$. Consequently, the maps $\tilde{f}$ and $\tilde{g}$ defined in \Cref{section fake foliation} coincide along these orbit segments.

For $0\leq i\leq [ na ]$, denote $u_0:=\tW_x^{cs}(y_1)\cap\tW^u_x(y_2)$ and $u_i := \tilde{g}_x^i(u_0)$. By the invariance of the fake foliations in \Cref{fake $u$-foliation} b) (similar discussions are presented in \cite{LVY13}),
\[
    u_i=\tW^{cs}_{f^i(x)}(f^i(y_1))\cap\tW^u_{f^i(x)}(f^i(y_2)).
\]
By the graph properties of the fake foliations in \Cref{fake $u$-foliation} a), it follows that $u_i$ stays inside the local chart at $f^i(x)$.
        

Next, we derive estimates involving the unstable distance.

Recall that the constant $C_0$, introduced in \Cref{section fake foliation}, is chosen to be sufficiently large to dominate any geometric distortion factors that arise from the uniform angles away from zero between the center-stable and unstable directions. 
By the graph property of the fake foliations, after increasing \(C_0\) if
necessary, the angles between the fake center-stable leaves and the fake
unstable leaves are uniformly controlled by \(C_0\). The reader can keep in mind that the fake leaves are flat enough.

After enlarging $C_0$ if necessary for the steps that follow, we obtain the following estimates, where $d^u$ denotes the distance measured along the fake unstable leaves.

        \begin{align*}
        d^u(\exp_{x}^{-1}y_2,u_0)&\geq |(\exp_x^{-1}y_2)_u-(u_0)_u|\\ 
        &\geq |(\exp_x^{-1}y_2)_u-(\exp_x^{-1}y_1)_u|-|(\exp_x^{-1}y_1)_u-(u_0)_u|\\
        &\quad\text{(By the triangle inequality)}\\
        &\geq |(\exp_x^{-1}y_2)_u-(\exp_x^{-1}y_1)_u|-\frac{1}{100C_0^2}|(\exp_x^{-1}y_1)_{cs}-(u_0)_{cs}|\\
        &\quad\text{(Because $u_0$ and $\exp_x^{-1}y_1$ are in the same fake center-stable leaf)}\\
        &\geq |(\exp_x^{-1}y_2)_u-(\exp_x^{-1}y_1)_u|-\frac{1}{100C_0^2}|\exp_x^{-1}y_1-u_0|\\
        &\geq |(\exp_x^{-1}y_2)_u-(\exp_x^{-1}y_1)_u|-\frac{1}{10C_0}|\exp_x^{-1}y_1-\exp_x^{-1}y_2|\\
        &\quad\text{(Since $\tW_x^{cs}(y_1)$ and $\tW^u_x(y_2)$ have uniform angle,} \\
        &\quad\quad\text{and $C_0$ is chosen large enough depending on this uniform angle)}\\
        &\geq |(\exp_x^{-1}y_2)_u-(\exp_x^{-1}y_1)_u|-\frac{1}{10}|\exp_x^{-1}y_1-\exp_x^{-1}y_2|'\\
        &\quad\text{(Since we have $C_0^{-1}|\cdot|\leq |\cdot|'\leq C_0|\cdot|$ in previous discussion)}\\
        &\geq |(\exp_x^{-1}y_2)_u-(\exp_x^{-1}y_1)_u|-\frac{1}{10}|(\exp_x^{-1}y_1)_u-(\exp_x^{-1}y_2)_u|\\
        &\quad\text{(By \Cref{rewirte1} and \Cref{rewirte2})}\\
        &\geq \frac{1}{2} |(\exp_x^{-1}y_2)_u-(\exp_x^{-1}y_1)_u|.
        \end{align*}
        In conclusion, we have
        \begin{equation}\label{longeq}
            d^u(\exp_{x}^{-1}y_2,u_0)\geq \frac{1}{2} |(\exp_x^{-1}y_2)_u-(\exp_x^{-1}y_1)_u|.
        \end{equation}
        \begin{figure}[htbp]
    \centering
    \includegraphics[width=0.7\textwidth]{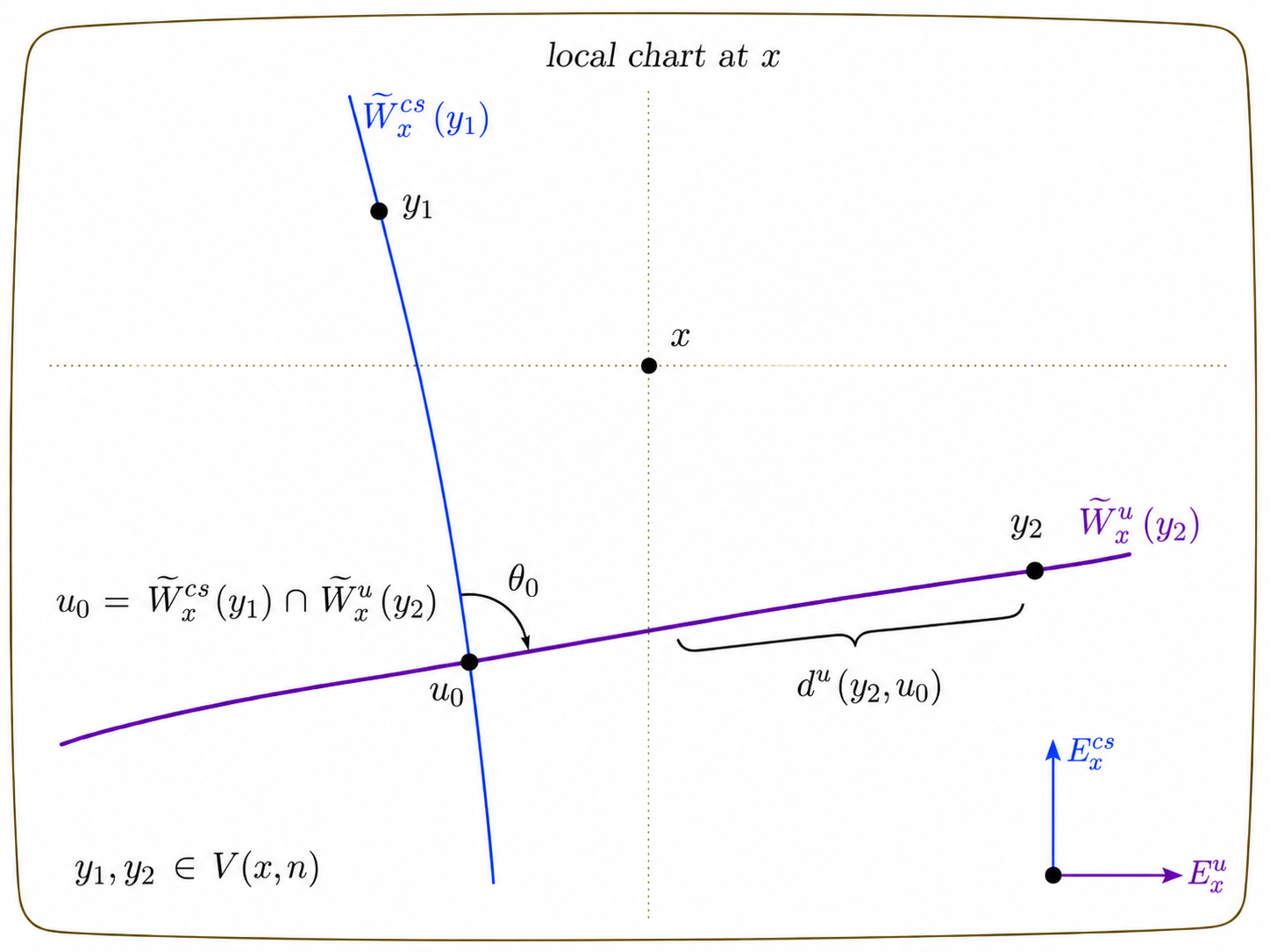}
    \caption{illustration for \Cref{claim 6.3}.}
\end{figure}
     
        Secondly, since it follows from $y_1,y_2\in V(x,n)$ that $$|\exp_{f^{[na]}(x)}^{-1}f^{[na]}(y_1)-\exp_{f^{[na]}(x)}^{-1}f^{[na]}(y_2)|\leq \delta A(f^{[na]}(x))^{-1},$$ then one can calculate that
        \begin{equation*}
        \begin{aligned}
        &\quad d^u(\exp_{f^{[na]}(x)}^{-1}f^{[na]}(y_2),u_{[na]})\\
        &\leq 2\cdot|(\exp_{f^{[na]}(x)}^{-1}f^{[na]}(y_2))_u-(u_{[na]})_u|\\
        &\quad\text{(Since $\exp_{f^{[na]}(x)}^{-1}f^{[na]}(y_2)$ and $u_{[na]}$ are in the same fake unstable leaf)}\\
        &\leq 20C_0\cdot|\exp_{f^{[na]}(x)}^{-1}f^{[na]}(y_1)-\exp_{f^{[na]}(x)}^{-1}f^{[na]}(y_2)|\\
        &\quad\text{(Since $\tW_{f^{[na]}x}^{cs}(f^{[na]}(y_1))$ and $\tW^u_{f^{[an]}x}(f^{[na]}(y_2))$ have uniform angle,} \\
        &\quad\quad\quad\text{and $C_0$ is chosen large enough depending on this uniform angle)}\\
        &\leq 20C_0\delta \cdot A(f^{[na]}(x))^{-1}.
        \end{aligned}
        \end{equation*}
        Moreover, recall that $u_i$ stays inside the local chart at $f^i(x)$, by the local argument (see \Cref{local arg here}), we can calculate that 
        \begin{equation*}
        \begin{aligned}
            d^u(\exp_{x}^{-1}y_2,u_0) &\leq A(f^{[na]}(x))\cdot C^N_f\e^{-[na](\lambda^u-3\epsilon)}\cdot d^u(\exp_{f^{[na]}(x)}^{-1}f^{[na]}(y_2),u_{[na]})\\
            &\leq 20C_0\cdot C^N_f\cdot\delta \e^{-[na](\lambda^u-3\epsilon)}.
        \end{aligned}
        \end{equation*}
        Together with \Cref{longeq}, it follows that 
        \begin{equation*}
            |(\exp_x^{-1}y_2)_u-(\exp_x^{-1}y_1)_u|\leq 100C_0\cdot C^N_f\cdot \e^{-n}\delta\leq \frac{1}{4}C_0^{-1}\e^{-n}.
        \end{equation*}
        Then it follows that $$|\exp_x^{-1}y_2-\exp_x^{-1}y_1|\leq \frac{1}{4}\e^{-n}$$ and thus $|\bar y_1-\bar y_2|\leq\e^{-n}$.
    \end{proof}
    \end{claim}
The proof of \Cref{lemma 35} requires a specific partition $\tP$ and a constant $N_0$, which we define below. Let $\tP_1$ be the partition defined previously. We then define the partition $\tP_2$ of $M$ by
\[ \tP_2 = \{ M - \Lambda_2 \} \cup \Xi_0, \]
where $\Xi_0$ is the measurable partition of $\Lambda_2$ at the beginning of this proof. Using \cite[Section 2.4]{Part1}, there exist a finite-entropy partition \(\tP_3\) and a measurable function \(n_0:M\to\mathbb N\) such that, if \(n\geq n_0(x)\), then
\[
    (\tP_3)^{na}_{-nb}(x)\subseteq V(x,n).
\]
We choose
\[
    \tP=\tP_1\vee\tP_2\vee\tP_3
\]
and \(N_0\in\mathbb N\) such that
\[
    m\{x:n_0(x)\leq N_0\}\geq 1-0.1\epsilon'.
\]

Then, for any \(n\geq N_0\), we define a measurable partition \(\Xi_n\) of \(M\) by refining the atoms of \(\tP^{na}_{-nb}\) that intersect
$
    \Lambda_2\cap\{x:n_0(x)\leq N_0\}.
$
More precisely, for each atom \(P\) of \(\tP^{na}_{-nb}\) satisfying
\[
    P\cap \Lambda_2\cap\{x:n_0(x)\leq N_0\}\neq\varnothing,
\]
we subdivide the whole atom \(P\) into finitely many measurable pieces according to the center coordinate in the chart determined by \(\Xi_0\), in such a way that whenever \(y_1,y_2\) belong to the same element of this subdivision, 
one has
\[
    |\bar y^c_1-\bar y^c_2|\leq \frac{\e^{-n}}{100C_0}.
\]
For atoms \(P\) of \(\tP^{na}_{-nb}\) with
\[
    P\cap \Lambda_2\cap\{x:n_0(x)\leq N_0\}=\varnothing,
\]
we leave \(P\) unchanged. Thus \(\Xi_n\) is a measurable partition of \(M\) and satisfies
\[
    \Xi_n\succ \tP^{na}_{-nb}.
\]
    Note that since the center is one-dimensional, there exists a constant $C_9>0$ such that the cardinality of $\Xi_n|_{\tP^{na}_{-nb}(x)}$ is bounded by $C_9\cdot\e^{n}$.
    The claim above implies that these elements have a diameter less than $2\e^{-n}$.

    Let 
    $$A_n  :=  \{x:m(\Xi_n(x)\cap \tP^{na}_{-nb}(x))\leq\frac{\e^{-n}}{C_9}0.1\epsilon'\cdot\e^{-n\epsilon}(1-\e^{-\epsilon})\cdot m(\tP^{na}_{-nb}(x))\}.$$
    Then it follows that $$m(A_n)\leq 0.1\epsilon'\cdot\e^{-n\epsilon}(1-\e^{-\epsilon}).$$ Let $$\Lambda_1:=(\Lambda_2\cap\{x:n_0(x)\leq N_0\})-\bigcup_{n\in \mathbb N}A_n,$$ 
    then we have $m(\Lambda_1)>1-\epsilon'$, and \Cref{lemma 35} is proved with $C=\frac{10C_9}{\epsilon' (1-\e^{-\epsilon})}$.
\end{proof}
\end{lemma}
\subsection{Counting arguments}\label{sec 6.3}
With \Cref{lemma 35} in hand, \Cref{C1 dimension estimate 1} can be proved by adapting the counting arguments directly from \cite[Section 12.3]{LEDRAPPIER_YOUNG_B}, and we provide the full details below for completeness.

Fix $a,b$, $\Lambda_1$, $N_0$, and $\tP$ as in \Cref{lemma 35}. By \Cref{a4}, \Cref{a7}, and \Cref{a8}, there exists a set $\Lambda_3\subseteq \Lambda_1$ with $m(\Lambda_3)>1-2\epsilon'$ and $N_1>N_0$   such that   for any $x\in\Lambda_3$ and $n\geq N_1$, 
\begin{equation}\label{ea}
    m^s_x(\tP^0_{-nb}(x))\leq \e^{-nb(h-2\epsilon)},
\end{equation}
\begin{equation}\label{eb}
    m_x(\tP^{na}_0(x))\leq \e^{-na(h-2\epsilon)},
\end{equation}
\begin{equation}\label{ec}
    m(\tP^{na}_{-nb}(x))\geq \e^{-n(a+b)(h+2\epsilon)},
\end{equation}
\begin{equation}\label{ed}
    \mathrm{diam}(\Xi_n(x))\leq 2\e^{-n},
\end{equation}
and 
\begin{equation}\label{ee}
    m(\Xi_n(x))\geq C^{-1}\e^{-n(1+\epsilon)}\e^{-n(a+b)(h+2\epsilon)}.
\end{equation}

By \Cref{a6} and the density point theorem, we can choose $\Lambda_4\subseteq\Lambda_3$ with $m(\Lambda_4)>1-3\epsilon'$ and $N_2>N_1$   such that for any $y\in\Lambda_4$ and $n>N_2$,
\begin{equation}\label{ef}
    m_y(\Lambda_3\cap B^u(y,\e^{-n}))\geq\frac{1}{2}\e^{-n(d^u+\epsilon)}.
\end{equation}
Again, by \Cref{a3} and the density point argument, we can choose $\Lambda\subseteq \Lambda_4$ with $m(\Lambda)>1-4\epsilon'$ and $N_3>N_2$   such that for any $x\in\Lambda$ and $n\geq N_3$,
\begin{equation}\label{eg}
    m^s_x(\Lambda_4\cap B^s(x,\e^{-n}))\geq\frac{1}{2}\e^{-n(d^s+\epsilon)}.
\end{equation}

Now, fix any $x\in\Lambda$ and denote $$\bar d:=\limsup_{n\to\infty}-\frac{1}{n}\log m(B(x,4\e^{-n})).$$ There exist infinitely many $n$   such that  
\begin{equation}\label{eh}
    m\left(B(x,4\e^{-n})\right)\leq\e^{-n(\bar d-\epsilon)}.
\end{equation}
For $n$ large enough such that $n>N_3$ and $4C\leq\e^{n\epsilon}$, we consider
$$N=\#\{\text{atoms of }\Xi_n\text{ intersecting }\Lambda_3\cap B(x,2\e^{-n})\}$$
From \Cref{ed}, \Cref{ee}, and \Cref{eh}, it follows that
\begin{equation}\label{upper bound for N}
    N\leq C\e^{n(1+\epsilon)}\e^{n(a+b)(h+2\epsilon)}\e^{-n(\bar d-\epsilon)}.
\end{equation}

Then we estimate the lower bound for $N$. First, we count the atoms of $\tP^{na}_{-nb}$ that intersect with $\Lambda_3\cap B(x,4\e^{-n})$. Since $x\in\Lambda$, for any $y\in\xi^s(x)\cap\Lambda_4\cap B(x,\e^{-n})$, by \Cref{ea} one has
\begin{equation*}
    m^s_x(\tP^0_{-nb}(y))=m^s_y(\tP^0_{-nb}(y))\leq\e^{-nb(h-2\epsilon)}.
\end{equation*}
Thus, by \Cref{eg},
\begin{equation*}
    \#\{\text{atoms of }\tP^0_{-nb}\text{ intersecting }\Lambda_4\cap B(x,\e^{-n})\}\geq\frac{1}{2}\e^{-n(d^s+\epsilon)}\e^{nb(h-2\epsilon)}.
\end{equation*}
Let us fix one of these atoms as $p_u$ and choose $y\in p_u\cap\Lambda_4\cap B(x,\e^{-n})$. For any $z\in\eta(y)\cap \Lambda_3\cap B(y,\e^{-n})$, by \Cref{eb} we have
\begin{equation*}
    m_y(\tP_0^{na}(z))=m_z(\tP^{na}_0(z))\leq\e^{-na(h-2\epsilon)}.
\end{equation*}
Thus, if $n(X):=\#\{\text{atoms of }\tP^{na}_0\text{ intersecting }X\cap \Lambda_3\cap B(y,\e^{-n})\}$, by \Cref{ef} we obtain
\begin{equation*}
    n(\eta(y))\geq\frac{1}{2}\e^{-n(d^u+\epsilon)}\e^{na(h-2\epsilon)}.
\end{equation*}
Moreover, since $$\eta(y)\subseteq \tP^0_{-\infty}(y)\subseteq p_u,$$ it follows that $n(p_u)\geq n(\eta(y))$. Let $p_s$ be one of the $\text{atoms of }\tP^{na}_0\text{ intersecting }p_u\cap \Lambda_3\cap B(y,\e^{-n})$, then $p_u\cap p_s$ is an atom of $\tP^{na}_{-nb}$ that intersects $\Lambda_3\cap B(y,\e^{-n})$ for some $y$ with $d(y,x)\leq\e^{-n}$.
Since $\Xi_n$
refines 
$\tP^{na}_{-nb}$
, 
each atom of $\tP^{na}_{-nb}$ intersecting the relevant set contains at least one atom of $\Xi_n$ intersecting the same set.
Hence,
\begin{equation}\label{lower bound for N}
\begin{aligned}
    N&\geq\sum_{\{p_u:p_u\cap\Lambda_4\cap B(x,\e^{-n})\neq\emptyset\}}n(p_u)\\
    &\geq \frac{1}{4}\e^{-n(d^u+d^s+2\epsilon)}\e^{n(a+b)(h-2\epsilon)}.
\end{aligned}
\end{equation}
Comparing \Cref{upper bound for N} and \Cref{lower bound for N} and letting $n\rightarrow\infty$, $\epsilon\rightarrow 0$, it follows that $$\bar d\leq d^u+d^s+1.$$ Thus, the proof of \Cref{C1 dimension estimate 1} is complete.

\begin{remark}
    If we only admit weaker assumption that $E^u\oplus E^c\oplus E^s$ is dominated on the support of $m$ and $\dim E^c\leq 1$ instead of simply dominated, then $d^u$ and $d^s$ are not known to be exact. However, if we define 
    \begin{equation*}
    \bar d^u:=\limsup_{\epsilon\rightarrow 0}\frac{\log m^u_x(B^u(x,\epsilon))}{\log\epsilon},\ m-a.e.\ x\in M,
\end{equation*}\begin{equation*}
    \bar d^s:=\limsup_{\epsilon\rightarrow 0}\frac{\log m^s_x(B^s(x,\epsilon))}{\log\epsilon},\ m-a.e.\ x\in M.
\end{equation*}
    Using the same arguments, we can obtain the following upper bound estimate
\begin{equation*}
    \bar d\leq\bar d^u+\bar d^s+1.
\end{equation*}
\end{remark}

\section{Eckmann-Ruelle conjecture and \Cref{C^1 dimension estimate 2}}\label{section erconjecture}

\subsection{Eckmann-Ruelle conjecture}
Let $M$ be a $C^{\infty}$ compact Riemannian manifold without boundary, and $f: M \to M$ a diffeomorphism. We consider an $f$-invariant, ergodic Borel measure $m$, which is called \textit{hyperbolic} if it has no zero Lyapunov exponents.

A central problem in dimension theory, often referred to as the Eckmann-Ruelle conjecture, is whether the dimension of such a measure $m$ is exact. Following several breakthroughs in the last century, notably by Young \cite{Young_1982} and Ledrappier and Young \cite{LEDRAPPIER_YOUNG_A, LEDRAPPIER_YOUNG_B}, this conjecture was resolved for $C^{1+\alpha}$ diffeomorphisms by Barreira, Pesin, and Schmeling in their remarkable work \cite{Barreira_Pesin_Schmeling}. Their proof established a new and non-trivial property for hyperbolic ergodic measures, called the asymptotically almost local product structure. This property is key to their proof.

The result was subsequently extended to random systems by Liu and Xie \cite{liu-xie06}, who employed the method of local entropy, and to endomorphisms by Shu \cite{Shu10}. A common feature of these works is the requirement of $C^{1+\alpha}$ regularity. More recently, working in the $C^1$ setting with a dominated splitting, Wang and Cao \cite{WangCao16} obtained a special version of upper and lower estimates for the dimension of ergodic hyperbolic measures.

In this section, we establish a $C^1$ version of the Eckmann-Ruelle conjecture by proving the exactness of the dimension; see \Cref{main thmsecond}. Specifically, we prove that if the Oseledets splitting into stable and unstable subbundles is dominated on the support of $m$, and if the stable and unstable dimensions are themselves exact, then the dimension of $m$ is exact and equals the sum of its stable and unstable dimensions. As a corollary, we obtain \Cref{C^1 dimension estimate 2}, which confirms that the Eckmann-Ruelle conjecture holds in the $C^1$ setting for simply dominated measures.

Our technical approach is based on that of Barreira, Pesin, and Schmeling \cite{Barreira_Pesin_Schmeling}. We construct the required partitions and compute the dimension by making essential use of fake foliation charts and the dominated splitting.

Now, we turn to the technical details and definitions. Denote the regular points with respect to $m$ by $\Gamma_{\mathrm{reg}}$. Let $E^u$ be the sub-bundle of all positive Lyapunov exponents, and let $E^s$ be the sub-bundle of all negative Lyapunov exponents. We assume that $E^u\oplus E^s$ is a dominated splitting on the support of $m$. 

For any $x\in \Gamma_{\mathrm{reg}}\cap \mathrm{supp}m$, the (global) stable and unstable manifolds are defined as $$W^s(x)=\{ y\in M: \limsup_{n\rightarrow \infty}\frac{1}{n}\log (d(f^nx,f^ny))<0 \}$$
and
$$W^u(x)=\{ y\in M: \limsup_{n\rightarrow \infty}\frac{1}{n}\log (d(f^{-n}x,f^{-n}y))<0 \},$$
where $d$ is the distance induced by the Riemannian metric on $M$. When $E^u\oplus E^s$ is dominated on the support of $m$, Abdenur, Bonatti, and Crovisier proved in \cite[Section 8]{ABC} that $W^u(x)$  (resp. $W^s(x)$) is an injectively immersed $C^1$ manifold with dimension $\dim E^u(x)$ (resp. $\dim E^s(x)$).

Recall the definitions of the stable and unstable dimension in \Cref{section 7.3}. 
Choosing a measurable partition $\xi^u$ subordinate to $W^u$ and $\xi^s$ subordinate to $W^s$, we define the conditional measures of $m$ associated with $\xi^u$ and $\xi^s$ as $\{m_x^u\}$ and $\{m^s_x\}$. For $*\in\{s,u\}$, let $B^*(x,\epsilon)$ denote the ball in $W^*(x)$ centered at $x$ of radius $\epsilon>0$. We define 
\begin{equation*}
    \ud^*(x,\xi^*)=\liminf_{\epsilon\rightarrow 0}\frac{\log m^*_x(B^*(x,\epsilon))}{\log\epsilon}
\end{equation*}
and
\begin{equation*}
    \od^*(x,\xi^*)=\limsup_{\epsilon\rightarrow 0}\frac{\log m^*_x(B^*(x,\epsilon))}{\log\epsilon}.
\end{equation*}
One can easily check that $\ud^*(x,\xi^*)$ and $\od^*(x,\xi^*)$ are independent of the choice of the subordinate partitions and are constants for $m$ almost every point, denoted by $\ud^*$ and $\od^*$. 
For $* \in \{s, u\}$, we say that the {$*$-dimension of $m$ is exact} if $\ud^* = \od^*$. In this case, we denote this common value by $d^* := \ud^* = \od^*$.

It was shown by Ledrappier and Young in \cite{LEDRAPPIER_YOUNG_B} that for $C^2$ diffeomorphisms, both the stable dimension $d^s$ and the unstable dimension $d^u$ are exact. This regularity assumption was later weakened to $C^{1+\alpha}$ by Brown in \cite{AaronBrown2022}. In the present work, under the assumption of $C^1$ regularity, we have proved in \Cref{section 7.3} that $d^s$ and $d^u$ are exact for any simply dominated measure.

Based on the theory of Ledrappier and Young, Barreira, Pesin, and Schmeling proved in \cite{Barreira_Pesin_Schmeling} that for a $C^{1+\alpha}$ diffeomorphism, the dimension of any hyperbolic measure $m$ is exact and satisfies the formula $\dim m = d^s + d^u$.

Following their ideas, we prove the following \Cref{main thmsecond}, which is a $C^1$ version of their result. 
\begin{theorem}\label{main thmsecond}
     Let $f \in \Diff(M)$ and $m$ be an ergodic hyperbolic measure of $f$. If
     $d^s$ and $d^u$ are exact and $E^u\oplus E^s$ is dominated on $\mathrm{supp}m$, then for $m$-a.e. $x\in M$,
    \begin{equation*}
        \limsup_{\epsilon\rightarrow0}\frac{\log m(B(x,\epsilon))}{\log\epsilon}
        =\liminf_{\epsilon\rightarrow0}\frac{\log m(B(x,\epsilon))}{\log\epsilon}
        =d^u+d^s.
    \end{equation*}
\end{theorem}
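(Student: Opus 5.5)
The upper bound $\overline d(x)\leq d^u+d^s$ comes from the counting argument of \Cref{Appendix} with the center direction removed. When $E^c=\{0\}$, \Cref{lemma 35} becomes trivial: we take $\Xi_n=\tP^{na}_{-nb}$ with no center cutting. Now $a=1/(\lambda^u-3\epsilon)$ and $b=1/(-\lambda^s-3\epsilon)$. By the local argument (\Cref{local arg here}) applied to the fake unstable and fake stable leaves, and by the version of \Cref{claim 6.3} without the center coordinate, the atoms $\tP^{na}_{-nb}(x)$ have diameter at most $2\e^{-n}$ on a large set. The counting in \Cref{sec 6.3} then goes through with \eqref{a1}, \eqref{a2}, \eqref{a3}, \eqref{a4}, \eqref{a5}. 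Here we use $h_u=h_s=h$, which holds for hyperbolic measures with $E^u\oplus E^s$ dominated by \cite[Main Theorem]{Part1} applied to $f$ and $f^{-1}$. The conclusion is $\overline d\leq \od^u+\od^s=d^u+d^s$. For this half we only need the upper dimensions.

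The substantial part is the lower bound $\underline d(x)\geq d^u+d^s$. For it I would follow Barreira--Pesin--Schmeling and prove an \emph{asymptotically almost local product structure}. Fix $\epsilon>0$, a Pesin-type block $\Lambda_{A_0}$ on which $A(x)\leq A_0$, and a finite-entropy partition $\tP$ as in \Cref{section partial entropy}. Adapted to $\tP$, we set up the sets $\tP^{na}_0(x)\cap \xi^u(x)$ and $\tP^0_{-nb}(x)\cap\xi^s(x)$. Using \eqref{a7}, \eqref{a8} and the exactness of $d^u$ and $d^s$, we show that for most $x$ and all large $n$:
\[
m(\tP^{na}_{-nb}(x))\leq \e^{n\epsilon'}\, m^u_x\bigl(\tP^{na}_0(x)\cap\xi^u(x)\bigr)\, m^s_x\bigl(\tP^{0}_{-nb}(x)\cap\xi^s(x)\bigr),
\]
after removing a set of small measure. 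With the Shannon--McMillan--Breiman theorem and the partial entropies equal to $h$, this gives $m(\tP^{na}_{-nb}(x))\lesssim \e^{-n(a+b)h+n\epsilon'}$. Comparing the Bowen-type atoms with balls, the ball $B(x,\e^{-n})$ is covered by controllably many atoms. Their number is governed by $d^u$ and $d^s$ through $m^u_x(B^u(x,\e^{-n}))\leq \e^{-n(d^u-\epsilon)}$, its stable analogue, and $h=\lambda$-scaled dimension relations, which replace the Ledrappier--Young formula here. Combining these yields $m(B(x,\e^{-n}))\leq \e^{-n(d^u+d^s-C\epsilon)}$.

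In the $C^1$ setting I would make this local product comparison with fake foliation charts. Near a density point $\omega_0$ of $\Lambda_{A_0}$, the fake stable and fake unstable leaves from \Cref{fake $u$-foliation} (with $u,s$ in place of $(i-1),\widehat{(i-1)}$) are graphs with slope at most $\gamma_0/C_1$. So the map $y\mapsto(\tW^s(y)\cap \tW^u(\omega_0),\tW^u(y)\cap \tW^s(\omega_0))$ is a homeomorphism of a small box. It need not be Lipschitz, but by the domination and the argument of \Cref{uniform Hölder} it is $\alpha$-bi-Hölder with $\alpha\to1$ as $\epsilon\to0$. This Hölder loss multiplies dimensions by $\alpha^{\pm1}$ and disappears in the limit, exactly as in \Cref{upper estimate}. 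The density of $m$ in such a box will then be compared with the product of the projected conditional measures using \Cref{lower transverse dimension lemma} and \Cref{lower dimension lemma}.

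I expect the main obstacle to be this almost product estimate. It needs a Borel--Cantelli argument over $n$ that controls, uniformly on a large set, how the conditional measures $m^u_y$ for $y$ in a stable plaque differ from $m^u_x$. Barreira, Pesin and Schmeling used Lipschitz holonomies at this point. I would instead work at the level of partition atoms: the Bowen atoms $\tP^{na}_{-nb}$ are intrinsically product-like because forward and backward refinements are independent, and this avoids holonomies. Geometry enters only through the diameter control from the local argument, where the Hölder exponent costs only $\alpha$-powers.
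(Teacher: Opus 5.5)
Your upper-bound half is fine. It is a genuinely different route from the paper's. You run the Ledrappier--Young counting of \Cref{Appendix} with no center direction (so $\Xi_n=\tP^{na}_{-nb}$), which gives $\overline d\leq \overline d^u+\overline d^s$; the paper instead reads this half off the product estimate of \Cref{lemma lowboundsecond}.

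The gap is in the lower bound $\underline d\geq d^u+d^s$, which is the real content of the theorem. Your concrete mechanism there is a product chart $y\mapsto(\tW^s(y)\cap\tW^u(\omega_0),\tW^u(y)\cap\tW^s(\omega_0))$, claimed $\alpha$-bi-H\"older with $\alpha\to1$, fed into \Cref{lower transverse dimension lemma} and \Cref{lower dimension lemma}. That is the standard argument when holonomies are (almost) Lipschitz, but the hypothesis fails here. The uniform H\"older lemma gives $\alpha=1-\frac{7\epsilon}{\lambda_{i-1}-\lambda_i}$ only because its transversals are curves close to the one-dimensional bundle $E^i$, which carries the single exponent $\lambda_i$. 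Both transversals therefore have the same growth rate up to $\e^{\pm\epsilon n}$. For the stable holonomy between unstable plaques, the transversal is all of $E^u$, which may carry several exponents $\lambda_1>\dots>\lambda_u=\lambda^u$. The same estimate then only gives an exponent of order $\frac{\lambda^u-\lambda^s}{\lambda_1-\lambda^s}$, bounded away from $1$ as soon as $\lambda_1>\lambda^u$. In general these holonomies are not Lipschitz even for smooth systems. Besides, the fake leaves through a general point $y$ of the box are not the true local manifolds of $y$, so your chart does not even carry the conditionals $m^u_y$, $m^s_y$. Barreira--Pesin--Schmeling did not use Lipschitz holonomies; their unavailability is precisely why a new argument was needed.

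Your atom-level replacement does not fill this gap. The displayed inequality $m(\tP^{na}_{-nb}(x))\leq\e^{n\epsilon'}m^u_x(\cdot)\,m^s_x(\cdot)$ follows at once from the Shannon--McMillan--Breiman theorem and its partial version once $h_u=h_s=h$. It therefore carries no product-structure information. The whole difficulty is hidden in your sentence that $B(x,\e^{-n})$ is covered by controllably many atoms whose number is governed by $d^u$ and $d^s$. The bound on $m^u_x(B^u(x,\e^{-n}))$ only controls the number $N^u$ of atoms meeting $\xi^u(x)$ at good points, where SMB lower bounds on atom measures hold. It controls neither the number $\hat N^u$ of all atoms meeting the leaf, nor the atoms of the ball that meet neither $\xi^u(x)$ nor $\xi^s(x)$.

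The paper, following BPS, needs three further ingredients.
\begin{enumerate}
\item The sets $\tQ_n(y)$. Their graph conditions are where the domination geometry really enters, not just through diameters. Via \Cref{lemma 7.11} they give an injective indexing of the good atoms in $\tQ_n(y)$ by pairs of atoms meeting $\xi^s(y)$ and $\xi^u(y)$, hence $N(n+\Delta,\tQ_{n+\Delta}(y))\lesssim\hat N^s\hat N^u$ (\Cref{lem3second}).
\item The global count of \Cref{lem4second}.
\item Crucially, the comparison $\hat N^\flat(n,y,\tQ_n(y))\leq\e^{7an\epsilon}N^\flat(n,y,\tQ_n(y))$ for large $n$ (\Cref{lem5second}). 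It is proved by contradiction: on the exceptional set, a Besicovitch cover at the bad scales combined with \Cref{lem4second} gives Hausdorff dimension $\leq d^s-\epsilon$ inside a stable leaf. This contradicts Frostman's lemma together with the exactness of $d^s$.
\end{enumerate}
Only with these does one get $m(B(y,\e^{-n-\Delta}))\lesssim m^s_y(B^s(y,C_2^2\e^{-n}))\,m^u_y(B^u(y,C_2^2\e^{-n}))\,\e^{O(\epsilon n)}$ (\Cref{lemma upperboundsecond}), i.e.\ $\underline d\geq d^u+d^s$. Your plan contains no substitute for \Cref{lem5second}.
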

Note that $d^u$ and $d^s$ are dynamically defined dimensions and $\dim m$ is defined measure-theoretically; hence the result shows the coincidence of the dynamical dimensions and the measure-theoretic dimension.

We remark that \Cref{main thmsecond} also has a non-ergodic version. The technique is a standard reduction using the Ergodic Decomposition Theorem. The details are omitted in this paper, and we refer the reader to \cite[Section 7]{Barreira_Pesin_Schmeling} for the idea.
\subsection{Construction of partitions}
In this section, we use local charts and fake foliation (see \Cref{fake foliation charts def}) and the local argument (see \Cref{local arg here}) to construct special partitions and subordinate partitions in preparation for the proof of the main theorem in our $C^1$ setting. 
 The partition $\tP$ constructed here is guaranteed by the dominated splitting, which is different from  \cite{Barreira_Pesin_Schmeling}.

\subsubsection{Fake foliations}
Let $f\in \Diff(M)$ preserve an ergodic Borel probability $m$, and the corresponding splitting $E^u\oplus E^s$ is dominated on the support of $m$. The following construction of fake foliation is similar to \cite[Section 2]{Part1} and \Cref{section fake foliation}.


 By the arguments in \cite[Section 2]{Part1} or \Cref{section fake foliation}, one has:
\begin{lemma}\label{ABC ergodic lemmassecond}
Given any $\epsilon>0$, there exist a full measure set $\Gamma'$, a measurable function A: $\Gamma'\rightarrow[1,\infty)$, and a constant $N>0$  such that   $A(f^{\pm}x)\leq \mathrm{e}^{\epsilon}A(x)$ and for any $k\in \mathbb N $,
\begin{equation*}
    \prod_{\ell=0}^{k-1}\parallel Df^N|_{E^s(f^{\ell N}(x))}\parallel \leq A(x)\cdot \mathrm{e}^{kN(\lambda^s+\epsilon)} 
\end{equation*}
\begin{equation*}
    \prod_{\ell=0}^{k-1}\parallel Df^{-N}|_{E^u(f^{-\ell N}(x))}\parallel \leq A(x)\cdot \mathrm{e}^{kN(-\lambda^u+\epsilon)} 
\end{equation*}
\end{lemma}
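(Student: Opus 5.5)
The plan is to repeat the proof of \cite[Lemma 2.2]{Part1}, which is the analogous statement for $E^u\oplus E^c\oplus E^s$. Only two bundles appear here: $E^s$ under forward iteration, with top exponent $\lambda^s$, and $E^u$ under backward iteration, with top exponent $-\lambda^u$. Since the dominated splitting $E^u\oplus E^s$ extends continuously to $\mathrm{supp}\,m$, the functions
\[
\varphi^s_N(x):=\log\|Df^N|_{E^s(x)}\|,\qquad \varphi^u_N(x):=\log\|Df^{-N}|_{E^u(x)}\|
\]
are measurable and bounded by $N\log C_f$. By the Oseledets theorem, $\frac1N\varphi^s_N\to\lambda^s$ and $\frac1N\varphi^u_N\to-\lambda^u$ for $m$-a.e.\ $x$. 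By dominated convergence I will fix $N$ so large that both $\frac1N\varphi^s_N-\lambda^s$ and $\frac1N\varphi^u_N+\lambda^u$ have $L^1(m)$-norm smaller than $\epsilon/4$, in a sense strong enough for the next step.

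Next I will apply Birkhoff's theorem to $f^N$ (forward for $\varphi^s_N$, and to $f^{-N}$ for $\varphi^u_N\circ f^{\cdot}$). The aim is to obtain, for $m$-a.e.\ $x$,
\[
\limsup_{k\to\infty}\frac1k\sum_{\ell=0}^{k-1}\varphi^s_N(f^{\ell N}x)\le N(\lambda^s+\epsilon/2),
\]
and the analogous bound for $\varphi^u_N$ along $f^{-\ell N}x$. With these in hand, I will set
\[
A_1(x):=\sup_{k\ge0}\exp\Big(\sum_{\ell<k}\varphi^s_N(f^{\ell N}x)-kN(\lambda^s+\epsilon)\Big),
\]
define $A_2$ similarly for $E^u$, and take $\max\{A_1,A_2,1\}$. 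This gives a finite measurable function satisfying both product inequalities. Finally, I will temper it in the standard way by replacing it with $A(x):=\sup_{n\in\mathbb Z}\max\{A_1,A_2,1\}(f^nx)\,\mathrm e^{-\epsilon|n|}$. This preserves the inequalities, and it gives $A(f^{\pm1}x)\le \mathrm e^{\epsilon}A(x)$ provided $A$ is finite a.e. Finiteness follows because $A_1$ and $A_2$ grow subexponentially along orbits: by the choice of $N$, the defining sums exceed their linear bound only by $o(|n|)$ once one shifts along the orbit. This is the usual tempering-kernel argument used in Part1. The full measure set $\Gamma'$ is the intersection of the Oseledets regular set, $\mathrm{supp}\,m$, and the Birkhoff generic points.

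The main obstacle is that $f^N$ need not be ergodic. Birkhoff's theorem therefore yields the conditional expectation of $\varphi^s_N$ on the $f^N$-invariant $\sigma$-algebra, not $\int\varphi^s_N\,\td m$. Here $m$ splits into $p\mid N$ ergodic components $m_0,\dots,m_{p-1}$, cyclically permuted by $f$, and each component must satisfy $\int\varphi^s_N\,\td m_j\le N(\lambda^s+\epsilon/2)$. I would first try to use the cyclic structure: $\int\varphi^s_N\,\td m_j=\int\varphi^s_N\circ f^{j}\,\td m_0$, and $\varphi^s_N\circ f^j$ differs from $\varphi^s_N$ by at most $2j\log C_f$. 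Alternatively, I would average the sums over the $N$ phases, using the subadditivity $\varphi^s_{kN}\le\sum_\ell\varphi^s_N\circ f^{\ell N}$ together with the pointwise convergence $\frac1{kN}\varphi^s_{kN}\to\lambda^s$, and choose $N$ along a suitable subsequence so that the deviation is uniformly small on every component. If that fails, I would fall back on the construction of \cite[Lemma 2.2]{Part1} verbatim, which already addresses exactly this point.
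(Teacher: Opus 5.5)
Your skeleton is the standard one: choose $N$ from $\frac1N\varphi^s_N\to\lambda^s$ in $L^1(m)$, take maximal functions $A_1,A_2$ of the block sums along $f^{\pm N}$-orbits, then temper by $\sup_n A_0(f^nx)\e^{-\epsilon|n|}$. The paper itself gives no argument beyond citing \cite[Lemma 2.2]{Part1}. The gap is the step you flag yourself: neither of your concrete fixes for the non-ergodicity of $f^N$ works as written.

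In the cyclic comparison $\int\varphi^s_N\,\td m_j\le\int\varphi^s_N\,\td m_0+2j\log C_f$, the index $j$ runs up to $p-1$. Since $p$ can be of order $N$ (e.g.\ when $(f,m)$ has an odometer factor), the error is of order $N$, i.e.\ of order one after dividing by $N$. Also, your choice of $N$ only controls the average $\frac1p\sum_j\int\varphi^s_N\,\td m_j=\int\varphi^s_N\,\td m$, so $m_0$ need not be a good component to compare with.

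The phase-averaging idea uses $\varphi^s_{kN}\le\sum_{\ell<k}\varphi^s_N\circ f^{\ell N}$. That bounds the block sums from below, not above, and averaging over phases just returns the average bound again. The ``suitable subsequence'' has no mechanism behind it. Citing Part1 at that point is what the paper does, but then the one nontrivial step is not proved.

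Your two ingredients do suffice, combined the other way round. Let $m_0,\dots,m_{p-1}$ be the $f^N$-ergodic components, with $p\mid N$, $m_{j+1}=f_*m_j$ and indices mod $p$. Put $F_j:=\frac1N\int\varphi^s_N\,\td m_j$.

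First, subadditivity gives a \emph{lower} bound on every component. Integrate $\varphi^s_{kN}\le\sum_{\ell<k}\varphi^s_N\circ f^{\ell N}$ against the $f^N$-invariant $m_j\ll m$ and let $k\to\infty$; this gives $F_j\ge\lambda^s$.

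Second, $|\varphi^s_N\circ f-\varphi^s_N|\le2\log C_f$ gives $F_{j+1}\ge F_j-\frac{2\log C_f}{N}$. So a bad component forces nearby components to be bad, over roughly $\epsilon N/\log C_f$ consecutive indices. Suppose $F_{j_0}\ge\lambda^s+\epsilon$. Then $F_{j_0+i}\ge\lambda^s+\frac\epsilon2$ for $0\le i<\min\{p,\frac{\epsilon N}{4\log C_f}\}$. Using $F_j\ge\lambda^s$ for the remaining components and $p\le N$,
\[
\frac1N\int\varphi^s_N\,\td m-\lambda^s=\frac1p\sum_{j=0}^{p-1}(F_j-\lambda^s)\ge\frac\epsilon2\min\Big\{1,\frac{\epsilon}{4\log C_f}\Big\},
\]
which fails for $N$ large. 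Hence for $N$ large every component has $F_j<\lambda^s+\epsilon$. So the $f^N$-Birkhoff averages of $\varphi^s_N$ are a.e.\ strictly below $N(\lambda^s+\epsilon)$ in every phase, and the same argument with $f^{-1}$ treats $E^u$.

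With this in place, your $A_1,A_2$ are finite and grow sublinearly along each of the $N$ phase classes of the $f$-orbit. Your tempering step then goes through as you describe.
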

Fix a constant $C_f\geq 100\max\{|Df^\pm|,\mathrm{e}^{|\lambda_1|+100\epsilon},\mathrm{e}^{|\lambda_r|+100\epsilon}\}$. There exists a constant $\Delta>0$   such that for any $x\in M$, $\exp_x:\{|v|\leq \Delta\}\rightarrow M$ is a diffeomorphism to the image with $|D\exp^{\pm}|\leq 2$.
We define $\tf_x,\ r_0$ and $\tg_x$ as in \Cref{section fake foliation}. 
Similarly, there exists a small constant $\epsilon_0'>0$ such that for any $\epsilon_0\leq\epsilon_0'$ and the $\tg_x$, $N$ defined as above, 
\begin{equation*}
    \mathrm{e}^{-\epsilon N}\leq \frac{|D\tf^{\pm N}_x(0)v|}{|D\tg^{\pm N}_x(y)v|}\leq \mathrm{e}^{\epsilon N},\ \ \forall x\in M,\ \ v ,y\in T_xM .   
\end{equation*}

Then, we introduce the box norm $|\cdot|'$ on each $T_xM,x\in\Gamma'$ respect to the splitting $T_xM=E^u_x\oplus E^s_x$ as in \Cref{section fake foliation}. 
Let $R_x(\rho)$ and $R_x  ^*(\rho)$ be the ball centered at $0$ with radius $\rho$ under the box norm in $T_xM$ and $E^*_x$, where $*\in\{u,s\}$. 
There exists a constant $K>1$  such that   $K^{-1}|\cdot|\leq |\cdot|'\leq K|\cdot|$. 

One can choose a constant $C_0>K$ large enough such that for any $x\in\Gamma'$ and a $*$-dimensional linear subspace $V^*(x)$ as the graph of a linear function: $E^*_x\rightarrow E^{\hat*}_x$ with slope $\leq \frac{1}{C_0}$, $*,\hat*\in\{u,s\}$ and $T_xM=E^*_x\oplus E^{\hat*}_x$, one has
\begin{equation*}
     \mathrm{e}^{-\epsilon N}\leq\frac{\|D\tf_x^{\pm N}(0)|_{V^*(x)}\|}{\|D\tf_x^{\pm N}(0)|_{E^*(x)}\|}\leq \mathrm{e}^{\epsilon N} 
\end{equation*}

Fix a constant $\gamma_0<\frac{\pi}{10^6\cdot C_0^{100}}$. Following the standard discussions on the graph transformation, see \cite{Katok_book,HPS,BW2010, LVY13} for instance, we can choose $\epsilon_0>0$ and $r_0>0$ small enough,   such that   the following lemma holds, where the constant $C_1$ is also given as in \Cref{section fake foliation}.

\begin{lemma}[Fake foliations]\label{fake $u$-foliationsecond}
For each $x\in\Gamma'$ and $*\in\{s,u\}$ there exist unique global fake foliations $\thF^{*}_x$ on $T_xM$ with $C^1$ leaves,  such that   for any $y \in T_xM$:
		\begin{enumerate}
			\item[a)] The unique leaf containing $y$, denoted by $\tW^{*}_x(y)$, is the graph of a $C^1$ function $\varphi:E^{*}_x\rightarrow E^{\hat *}_x$ with $|D\varphi|\leq\frac{\gamma_0}{C_1}$. 
			\item[b)] The foliations are invariant under $\tg_{\cdot}$ in the sense that 
			$$
			\tg_x\left(\thF^{*}_x(y)\right) = \thF^{*}_{f(x)}(\tg_x(y)).
			$$
		\end{enumerate}
\end{lemma}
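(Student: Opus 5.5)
The plan is to run the standard Hadamard--Perron graph-transform argument for the globally defined maps $\tg_{f^n(x)}$ along the orbit of $x\in\Gamma'$, as in \cite[Section 2]{Part1}, \cite{BW2010}, \cite{LVY13}. Only domination of $E^u\oplus E^s$ is used, not hyperbolicity. First fix $N$ from the domination, so that $\|D\tf^N_x(0)|_{E^s}\|\cdot\|(D\tf^N_x(0)|_{E^u})^{-1}\|\leq\frac12$ in the box norm. Then, using $L(\tg_x-D\tf_x(0))\leq\epsilon_0$ and choosing $\epsilon_0$ (hence $r_0$) small, I will show that each $\tg^N_x$ is a global homeomorphism of $T_xM$ which is an $O(\epsilon_0)$-Lipschitz perturbation of a linear map with the same domination ratio up to a factor $\mathrm{e}^{2\epsilon N}$, by \eqref{local argument 1}. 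The first concrete consequence is cone invariance. The $u$-cones $\{v:|v_s|'\leq \frac{\gamma_0}{C_1}|v_u|'\}$ are mapped into themselves by $D\tg^N$, and also by differences $\tg^N(z)-\tg^N(z')$ whenever $z-z'$ lies in the cone. Symmetrically, the $s$-cones are invariant under $\tg^{-N}$. Intermediate iterates $1\leq k<N$ only distort slopes by a factor $C_f^N$, which is absorbed by the choice of $C_1$ relative to $C_0$ and $\gamma_0$.

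Next I construct $\tW^u_x(y)$. Take the affine plane $z+E^u$ through $z=\tg^{-kN}_{x}(y)\in T_{f^{-kN}x}M$, where $\tg^{-kN}_x$ denotes the inverse of $\tg_{f^{-1}x}\circ\cdots\circ\tg_{f^{-kN}x}$, and push it forward by $\tg^{kN}$. By cone invariance and the global graph-transform lemma, each image is the graph over $E^u_x$ of a function with Lipschitz constant at most $\frac{\gamma_0}{C_1}$ and passing through $y$. Domination gives a contraction of the graph transform in the sup-metric on the $E^s$-components over compact subsets, with rate $\frac12\mathrm{e}^{2\epsilon N}$ per $N$ steps. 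Hence these graphs converge to a Lipschitz graph. I will also prove the characterization
\[
\tW^u_x(y)=\{z:\ \tg^{-n}(z)-\tg^{-n}(y)\text{ lies in the $u$-cone for all } n\geq0\},
\]
which gives uniqueness, disjointness of leaves, that the leaves cover $T_xM$, and the invariance b) at once, since the characterization is manifestly $\tg$-equivariant. The $s$-foliation is obtained the same way using $\tg^{-1}$.

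For the $C^1$ regularity of the leaves and the bound $|D\varphi|\leq\frac{\gamma_0}{C_1}$, I will use the fibre-contraction (invariant section) argument. Consider the bundle over $\tW^u_x(y)$ of $\dim E^u$-planes lying in the $u$-cone, acted on by $D\tg$. By domination this action contracts fibres uniformly, with rate $\frac12\mathrm{e}^{2\epsilon N}$ in the graph-slope metric. So there is a unique continuous invariant plane field, and it is tangent to the Lipschitz limit graph; this makes $\varphi$ a $C^1$ function with the stated derivative bound.

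I expect the main obstacle to be uniformity. The constants $\gamma_0$, $C_1$ and $C_0$ must be independent of $x$, even though $\tf_x$ is only $C^1$, so that $D\tg$ is merely continuous. Consequently, the contraction rates must come from the uniform domination with the box norm rather than from Lyapunov norms, and the continuity of the derivative of the perturbation cannot be used quantitatively. I would handle this by working entirely with $N$-step iterates and the Lipschitz bound $\epsilon_0$, with $\epsilon_0$ fixed after $C_0$ and $C_1$, exactly as in the choice preceding \Cref{fake $u$-foliation}.
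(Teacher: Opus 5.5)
Your proposal is correct and takes the same approach as the paper: a Hadamard--Perron graph transform for the globalized maps $\tg_\cdot$ along the orbit of $x$, using only domination. The paper gives no separate argument and simply appeals to the standard graph-transform theory \cite{Katok_book,HPS,BW2010,LVY13} after choosing $\epsilon_0$ and $r_0$ small. One step needs rewording: since $Df^N|_{E^s}$ need not contract, the graph transform is not a $\frac12$-contraction in the plain sup-metric on a fixed compact set; domination gives contraction at rate about $\|Df^N|_{E^s}\|\cdot\|(Df^N|_{E^u})^{-1}\|\leq\frac12$ in the weighted metric $\sup_{v\neq y_u}|\varphi_1(v)-\varphi_2(v)|/|v-y_u|$ on graphs through the common point $y$, and this still yields the uniform-on-compacts convergence you need.
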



	\begin{remark}\label{r.foliation.size}
		
        It is important to note that the fake foliations are defined on the entire tangent bundle $T_\cdot M$ and are invariant under the global map $\tg_\cdot$. However, within the local charts, that is, for vectors $v \in T_\cdot M$ where $|v|\leq r_0$, the map $\tg_\cdot$ coincides with the real dynamics $\tf_\cdot$. Consequently, the foliations are locally invariant under $\tf_\cdot$.
        
        This local invariance is the crucial property that allows the foliations to capture the dynamics of $f$ within this region, and we refer to it as the \textbf{local invariance property}.
	\end{remark}

For simplicity, when $y\in \exp_x\{|v|\leq r_0\}$, we also denote $\tW_x^*(y) :=  \tW^*_x(\exp_x^{-1}y)$ for $*\in\{s,u\}$.
 For $0<\delta\leq \delta_0 :=  \frac{r_0}{100C^N_fC_0^{10}}$, the local unstable and stable manifolds are defined by
 $$W^u_{x,2\delta}(x) := \exp^{-1}_x(\text{component of }W^u(x)\cap \exp_x(R(2\delta A(x)^{-1}))\text{ that contains }x),$$
 $$W^s_{x,2\delta}(x) := \exp^{-1}_x(\text{component of }W^s(x)\cap \exp_x(R(2\delta A(x)^{-1}))\text{ that contains }x).$$ 
 As in \cite[Lemma 2.5]{Part1}, it follows that
\begin{lemma}\label{lemma 8 the sizesecond}
For any $x\in\Gamma'$,
    $$W^u_{x,2\delta}(x)=\tW^u_x(x)\cap R_x(2\delta A(x)^{-1}),$$ $$W^s_{x,2\delta}(x)=\tW^s_x(x)\cap R_x(2\delta A(x)^{-1}).$$
\end{lemma}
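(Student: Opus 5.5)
The approach follows \cite[Lemma 2.5]{Part1}. Fix $x\in\Gamma'$; I treat the unstable case, since the stable one is identical after replacing $f$ by $f^{-1}$ (and $\tg$ by $\tg^{-1}$). The plan is to prove the two inclusions
\[
W^u_{x,2\delta}(x)\subseteq \tW^u_x(x)\cap R_x(2\delta A(x)^{-1}) \quad\text{and}\quad \tW^u_x(x)\cap R_x(2\delta A(x)^{-1})\subseteq W^u_{x,2\delta}(x).
\]
In both, the key input is that inside the box $R_{f^{-k}x}(2\delta A(f^{-k}x)^{-1})$ the map $\tg$ coincides with $\tf$ (the local invariance property, Remark~\ref{r.foliation.size}). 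This holds because $2\delta A^{-1}\le 2\delta_0<r_0$. Combined with $A(f^{-k}x)\le \e^{k\epsilon}A(x)$, it lets us move between the fake dynamics and the true dynamics.

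\textbf{Step 1 (fake leaf $\subseteq$ true unstable manifold).} Take $v\in \tW^u_x(0)\cap R_x(2\delta A(x)^{-1})$. Apply the local argument of Remark~\ref{local arg here} backwards along the fake unstable leaf. This uses \eqref{local argument 1}, the slope control \eqref{local argument 2} (leaves have slope $\le \gamma_0/C_1$), and the estimate for $\|Df^{-N}|_{E^u}\|$ in Lemma~\ref{ABC ergodic lemmassecond}. The outcome is
\[
|\tg^{-k}v|\le C_f^N A(x)\,\e^{-k(\lambda^u-3\epsilon)}|v|\le C_f^N\,2\delta\,\e^{-k(\lambda^u-3\epsilon)}.
\]
Since $\delta\le\delta_0=r_0/(100C_f^NC_0^{10})$, this shows the backward orbit stays in the region $|\cdot|<r_0$, where $\tg=\tf$. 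Hence $\exp_{f^{-k}x}\tg^{-k}v=f^{-k}\exp_x v$, and $d(f^{-k}x,f^{-k}\exp_xv)$ decays exponentially, so $\exp_xv\in W^u(x)$. The intersection is a connected graph over $R^u_x(2\delta A(x)^{-1})$ containing $0$, so it lies in the component containing $x$. This gives $\supseteq$.

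\textbf{Step 2 (true local unstable manifold $\subseteq$ fake leaf).} This is the main obstacle, because a point of $W^u(x)$ whose backward orbit is exponentially contracting might a priori leave the local charts at intermediate times. I would argue via the connected component. Let $y\in W^u_{x,2\delta}(x)$, so $\exp_x^{-1}y$ lies in the component of $W^u(x)$ through $x$ inside the box. First I use that $W^u(x)$ is a $C^1$ manifold of dimension $\dim E^u$ tangent to $E^u$ (the result of \cite{ABC}). By Step 1, $\tW^u_x(x)\cap R_x(2\delta A(x)^{-1})$ is a $\dim E^u$-dimensional disk contained in $W^u(x)$. It is relatively closed in the box and, by invariance of domain, open in $W^u(x)$. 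Connectedness of the component then forces equality, provided the disk has no boundary in the interior of the box. Its boundary lies on $\partial R_x$, because the graph is defined over the whole $R^u_x$-ball and, by the slope bound $\gamma_0/C_1$, its $E^s$-component stays inside $R^s_x(2\delta A(x)^{-1})$.

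If the injectively immersed topology of $W^u(x)$ causes trouble here, I would instead use a dichotomy argument. The fake stable leaf through $\exp_x^{-1}y$ meets $\tW^u_x(0)$ at a point $w$. If $w\neq \exp_x^{-1}y$, then under $\tg^{-k}$ the $E^s$-type separation expands at rate $\e^{k(-\lambda^s-\epsilon)}$, uniformly by domination, while both points converge to the orbit of $x$. That is a contradiction as long as the orbits remain in the charts. Finally, the equivalence of local and fake leaves under small $\delta$ is checked exactly as in \cite[Lemma 2.5]{Part1}.
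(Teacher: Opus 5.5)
Your Step~1 is correct; the paper itself gives no argument beyond the reference to \cite[Lemma 2.5]{Part1}. Step~2, however, has a genuine gap, and it is exactly the obstacle you name without overcoming it.

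Write $D:=\exp_x\bigl(\tW^u_x(0)\cap R_x(2\delta A(x)^{-1})\bigr)$. The invariance-of-domain argument needs the inclusion $D\hookrightarrow W^u(x)$ to be continuous for the intrinsic topology of the immersed manifold $W^u(x)$. This does not follow from $W^u(x)$ being an injectively immersed $C^1$ manifold of dimension $\dim E^u$ tangent to the unstable cone. Abstractly, such a manifold can leave $x$ along a branch off $D$ that is still tangent to the cone, while the points of $D$ on one side of $x$ are covered by a different part of $W^u(x)$ accumulating on $x$. In that case $D$ is not open in $W^u(x)$, and the component through $x$ is not $D$. The last sentence of Step~1 has the same problem, since it needs $D$ to be connected in the intrinsic topology.

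Running the clopen argument in the ambient topology does not help. There $D$ is typically not open in $W^u(x)\cap\exp_x(R_x(2\delta A(x)^{-1}))$; think of a transitive Anosov map, where $W^u(x)$ is dense. Your fallback dichotomy at time $0$ assumes that the backward orbit of $y$ stays in the charts, which is precisely what is unknown for a point of the component. So only dynamics can exclude the pathology, and the proposal does not supply that input.

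The missing step is to apply your dichotomy at a late time rather than at time $0$.
\begin{itemize}
\item For any $y\in W^u(x)$ one has $d(f^{-k}x,f^{-k}y)\to 0$. Since the chart radius $r_0$ is uniform, there is $n$ with $|\exp^{-1}_{f^{-k}x}f^{-k}y|<r_0/C_0$ for all $k\geq n$.
\item On this tail $\tg=\tf$, and the dichotomy yields $\exp^{-1}_{f^{-n}x}f^{-n}y\in\tW^u_{f^{-n}x}(0)$.
\item The backward expansion along fake stable leaves is not uniform ``by domination'', which only gives relative growth. It comes from Lemma~\ref{ABC ergodic lemmassecond} applied at the points $f^{-kN}(f^{-n}x)$, together with $A(f^{\pm}\cdot)\leq \e^{\epsilon}A(\cdot)$, and this suffices.
\item Combined with the contraction from Step~1, this shows that $W^u(x)$ is exhausted by the embedded disks $\Delta_n:=f^n\bigl(\exp_{f^{-n}x}(\tW^u_{f^{-n}x}(0)\cap R_{f^{-n}x}(2\delta A(f^{-n}x)^{-1}))\bigr)$. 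Each $\Delta_n$ is contained in $\Delta_{n'}$ for all large $n'$, and $\Delta_0=D$.
\end{itemize}
This identifies the intrinsic topology of $W^u(x)$ near $x$ with that of the fake leaf, so $D$ is open and connected in $W^u(x)$. Since $D$ is also relatively closed in the box (its boundary lies on $\partial R_x$), your clopen argument on the component now goes through. The stable case is symmetric.
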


\subsubsection{Special partitions}
In this section, we construct the special partitions $\tP$, $\xi^u$, and $\xi^s$. Our approach is based on the methods of \cite{Ledrappier_Strelcyn_1982}, which have since been adapted to other settings, such as partially hyperbolic systems in \cite{YANG_expanding_entropy}. The first step in this construction is to define the small boundary condition.
\begin{definition}[Small boundary condition]
    A measurable partition $\tA$ satisfies the small boundary condition if for any $ \lambda>0$,  $$\sum_{n=1}^{\infty}m(B(\partial\tA,\e^{-n\lambda}))<\infty,$$ where $\partial\tA$ denotes the boundary of partition $\tA$ and $B(\partial\tA,r):=\{x\in M:d(x,\partial\tA)\leq r\}$. A Borel set $A$ satisfies the small boundary condition if the partition $\{A,M-A\}$ satisfies the small boundary condition.
\end{definition}

By \cite[Proposition 3.2]{Ledrappier_Strelcyn_1982}, it follows that for any $x\in M$, $B(x,r)$ satisfies the small boundary condition for $\mathrm{Leb}$-a.e. $r>0$ small enough. This property is crucial for the construction of subordinate partitions.

Recall that $\Lambda_{A_0}:=\{x\in\Gamma':A(x)\leq A_0\}$.
Given $0<\epsilon<1$, we fix $A_0>0$ such that $m(\Lambda_{A_0})>1-0.1\epsilon$. 
By \Cref{lemma 8 the sizesecond}, this choice ensures that local stable and unstable manifolds have a uniform size of $\delta_0A_0^{-1}$. 
We then choose two positive numbers $r_{A_0}>r_{A_0}'>0$, sufficiently small compared to $\delta_0A_0^{-1}$, such that for any two points $x,y\in\Lambda_{A_0}$ with $d(x,y)\leq r_{A_0}'$, the local unstable manifold $B^u(x,r_{A_0})$ and the local stable manifold $B^s(y,r_{A_0})$ intersect at exactly one point and have uniform angles away from zero. 
Moreover, the slopes of the graphs $\exp_x^{-1}(B^s(y,r_{A_0}))$ and $\exp_y^{-1}(B^u(x,r_{A_0}))$ are bounded by $\frac{1}{10^5}$.

By the compactness of \(M\) and the preceding discussion, there exist finitely many balls
\(\{B_i\}_{i=1}^{m}\) that cover \(M\), such that each \(B_i\) satisfies the small boundary condition
and has radius smaller than \(\frac{r_{A_0}'}{100}\).
Then we define the finite partition $\tP$ as
\begin{equation*}
    \tP:=\{B_1,B_2\setminus B_1,\cdots,B_m\setminus(\bigcup_{i=1}^{m-1}B_i)\}.
\end{equation*}
It follows that $\tP$ satisfies the small boundary condition.

For $x\in\Lambda_{A_0}$, we define the local stable manifold $V^s_{\mathrm{loc}}(x):=\exp_x(W^s_{x,2\delta_0}(x))$ and the local unstable manifold $V^u_{\mathrm{loc}}(x):=\exp_x(W^u_{x,2\delta_0}(x))$. Then $\hat\xi^s$ and $\hat\xi^u$ can be defined as follows, which is analogous to \cite{LEDRAPPIER_YOUNG_A} and \cite{YANG_expanding_entropy}. 
\begin{equation*}
\hat\xi^s(x)=\left\{
    \begin{aligned}
        &V^s_{\mathrm{loc}}(y)\cap\tP(x) &\forall\, x\in V^s_{\mathrm{loc}}(y) \text{ for some } y\in\tP(x)\cap\Lambda_{A_0}, \\
        &\tP(x)\setminus\left(\bigcup_{y\in\tP(x)\cap\Lambda_{A_0}}V^s_{\mathrm{loc}}(y)\right)      & \forall\, x\notin V^s_{\mathrm{loc}}(y) \text{ for any } y\in\tP(x)\cap\Lambda_{A_0}.\ \  
    \end{aligned}
    \right .
\end{equation*}
\begin{equation*}
\hat\xi^u(x)=\left\{
    \begin{aligned}
        &V^u_{\mathrm{loc}}(y)\cap\tP(x) &\forall\, x\in V^u_{\mathrm{loc}}(y) \text{ for some } y\in\tP(x)\cap\Lambda_{A_0}, \\
        &\tP(x)\setminus\left( \bigcup_{y\in\tP(x)\cap\Lambda_{A_0}}V^u_{\mathrm{loc}}(y) \right)      &\forall\, x\notin V^u_{\mathrm{loc}}(y) \text{ for any } y\in\tP(x)\cap\Lambda_{A_0}.\ \  
    \end{aligned}
    \right .
\end{equation*}
Define $\xi^u:=\bigvee_{n=0}^\infty f^{n}\hat\xi^u$ and $\xi^s:=\bigvee_{n=0}^\infty f^{-n}\hat\xi^s$. For the same reasons as in \cite{Ledrappier_Strelcyn_1982},  $\xi^u$ is subordinate to $W^u$ and $\xi^s$ is subordinate to $W^s$.

\subsubsection{Fine properties}
In this section, we prove that the $\tP$, $\xi^u$, and $\xi^s$ constructed above satisfy fine properties analogous to \cite[Section 4]{Barreira_Pesin_Schmeling} with several adjustments. For $k,l\geq0$, from now on, we denote the notation that for any measurable partition $\eta$, $$\eta_k^l:=\bigvee_{n=-k}^lf^{-n}\eta.$$ Let $h:=h_m(f)$ be the metric entropy of $f$. Without loss of generality, we can assume that $h_m(f,\tP)>h-\frac{\epsilon}{10}$, since the diameter of the small boundary balls (and thus $\tP$) can be arbitrarily small. Recall that the local charts and the fake foliation charts are defined in \Cref{fake foliation charts def}.

\begin{lemma}\label{lemma 6second}
There exist a set $\Gamma\subseteq \Lambda_{A_0}$ of measure $m(\Gamma)>1-\frac{\epsilon}{2}$, an integer $n_0\geq 1$, and a constant $C>1$   such that   for any $x\in\Gamma$ and any $n\geq n_0$, the following hold:\\
  A. For any integers $k,l\geq 1$,
\begin{equation}\label{eq5second}
    C^{-1}\e^{-(l+k)(h+\epsilon)}\leq m(\tP_k^l(x))\leq C\e^{-(l+k)(h-\epsilon)},
\end{equation}
\begin{equation}\label{eq6second}
    C^{-1}\e^{-kh-k\epsilon}\leq m^s_x(\tP^0_k(x))\leq C\e^{-kh+k\epsilon},
\end{equation}
\begin{equation}\label{eq7second}
    C^{-1}\e^{-lh-l\epsilon}\leq m^u_x(\tP^l_0(x))\leq C\e^{-lh+l\epsilon}.
\end{equation}
  B.
\begin{equation}\label{eq8second}
    \xi^s(x)\cap(\bigcap_{n\geq0}\tP^n_0(x))\supseteq B^s(x,\e^{-n_0}),
\end{equation}
\begin{equation}\label{eq9second}
    \xi^u(x)\cap(\bigcap_{n\geq0}\tP^0_n(x))\supseteq B^u(x,\e^{-n_0}).
\end{equation}
  C.
\begin{equation}\label{eq10second}
    \e^{-d^sn-n\epsilon}\leq m^s_x(B^s(x,\e^{-n}))\leq \e^{-d^sn+n\epsilon},
\end{equation}
\begin{equation}\label{eq11second}
    \e^{-d^un-n\epsilon}\leq m^u_x(B^u(x,\e^{-n}))\leq \e^{-d^un+n\epsilon}.
\end{equation}

\begin{proof}
We examine the properties one by one and show that they can be satisfied by appropriately choosing  $\Gamma,n_0$ and $C$:
\begin{itemize}
\item[A]: Since $\xi^u$ is subordinate to $W^u$, we first claim that

\begin{claim}
For any $x\in\Lambda_{A_0}$ and $l\in\mathbb N$, one has
$$\tP^l_0(x)\cap\xi^u(x)=\bigvee_{i=0}^lf^{-i}\hat\xi^u(x)\cap\xi^u(x).$$
\begin{proof}
Since by definition one has $\tP\prec\hat\xi^u$, which implies that 
$$\tP^l_0(x)\cap\xi^u(x)\supseteq\bigvee_{i=0}^lf^{-i}\hat\xi^u(x)\cap\xi^u(x).$$
So we only need to prove that for $0\leq i\leq l$,
$$f^i\left( \tP^l_0(x)\cap\xi^u(x)\right) \subseteq \hat\xi^u(f^ix).$$

Before giving the proof, we record a basic property of the fake leaves.
Since for any $V^u_{\mathrm{loc}}(y),\ y\in\Lambda_{A_0}$ and any point $z\in V^u_{\mathrm{loc}}(y)$, the local argument implies that $\mathrm{diam} (f^{-j}V^u_{\mathrm{loc}}(y))\leq r_0$. Thus, $f^{-j}V^u_{\mathrm{loc}}(y)$ always stays in the local chart of $f^{-j}z$. By the construction of fake leaves, it implies $$V^u_{\mathrm{loc}}(y)\subseteq \exp_z(\tW^u_{z}(z)).$$

Now, we start to prove the claim. First, consider any $x_0\in  \tP^l_0(x)\cap\xi^u(x)$. Since $x\in\Lambda_{A_0}$, by the definition of $\hat\xi^u$ and the previous property of fake leaves, one has $$\tP^l_0(x)\cap\xi^u(x)\subseteq \exp_{x_0}\tW^u_{x_0}(x_0).$$ Note that each element of $\tP$ has diameter much smaller than $r_0$, which is the uniform size of the local charts. Thus, the local invariance property in \Cref{r.foliation.size} holds for $0\leq i\leq l$. So
$$f^i\left( \tP^l_0(x)\cap\xi^u(x)\right)\subseteq\exp_{f^i(x_0)} \left(\tW^u_{f^i(x_0)}(f^i(x_0))\right).$$
By the graph properties of fake leaves, it follows that the submanifold distance $\tilde d^u$,
which is induced by the manifold $\exp_{f^i(x_0)}\left(\tW^u_{f^i(x_0)}(f^i(x_0))\right)$ on $f^i\left( \tP^l_0(x)\cap\xi^u(x)\right)\subseteq W^u(f^i(x_0))$, is equivalent to the Riemannian distance $d$ on $M$, that is
$$\frac{1}{10}\tilde d^u(\cdot,\cdot)\leq d(\cdot,\cdot)\leq \tilde d^u(\cdot,\cdot).$$

Together with the previous property of the fake leaves, it follows that if $f^i(x_0)\in V^u_{\mathrm{loc}}(y)$ for some $y\in\Lambda_{A_0}$, then $$V^u_{\mathrm{loc}}(y)\subseteq\exp_{f^i(x_0)}\left(\tW^u_{f^i(x_0)}(f^i(x_0))\right)$$ and thus the submanifold distance $d^u$ on $V^u_{\mathrm{loc}}(y)$ is equivalent to the Riemannian distance $d$ on $M$, i.e.
$$\frac{1}{10}d^u(\cdot,\cdot)\leq d(\cdot,\cdot)\leq d^u(\cdot,\cdot).$$

Since each element of $\tP$ has diameter much smaller than $\delta_0A_0^{-1}$, which is the radius of the local unstable manifold $V_{\mathrm{loc}}^u(y)$ when $y\in\Lambda_{A_0}$, the previous discussion implies  that whenever $f^i\left( \tP^l_0(x)\cap\xi^u(x)\right)$ has a non-empty intersection with the union of the local unstable manifolds in the definition of $\hat\xi^u$, it is contained in one of the local unstable manifolds. Thus, by checking the definition of $\hat\xi^u$, we have for any $0\leq i\leq l$,
$$f^i\left( \tP^l_0(x)\cap\xi^u(x)\right) \subseteq \hat\xi^u(f^ix).$$
Hence, the proof of the claim is complete.
\end{proof}
\end{claim}

Then, by the definition of $\tP$ and $\xi^u$, one has
\begin{equation*}
\begin{aligned}
    m^u_x(\tP^l_0(x))&=m^u_x(\tP^l_0(x)\cap\xi^u(x))\\
     &=m^u_x(\bigvee_{i=0}^lf^{-i}\hat\xi^u(x)\cap\xi^u(x))\\
    &=m^u_x(\bigvee_{i=0}^lf^{-i}\xi^u(x))\\
    &=m^u_x\left((\xi^u)_0^l(x)\right).
\end{aligned}
\end{equation*}
and similarly,
\begin{equation*}
    m^s_x(\tP^0_k(x))=m^s_x\left((\xi^s)^0_k(x)\right).
\end{equation*}
By \cite[Theorem 1.1]{Part1}, under the present dominated splitting assumptions,
the stable and unstable entropies coincide with the metric entropy. Then the properties in A follow from the Shannon-McMillan-Breiman theorem and its partial version, see \Cref{SMB type thm} or \cite[Lemma 9.3.1]{LEDRAPPIER_YOUNG_B}.
\item[B]: By the definition of $\xi^u$ and $\xi^s$, one has $\tP\prec\hat\xi^u$ and $\tP\prec\hat\xi^s$. Then we have $$\xi^s(x)\subseteq\bigcap_{n\geq0}\tP^n_0(x)$$ 
and thus
\begin{equation}\label{review eq1}
\xi^s(x)\cap\left(\bigcap_{n\geq0}\tP^n_0(x)\right)=\xi^s(x).
\end{equation}
Similarly, it follows that 
\begin{equation}\label{review eq2}
\xi^u(x)\cap\left(\bigcap_{n\geq0}\tP^0_n(x)\right)=\xi^u(x).
\end{equation}
Thus, the properties in B can be obtained from the subordinate properties of $\xi^u$ and $\xi^s$, more specifically, property (a) of \Cref{def subordiante partition}.
\item[C]: The properties in C directly follow from the definition of $d^u$ and $d^s$.
\end{itemize}
\end{proof}
\end{lemma}

Now, we fix a constant integer $a\in\mathbb N$ that is larger than $\max\{\frac{1}{\lambda^u-100\epsilon},\frac{1}{-\lambda^s-100\epsilon}\}$ throughout the section.

\begin{lemma}\label{lemma 7second}
For any $x\in \Gamma$ and $n\geq n_0$, one has 
\\
D.
\begin{equation*}
    \tP^{an}_{0}(x)\cap\xi^u(x)\subseteq B^u(x,\e^{-n})\subseteq \tP(x)\cap\xi^u(x),
\end{equation*}
\begin{equation*}
    \tP^{0}_{an}(x)\cap\xi^s(x)\subseteq B^s(x,\e^{-n})\subseteq \tP(x)\cap\xi^s(x),
\end{equation*}
\begin{equation*}
    \tP^{an}_{an}(x)\subseteq B(x,\e^{-n}),
\end{equation*}
\begin{proof}
The right hand side follows directly from \Cref{eq8second} and \Cref{eq9second}. We only prove the left hand side.

We first prove that $$ \tP^{an}_{0}(x)\cap\xi^u(x)\subseteq B^u(x,\e^{-n}).$$ For $y\in\tP^{an}_{0}(x)\cap\xi^u(x)$, we have $f^k(y)\in\tP(f^k(x))$ for $0\leq k\leq an$; therefore, $f^k(y)$ stays in the local chart of $f^k(x)$. Consider the fake foliation charts along the orbit of $x$. By the invariant property, we have $\exp_{f^k(x)}^{-1}f^k(y)\in\tW^u_{f^k(x)}(f^k(x))$. By the local argument (see \Cref{local arg here}), 
\begin{equation*}
\begin{aligned}
    |\exp^{-1}_xy-0|&\leq A(f^{an}(x))\cdot C_f^N\e^{-(\lambda^u-3\epsilon)an}\cdot|\exp_{f^{an}(x)}^{-1}f^{an}(y)-0|\\
    &\leq A_0C^N_f\e^{-(\lambda^u-4\epsilon)an}\cdot 2r_{A_0}\\
    &\leq \frac{1}{100}\e^{-n}.
\end{aligned}
\end{equation*}
Hence, $$\tP^{an}_{0}(x)\cap\xi^u(x)\subseteq B^u(x,\e^{-n}).$$ 
Similarly, one has $$\tP^{0}_{an}(x)\cap\xi^s(x)\subseteq B^s(x,\e^{-n}).$$

It remains to prove that $\tP^{an}_{an}(x)\subseteq B(x,\e^{-n})$. If $y\in\tP^{an}_{an}(x)$, then $f^k(y)\in\tP(f^k(x))$ for $-an\leq k\leq an$. It follows that $f^k(y)$ stays in the local chart of $f^k(x)$. Denote $$z_k:=\tW^u_{f^k(x)}(f^k(x))\cap\tW^s_{f^k(x)}(f^k(y)).$$ By the local product structure of the fake foliations, $z_k$ stays in the local chart of $f^k(x)$ for any $-na \leq k\leq na$. By the local invariant property, we have $\tf_{f^k(x)}z_k=z_{k+1}$. For the same reason, the local argument (see \Cref{local arg here}) implies that $|\exp_x^{-1}y-z_0|\leq\frac{1}{100}\e^{-n}$ and $|z_0-0|\leq\frac{1}{100}\e^{-n}$. Then we have $|\exp_x^{-1}y|\leq\frac{1}{50}\e^{-n}$, and thus $d(x,y)\leq \e^{-n}$. Hence, we have proved  that $$\tP^{an}_{an}(x)\subseteq B(x,\e^{-n}).$$  
\end{proof}
\end{lemma}

Using the local argument as above, and together with the domination condition and fake foliations, the following tube-covering lemma can be obtained, which controls the shape of $\tP^{an}_0(x)$ and $\tP_{an}^0(x)$ for any $x\in\Gamma$.
\begin{lemma}\label{tube lem}
    If $x\in\Gamma$, $n\geq n_0$, then
    \begin{equation*}
        \tP^{an}_0(x)\subseteq U(B^s(x,r_{A_0}),\e^{-n}),
    \end{equation*}
    \begin{equation*}
        \tP^{0}_{an}(x)\subseteq U(B^u(x,r_{A_0}),\e^{-n}),
    \end{equation*}
    where $U(X,r):=\{x\in M:d(x,X)<r\}$ denotes the $r$-neighborhood of $X$.
    \begin{proof}
        Since the diameter of $\tP$ is much smaller than $r_{A_0}$, $\tP(x)$ is contained in the fake foliation chart of $x$. Thus, for any $y\in\tP^{an}_0(x)$, there exists a point $$p:=B^s(x,r_{A_0})\cap\exp_x(\tW^u_{x,2\delta}(y)).$$ 
        As in the previous lemma, we can use the local argument to show that
        $$|\exp_x^{-1}y-\exp_x^{-1}p|\leq\frac{\e^{-n}}{100}.$$
        Thus, it follows that $d(y,p)<\e^{-n}$. Since $p\in B^s(x,r_{A_0})$, the proof of the first statement is complete. The second statement follows in the same manner. 
        
    \end{proof}
\end{lemma}
\begin{lemma}
    For any $x\in\Gamma$ and $n\geq n_0$, 
    \begin{equation}\label{markov ssecond}
        \tP^{an}_{an}(x)\cap\xi^s(x)=\tP^0_{an}(x)\cap\xi^s(x),
    \end{equation}
    \begin{equation}\label{markov usecond}
        \tP^{an}_{an}(x)\cap\xi^u(x)=\tP_0^{an}(x)\cap\xi^u(x).
    \end{equation}
    \begin{proof}
        It follows directly from \Cref{review eq1} and \Cref{review eq2}.
    \end{proof}
\end{lemma}

Using the Borel density point arguments as in \cite[Section 4]{Barreira_Pesin_Schmeling}, we can enlarge $n_0$ if necessary and choose a set $\hat\Gamma\subseteq \Gamma$ with $m(\hat\Gamma)>1-\epsilon$   such that   for any  $n\geq n_0$ and $x\in\hat\Gamma$, one has
\begin{equation}\label{eq19second}
    m(B(x,\e^{-n})\cap\Gamma)\geq \frac{1}{2}m(B(x,\e^{-n})),
\end{equation}
\begin{equation}\label{eq20second}
    m^s_x(B^s(x,\e^{-n})\cap\Gamma)\geq \frac{1}{2}m^s_x(B^s(x,\e^{-n})),
\end{equation}
\begin{equation}\label{eq21second}
    m^u_x(B^u(x,\e^{-n})\cap\Gamma)\geq\frac{1}{2}m^u_x(B^u(x,\e^{-n})).
\end{equation}
Together with \Cref{lemma 6second} and \Cref{lemma 7second}, it follows that
\begin{lemma}\label{lemma 41}
    There exists a positive constant $D>0$   such that   for any $k\geq1$ and every $x\in\hat\Gamma$,
    \begin{equation*}
        m^s_x(\tP^k_0(x)\cap\Gamma)\geq D,
    \end{equation*}
    \begin{equation*}
        m^u_x(\tP^0_k(x)\cap\Gamma)\geq D.
    \end{equation*}
    \begin{proof}
        The same as \cite[Proposition 4]{Barreira_Pesin_Schmeling}. 
        By \Cref{eq8second}, for any $k\geq 1$ and $x\in\Gamma$, one has
        $$\tP^k_0(x)\cap\Gamma\supseteq B^s(x,\e^{-n_0})\cap\Gamma.$$
        Thus, by \Cref{eq20second} and \Cref{eq10second},
        $$m^s_x\left(\tP^k_0(x)\cap\Gamma\right)\geq \frac{1}{2}m^s_x\left(B^s(x,\e^{-n_0})\right)\geq\frac{1}{2}\e^{-(d^s+\epsilon)n_0}:=D.$$
        The other inequality follows similarly. 
    \end{proof}
\end{lemma}

For any given $x\in\Gamma$ and $n\geq n_0$, we define two classes of elements of partition $\tP_{an}^{an}$ denoted as $\tR(n)$ and $\tF(n)$ as in \cite{Barreira_Pesin_Schmeling}:
$$\tR(n):=\{\tP^{an}_{an}(y)\subseteq \tP(x):\tP^{an}_{an}(y)\cap\Gamma\neq\emptyset\},$$
$$\tF(n):=\{\tP^{an}_{an}(y)\subseteq\tP(x):\tP^0_{na}(y)\cap\hat\Gamma\neq\emptyset,\tP^{an}_0(y)\cap\hat\Gamma\neq\emptyset\}.$$
Here \(\tR(n)\) denotes the collection of \(n\)-scale rectangles that actually meet the good set \(\Gamma\).  The family \(\tF(n)\) imposes a different product-type condition: both the stable strip \(\tP^0_{na}(y)\) and the unstable strip \(\tP^{an}_0(y)\) have to meet the smaller density good set \(\hat\Gamma\).  Thus \(\tF(n)\) admits a ``local product structure''.  This distinction will be useful in the counting argument below: rectangles in \(\tR(n)\) have good estimates from \Cref{lemma 6second}, which are used to count rectangles intersecting \(\Gamma\), while \(\tF(n)\) is used to recover a controlled local product structure.

Now we can define the set $\tQ_n(x)$, which is, roughly speaking, the union of elements in $\tF(n)$ in the ``$\e^{-n}$-neighborhood'' of $x$, and which has a nice local product structure. 
\begin{lemma}\label{lemma 10second}
    There exist an integer $n_1\gg n_0$ and a constant $C_2>1$ large enough such that  : for any $x\in\Gamma$ and $n\geq n_1$,
    define $\tQ_n(x)$ by $$\tQ_n(x)=\bigcup\tP^{an}_{an}(y),$$ 
    where the union is taken over all $y\in M$ such that $\tP^{an}_{an}(y)\in \tF(n)$ and which satisfy the following properties:
    \begin{itemize}
    \item[1.] There exists a point $q\in\tP^{an}_0(y)\cap\hat\Gamma$   such that   $\exp_x^{-1}(B^s(q,\e^{-n_0}))\subseteq T_xM$ contains the graph of a function $\varphi_s:R^s_x(C^{30}_2\e^{-n})\to E^u_x$ with slope less than $\frac{1}{10^5}$ and $|\varphi_s(0)|\leq C_2\e^{-n}$.
     \item[2.] There exists a point $d\in\tP^{0}_{an}(y)\cap\hat\Gamma$   such that   $\exp_x^{-1}(B^u(d,\e^{-n_0}))\subseteq T_xM$ contains the graph of a function $\varphi_u:R^u_x(C^{30}_2\e^{-n})\to E^s_x$ with slope less than $\frac{1}{10^5}$ and $|\varphi_u(0)|\leq C_2\e^{-n}$.   
     \end{itemize}
    Then we have
    \begin{equation}\label{eq16second}
        B(x,\e^{-n})\cap\hat\Gamma\subseteq\tQ_n(x)\subseteq B(x,C_2^2\e^{-n}),
    \end{equation}  
    \begin{equation}\label{eq17second}
        B^s(x,\e^{-n})\cap\hat\Gamma\subseteq\tQ_n(x)\cap\xi^s(x)\subseteq B^s(x,C_2^2\e^{-n}),
    \end{equation} 
    \begin{equation}\label{eq18second}
        B^u(x,\e^{-n})\cap\hat\Gamma\subseteq\tQ_n(x)\cap\xi^u(x)\subseteq B^u(x,C_2^2\e^{-n}),
    \end{equation} 
    and for each $y\in\tQ_n(x)$, $\tP^{an}_{an}(y)\subseteq\tQ_n(x)$.
    
\begin{figure}[htbp]
    \centering
    \includegraphics[width=0.9\textwidth]{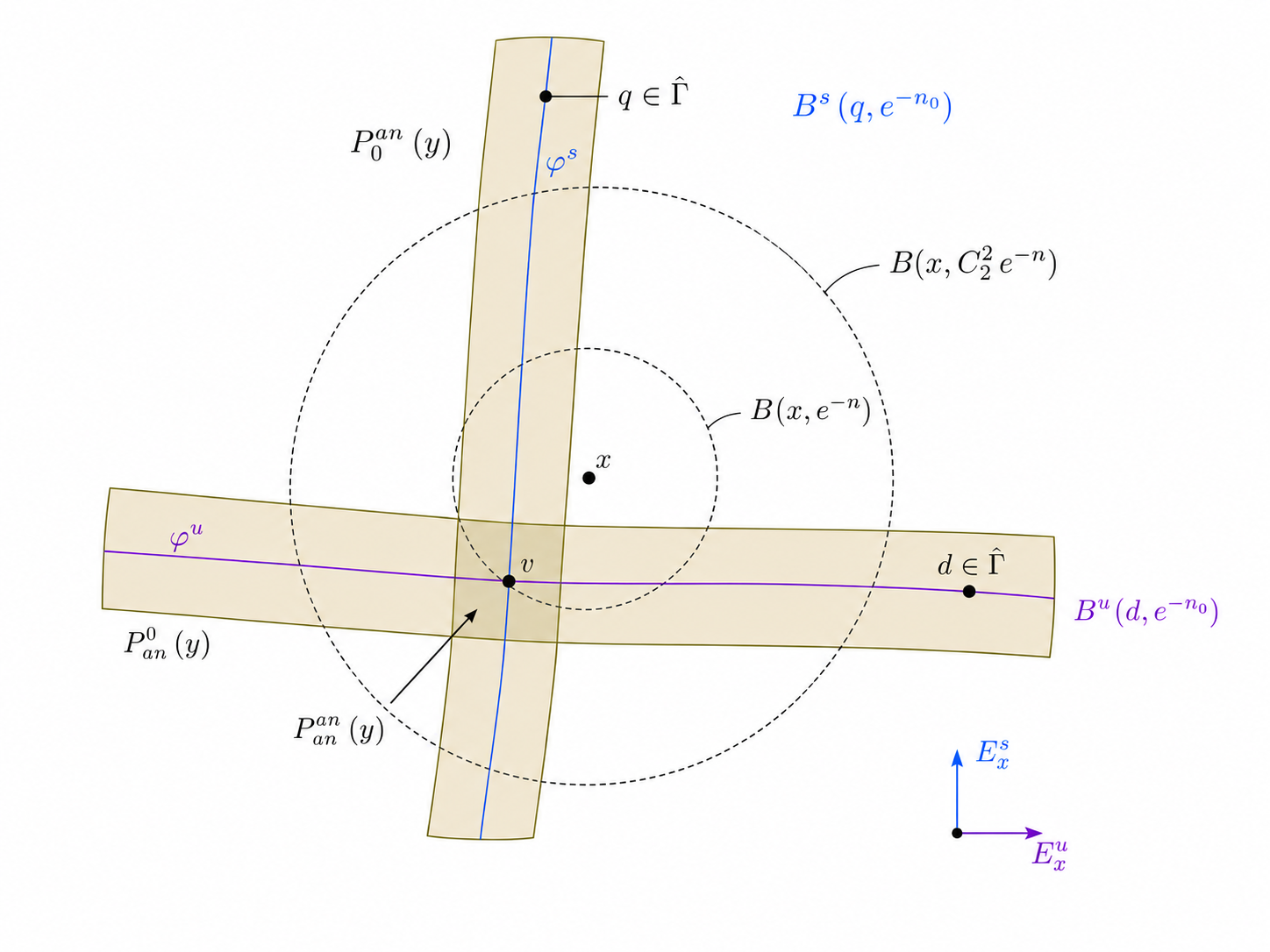}
    \caption{illustration for \Cref{lemma 10second}.}
\end{figure}

    \begin{proof}
    By the definition of $\tQ_n$, it follows that for each $y\in\tQ_n(x)$, $\tP^{an}_{an}(y)\subseteq\tQ_n(x)$.

    We first prove the left hand side of \Cref{eq16second} that 
    $$B(x,\e^{-n})\cap\hat\Gamma\subseteq\tQ_n(x).$$ 
    If $y\in B(x,\e^{-n})\cap\hat\Gamma$,
    clearly we have $\tP_{an}^{an}(y)\in \tF(n)$ since $y\in \tP^{an}_0(y)\cap\tP_{an}^0(y)$.
    Moreover, $y$ has local manifolds $B^u(y,\e^{-n_0}),B^s(y,\e^{-n_0})$ with uniform size $\e^{-n_0}$.
    Since $n_0\ll n_1\leq n$, one can choose $n_1=n_1(C_2)$ large enough such that $\exp_x^{-1}(B^s(y,\e^{-n_0}))$ contains the graph of a function  $\varphi_s:R^s_x(C^{30}_2\e^{-n})\to E^u_x$ and
    $\exp_x^{-1}(B^u(y,\e^{-n_0}))$ contains the graph of a function  $\varphi_u:R^u_x(C^{30}_2\e^{-n})\to E^s_x$. Recall that by the definition of $\tP$, those graphs have slope less than $\frac{1}{10^5}$. Since $d(x,y)\leq\e^{-n}$ and the local stable and unstable manifolds have uniform angles away from zero, there exists a constant $C_2>1$ such that $|\varphi_s(0)|\leq C_2\e^{-n}$ and $|\varphi_u(0)|\leq C_2\e^{-n}$.
    So the left hand side of \Cref{eq16second} is proved.

    The second step of the proof is to prove the right hand side of \Cref{eq16second} that $$\tQ_n(x)\subseteq B(x,C_2^2\e^{-n}).$$ Suppose $y\in\tQ_n(x)$, we only need to prove that $\tP^{an}_{an}(y)\subseteq B(x,C_2^2\e^{-n})$. 
   Let $q$ and $d$ be chosen as in property 1. and 2., together with \Cref{tube lem} one has $$\tP^{an}_{an}(y)=\tP^{an}_0(q)\cap\tP^0_{an}(d)\subseteq U(B^s(q,r_{A_0}),\e^{-n})\cap U(B^u(d,r_{A_0}),\e^{-n}).$$
    Let $v:=B^s(q,r_{A_0})\cap B^u(d,r_{A_0})$. By the uniform angle condition between stable and unstable manifolds, we can choose the constant $C_2$ large enough such that
\[ \tP^{an}_{an}(y)\subseteq B(v,C_2\e^{-n}). \]

Thus, it remains to prove that $v\in B(x,C_2\cdot(C_2-1)\e^{-n})$. 
To this end, recall that in the chart at $x$, the manifolds $B^s(q,r_{A_0})$ and $B^u(d,r_{A_0})$ contain the graphs of functions $\varphi_s$ and $\varphi_u$, respectively. Due to the properties of these functions, their graphs intersect at a unique point, which is $\exp_x^{-1}v$ and must satisfy the estimate:
\[ |\exp_x^{-1}v-0|\leq \frac{C_2\cdot(C_2-1)}{2}\cdot\e^{-n}. \]
Therefore $v\in B(x,C_2\cdot(C_2-1)\e^{-n})$ and the proof of \Cref{eq16second} is complete.

    Finally, it is not difficult to use \Cref{eq16second} to deduce \Cref{eq17second} and \Cref{eq18second}. Note that $\xi^s(x)$ and $\xi^u(x)$ are contained in the local stable and unstable manifold, and by the graph properties it follows that the submanifold distances $d^s$ on $V^s_{\mathrm{loc}}(x)$ and $d^u$ on $V^u_{\mathrm{loc}}(x)$ are equivalent to the Riemannian distance $d$ on $M$, i.e. $$\frac{1}{10}d^s(\cdot,\cdot)\leq d(\cdot,\cdot)\leq d^s(\cdot,\cdot)$$ and $$\frac{1}{10}d^u(\cdot,\cdot)\leq d(\cdot,\cdot)\leq d^u(\cdot,\cdot).$$ Thus by slightly enlarging $C_2$, \Cref{eq17second} and \Cref{eq18second} follows from \Cref{eq16second} directly, noting that one has $\xi^u(x)\supseteq B^u(x,\e^{-n_0})$ and $\xi^s(x)\supseteq B^s(x,\e^{-n_0})$ whenever $x\in\Gamma$ by \Cref{eq8second} and \Cref{eq9second}.
    \end{proof}
\end{lemma}

\begin{remark}\label{remark:Qn_difference}
The definition of $\tQ_n(x)$ has been modified accordingly from that in \cite{Barreira_Pesin_Schmeling} due to the intrinsic difficulties of the problem in the present setting. Geometric conditions 1. and 2. in the definition of $\tQ_n(x)$ are essential to ensure the local product structure in the setting of $C^1$ plus domination.
\end{remark}

By the graph properties in the definition of $\tQ_n(x)$ and the proof of the previous lemma, \Cref{lemma 7.11} follows directly (keep in mind that $n_1\gg n_0$). We omit the proof.
\begin{lemma}\label{lemma 7.11}
    For any $x\in\Gamma$, $n\geq n_1$, and $\tP_{an}^{an}(y)\subseteq \tQ_n(x)$: \begin{itemize}
        \item[1.] there exists a point $q\in\tP^{an}_0(y)\cap\hat\Gamma$ such that for any $\zeta\in B(x,C_2^2\e^{-n})\cap\Gamma$, $B^s(q,\e^{-n_0})\cap B^u(\zeta,\e^{-n_0})$ intersects exactly at one point.
        \item[2.] there exists a point $d\in\tP^{0}_{an}(y)\cap\hat\Gamma$ such that for any $\zeta\in B(x,C_2^2\e^{-n})\cap\Gamma$, $B^u(d,\e^{-n_0})\cap B^s(\zeta,\e^{-n_0})$ intersects exactly at one point.
    \end{itemize}
\end{lemma}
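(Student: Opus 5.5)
The plan is to derive both items from the two geometric conditions in the definition of $\tQ_n(x)$ in \Cref{lemma 10second}, combined with the fact that points of $\Gamma$ near $x$ carry uniformly sized, almost flat local stable and unstable manifolds in the chart at $x$. We treat item 1; item 2 is symmetric, with the roles of $s$ and $u$ exchanged and condition 2 used in place of condition 1.

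Fix $\tP^{an}_{an}(y)\subseteq\tQ_n(x)$. By construction of $\tQ_n(x)$ there is a point $q\in\tP^{an}_0(y)\cap\hat\Gamma$ for which condition 1 holds. Thus, in $T_xM$, the set $\exp_x^{-1}(B^s(q,\e^{-n_0}))$ contains the graph of $\varphi_s:R^s_x(C_2^{30}\e^{-n})\to E^u_x$ with slope $<10^{-5}$ and $|\varphi_s(0)|\leq C_2\e^{-n}$.

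Now take $\zeta\in B(x,C_2^2\e^{-n})\cap\Gamma$. Since $\zeta\in\Gamma\subseteq\Lambda_{A_0}$ and $d(x,\zeta)\leq C_2^2\e^{-n}\ll r_{A_0}'$ (because $n\geq n_1\gg n_0$), we argue exactly as in the proof of the left side of \eqref{eq16second}. Enlarging $n_1$ if necessary, $\exp_x^{-1}(B^u(\zeta,\e^{-n_0}))$ contains the graph of some $\psi_u:R^u_x(C_2^{30}\e^{-n})\to E^s_x$ with slope $<10^{-5}$. This uses the slope bound $10^{-5}$ built into the choice of $r_{A_0}, r_{A_0}'$, the uniform size $\e^{-n_0}\ll \delta_0A_0^{-1}$ of these manifolds, and \Cref{lemma 8 the sizesecond}. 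Because the angle between $E^u_x$ and $E^s_x$ is bounded below, this gives $|\psi_u(0)|\leq C_2\cdot C_2^2\e^{-n}$, after enlarging $C_2$ once at the start by a fixed constant depending only on $K$.

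Existence of an intersection point then reduces to a contraction-mapping argument in the box $R_x(C_2^{30}\e^{-n})$. We look for $(v^u,v^s)$ with $v^u=\varphi_s(v^s)$ and $v^s=\psi_u(v^u)$. The composition $\varphi_s\circ\psi_u$ is a contraction with Lipschitz constant $\leq 10^{-10}$. It maps $R^u_x(C_2^{30}\e^{-n})$ into itself, since
\[
|\varphi_s(\psi_u(v^u))|\leq |\varphi_s(0)|+10^{-5}\bigl(|\psi_u(0)|+10^{-5}C_2^{30}\e^{-n}\bigr)\leq (C_2+C_2^3+1)\e^{-n}<C_2^{30}\e^{-n}.
\]
Hence the graphs meet at exactly one point inside the box.

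Uniqueness of the intersection of $B^s(q,\e^{-n_0})$ and $B^u(\zeta,\e^{-n_0})$ globally, and not merely inside the box, is the step I expect to be the main obstacle. Both sets have size $\e^{-n_0}\ll r_{A_0}$ and lie in the fake foliation chart at $x$. By \Cref{lemma 8 the sizesecond} and the graph property a) of \Cref{fake $u$-foliationsecond}, applied through the chart change (slopes $\leq 10^{-5}$ as chosen), the whole of $\exp_x^{-1}(B^s(q,\e^{-n_0}))$ is a single graph over $E^s_x$, and the whole of $\exp_x^{-1}(B^u(\zeta,\e^{-n_0}))$ is a single graph over $E^u_x$. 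Two such almost-transverse graphs meet in at most one point, by the same contraction estimate carried out on the full domains. This is precisely the property recorded in the choice of $r_{A_0}$ and $r_{A_0}'$ (a unique intersection point with uniform angle). The only care needed is to check that $q$ and $\zeta$ are within $r_{A_0}'$ of each other, which holds since both lie within $C_2^2\e^{-n}+\mathrm{diam}\,\tP$ of $x$.
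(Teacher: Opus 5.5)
Your proposal is correct and follows the route the paper indicates for this omitted proof. You take $q$ (resp.\ $d$) from conditions 1--2 in the definition of $\tQ_n(x)$, rerun the graph argument from the proof of \Cref{lemma 10second} at $\zeta\in B(x,C_2^2\e^{-n})\cap\Gamma$ using $n_1\gg n_0$, intersect the two almost-transverse graphs in the box, and obtain uniqueness from the choice of $r_{A_0},r_{A_0}'$ (valid since $\tP(y)=\tP(x)$ gives $d(q,\zeta)\leq \mathrm{diam}\,\tP(x)+C_2^2\e^{-n}<r_{A_0}'$). One small slip: your bound $(C_2+C_2^3+1)\e^{-n}$ is wrong as written, because $10^{-10}C_2^{30}$ need not be at most $1$; however, the estimates you actually need, $C_2^3+10^{-5}C_2^{30}<C_2^{30}$ (so that $\psi_u$ lands in the domain of $\varphi_s$) and $C_2+10^{-5}C_2^3+10^{-10}C_2^{30}<C_2^{30}$, do hold for $C_2$ large, so the argument goes through.
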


The following two lemmas, \Cref{lem 7.13} and \Cref{lemma **second}, can be proved similarly, which introduce $\tQ_n^*(x)$ and $\tQ_n^{**}(x)$ as larger sets that are analogous to $\tQ_n(x)$. Each enlargement uses a smaller graph domain but allows a larger displacement and a slightly larger slope,  which are chosen to absorb the coordinate changes used in \Cref{lem includingsecond}. They will both be used in the last section. The inclusion relationships between $\tQ_n(x)$, $\tQ^{*}_n(x)$, and $\tQ_n^{**}(x)$ are shown in \Cref{lem includingsecond}, which will be used in the proof of \Cref{lemma lowboundsecond}.
\begin{lemma}\label{lem 7.13}
        After increasing \(n_1\) if necessary, the following holds for every
\(x\in\Gamma\) and every \(n\ge n_1\).  Define $\tQ^*_n(x)$ by $$\tQ^*_n(x)=\bigcup\tP^{an}_{an}(y)$$
      where the union is taken over all $y\in M$ such that $\tP^{an}_{an}(y)\in \tF(n)$ and which satisfy the following properties:
    \begin{itemize}
    \item[1.] There exists a point $q\in\tP^{an}_0(y)\cap\hat\Gamma$   such that   $\exp_x^{-1}(B^s(q,\e^{-n_0}))\subseteq T_xM$ contains the graph of a function $\varphi_s:R^s_x(C^{20}_2\e^{-n})\to E^u_x$ with slope less than $\frac{2}{10^5}$ and $|\varphi_s(0)|\leq C^4_2\e^{-n}$.
     \item[2.] There exists a point $d\in\tP^{0}_{an}(y)\cap\hat\Gamma$   such that   $\exp_x^{-1}(B^u(d,\e^{-n_0}))\subseteq T_xM$ contains the graph of a function $\varphi_u:R^u_x(C^{20}_2\e^{-n})\to E^s_x$ with slope less than $\frac{2}{10^5}$ and $|\varphi_u(0)|\leq C^4_2\e^{-n}$.   
     \end{itemize}
    Then we have
    \begin{equation}\label{eq16asecond}
        B(x,C_2^3\e^{-n})\cap\hat\Gamma\subseteq\tQ^*_n(x)\subseteq B(x,C_2^5\e^{-n}),
    \end{equation}  
    \begin{equation}\label{eq17asecond}
        B^s(x,C_2^3\e^{-n})\cap\hat\Gamma\subseteq\tQ^*_n(x)\cap\xi^s(x)\subseteq B^s(x,C_2^5\e^{-n}),
    \end{equation} 
    \begin{equation}\label{eq18asecond}
        B^u(x,C^3_2\e^{-n})\cap\hat\Gamma\subseteq\tQ^*_n(x)\cap\xi^u(x)\subseteq B^u(x,C_2^5\e^{-n}),
    \end{equation} 
    and for each $y\in\tQ^*_n(x)$, $\tP^{an}_{an}(y)\subseteq\tQ^*_n(x)$.
\end{lemma}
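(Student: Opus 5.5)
The plan is to run the proof of \Cref{lemma 10second} again, with the constants changed: the graph domain shrinks from $C_2^{30}\e^{-n}$ to $C_2^{20}\e^{-n}$, the displacement bound grows from $C_2\e^{-n}$ to $C_2^4\e^{-n}$, and the slope bound relaxes from $\frac{1}{10^5}$ to $\frac{2}{10^5}$. First, the saturation property is immediate from the definition. $\tQ^*_n(x)$ is a union of atoms of $\tP^{an}_{an}$, so $y\in\tQ^*_n(x)$ implies $\tP^{an}_{an}(y)\subseteq\tQ^*_n(x)$.

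For the left inclusion of \Cref{eq16asecond}, take $y\in B(x,C_2^3\e^{-n})\cap\hat\Gamma$. Since $y$ lies in both $\tP^0_{an}(y)$ and $\tP^{an}_0(y)$, we have $\tP^{an}_{an}(y)\in\tF(n)$, and we take $q=d=y$. The local manifolds $B^s(y,\e^{-n_0})$ and $B^u(y,\e^{-n_0})$ have the uniform size $\e^{-n_0}$ and, by the choice of $\tP$ and $r_{A_0}$, slope at most $\frac{1}{10^5}$ in the chart at $x$. After enlarging $n_1$ so that $C_2^{20}\e^{-n}\ll\e^{-n_0}$, their images under $\exp_x^{-1}$ contain graphs over $R^s_x(C_2^{20}\e^{-n})$ and $R^u_x(C_2^{20}\e^{-n})$. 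The value at $0$ is controlled by the uniform angle constant: it is at most a fixed multiple of $|\exp_x^{-1}y|$, which is at most $2C_2^3\e^{-n}$. This gives $|\varphi_{s}(0)|,|\varphi_u(0)|\le C_2^4\e^{-n}$, provided $C_2$ was chosen larger than that angle constant. I expect this to be consistent with how $C_2$ was fixed in \Cref{lemma 10second}, possibly after enlarging $C_2$ once at the outset; the factor 2 in the slope bound leaves room for the change of charts from $y$ to $x$.

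For the right inclusion, take an atom $\tP^{an}_{an}(y)\subseteq\tQ^*_n(x)$ together with its witnesses $q,d$. By \Cref{tube lem}, the atom lies in $U(B^s(q,r_{A_0}),\e^{-n})\cap U(B^u(d,r_{A_0}),\e^{-n})$. Put $v:=B^s(q,r_{A_0})\cap B^u(d,r_{A_0})$. The uniform angles place the atom inside $B(v,C_2\e^{-n})$. In the chart at $x$, $\exp_x^{-1}v$ is the intersection of two graphs with slope at most $\frac{2}{10^5}$ and offsets at most $C_2^4\e^{-n}$. Solving the fixed-point equation for the intersection gives $|\exp_x^{-1}v|\le 2C_2^4\e^{-n}$. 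The point lies well within the domains $R^*_x(C_2^{20}\e^{-n})$, so it is genuinely the intersection of the given graphs. Hence $d(x,\cdot)\le 4C_2^4\e^{-n}+C_2\e^{-n}\le C_2^5\e^{-n}$ once $C_2\ge 10$.

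\Cref{eq17asecond} and \Cref{eq18asecond} follow from \Cref{eq16asecond}. We intersect with $\xi^s(x)$, respectively $\xi^u(x)$. These contain $B^s(x,\e^{-n_0})$ and $B^u(x,\e^{-n_0})$ by \Cref{eq8second} and \Cref{eq9second}, and $C_2^5\e^{-n}\ll\e^{-n_0}$. We then use the equivalence $\frac{1}{10}d^*\le d\le d^*$ of the leaf distance with the ambient distance on local leaves, absorbing the factor 10 by enlarging $C_2$ slightly as in \Cref{lemma 10second}. The main obstacle is bookkeeping: $C_2$ must be fixed once, uniformly, so that the offset estimate $C\cdot C_2^3\le C_2^4$ and the intersection bound both hold without circularity. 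I would first try to fix $C_2$ at the beginning as a large power of the angle constant $C_0$, which serves \Cref{lemma 10second} and this lemma simultaneously.
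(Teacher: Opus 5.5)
Your proposal is correct and follows the paper's route: the paper proves this lemma by rerunning the proof of \Cref{lemma 10second} with the enlarged constants, exactly as you do. That means witnesses $q=d=y$ for the left inclusion, \Cref{tube lem} plus the graph-intersection estimate for the right one, and the local equivalence of $d^s,d^u$ with $d$ for \Cref{eq17asecond} and \Cref{eq18asecond}; your plan to fix $C_2$ once, larger than the relevant geometric constants, is the right way to make the paper's ``slightly enlarging $C_2$'' non-circular. One detail that both you and the paper leave implicit is that membership in $\tF(n)$ also requires $\tP^{an}_{an}(y)\subseteq\tP(x)$, i.e.\ $y\in\tP(x)$; for the left inclusion this needs something like $B(x,\e^{-n_0})\subseteq\tP(x)$ for $x\in\Gamma$, which can be arranged when choosing $\Gamma$ because $m(\partial\tP)=0$ by the small boundary condition.
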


\begin{lemma}\label{lemma **second}
        After increasing \(n_1\) if necessary, the following holds for every
\(x\in\Gamma\) and every \(n\ge n_1\).  Define $\tQ^{**}_n(x)$ by $$\tQ^{**}_n(x)=\bigcup\tP^{an}_{an}(y)$$
    where the union is taken over all $y\in M$ such that $\tP^{an}_{an}(y)\in \tF(n)$ and which satisfy the following properties:
   \begin{itemize}
    \item[1.] There exists a point $q\in\tP^{an}_0(y)\cap\hat\Gamma$   such that   $\exp_x^{-1}(B^s(q,\e^{-n_0}))\subseteq T_xM$ contains the graph of a function $\varphi_s:R^s_x(C^{10}_2\e^{-n})\to E^u_x$ with slope less than $\frac{3}{10^5}$ and $|\varphi_s(0)|\leq C^7_2\e^{-n}$.
     \item[2.] There exists a point $d\in\tP^{0}_{an}(y)\cap\hat\Gamma$   such that   $\exp_x^{-1}(B^u(d,\e^{-n_0}))\subseteq T_xM$ contains the graph of a function $\varphi_u:R^u_x(C^{10}_2\e^{-n})\to E^s_x$ with slope less than $\frac{3}{10^5}$ and $|\varphi_u(0)|\leq C^7_2\e^{-n}$.   
     \end{itemize}    
    Then we have
    \begin{equation}\label{eq16bsecond}
        B(x,C_2^6\e^{-n})\cap\hat\Gamma\subseteq\tQ^{**}_n(x)\subseteq B(x,C_2^8\e^{-n}),
    \end{equation}  
    \begin{equation}\label{eq17bsecond}
        B^s(x,C_2^6\e^{-n})\cap\hat\Gamma\subseteq\tQ^{**}_n(x)\cap\xi^s(x)\subseteq B^s(x,C_2^8\e^{-n}),
    \end{equation} 
    \begin{equation}\label{eq18bsecond}
        B^u(x,C^6_2\e^{-n})\cap\hat\Gamma\subseteq\tQ^{**}_n(x)\cap\xi^u(x)\subseteq B^u(x,C_2^8\e^{-n}),
    \end{equation} 
    and for each $y\in\tQ^{**}_n(x)$, $\tP^{an}_{an}(y)\subseteq\tQ^{**}_n(x)$.
\end{lemma}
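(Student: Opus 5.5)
The plan is to prove \Cref{lemma **second} exactly as \Cref{lemma 10second} is proved, with the constants rescaled. The only changes are that the graph domain shrinks from $C_2^{30}\e^{-n}$ to $C_2^{10}\e^{-n}$, the admissible displacement grows to $C_2^7\e^{-n}$, and the slope bound relaxes to $\frac{3}{10^5}$. The last assertion, that $\tP^{an}_{an}(y)\subseteq\tQ^{**}_n(x)$ for every $y\in\tQ^{**}_n(x)$, is immediate because $\tQ^{**}_n(x)$ is by definition a union of whole atoms of $\tP^{an}_{an}$.

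For the left inclusion in \Cref{eq16bsecond}, take $y\in B(x,C_2^6\e^{-n})\cap\hat\Gamma$. Then $y\in\tP^0_{an}(y)\cap\tP^{an}_0(y)\cap\hat\Gamma$, so $\tP^{an}_{an}(y)\in\tF(n)$ (after shrinking, we may assume $\tP^{an}_{an}(y)\subseteq\tP(x)$, since $y$ is very close to $x$ and the atoms are small). We take $q=d=y$. Since $y\in\hat\Gamma\subseteq\Lambda_{A_0}$, the local manifolds $B^s(y,\e^{-n_0})$ and $B^u(y,\e^{-n_0})$ have uniform size and are graphs of slope at most $\frac{1}{10^5}$ in the chart at $x$, by the choice of $r'_{A_0}$. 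Choosing $n_1$ so large that $C_2^{10}\e^{-n}\ll\e^{-n_0}$, these graphs cover $R^{s}_x(C_2^{10}\e^{-n})$ and $R^{u}_x(C_2^{10}\e^{-n})$. Because the angle between $E^s$ and $E^u$ is uniform, the offsets satisfy $|\varphi_s(0)|,|\varphi_u(0)|\leq C_2\cdot C_2^6\e^{-n}=C_2^7\e^{-n}$, using the same angle constant $C_2$ as in \Cref{lemma 10second}.

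For the right inclusion, take an atom $\tP^{an}_{an}(y)$ in the union, with witnesses $q,d$. By \Cref{tube lem},
\[
\tP^{an}_{an}(y)\subseteq U(B^s(q,r_{A_0}),\e^{-n})\cap U(B^u(d,r_{A_0}),\e^{-n}).
\]
Let $v=B^s(q,r_{A_0})\cap B^u(d,r_{A_0})$. Uniform transversality gives $\tP^{an}_{an}(y)\subseteq B(v,C_2\e^{-n})$. In the chart at $x$, $\exp_x^{-1}v$ is the intersection of $\mathrm{graph}\,\varphi_s$ and $\mathrm{graph}\,\varphi_u$, whose slopes are at most $\frac{3}{10^5}$. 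Solving the fixed-point problem for this intersection gives $|\exp_x^{-1}v|\lesssim 2(|\varphi_s(0)|+|\varphi_u(0)|)\leq 4C_2^7\e^{-n}$. This point lies well inside the graph domains $R_x^{*}(C_2^{10}\e^{-n})$, which is exactly why the domain exponent $10$ must exceed the displacement exponent $7$. Hence $d(x,\cdot)\leq C_2\e^{-n}+8C_2^7\e^{-n}\leq C_2^8\e^{-n}$ on the atom, provided $C_2$ is large.

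The stable and unstable versions \Cref{eq17bsecond} and \Cref{eq18bsecond} follow from \Cref{eq16bsecond}. Intersecting with $\xi^s(x)$ and $\xi^u(x)$ uses two facts: leaf distance and ambient distance are comparable up to a factor $10$ on local leaves, and $\xi^{s/u}(x)\supseteq B^{s/u}(x,\e^{-n_0})$ by \Cref{eq8second} and \Cref{eq9second}. Both inclusions then hold after absorbing the factor $10$ into $C_2$, which is allowed because $C_2$ only needs to be large and was chosen before $n_1$.

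The main obstacle is keeping the constants consistent. The enlarged $C_2$ must still make \Cref{lemma 10second} and \Cref{lem 7.13} hold, which is fine since those lemmas only need $C_2$ large. Separately, $n_1$ must be increased so that the $C_2^{10}\e^{-n}$ domains stay inside the $\e^{-n_0}$-local manifolds.
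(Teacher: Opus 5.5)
Your proposal takes the same route as the paper, which proves this lemma only by saying it goes like \Cref{lemma 10second} with the constants rescaled. Your constant bookkeeping is right:
\begin{itemize}
\item the domain exponent $10$ exceeds the displacement exponent $7$, so $\exp_x^{-1}v$ stays inside the graph domains;
\item the full factor $C_2$ between $C_2^7$ and $C_2^8$ absorbs the factor $10$ from comparing leaf and ambient distances;
\item $C_2$ is fixed before $n_1$.
\end{itemize}

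One step needs a different justification. You say that for $y\in B(x,C_2^6\e^{-n})\cap\hat\Gamma$ you ``may assume $\tP^{an}_{an}(y)\subseteq\tP(x)$, since $y$ is very close to $x$ and the atoms are small''. That does not follow. Small atoms of $\tP^{an}_{an}$ only give $\tP^{an}_{an}(y)\subseteq\tP(y)$.

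If $x$ lies within $C_2^6\e^{-n}$ of $\partial\tP$, then $y$ can lie in a neighbouring atom of $\tP$. In that case $\tP^{an}_{an}(y)\notin\tF(n)$ and $y\notin\tQ^{**}_n(x)\subseteq\tP(x)$, so the left inclusion of \eqref{eq16bsecond} fails. Since $\Gamma$ is fixed and $n_1$ must be uniform over $x\in\Gamma$, increasing $n_1$ alone cannot repair this.

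What you need is $B(x,C_2^8\e^{-n})\subseteq\tP(x)$ for all $x\in\Gamma$ and $n\geq n_1$. Get it as follows:
\begin{itemize}
\item The small boundary condition gives $m(B(\partial\tP,\e^{-n_0}))\to 0$ as $n_0\to\infty$.
\item So, when choosing $\Gamma$ in \Cref{lemma 6second}, you may also require $d(x,\partial\tP)\geq \e^{-n_0}$, at the cost of arbitrarily small measure.
\item Then take $n_1$ with $C_2^8\e^{-n_1}<\e^{-n_0}$.
\item The small ball $B(x,\e^{-n_0})$ is connected and misses the closed set $\partial\tP$. Hence it lies in the interior of a single atom, namely $\tP(x)$.
\end{itemize}

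The paper's proof of \Cref{lemma 10second} passes over the same point without comment (``clearly $\tP^{an}_{an}(y)\in\tF(n)$''). So this is not a departure from the paper, but you should replace your stated reason with this argument.
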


The next lemma records a simple but useful stability property of the sets \(\tQ_n(x)\), \(\tQ_n^*(x)\), and \(\tQ_n^{**}(x)\) under a change of base point.
\begin{lemma}\label{lem includingsecond}
For any $x,y\in\Gamma$ and $n\geq n_1$, one has $\tQ_n(x)\subseteq\tQ_n^*(y)$ whenever $y\in\tQ_n(x)\cap\Gamma$ and $\tQ_n^*(x)\subseteq\tQ_n^{**}(y)$ whenever $y\in\tQ_n^*(x)\cap\Gamma$.
\begin{proof}
We prove the first inclusion; the second one follows by the same argument.

Let
\[
    P:=\tP^{an}_{an}(z)
\]
be an atom contained in \(\tQ_n(x)\).  By the definition of \(\tQ_n(x)\), we have
\(P\in \tF(n)\), \(P\subseteq \tP(x)\), and there exist points
\[
    q\in \tP^{an}_0(z)\cap\hat\Gamma,
    \qquad
    d\in \tP^0_{an}(z)\cap\hat\Gamma
\]
such that, in the chart at \(x\),
\[\exp_x^{-1}(B^s(q,\e^{-n_0}))\] contains a graph over
\(R_x^s(C_2^{30}\e^{-n})\) with slope at most \(10^{-5}\) and displacement at
the origin at most \(C_2\e^{-n}\), and
\[\exp_x^{-1}(B^u(d,\e^{-n_0}))\] contains a graph over
\(R_x^u(C_2^{30}\e^{-n})\) with the same bounds.

Since \(y\in \tQ_n(x)\cap\Gamma\), \Cref{eq16second} gives
\[
    d(x,y)\leq C_2^2\e^{-n}.
\]
Also \(\tQ_n(x)\subseteq \tP(x)\), so \(y\in\tP(x)\), and hence
\[
    \tP(y)=\tP(x).
\]
Therefore \(P\subseteq \tP(y)\). 

It remains to check the graph conditions with \(y\) as the base point.  By the
uniform \(C^1\)-continuity of the exponential charts, of the bundles \(E^s,E^u\),
and of the local stable and unstable manifolds, after choosing \(C_2\) large
enough and then increasing \(n_1\) if necessary, changing the base point from
\(x\) to \(y\) with \(d(x,y)\leq C_2^2\e^{-n}\) transforms the above graphs into
graphs over the smaller domains
\[
    R_y^s(C_2^{20}\e^{-n})
    \quad\text{and}\quad
    R_y^u(C_2^{20}\e^{-n}),
\]
with slopes at most \(2\cdot10^{-5}\) and displacements at the origin at most
\(C_2^4\e^{-n}\).  Indeed, the loss in slope is absorbed by the margin from
\(10^{-5}\) to \(2\cdot10^{-5}\), and the displacement error, of order
\(d(x,y)\), is absorbed by the larger bound \(C_2^4\e^{-n}\); the reduction of
the domains from \(C_2^{30}\e^{-n}\) to \(C_2^{20}\e^{-n}\) absorbs the change
of coordinates.

Thus \(P\) satisfies the defining conditions of \(\tQ_n^*(y)\), and hence
\[
    P\subseteq \tQ_n^*(y).
\]
Taking the union over all atoms \(P\subseteq \tQ_n(x)\), we obtain
\[
    \tQ_n(x)\subseteq \tQ_n^*(y).
\]

Now assume \(y\in\tQ_n^*(x)\cap\Gamma\).  Then, by \Cref{eq16asecond},
\[
    d(x,y)\leq C_2^5\e^{-n}.
\]
If \(P=\tP^{an}_{an}(z)\subseteq\tQ_n^*(x)\), then the same argument gives
\(P\subseteq\tP(y)\) and \(P\in\tF(n)\) inside \(\tP(y)\).  Moreover, the graph
conditions defining \(\tQ_n^*(x)\), after changing the base point from \(x\) to
\(y\), remain valid on the smaller domains
\[
    R_y^s(C_2^{10}\e^{-n})
    \quad\text{and}\quad
    R_y^u(C_2^{10}\e^{-n}),
\]
with slopes at most \(3\cdot10^{-5}\) and displacements at the origin at most
\(C_2^7\e^{-n}\).  Hence \(P\) satisfies the defining conditions of
\(\tQ_n^{**}(y)\), so
\[
    P\subseteq \tQ_n^{**}(y).
\]
Taking the union over all atoms \(P\subseteq \tQ_n^*(x)\), we conclude that
\[
    \tQ_n^*(x)\subseteq \tQ_n^{**}(y).
\]
The proof is complete.
\end{proof}
\end{lemma}

\subsection{Preparations for the proof}
In this section, we carry out preparations for the proof of \Cref{main thmsecond} analogously to \cite[Section 5]{Barreira_Pesin_Schmeling}. 

Fix an $x\in\hat\Gamma$ and let $n>n_1$. For each set $A\subseteq \tP(x)$ and $\flat\in\{s,u\}$, define
\begin{equation*}
    N(n,A)=\#\{R\in\tR(n):R\cap A\neq \emptyset\}
\end{equation*}
\begin{equation*}
    N^\flat(n,y,A)=\#\{R\in\tR(n):R\cap\xi^\flat(y)\cap\Gamma\cap A\neq \emptyset\}
\end{equation*}
\begin{equation*}
    \hat N^\flat(n,y,A)=\#\{R\in\tF(n):R\cap\xi^\flat(y)\cap A\neq \emptyset\}
\end{equation*}
Then, analogously to the ideas in \cite[Section 5]{Barreira_Pesin_Schmeling}, we can prove a series of lemmas in sequence as follows.
\begin{lemma}\label{lem1second}
    For each $y\in \tP(x)\cap\Gamma$ and integer $n\geq n_1$ we have for $\flat\in\{s,u\}$
    \begin{equation*}
        N^\flat(n,y,\tQ_n(y))\leq m^\flat_y\left(B^\flat(y,C_2^2\e^{-n})\right)\cdot C\e^{anh+an\epsilon},
    \end{equation*}
    \begin{equation*}
        N^\flat(n,y,\tQ^*_n(y))\leq m^\flat_y\left(B^\flat(y,C_2^5\e^{-n})\right)\cdot C\e^{anh+an\epsilon},
    \end{equation*}
    \begin{equation*}
        N^\flat(n,y,\tQ_n^{**}(y))\leq m^\flat_y\left(B^\flat(y,C_2^8\e^{-n})\right)\cdot C\e^{anh+an\epsilon}.
    \end{equation*}
\begin{proof}
    By \Cref{eq17second} and the definitions, it follows that
    \begin{equation*}
    \begin{aligned}
        &m^s_y\left(B^s(y,C_2^2\e^{-n})\right)\geq m^s_y(\tQ_n(y))\\
        &\geq N^s\left(n,y,\tQ_n(y)\right)\cdot\min\{m^s_y(R):R\in \tR(n)\text{ and } R\cap\xi^s(y)\cap\tQ_n(y)\cap\Gamma\neq\emptyset\}. 
    \end{aligned}
    \end{equation*}
    Then the first inequality for $s$ follows from \Cref{markov ssecond} and \Cref{eq6second}. The first inequality for $u$ can be proved similarly. The proofs for $\tQ_n^*$ and $\tQ_n^{**}$ are the same.
\end{proof}
\end{lemma}

\begin{lemma}\label{lem2second}
    There exists a measurable function $n_2:M\rightarrow \mathbb N$   such that   $n_2(y)\geq n_1$ and for each $y\in\tP(x)\cap\hat\Gamma$ and $n\geq n_2(y)$, 
    \begin{equation*}
        m(B(y,\e^{-n}))\leq N(n,\tQ_n(y))\cdot 2C\e^{-2anh+2an\epsilon},
    \end{equation*}
    \begin{equation*}
        m(B(y,C_2^3\e^{-n}))\leq N(n,\tQ^*_n(y))\cdot 2C\e^{-2anh+2an\epsilon},
    \end{equation*}
    \begin{equation*}
        m(B(y,C^6_2\e^{-n}))\leq N(n,\tQ^{**}_n(y))\cdot 2C\e^{-2anh+2an\epsilon}.
    \end{equation*}
\begin{proof}
    By the Borel density point arguments, for $m$-a.e. $y\in\hat\Gamma$, there is an integer $n_2(y)\geq n_1$   such that   for all $n\geq n_2(y)$ one has $m(B(y,\e^{-n})\cap\hat\Gamma)\geq\frac{1}{2}m(B(y,\e^{-n}))$. Together with \Cref{eq16second}, it follows that
    \begin{equation*}
    \begin{aligned}
        \frac{1}{2}m(B(y,\e^{-n}))&\leq m(B(y,\e^{-n})\cap\hat\Gamma)\leq m(\tQ_n(y)\cap\Gamma)\\
        &\leq N(n,\tQ_n(y))\cdot\max\{m(R):R\in \tR(n)\text{ and }R\cap\tQ_n(y)\neq\emptyset\}.
    \end{aligned}
    \end{equation*}
    The desired inequality follows from \Cref{eq5second}.
    The proofs for $\tQ_n^*$ and $\tQ_n^{**}$ are the same.
\end{proof}
\end{lemma}

\begin{lemma}\label{lem3second}
    There exists $\Delta\in\mathbb N$   such that   for $m$-a.e. $y\in\tP(x)\cap\hat\Gamma$ and $n\geq n_2(y)$,
    \begin{equation*}
        N(n+\Delta,\tQ_{n+\Delta}(y))\leq\hat N^s(n,y,\tQ_n(y))\cdot\hat N^u(n,y,\tQ_n(y))\cdot 2C^2\e^{2\Delta\cdot a(h+\epsilon)+4an\epsilon},
    \end{equation*}
      \begin{equation*}
        N(n+\Delta,\tQ^*_{n+\Delta}(y))\leq\hat N^s(n,y,\tQ^*_n(y))\cdot\hat N^u(n,y,\tQ^*_n(y))\cdot 2C^2\e^{2\Delta\cdot a(h+\epsilon)+4an\epsilon},
    \end{equation*}
      \begin{equation*}
        N(n+\Delta,\tQ^{**}_{n+\Delta}(y))\leq\hat N^s(n,y,\tQ^{**}_n(y))\cdot\hat N^u(n,y,\tQ^{**}_n(y))\cdot 2C^2\e^{2\Delta\cdot a(h+\epsilon)+4an\epsilon}.
    \end{equation*}
\begin{proof}
We choose $\Delta=\Delta(C_2)$   such that   for all $n\geq n_2(y)$,
\begin{equation*}
\begin{aligned}
    2m(\tQ_n(y)\cap\hat\Gamma)&\geq 2m(B(y,\e^{-n})\cap\hat\Gamma)\geq m(B(y,\e^{-n}))\\
    &\geq m(B(y,C^2_2\e^{-n-\Delta}))\geq m(\tQ_{n+\Delta}(y)).
\end{aligned}
\end{equation*}
Moreover, for $n\geq n_2(y)$ we have
\begin{equation*}
\begin{aligned}
    m(\tQ_{n+\Delta}(y))&=\sum_{\tP^{a({n+\Delta})}_{a({n+\Delta})}(z)\subseteq\tQ_{n+\Delta}(y)}m(\tP^{a({n+\Delta})}_{a({n+\Delta})}(z))\\&\geq N({n+\Delta},\tQ_{n+\Delta}(y))\cdot C^{-1}\e^{-2a({n+\Delta})(h+\epsilon)}
\end{aligned}
\end{equation*}
and
\begin{equation*}
    m(\tQ_n(y)\cap\hat\Gamma)=\sum_{\tP^{an}_{an}(z)\subseteq\tQ_n(y)}m(\tP^{an}_{an}(z)\cap\hat\Gamma)\leq N_n(y)C\e^{-2an(h-\epsilon)},
\end{equation*}
where $$N_n(y):=\#\{\tP_{an}^{an}(z)\in\tR(n):\tP^{an}_{an}(z)\cap\hat\Gamma\neq\emptyset\text{ and }\tP^{an}_{an}(z)\subseteq\tQ_n(y)\}.$$ Together, it follows that
\begin{equation}\label{equation 54second}
    N(n+\Delta,\tQ_{n+\Delta}(y))\leq 2N_n(y)\cdot C^2\e^{2a\Delta(h+\epsilon)+4an\epsilon}.
\end{equation}
On the other hand, for any $v\in \hat\Gamma$ and $\tP^{an}_{an}(v)\subseteq\tQ_n(y)$, since $$B^u(y,\e^{-n_0})\subseteq \xi^u(y)\ ,B^s(y,\e^{-n_0})\subseteq \xi^s(y),$$      by \Cref{lemma 7.11}, the intersections $\tP^0_{an}(v)\cap\tP^{an}_0(y)$ and $\tP^0_{an}(y)\cap\tP^{an}_0(v)$ are non-empty. Moreover,
since $\tP^{an}_{an}(v)\subseteq\tQ_n(y)$ and $\tP^{an}_{an}(y)\subseteq\tQ_n(y)$ imply
\[
\tP^0_{an}(v)\cap\tP^{an}_0(y)\subseteq\tQ_n(y),\ \tP^0_{an}(y)\cap\tP^{an}_0(v)\subseteq \tQ_n(y),
\] 
it follows that
$$\left(
\tP^0_{an}(v)\cap\tP^{an}_0(y),\tP^0_{an}(y)\cap\tP^{an}_0(v)
\right)$$ 
belongs to 
\begin{equation*}
    \{R\in\tF(n):R\cap\xi^s(y)\cap\tQ_n(y)\neq \emptyset\}\times\{R\in\tF(n):R\cap\xi^u(y)\cap\tQ_n(y)\neq\emptyset\}.
\end{equation*}

By definition, this correspondence from
\[
\{\tP_{an}^{an}(z)\in\tR(n):\tP^{an}_{an}(z)\cap\hat\Gamma\neq\emptyset\text{ and }\tP^{an}_{an}(z)\subseteq\tQ_n(y)\}
\]
to
\[
 \{R\in\tF(n):R\cap\xi^s(y)\cap\tQ_n(y)\neq \emptyset\}\times\{R\in\tF(n):R\cap\xi^u(y)\cap\tQ_n(y)\neq\emptyset\}
\]
is injective. 
Then it follows that
\begin{equation*}
    N_n(y)\leq\hat N^s(n,y,\tQ_n(y))\cdot\hat N^u(n,y,\tQ_n(y))
\end{equation*}
and together with \Cref{equation 54second}, the proof is complete.
  The proofs for $\tQ_n^*$ and $\tQ_n^{**}$ are the same.
\end{proof}
\end{lemma}

\begin{lemma}\label{lem4second}
    For each $x\in\hat\Gamma$ and $n\geq n_1$, 
    \begin{equation*}
        \hat N^s(n,x,\tP(x))\leq D^{-1}C^2\e^{anh+3an\epsilon},
    \end{equation*}
    \begin{equation*}
        \hat N^u(n,x,\tP(x))\leq D^{-1}C^2\e^{anh+3an\epsilon}.
    \end{equation*}
    \begin{proof}
        Since the partition $\tP$ is finite, there exist points $y_i\in M, i\in \mathbb N$   such that   $\bigcup \tP^{an}_0(y_i)=\tP(x)$ and they are mutually disjoint. Assume that $y_i\in\hat\Gamma$ whenever $\tP^{an}_0(y_i)\cap\hat\Gamma\neq\emptyset$. Then we have $$N(n,\tP(x))\geq\sum_{\tP^{an}_0(y_i)\cap\hat\Gamma\neq\emptyset}N^s(n,y_i,\tP^{an}_0(y_i)).$$ By \Cref{lemma 41} and \Cref{lemma 6second} A, for any $y_i\in\hat\Gamma$,
        \begin{equation*}
        \begin{aligned}
            N^s(n,y_i,\tP^{an}_0(y_i))&\geq\frac{m^s_{y_i}(\tP^{an}_0(y_i)\cap\Gamma)}{\max\{m^s_{y_i}(\tP^{an}_{an}(z)):z\in\xi^s(y_i)\cap\tP(x)\cap\Gamma\}}\\
            &\geq \frac{D}{\max\{m^s_z(\tP^{0}_{an}(z)):z\in\xi^s(y_i)\cap\tP(x)\cap\Gamma\}}\\
            &\geq DC^{-1}\e^{anh-an\epsilon}.
        \end{aligned}
        \end{equation*}
    and similarly it follows that
    \begin{equation*}
        N(n,\tP(x))\leq\frac{m(\tP(x))}{\min\{m(\tP^{an}_{an}(z)):z\in\tP(x)\cap\Gamma\}}\leq C\e^{2anh+2an\epsilon}.
    \end{equation*}
    By the construction of $\xi^u$, if $y\in\xi^u(x)$, then $y\in\tP^0_{an}(x)$ and thus $$\tP^{an}_{an}(y)=\tP^0_{an}(x)\cap\tP^{an}_0(y).$$ Hence, we have
    \begin{equation*}
        \hat N^u(n,x,\tP(x))\leq\#\{i:\tP^{an}_0(y_i)\cap\hat\Gamma\neq\emptyset\}.
    \end{equation*}
    Together, we conclude that
    \begin{equation*}
    \begin{aligned}
        C\e^{2anh+2an\epsilon}&\geq N(n,\tP(x))\geq \sum_{i:\tP^{an}_0(y_i)\cap\hat\Gamma\neq\emptyset}
        N^s(n,y_i,\tP^{an}_0(y_i))\\
        &\geq \#\{i:\tP^{an}_0(y_i)\cap\hat\Gamma\neq\emptyset\}\cdot\min_{y_i\in\hat\Gamma}         N^s(n,y_i,\tP^{an}_0(y_i))\\
        &\geq\hat N^u(n,x,\tP(x))\cdot DC^{-1}\e^{anh-an\epsilon}.
    \end{aligned}
    \end{equation*}
    Thus $\hat N^u(n,x,\tP(x))\leq D^{-1}C^2\e^{anh+3an\epsilon}$. The other inequality can be proved similarly.
    \end{proof}
\end{lemma}

Finally, we establish the key \Cref{lem5second} in this section as follows, which is parallel to \cite[Lemma 5]{Barreira_Pesin_Schmeling}.
\begin{lemma}\label{lem5second}
    For $m$-a.e. $y\in\tP(x)\cap\hat\Gamma$, we have for $\flat\in\{s,u\}$,
    \begin{equation}\label{eq 181}
        \limsup_{n\rightarrow\infty}\frac{\hat N^\flat(n,y,\tQ_n(y))}{N^\flat(n,y,\tQ_n(y))}\cdot\e^{-7an\epsilon}<1,
    \end{equation}
      \begin{equation*}
        \limsup_{n\rightarrow\infty}\frac{\hat N^\flat(n,y,\tQ_n^*(y))}{N^\flat(n,y,\tQ_n^*(y))}\cdot\e^{-7an\epsilon}<1,
    \end{equation*}
      \begin{equation*}
        \limsup_{n\rightarrow\infty}\frac{\hat N^\flat(n,y,\tQ_n^{**}(y))}{N^\flat(n,y,\tQ_n^{**}(y))}\cdot\e^{-7an\epsilon}<1.
    \end{equation*}
\begin{proof}
    By \Cref{lemma 10second} and the definition of $n_2$ we have for $y\in\hat\Gamma,n\geq n_2(y)$
    \begin{equation*}
    m^s_y(\tQ_n(y)\cap\Gamma)\geq m^s_y(B^s(y,\e^{-n})\cap\hat\Gamma)\geq\frac{1}{2}m^s_y(B^s(y,\e^{-n})).
    \end{equation*}
    Together with \Cref{lemma 6second}, it follows that
    \begin{equation}\label{eq86second}
    \begin{aligned}
        N^s(n,y,\tQ_n(y))&\geq\frac{m^s_y(\tQ_n(y)\cap\Gamma)}{\max\{m^s_z(\tP^{an}_{an}(z)):z\in\xi^s(y)\cap\tP(x)\cap\Gamma\}}\\
        &\geq\frac{1}{2}\frac{m^s_y(B^s(y,\e^{-n}))}{\max\{m^s_z(\tP^0_{an}(z)):z\in\xi^s(y)\cap\tP(x)\cap\Gamma\}}\\
        &\geq\frac{1}{2C}\frac{\e^{-d^sn-n\epsilon}}{\e^{-anh+an\epsilon}}.
    \end{aligned}
    \end{equation}
    Now we define the set $$F:=\{y\in\hat\Gamma:\limsup_{n\rightarrow\infty}\frac{\hat N^s(n,y,\tQ_n(y))}{N^s(n,y,\tQ_n(y))}\e^{-7an\epsilon}\geq 1\}.$$ It only remains to prove that $m(F)=0$. If $m(F)>0$, then there exists a set $F'\subseteq F$ with $m(F')=m(F)>0$   such that   
     $$\lim_{r\rightarrow 0}\frac{\log m^s_y(B^s(y,r))}{\log r}=d^s,\ \forall y\in F'.$$ Hence, there exists $y\in F$   such that   $m^s_y(F')=m^s_y(F'\cap\xi^s(y))>0$. By the Frostman's Lemma it follows that $\dim_{\mathrm H}(F'\cap\xi^s(y))\geq d^s$, where $\dim_{\mathrm H}$ denotes the Hausdorff dimension of sets.

    On the other hand, for each $y\in F$ there exists an increasing sequence of integers $\{m_j(y)\}_{j=1}^\infty\subseteq \mathbb N$   such that   for any $j\in\mathbb N$
    \begin{equation}\label{eq29second}
    \begin{aligned}
        \hat N^s(m_j,y,\tQ_{m_j}(y))&\geq\frac{1}{2}N^s(m_j,y,\tQ_{m_j}(y))\e^{7am_j\epsilon}\\
        &\geq\frac{1}{4C}\e^{-d^sm_j+am_jh+5am_j\epsilon}.
    \end{aligned}
    \end{equation}
    Then consider the cover of $F'\cap\xi^s(y)$ by the balls
    $$\tB=\{B(z,C_2^2\e^{-m_j(z)}):z\in F'\cap\xi^s(y),j\in \mathbb N\}.$$
    By the Besicovitch Covering Lemma, for any $L>0$ there exists a countable sub-cover $$\tC=\{B(z_i,C_2^2\e^{-t_i}):i\in\mathbb N\}$$ with  $z_i\in F'\cap\xi^s(y)$ and $t_i\in\{m_j(z_i)\}_{j=0}^\infty$ larger than $L$, and $\tC$ covers each point in  $ \xi^s(y)$ with multiplicity not exceeding $\rho=\rho(\dim M)\in\mathbb N$. Then the $(d^s-\epsilon)$-dimensional Hausdorff sum corresponding to this cover is 
    \begin{equation}\label{equa1second}
        \sum_{B\in\tC}(\mathrm{diam} B)^{d^s-\epsilon}=(2C^2_2)^{d^s-\epsilon}\sum_{i=1}^\infty \e^{-t_i(d^s-\epsilon)}.
    \end{equation}
    By \Cref{eq29second} we have
    \begin{equation}\label{equa2second}
    \begin{aligned}
        \sum_{i=1}^{\infty}\e^{-t_i(d^s-\epsilon)}&\leq\sum_{i=1}^\infty \hat N^s(t_i,z_i,\tQ_{t_i}(z_i))\cdot 4C\e^{-at_ih-4at_i\epsilon}\\
        &\leq 4C\sum_{q=1}^\infty\e^{-aqh-4aq\epsilon}\sum_{i:t_i=q}\hat N^s(q,z_i,\tQ_{t_i}(z_i)). 
    \end{aligned}
    \end{equation}
    Since $\tQ_{t_i}(z_i)\subseteq B(z_i,C^2_2\e^{-t_i})$, by the finite covering property of the Besicovich covering, for any fixed $q\in\mathbb N$ and $t_i=q$, each atom of the partition $\tP^{aq}_{aq}$ appears in $\tQ_{t_i}(z_i)$ at most $\rho$ times. Moreover, since $\xi^s(y)=\xi^s(z_i)$, it follows that
    \begin{equation}\label{equa3second}
        \sum_{i:t_i=q}\hat N^s(q,z_i,\tQ_{t_i}(z_i))\leq\rho \hat N^s(q,y,\tP(y)).
    \end{equation}
    Combining \Cref{equa1second}, \Cref{equa2second}, \Cref{equa3second} and \Cref{lem4second}, a direct calculation gives
    \begin{equation*}
        \sum_{B\in\tC}(\mathrm{diam} B)^{d^s-\epsilon}\leq 4(2C_2^2)^{d^s-\epsilon}D^{-1}C^3\rho\sum_{q=1}^\infty\e^{-aq\epsilon}<\infty.
    \end{equation*}

 Since $L$ can be arbitrarily large, the diameter of the cover $\tC$ can be arbitrarily small, and it follows that
\[
\dim_{\mathrm H}(F'\cap\xi^s(y)) \leq d^s - \epsilon,
\]
which contradicts the assumption that $\dim_{\mathrm H}(F'\cap\xi^s(y)) \geq d^s$. Thus, we conclude that $m(F) = 0$, completing the proof of \Cref{eq 181} for $s$. The proofs for the other inequalities are similar.   
\end{proof}
\end{lemma}

\subsection{Proof of \Cref{main thmsecond}}
In this section, we complete the proof of \Cref{main thmsecond} following the ideas in \cite{Barreira_Pesin_Schmeling} with modifications. 
We start by making some necessary preparations.

By \Cref{lem5second}, for $m$-a.e. $y\in\hat\Gamma$, there exists an integer $n_3(y)\geq n_2(y)$ such that if $n\geq n_3(y)$, then for $\flat\in\{s,u\}$,
\begin{equation*}
   \hat N^\flat(n,y,\tQ_n(y))<N^\flat(n,y,\tQ_n(y))\cdot\e^{7an\epsilon},
\end{equation*}
\begin{equation*}
    \hat N^\flat(n,y,\tQ^*_n(y))<N^\flat(n,y,\tQ^*_n(y))\cdot\e^{7an\epsilon},
\end{equation*}
\begin{equation*}
    \hat N^\flat(n,y,\tQ_n^{**}(y))<N^\flat(n,y,\tQ_n^{**}(y))\cdot\e^{7an\epsilon}.
\end{equation*}

For any $\epsilon>0$ there exists a subset $\Gamma_\epsilon\subseteq\hat\Gamma$   such that  
\begin{equation*}
    m(\Gamma_\epsilon)>m(\hat\Gamma)-\epsilon\text{ and }n_\epsilon:=\sup\{n_1,n_3(y):y\in\Gamma_\epsilon\}<\infty.
\end{equation*}
Similarly, using the Borel density point arguments, there exist $\hat\Gamma_\epsilon\subseteq \Gamma_\epsilon$ and an integer $\hat n_\epsilon>n_\epsilon$   such that   $m(\hat\Gamma_\epsilon)>m(\Gamma_\epsilon)-\epsilon$ and for any $x\in\hat\Gamma_\epsilon$, $n>\hat n_\epsilon$,
\begin{equation*}
    m(B(x,\e^{-n})\cap\hat\Gamma_\epsilon)\geq \frac{1}{2}m(B(x,\e^{-n})),
\end{equation*}
\begin{equation*}
    m^s_x(B^s(x,\e^{-n})\cap\hat\Gamma_\epsilon)\geq \frac{1}{2}m^s_x(B^s(x,\e^{-n})),
\end{equation*}
\begin{equation*}
    m^u_x(B^u(x,\e^{-n})\cap\hat\Gamma_\epsilon)\geq\frac{1}{2}m^u_x(B^u(x,\e^{-n})).
\end{equation*}
The following two lemmas establish the asymptotically ``almost'' local product structure for  hyperbolic measures as in \cite{Barreira_Pesin_Schmeling}.
\begin{lemma}\label{lemma lowboundsecond}
    For any $\epsilon>0$, $y\in\hat\Gamma_\epsilon$, and $n\geq\hat n_\epsilon$,
    \begin{equation*}
        m^s_y(B^s(y,\e^{-n}))\cdot m^u_y(B^u(y,\e^{-n}))\leq m(B(y,C^8_2\e^{-n}))\cdot 4C^3\e^{11an\epsilon}.
    \end{equation*}
    \begin{proof}
    Let $z\in\Gamma_\epsilon\cap\tQ_n(y)$ and $n\geq n_\epsilon$. Then we have 
    \begin{equation*}
        N^u(n,y,\tQ_n(y))\leq\hat N^u(n,y,\tQ_n(y))
    \end{equation*}
    because for any $R\in\tR(n)$, $R\subseteq\tQ_n(y)$ implies $R\in\tF(n)$, since \(\tQ_n(y)\) is a union of some \(\tP^{an}_{an}\)-atoms belonging to
\(\tF(n)\) by definition.
 Moreover, we claim that 
\begin{claim}
    \begin{equation*}
        \hat N^u(n,y,\tQ_n(y))=\hat N^u(n,z,\tQ_n(y)).
    \end{equation*}
    \begin{proof}
     Note that now $z\in B(y,C_2^2\e^{-n})\cap\Gamma_\epsilon$; thus, by \Cref{lemma 7.11}, for any atom of partition $\tP^{an}_{an}$ such that $$\tP^{an}_{an}(k)\in\{R\in\tF(n):R\cap\xi^u(y)\cap \tQ_n(y)\neq \emptyset\},$$
     there exists $q\in\tP^{an}_0(k)\cap\hat\Gamma$  such that $$B^s(q,\e^{-n_0})\cap B^u(y,\e^{-n_0})=\{v\},$$
    $$B^s(q,\e^{-n_0})\cap B^u(z,\e^{-n_0})=\{v'\},$$ 
     and $v,v'\in \tP^{an}_0(k)$, $\tP^{an}_{an}(v)=\tP^{an}_{an}(k)$.
     
    By the definition of $\tQ_n$ and $B^u(z,\e^{-n_0})\subseteq\xi^u(z)$, it can be checked that $\tP^{an}_{an}(v')\subseteq\tQ_n(y)$ and $\tP^{an}_{an}(v')\cap\xi^u(z)\neq\emptyset$. 
     Thus, it follows that
     $$\tP^{an}_{an}(v')\in\{R\in\tF(n):R\cap\xi^u(z)\cap \tQ_n(y)\neq \emptyset\}.$$ 
     Now consider the correspondence that maps $\tP_{an}^{an}(k)$ to $\tP_{an}^{an}(v')$. This correspondence is injective by definition, which implies $$\hat N^u(n,y,\tQ_n(y))\leq \hat N^u(n,z,\tQ_n(y)).$$ Similarly, one has  
     $$\hat N^u(n,y,\tQ_n(y))\geq\hat N^u(n,z,\tQ_n(y)).$$
     Thus, the equality $$\hat N^u(n,y,\tQ_n(y))=\hat N^u(n,z,\tQ_n(y))$$
     is obtained and the proof of the claim is complete.
     \end{proof}
\end{claim}
     
    By \Cref{lem includingsecond}, we have $\tQ_n(y)\subseteq\tQ^*_n(z)\subseteq\tQ_n^{**}(y)$. It follows that
    \begin{equation*}
        \hat N^u(n,z,\tQ_n(y))\leq \hat N^u (n,z,\tQ_n^*(z))< N^u(n,z,\tQ_n^*(z))\cdot\e^{7an\epsilon}\leq N^u(n,z,\tQ^{**}_n(y))\cdot\e^{7an\epsilon}.
    \end{equation*}
    Combining all the above discussions, we obtain 
    \begin{equation*}
        N^u(n,y,\tQ_n(y))\leq \inf \{N^u(n,z,\tQ^{**}_n(y)): z\in \Gamma_\epsilon\cap\tQ_n(y)\}\cdot\e^{7an\epsilon}.
    \end{equation*}
    Now, define
    \begin{equation*}
    \hat N^s_{\Gamma^u_\epsilon}(n,y,\tQ_n(y)):=\#\{\tP^{an}_0(y)\cap\tP^{0}_{an}(z): z\in \Gamma_\epsilon\cap\tQ_n(y)\}.
    \end{equation*}
    By the definition of $N(n,\tQ^{**}_n(y))$, we have that
    \begin{equation*}
    \hat N^s_{\Gamma^u_\epsilon}(n,y,\tQ_n(y))\cdot \inf \{N^u(n,z,\tQ^{**}_n(y)): z\in \Gamma_\epsilon\cap\tQ_n(y)\} \leq N(n,\tQ^{**}_n(y)).
    \end{equation*}
    We further define $$N^s_{\Gamma_\epsilon}(n,y,\tQ_n(y)):=\#\{R\in\tR(n):R\subseteq\tQ_n(y)\,\text{ and }\ R\cap\Gamma_\epsilon\cap\xi^s(y)\neq\emptyset\}.$$ By the definitions, one can directly check that $$\hat N^s_{\Gamma^u_\epsilon}(n,y,\tQ_n(y))\geq N^s_{\Gamma_\epsilon}(n,y,\tQ_n(y)).$$ Therefore,
    \begin{equation*}
        N^s_{\Gamma_\epsilon}(n,y,\tQ_n(y))\times N^u(n,y,\tQ_n(y))\leq N(n,\tQ^{**}_n(y))\cdot\e^{7an\epsilon}.
    \end{equation*}
    Recall that $y\in\hat\Gamma_\epsilon$ and $n\geq\hat n_\epsilon$, as in \Cref{eq86second} we can prove that
    \begin{equation*}
        N^s_{\Gamma_\epsilon}(n,y,\tQ_n(y))\geq m^s_y(B^s(y,\e^{-n}))(2C)^{-1}\e^{anh-an\epsilon},
    \end{equation*}
    \begin{equation*}
        N^u(n,y,\tQ_n(y))\geq m^u_y(B^u(y,\e^{-n}))(2C)^{-1}\e^{anh-an\epsilon}.
    \end{equation*}
    Moreover, by \Cref{eq5second} and \Cref{lemma **second}, it follows that
    \begin{equation*}
        N(n,\tQ^{**}_n(y))\leq\frac{m(\tQ_n^{**}(y))}{\min\{m(\tP^{an}_{an}(z)):z\in\tQ^{**}_n(y)\cap\Gamma\}}\leq m(B(y,C_2^{8}\e^{-n}))\cdot C\e^{2anh+2an\epsilon}.
    \end{equation*}
    Putting these three inequalities together, the proof is complete.
    \end{proof}
\end{lemma}
\begin{lemma}\label{lemma upperboundsecond}
        For any $\epsilon>0$, $y\in\hat\Gamma_\epsilon$, and $n\geq\hat n_\epsilon$, 
        \begin{equation*}
            m(B(y,\e^{-n-\Delta}))\leq m^s_y(B^s(y,C_2^2\e^{-n}))\cdot m^u_y(B^u(y,C^2_2\e^{-n}))\cdot 4C^5\e^{4\Delta\cdot a\epsilon+22an\epsilon}.
        \end{equation*}
    \begin{proof}
        By \Cref{lem2second} and \Cref{lem3second}, it follows that
        \begin{equation*}
            m(B(y,\e^{-n-\Delta}))\leq \hat N^s(n,y,\tQ_n(y))\cdot\hat N^u(n,y,\tQ_n(y))\cdot4C^3\e^{4\Delta\cdot a\epsilon-2anh+6an\epsilon}.
        \end{equation*}
        Hence, by previous discussions,
        \begin{equation*}
            m(B(y,\e^{-n-\Delta}))\leq  N^s(n,y,\tQ_n(y)) \cdot N^u(n,y,\tQ_n(y))\cdot4C^3\e^{4\Delta\cdot a\epsilon-2anh+20an\epsilon}.
        \end{equation*}
        Together with \Cref{lem1second}, the proof is complete.
    \end{proof}
\end{lemma}
Finally, as in \cite[Section 6]{Barreira_Pesin_Schmeling}, we combine \Cref{lemma lowboundsecond} and \Cref{lemma upperboundsecond} and let $\epsilon\rightarrow 0$. 
From \Cref{lemma lowboundsecond}, it follows that
$$d^u+d^s\geq\limsup_{r\rightarrow0}\frac{\log m(B(x,r))}{\log r},\quad m-\text{a.e.}\ x\in M,$$
and from \Cref{lemma upperboundsecond}, one has
$$d^u+d^s\leq\liminf_{r\rightarrow0}\frac{\log m(B(x,r))}{\log r},\quad m-\text{a.e.}\ x\in M.$$
Together, the proof of \Cref{main thmsecond} is complete.

\section*{Acknowledgements}
The author thanks Professors Shaobo Gan, Jiagang Yang, and Fan Yang for their careful proofreading, numerous helpful discussions, and comments on earlier drafts of this paper. The author also thanks them for their guidance and encouragement.

This work was partially completed at the Instituto de Matemática Pura e Aplicada (IMPA). The author thanks Professor Marcelo Viana for the invitation and hospitality during the visit in January–February 2026.

\bigskip
\noindent {\bf Data Availability}
No numerical or categorical data were used in this article.
\bigskip
\bibliographystyle{abbrv}
{\footnotesize\bibliography{library}}

\end{document}